\renewcommand{\eprint}[1]{#1}
\mathchardef\mhyph="2D
\numberwithin{equation}{section}
\newtheorem{theorem}{Theorem}[section]
\newtheorem{corollary}[theorem]{Corollary}
\newtheorem{lemma}[theorem]{Lemma}
\newtheorem{proposition}[theorem]{Proposition}
\theoremstyle{remark}
\newtheorem{remark}[theorem]{Remark}
\theoremstyle{definition}
\newtheorem{definition}[theorem]{Definition}
\def\-{{\hbox{-}}}
\newtheorem{theoremAlph}{Theorem}
\newcommand\bp{\begin{proof}}
\newcommand\ep{\end{proof}}
\newcommand{\C}{\mathbb C}
\newcommand{\bA}{\mathbb A}
\newcommand{\bE}{\mathbb E}
\newcommand{\bK}{\mathbb K}
\newcommand{\cA}{\mathcal A}
\newcommand{\bB}{\mathbb B}
\newcommand{\bD}{\mathbb D}
\newcommand{\cC}{\mathcal C}
\newcommand{\cD}{\mathcal D}
\newcommand{\cE}{\mathcal E}
\newcommand{\cF}{\mathcal F}
\newcommand{\cK}{\mathcal K}
\newcommand{\cL}{\mathcal L}
\newcommand{\cM}{\mathcal M}
\newcommand{\cN}{\mathcal N}
\newcommand{\cO}{\mathcal O}
\newcommand{\cR}{\mathcal R}
\newcommand{\cS}{\mathcal S}
\newcommand{\cU}{\mathcal U}
\newcommand{\cY}{\mathcal Y}
\newcommand{\cZ}{\mathcal Z}
\newcommand{\mpcC}{\cC^{mp}}
\newcommand{\fgpBim}{{\sf Bim_{fgp}}}
\newcommand{\fdRep}{{\sf Rep_{fd}}}
\DeclareMathOperator{\WAlg}{W*Alg}
\DeclareMathOperator{\End}{End}
\DeclareMathOperator{\Hom}{Hom}
\DeclareMathOperator{\Irr}{Irr}
\DeclareMathOperator{\Mod}{Mod}
\DeclareMathOperator{\Rep}{Rep}
\DeclareMathOperator{\Vect}{Vec}
\DeclareMathOperator{\Bal}{Bal}
\DeclareMathOperator{\tr}{tr}
\DeclareMathOperator{\Bim}{Bim}
\DeclareMathOperator{\For}{For}
\DeclareMathOperator{\RMod}{RMod}
\DeclareMathOperator{\LMod}{LMod}
\DeclareMathOperator{\Surf}{Surf}
\DeclareMathOperator{\CCat}{C*Cat}
\DeclareMathOperator{\Corr}{Corr}
\DeclareMathOperator{\Disk}{Disk}
\newcommand\op{\text{op}}
\newcommand\un{{\mathds 1}}
\newcommand{\id}{\mathrm{id}}
\newcommand{\Hilb}{\mathrm{Hilb}}
\newcommand{\YD}{\mathcal{YD}}
\begin{document}




\title{Monadic reconstruction of unitary Drinfeld centers and Factorization Homology}

\date{}

\author{Lucas Hataishi}
\address{}
\email{lucas.yudihataishi@maths.ox.ac.uk}

\begin{abstract}
We prove that the unitary Drinfeld center of a unitary tensor category is equivalente to the category of unitary bimodules for the canonical W$^*$-algebra object, generalizing Müger's result to the non-fusion case. This is then used to express factorization homology in terms of C$^*$-algebraic extensions of symmetric enveloping algebras and actions of Drinfeld dobules of compact quantum groups.
\end{abstract}

\maketitle
\tableofcontents

\section{Introduction}

The Drinfeld center construction, which takes a tensor category and produces a braided tensor category, appears naturally in a variety of contexts in quantum algebra. 
\begin{itemize}
	\item Quantum groups: the Drinfeld center of the representation category of a quantum group is equivalent to the category of representations of its Drinfeld double (\cite{MR0934283}, \cite{MR1381692}). 
	\item Subfactors: Drinfeld centers appear in relation to the symmetric enveloping inclusions associated to subfactors (\cites{MR0996454,MR1302385,MR1316301,MR1332979,MR1782145,MR1742858,MR1966524,MR1966525,MR3509018}).
	\item Condensed-matter physics: Drinfeld centers describe topological charges in Levin-Wen models (\cite{PhysRevB.71.045110}).
	\item Topological Quantum Field Theory (TQFT):  Reshetikhin-Turaev construction from modular tensor categories (\cite{MR1091619}), of which Drinfeld centers of fusion categories are examples.
\end{itemize}

Specially for applications in mathematical-physics, the literature concentrates mostly on fusion categories, finite non-semisimple tensor categories, either rigid or of Grothendieck-Verdier type.  However, the theory of compact quantum groups (\cite{MR3204665}) and the theory of subfactors (\cite{MR0696688,MR1622812,MR3948170,Hataishi_Palomares_2025})gives rich sources of tensor categories with unitary structures and infinite isomorphism classes of simple objects. They are also rigid, which together with unitarity implies semi-simplicity, and have simple units. Such categories are called {\em unitary tensor categories} (UTCs).

The Drinfeld center construction is subtle in the unitary and non-finite context. By naively taking the category $\cZ \cC$ of unitary half-braidings of a UTC $\cC$, one often obtains a very small category. To get an interesting category, first we need to consider unitary half-braidings in the  {\em unitary ind-completion} $\Hilb(\cC)$ of $\cC$. We denote it by $\cZ \Hilb(\cC)$, and call it the {\em unitary Drinfeld center} of $\cC$, see \cite{MR3509018}. $\cZ \Hilb(\cC)$ is then a C$^*$-tensor category with a unitary braiding. The main motivation for this paper is to understand a 2-dimensional fully extended TQFT associated to $\cZ \Hilb(\cC)$, constructed via {\em factorization homology} with values in C$^*$-categories. 

One particular instance, and for us the case of major interest, is when $\cC$ is the category $\fdRep(K_q)$ of unitary representations of the Woronowicz-Drinfeld-Jimbo deformation of a semisimple simply connected Lie group $K$. In this case, writing $\Hilb(\fdRep(K_q)) = \Rep(K_q)$ for simplicity, $\cZ \Rep(K_q)$ coincides with the category $\Rep(DK_q)$ of unitary representations of the Drinfeld double of $K_q$. As advocated in \cite{MR4162277}, the $DK_q$ gives a quantization of the complexification $K_\C$ of $K$. We expect factorization homology with coefficients in $\Rep(DK_q)$ to give a quantization of the algebra of continuous functions on the $K_\C$-spaces
\[
\cR_{K_\C}(\Sigma) := \Hom(\pi_1(\Sigma), K_\C) \ , 
\]
where $\Sigma$ is a compact surface. Geometrically, $\cR_{K_\C}(\Sigma)$ is the moduli space of flat principal $K_\C$-bundles on $\Sigma$, trivialized on a point, and the action of $K_\C$ corresponds to changes of trivialization. Though we can not yet prove this quantization result, we show here that factorization homology can be expressed in terms of $DK_q$-C$^*$-algebras (Theorem \ref{thmB}). 

In order to compute factorization homology, one has to understand $\cZ \Hilb(\cC)$-module C$^*$-categories, a priori a difficult problem due to the lack of both rigidity, semisimplicity and finiteness. The main technical result of this paper (Theorem \ref{themA}) allows one to pass from module C$^*$-categories over $\cZ\Hilb(\cC)$ to bimodule C$^*$-categories over $\cC$, enabling the application of the results in \cite{hataishi2025categorical}. In the case $\cC = \fdRep(K_q)$ this is how one obtains the $DK_q$-C$^*$-algebras, and for generic $\cC$ it allows to express factorization homology in terms of extensions of Popa's symmetric enveloping algebras (Theorem \ref{thmC}). We expect that these extensions describe the algebras of observables in the 2-dimensional TQFT associated with factorization homology.

\medskip
Lets us now describe in more detail the content of the paper. In passing from $\cC$ to $\cZ\Hilb(\cC)$, semi-simplicity, and therefore rigidity, is lost, and the result is an immense C$^*$-category. This is well illustrated in the case of unitary representations of compact quantum groups. It is true that the unitary Drinfeld center is the category of unitary representations of the Drinfled double, which is now only a {\em locally} compact quantum group. Even classically, it is often very difficult to understand the category of unitary representations of a generic locally compact group.

 In the fusion case, Müger's works \cites{MR1966524,MR1966525} assert that the Drinfeld center of a fusion category 
$\cC$ is related to $\cC^{mp} \boxtimes \cC$ through a Morita context, where $\cC^{mp}$ is the category $\cC$ with opposition monoidal structure, and $\boxtimes$ is the tensor product of linear categories. Translated to the monadic language of \cites{MR2355605,MR2793022,MR2869176,MR3079759}, this tantamounts to the existence of an algebra object $\cS$ in $\cC^{mp} \boxtimes \cC$ such that
\begin{itemize}
	\item the category $\RMod(\cS)$ of right $\cS$-modules is equivalent to $\cC$,
	\item the category $\LMod(\cS)$ of left $\cS$-modules is equivalent to $\cC^{mp}$,
	\item the category $\Bim(\cS)$ is $\cS$-bimodules is equivalent to the Drinfeld center $\cZ\cC$,
	\item $\RMod(\cS) \underset{\cC^{mp} \boxtimes \cC}{\boxtimes} \LMod(\cS) \simeq \cC  \underset{\cC^{mp} \boxtimes \cC}{\boxtimes} \cC^{mp} \simeq \cZ(\cC)$.
\end{itemize}
A consequence of this is that the categories of modules for the Drinfeld center is equivalent to the category of modules for $\cC^{mp} \boxtimes \cC$. Also, there is a canonical choice for $\cS$ up to isomorphism: if $\Irr(\cC)$ is a choice of representatives for the isomorphism classes of simple objects in $\cC$, $\cS$ can be taken to be
\[
\cS = \bigoplus_{a \in \Irr(\cC)} \bar{a} \boxtimes a \ ,
\]
where $\bar{a}$ is the dual of $a$.

The basic idea of this paper is to investigate to which extent the above picture makes sense in the unitary and non-finite context. The inifinite version of the canonical algebra $\cS$ has already appeared both in quantum groups, as the implementation of the quantum-duality principle (\cites{MR1015339,MR1302020}), and in subfactors as part of the standard invariant of the symmetric enveloping inclusions (\cites{MR3509018,MR3687214}). Given a UTC $\cC$, as explained in \cite{MR3687214}, $\cS$ is a W$^*$, or von Neumann, algebra object in the algebraic ind-completion of $\cC^{mp} \boxtimes \cC$. The following is the main result in this paper.

\begin{theoremAlph}[Corollary \ref{cor:equivalencehalfbraidingsunitarybimodulesoverSEalgebra}]
	The unitary Drinfeld center $\cZ \Hilb(\cC)$ of a unitary tensor category is equivalent to the category  $\Bim_{\Hilb(\cC^{mp} \boxtimes \cC)}(\cS)$ of unitary bimodules for the canonical W$^*$-algebra object $\cS$.
	\label{themA}
\end{theoremAlph}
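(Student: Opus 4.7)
The plan is to prove this by constructing mutually inverse C$^*$-functors
\[
F \colon \cZ\Hilb(\cC) \longrightarrow \Bim_{\Hilb(\cC^{mp} \boxtimes \cC)}(\cS), \qquad G \colon \Bim_{\Hilb(\cC^{mp} \boxtimes \cC)}(\cS) \longrightarrow \cZ\Hilb(\cC),
\]
generalizing Müger's Morita argument to the unitary non-fusion setting. The starting point is the identifications $\RMod(\cS) \simeq \Hilb(\cC)$ and $\LMod(\cS) \simeq \Hilb(\cC^{mp})$ of right and left unitary modules over the W$^*$-algebra object $\cS$, which in the fusion case is Müger's theorem and which I expect to have been established, or to be directly derivable, from the monadic setup described earlier in the paper together with the symmetric enveloping / quantum duality constructions of \cite{MR1015339,MR1302020,MR3509018,MR3687214}.

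Given $(X, \sigma) \in \cZ\Hilb(\cC)$, the functor $F$ would transport $X$ to its canonical right $\cS$-module structure under $\RMod(\cS) \simeq \Hilb(\cC)$, and then use the half-braiding to define a compatible left $\cS$-action: for each $a \in \Irr(\cC)$, the summand $\bar a \boxtimes a$ of $\cS$ should act by combining the standard solutions to the conjugate equations for $a$ with the half-braiding morphism $\sigma_a \colon a \otimes X \to X \otimes a$. Naturality of $\sigma$ in $a$ together with the half-braiding hexagon are precisely what one needs for this prescription to define an associative left $\cS$-action that commutes with the right $\cS$-action. Conversely, given a bimodule $M$, one sets $G(M) = X$ to be the underlying object in $\Hilb(\cC)$ read off from the right-module structure, and defines $\sigma_a$ by extracting, from the $\bar a \boxtimes a$ summand of $\cS$, the composition of the left and right actions paired against the standard solutions; the bimodule commutativity axiom then translates directly into $\sigma$ being a natural unitary half-braiding.

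The main technical obstacle, I expect, is the unitary / W$^*$-theoretic bookkeeping: the sum $\cS = \bigoplus_{a \in \Irr(\cC)} \bar a \boxtimes a$ only makes sense normally inside the algebraic ind-completion, and one must check that the passage from half-braidings to bimodules preserves $*$-structures and converges in the normal topology rather than purely algebraically. Unitarity of $\sigma$ must correspond precisely to the $*$-compatibility of the bimodule action, a delicate matching that the fusion case sidesteps because everything is finitary there. Once this is settled, the verifications that $G \circ F \simeq \mathrm{id}$ and $F \circ G \simeq \mathrm{id}$ reduce to the defining coend-type universality of $\cS$ as implementing the quantum-duality / symmetric enveloping principle, so the core of the argument is the careful replay of the fusion Morita picture entirely within the normal W$^*$-categorical framework of $\Hilb(\cC^{mp} \boxtimes \cC)$.
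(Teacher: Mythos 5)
Your plan is essentially the paper's own route: one first shows every unitary one-sided $\cS$-module is free, i.e.\ of the form $\cS_H$ with $H \in \Hilb(\cC)$ (the paper does this for left modules in Proposition \ref{prop:Hilb(C)simequnitaryleftSmodules}; your mirror-image convention with right modules is the same argument), and then identifies the remaining commuting $\cS$-action on $\cS_H$, built from the half-braiding components and the standard solutions of the conjugate equations, with a half-braiding on $H$, exactly as in the paper's passage between right $\cS$-actions on $\cS_H$ and modules for the monad $(-)\triangleleft\cS$. The ``delicate matching'' you defer is precisely where the paper does its remaining work: the norm estimate $\|\alpha(\iota_a)\| \le d_a^{1/2}\|\sigma_a\|$ of Lemma \ref{lemma:fromboundedhalfbraidingstoboundedalgebramorphism} showing a bounded half-braiding yields an algebra morphism into $\bB^r_{\cS_H}$, and the Frobenius-reciprocity computation of Proposition \ref{prop:fromunitaryrightSmodulestounitaryhalfbraidings} identifying $*$-compatibility of that morphism with unitarity of the half-braiding (via the expression of $\sigma_a^{-1}$ through $\sigma_{\bar a}$), so your outline is correct but those two verifications are the substance of the unitary upgrade.
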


Unitarily braided C$^*$-tensor categories can be interpreted topologically. Consider the category $\Disk_2$ which has disjoint unions $D^{\sqcup k}$ of two dimensional disks as objects, and smooth embeddings $D^{\sqcup k} \to D^{\sqcup l}$ as morphisms. Disjoint union gives to $\Disk_2$ the structure of a symmetric monoidal category. Considering isotopies between embeddings as 2-morphisms, $\Disk_2$ becomes a symmetric monoidal (2,1)-category, i.e., a 2-category for which all 2-morphisms are invertible. Then a unitarily braided C$^*$-tensor category $\cZ$ determines a symmetric monoidal functor $\Disk_2 \to \CCat$, where $\CCat$ is the symmetric monoidal (2,1)-category of C$^*$-tensor categories, $*$-functors and unitary natural isomorphisms, equipped with the maximal tensor product $\boxtimes$ of C$^*$-categories. The category $\cZ$ is recovered from the functor as the value of the disk.

Thanks to \cite{MR3007088} and  \cite{MR3431668}, it follows that the functor $\Disk_2 \to \CCat$ defined by a  C$^*$-tensor category $\cZ$ with a unitary braiding extends to a symmetric monoidal functor
\[
\int_{(-)} \cZ : \Surf \to \CCat 
\]
from the category of compact surfaces, seen as a symmetric monoidal (2,1)-category in similar fashion as $\Disk_2$.  This functor is called {\em factorization homology}  with coefficients in $\cZ$ (\cites{MR3431668}), and satisfies an excision property in terms of collar-gluing of surfaces.  It naturally leads to the study of pointed module C$^*$-categories for unitary Drinfeld centers (a pointed module category is a module category with a distinguished object). 

We use Theorem \ref{themA} to study $\int_{(-)} \cZ \Hilb(\cC)$, factorization homology with coefficients in the unitary Drinfeld center of a UTC $\cC$. Using Theorem \ref{themA}, the following can be proven.

\begin{theoremAlph}[Proposition \ref{prop:frommodulerforthecentertocentrallypointedbimodules}]
	There exists a forgetful functor from 
	\[
	\lbrace \text{pointed $\cZ \Hilb(\cC)$-bimodule C$^*$-tensor categories} \rbrace \to \lbrace \text{ centrally pointed $\cC$-bimodule C$^*$-categories} \rbrace \ ,
	\]
	given at the level of objects by 
	\[
	(\cM,m) \mapsto \left( \cM \underset{\cZ \Hilb(\cC)}{\boxtimes} \Hilb(\cC), m \boxtimes \un \right) \ . 
	\]
	\label{thmB}
\end{theoremAlph}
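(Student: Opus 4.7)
The strategy is to construct the functor on objects, 1-morphisms, and 2-morphisms, using Theorem \ref{themA} to render the relative tensor product $\boxtimes_{\cZ\Hilb(\cC)}$ tractable as an algebraic construction inside $\Hilb(\cC^{mp}\boxtimes\cC)$.

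First I would equip $\Hilb(\cC)$ with its canonical $\cZ\Hilb(\cC)$-bimodule structure. The underlying-object functor $\cZ\Hilb(\cC)\to\Hilb(\cC)$ is a central $*$-functor, and combining it with the braiding of the Drinfeld center yields commuting left and right $\cZ\Hilb(\cC)$-actions on $\Hilb(\cC)$ that are compatible with the regular $\cC$-bimodule structure. Under the identification of Theorem \ref{themA}, $\Hilb(\cC)$ corresponds to an $\cS$-bimodule in $\Hilb(\cC^{mp}\boxtimes\cC)$ carrying a compatible $\cC$-bimodule structure, which is the form in which I would manipulate it.

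Given $(\cM,m)$ a pointed $\cZ\Hilb(\cC)$-bimodule C$^*$-tensor category, I would form the relative tensor product $\cM\boxtimes_{\cZ\Hilb(\cC)}\Hilb(\cC)$ using the right $\cZ\Hilb(\cC)$-action on $\cM$ balanced against the left $\cZ\Hilb(\cC)$-action on $\Hilb(\cC)$. Since the $\cC$-bimodule action on $\Hilb(\cC)$ commutes with the $\cZ\Hilb(\cC)$-action used in balancing, it descends to a $\cC$-bimodule C$^*$-category structure on the quotient. For the pointing, the key point is that $\un\in\Hilb(\cC)$ is the tensor unit: for every $c\in\cC$, the canonical isomorphisms $c\otimes\un\cong c\cong\un\otimes c$ in $\Hilb(\cC)$ yield natural isomorphisms $c\cdot(m\boxtimes\un)\cong m\boxtimes c\cong(m\boxtimes\un)\cdot c$, and composing these produces a half-braiding on $m\boxtimes\un$. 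The pentagon/hexagon axioms reduce to the unit coherences in $\Hilb(\cC)$ together with the bimodule coherences of $\cM$, so $m\boxtimes\un$ is centrally pointed.

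For functoriality, a morphism $(G,\gamma)\colon(\cM_1,m_1)\to(\cM_2,m_2)$ consists of a $\cZ\Hilb(\cC)$-bimodule $*$-functor $G$ and an isomorphism $\gamma\colon G(m_1)\to m_2$; applying $G\boxtimes_{\cZ\Hilb(\cC)}\id_{\Hilb(\cC)}$ together with $\gamma\boxtimes\id_{\un}$ yields a morphism of centrally pointed $\cC$-bimodule C$^*$-categories, with preservation of the half-braiding following from $G$ being a bimodule functor. Compatibility with composition and with 2-cells is handled by the universal property of $\boxtimes_{\cZ\Hilb(\cC)}$. The main obstacle is making the relative tensor product genuinely exist and behave well in the C$^*$-setting, and checking that the $\cC$-bimodule structure descends; this is exactly where Theorem \ref{themA} pays off, since it rewrites $\boxtimes_{\cZ\Hilb(\cC)}$ as a balanced tensor product over the W$^*$-algebra $\cS$ inside $\Hilb(\cC^{mp}\boxtimes\cC)$, reducing the analytic subtleties to the already-established properties of $\cS$ and of the ind-completion.
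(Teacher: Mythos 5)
Your proposal is correct and follows essentially the same route as the paper: it uses Theorem \ref{themA} (via the Morita context with $\cS$) to make the balanced tensor product $\cM \underset{\cZ \Hilb(\cC)}{\boxtimes} \Hilb(\cC)$ into a right $\Hilb(\cC^{mp}\boxtimes\cC)$-module, i.e.\ a $\cC$-bimodule C$^*$-category, and then extracts the central structure on $m\boxtimes\un$ from the unit isomorphisms $a\otimes\un\simeq a\simeq\un\otimes a$, exactly as in the paper's (terser) argument that $a\triangleright(m\boxtimes\un)\simeq m\boxtimes a\simeq(m\boxtimes\un)\triangleleft a$.
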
 

A pointed bimodule C$^*$-category $(\cM,m)$ for $\cC$ is called {\em centrally pointed} if on the distinguished object $m$ the left and right $\cC$ actions are unitarily equivalent. It was show in \cite{hataishi2025categorical} that, when $\cC$ is the category $\Rep \bK$ of unitary representations of a compact quantum group, such bimodules correspond to continuous action of the Drinfeld double $D \bK$ on unital C$^*$-algebras, via a generalization of the Tannaka-Krein duality for quantum group actions. In particular, factorization homology for unitary Drinfeld center gives rise to continuous $D \bK$-C$^*$-algebras. More precisely, we have the following.

\begin{theoremAlph}[Corollary \ref{cor:DrinfelddoubleactionsfromFH}]
	Let $\Sigma$ be a framed surface with non empty boundary. Given a compact quantum group $\bK$, there is a unital C$^*$-algebra $B_\Sigma$, a continuous actions of $D\bK^{\times n}$ on $B_\Sigma$, where $n$ is the number of boundary components of $\Sigma$, such that
	\[
	\int_{\Sigma} \Rep(D\bK) \underset{\Rep(D\bK^{\times n})}{\boxtimes} \Rep(\bK^{\times n}) \simeq \Hilb^{\bK^{\times n}}_{B_\Sigma} \ , 
	\]
	as a $\Rep(\bK^{\times n})$-bimodule C$^*$-tensor category, where $\Hilb^{\bK^{\times n}}_{B_\Sigma}$ is the category of $\bK^{\times n}$-equivariant Hilbert C$^*$-modules over $B_\Sigma$. The correspondence $\Sigma \mapsto B_\Sigma$ is functorial with respect to boundary preserving framed embeddings.
	\label{thmC}
\end{theoremAlph}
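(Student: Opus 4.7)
The plan is to assemble three ingredients: the canonical pointed $\Rep(D\bK^{\times n})$-bimodule structure carried by factorization homology on a bordered surface, Theorem \ref{thmB}, and the unitary Tannaka--Krein duality of \cite{hataishi2025categorical} for $D\bK^{\times n}$-C$^*$-algebras. Set $\cC = \fdRep(\bK^{\times n})$, so that $\Hilb(\cC) = \Rep(\bK^{\times n})$ and $\cZ \Hilb(\cC) = \Rep(D\bK^{\times n})$.

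First, I would exhibit $\int_\Sigma \Rep(D\bK)$ as a pointed $\Rep(D\bK^{\times n})$-bimodule C$^*$-tensor category. The $n$ commuting $\Rep(D\bK)$-actions come from framed embeddings of disks into disjoint open collars of the $n$ boundary circles, assembled via the excision property of $\int_{(-)}$; since $\Rep(D\bK)$ is braided, this module structure is equivalently a bimodule structure. The pointing is the image of the monoidal unit of $\Rep(D\bK)$ along a framed embedding of an interior disk of $\Sigma$.

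Second, applying Theorem \ref{thmB} to this pointed bimodule yields a centrally pointed $\Rep(\bK^{\times n})$-bimodule C$^*$-category
\[
\cM_\Sigma \;:=\; \int_\Sigma \Rep(D\bK) \underset{\Rep(D\bK^{\times n})}{\boxtimes} \Rep(\bK^{\times n}) \ ,
\]
pointed at $\un \boxtimes \un$. Invoking the unitary Tannaka--Krein duality of \cite{hataishi2025categorical} --- which identifies centrally pointed $\Rep(\bK^{\times n})$-bimodule C$^*$-categories with pairs $(B, \alpha)$ of a unital C$^*$-algebra $B$ with a continuous $D\bK^{\times n}$-action $\alpha$, via $(B, \alpha) \mapsto (\Hilb^{\bK^{\times n}}_B, B)$ --- extracts the desired $B_\Sigma$ together with the equivalence $\cM_\Sigma \simeq \Hilb^{\bK^{\times n}}_{B_\Sigma}$ of pointed bimodule C$^*$-categories.

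Functoriality follows by naturality: a boundary-preserving framed embedding $\Sigma \hookrightarrow \Sigma'$ induces a pointed $\Rep(D\bK^{\times n})$-module functor at the level of factorization homology, which descends through Theorem \ref{thmB} and the Tannaka--Krein equivalence to a $D\bK^{\times n}$-equivariant unital $*$-homomorphism $B_\Sigma \to B_{\Sigma'}$. The main technical obstacle is verifying that the pointing of $\cM_\Sigma$ is truly \emph{centrally} pointed, i.e.\ that the left and right $\Rep(\bK^{\times n})$-actions on $\un \boxtimes \un$ are unitarily equivalent. This should follow because, on the pointing of $\int_\Sigma \Rep(D\bK)$, the two $\Rep(D\bK^{\times n})$-actions differ only by half-braidings, which are precisely what gets trivialized upon base change along the forgetful $\Rep(D\bK^{\times n}) \to \Rep(\bK^{\times n})$; but carrying this out cleanly requires a precise analysis of how the forgetful functor of Theorem \ref{thmB} interacts with the boundary-collar module structure.
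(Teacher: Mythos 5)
Your proposal follows essentially the same route as the paper: the boundary collars and the quantum structure sheaf make $\int_\Sigma \Rep(D\bK)$ a pointed module category over $\Rep(D\bK^{\times n})$, Theorem \ref{thmB} produces the centrally pointed $\Rep(\bK^{\times n})$-bimodule $\cM_\Sigma$, and the categorical duality of \cite{hataishi2025categorical} then yields $B_\Sigma$ and the equivalence with $\Hilb^{\bK^{\times n}}_{B_\Sigma}$, with functoriality by naturality --- exactly the composite of Corollary \ref{cor:fromRepDKmodulestoYetterDrinfeldlagebras} applied to $\left(\int_\Sigma \Rep(D\bK), \cO_\Sigma\right)$. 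The ``technical obstacle'' you flag at the end is not actually open: central pointedness of $m \boxtimes \un$ is precisely the content of Proposition \ref{prop:frommodulerforthecentertocentrallypointedbimodules}, which follows directly from $a \triangleright (m \boxtimes \un) \simeq m \boxtimes a \simeq (m \boxtimes \un) \triangleleft a$, so no further analysis of how the forgetful functor interacts with the collar action is required.
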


In future works, factorization homology with coefficients in $\Rep (D\bK)$ will be interpreted as topological quantum gauge theory.
\medskip

The $D\bK$-C$^*$-algebras of Theorem \ref{thmC} are obtained from the fiber $\Rep \bK \to \Hilb$ of the unitary representation category of $\bK$. There are UTCs that do not admit fiber functor, and therefore are not equivalent to the unitary representation category of a compact quantum group, e.g. Fibonacci, Ising and Haagerup unitary fusion categories. But, given a UTC $\cC$, there is always a unital C$^*$-algebra $A$ with trivial center and a fully-faithful unitary tensor functor $F:\cC \to \Corr(A)$ to the category of Hilbert C$^*$-correspondences over $A$ (\cites{MR4139893,kitamura2024actions}). 

Building on the previous works (\cites{MR3948170,palomares2023discrete}), a generalization of the Tannaka-Krein duality for quantum group actions is proven in \cite{Hataishi_Palomares_2025}, functorially associating to a fully faithful unitary tensor functor $F:\cC \to \Corr(A)$ and a pointed module C$^*$-category $(\cM,m)$ for $\cC$ an extension $A \subset B_{(\cM,m)}$ of C$^*$-algebras. A pointed module C$^*$-category over $\cC$ is the same as a C$^*$-algebra object in the algebraic ind-completion of $\cC$ (\cite{MR3687214}), and thus the generalized Tannaka-Krein duality theorem is a correspondence associating concrete operator algebraic extensions to C$^*$-algebra objects.

How should the generalized Tannaka-Krein duality of \cite{Hataishi_Palomares_2025} work for centrally pointed module C$^*$-categories, and how can an analogue of Theorem \ref{thmC} be stated when, instead of a fiber functor $\Rep \bK \to \Hilb$, a fully faithful functor $F:\cC \to \Corr(A)$ is given? The answer involves the symmetric enveloping inclusion. Indeed, the functor $F$ induces a fully faithful unitary tensor functor $ \cC^{mp} \boxtimes \cC \to \Corr(A^{\op} \otimes A)$. Taking the canonical algebra $\cS$ as a C$^*$-algebra object in the algebraic ind-completion of $\cC^{mp} \boxtimes \cC$, there is an associated extension $A^{\op} \otimes A \subset B_F$, and $B_F$ is called the {\em symmetric enveloping algebra} associated to $F$. The following is then proved.

\begin{theoremAlph}[Theorem \ref{thm:extensionsofsymmetricenvelopingalgebra}]
	Suppose $\cC$ is a unitary tensor category and $F: \cC \to \Corr(A)$ is a fully faithful unitary tensor functor. Then there exists a functor 
	\[(\cM,m) \mapsto B_{(\cM,m)} \supset B_F \]
	from the category of pointed module C$^*$-categories over the unitary Drinfeld center of $\cC$ to the category of extensions of the symmetric enveloping algebra $B_F$ associated to $F$.
	\label{thmD}
\end{theoremAlph}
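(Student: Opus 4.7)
The plan is to chain Theorems \ref{themA} and \ref{thmB} with the generalized Tannaka--Krein correspondence of \cite{Hataishi_Palomares_2025}, verifying at each step that the canonical algebra $\cS$ is preserved. Let $(\cM, m)$ be a pointed module C$^*$-category over $\cZ\Hilb(\cC)$. Applying Theorem \ref{thmB} gives the centrally pointed $\cC$-bimodule C$^*$-category
\[
(\cN, n) := \left( \cM \underset{\cZ\Hilb(\cC)}{\boxtimes} \Hilb(\cC), \; m \boxtimes \un \right),
\]
and by the central pointing condition the left and right $\cC$-actions on $n$ are unitarily equivalent, so $(\cN, n)$ is equivalently a pointed module C$^*$-category over $\cC^{mp} \boxtimes \cC$.

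By \cite{MR3687214}, pointed module C$^*$-categories over $\cC^{mp} \boxtimes \cC$ correspond to C$^*$-algebra objects in the algebraic ind-completion via internal endomorphisms of the pointing. Let $B := \underline{\End}(n)$ denote the resulting algebra object. The crucial observation is that Theorem \ref{themA} produces a canonical unital $*$-homomorphism $\cS \to B$: under the equivalence $\cZ\Hilb(\cC) \simeq \Bim_{\Hilb(\cC^{mp}\boxtimes\cC)}(\cS)$, the $\cZ\Hilb(\cC)$-action on $(\cM, m)$ becomes an $\cS$-bimodule structure on $n$, which promotes $B$ to an algebra in the category of $\cS$-bimodules and hence yields the desired unit map from $\cS$.

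The composition of $F$ with the exterior tensor product produces a fully faithful unitary tensor functor $\cC^{mp}\boxtimes\cC \to \Corr(A^{\op} \otimes A)$, and the Tannaka--Krein correspondence of \cite{Hataishi_Palomares_2025} then assigns to $B$ a C$^*$-algebraic extension $A^{\op} \otimes A \subset B_{(\cM, m)}$. Since that correspondence is functorial in maps of C$^*$-algebra objects and the extension associated to $\cS$ is by definition $B_F$, the map $\cS \to B$ descends to an inclusion $B_F \subset B_{(\cM,m)}$. Functoriality in $(\cM, m)$ is inherited stepwise: a morphism of pointed $\cZ\Hilb(\cC)$-modules produces, via Theorem \ref{thmB}, a morphism of centrally pointed $\cC$-bimodules, hence a morphism of C$^*$-algebra objects under $\cS$, hence a morphism of extensions of $B_F$.

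The main obstacle is the construction and naturality of the map $\cS \to B$. It requires tracing how the $\cS$-bimodule structure on $n$ guaranteed by Theorem \ref{themA} interacts with the internal endomorphism algebra of $n$ in $\cN$, and checking that the resulting map $\cS \to B$ is a unital $*$-homomorphism of C$^*$-algebra objects in the algebraic ind-completion of $\cC^{mp}\boxtimes\cC$, compatible with the forgetful functor from $\cS$-bimodules. Once this compatibility is established, the rest of the argument is formal from Theorems \ref{themA}, \ref{thmB} and the Tannaka--Krein correspondence.
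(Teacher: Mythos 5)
Your overall architecture coincides with the paper's: reduce to a centrally pointed $\cC$-bimodule C$^*$-category via Proposition \ref{prop:frommodulerforthecentertocentrallypointedbimodules}, produce a $*$-algebra object monomorphism from $\cS$ into the internal endomorphism algebra $\cY_m = \underline{\End}_{\cC^{mp}\boxtimes\cC}(m\boxtimes\un)$, and then invoke functoriality of the realization construction of \cite{Hataishi_Palomares_2025}; the last step is indeed formal, and the paper's proof of Theorem \ref{thm:extensionsofsymmetricenvelopingalgebra} consists of essentially that one sentence. However, the step you defer as ``the main obstacle'' is precisely the mathematical content of the theorem, and your proposed mechanism for it is both unproven and imprecise as stated. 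The object $n=m\boxtimes\un$ lives in the module category $\cM\boxtimes_{\cZ\Hilb(\cC)}\Hilb(\cC)$, not in $\Hilb(\cC^{mp}\boxtimes\cC)$, so Theorem \ref{themA} does not literally endow it with ``an $\cS$-bimodule structure''; and $\underline{\End}(n)$ is a priori only a C$^*$-algebra object in $\Vect(\cC^{mp}\boxtimes\cC)$, so the assertion that it becomes an algebra in $\Bim(\cS)$ whose unit gives a unital $*$-monomorphism $\cS\to\underline{\End}(n)$ is exactly what needs an argument, and nothing in your text supplies it (nor its naturality in $(\cM,m)$, which is needed for the claim that morphisms of extensions fix $B_F$).

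The paper fills this hole concretely. The central structure on $m\boxtimes\un$ produces, by \cite{hataishi2025categorical}, a Yetter--Drinfeld C$^*$-algebra object $\bA_m$ in $\Vect(\cC)$, and the universal properties of internal endomorphism algebras together with the central structure give an isomorphism $\cY_m\simeq\cS_{\bA_m}\simeq\bigoplus_{a\in\Irr(\cC)}\cS_a\odot\bA_m(a)$ of C$^*$-algebra objects. The embedding is then explicit,
\[
\cS\simeq\cS_\un\odot\C 1\hookrightarrow\cS_\un\odot\bA_m(\un)\hookrightarrow\cS_{\bA_m}\simeq\cY_m ,
\]
and it is verified to be a $*$-algebra object monomorphism compatible with the canonical isomorphisms $\cY_m\odot(\un\boxtimes a)\simeq\cY_m\odot(a\boxtimes\un)$ encoding the half-braiding; only then does functoriality of $(F,\bD)\mapsto (A^{\op}\otimes A)\rtimes_F\bD$ yield $B_F\subset(A^{\op}\otimes A)\rtimes_F\cY_m$. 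Note also a small slip: the central pointing is not what makes $\cM\boxtimes_{\cZ\Hilb(\cC)}\Hilb(\cC)$ a $\cC^{mp}\boxtimes\cC$-module category (every $\cC$-bimodule C$^*$-category is one); its role is precisely to produce $\bA_m$ and hence the copy of $\cS$ inside $\cY_m$. If you replace your ``algebra in $\cS$-bimodules'' paragraph by this construction, or by a genuine proof of your abstract claim including its naturality, the rest of your argument goes through as written.
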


Applying Theorem \ref{thmD} to factorization homology gives the following result.

\begin{theoremAlph}[Corollary \ref{cor:extensionofSEfromFH}]
	Let $\cC$ be a unitary tensor category and $F: \cC \to \Corr(A)$ a fully faithful unitary tensor functor. Given a framed surface $\Sigma$ with non empty boundary, there is an associated extension  $B_F^{\otimes n} \subset B_\Sigma$ of C$^*$-algebras, where $n$ is the number of boundary components of $\Sigma$, and a fully-faithful unitary functor
	\[
	\int_\Sigma \cZ \Hilb(\cC) \underset{\cZ \Hilb(\cC)^{\boxtimes n}}{\boxtimes} \Hilb(\cC) \to \Corr^l(B_\Sigma,B_F^{\otimes n}) \ ,
	\]
	of $\cC^{\boxtimes n}$-bimodule C$^*$-categories, where the target is the category of left Hilbert $(B_\Sigma,B_F^{\otimes n})$-C$^*$-correspondences. This construction is functorial in $\Sigma$. Moreover, the modular group $\Gamma(\Sigma)$ of $\Sigma$ relative to the boundary acts on $B_\Sigma$ by $*$-automorphims, and $B_F^{\otimes n}$ is globally fixed under this action. 
	\label{thmE}. 
\end{theoremAlph}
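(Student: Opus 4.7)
The plan is to assemble the result by combining the preceding theorems of the paper, reading the $n$-fold boundary structure as a single Drinfeld-center action for the UTC $\cC^{\boxtimes n}$. Given a framed surface $\Sigma$ with $n$ boundary components, I first use the excision property of factorization homology together with disk embeddings into collar neighborhoods of each boundary circle to exhibit $\int_\Sigma \cZ\Hilb(\cC)$ as a left $\cZ\Hilb(\cC)^{\boxtimes n}$-module C$^*$-category, and identify $\cZ\Hilb(\cC)^{\boxtimes n}$ with $\cZ\Hilb(\cC^{\boxtimes n})$. A canonical distinguished object $m_\Sigma$ arises from the value of the tensor unit under this module structure (concretely, from a disjoint union of collar disks embedded along the boundary). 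This equips $(\int_\Sigma \cZ\Hilb(\cC), m_\Sigma)$ with the structure of a pointed module C$^*$-category over $\cZ\Hilb(\cC^{\boxtimes n})$, to which Theorem \ref{thmD} can be applied.

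From $F: \cC \to \Corr(A)$ I form the external tensor product $F^{\boxtimes n}: \cC^{\boxtimes n} \to \Corr(A^{\otimes n})$, and observe that its symmetric enveloping algebra $B_{F^{\boxtimes n}}$ is naturally isomorphic to $B_F^{\otimes n}$. This rests on the fact that the canonical W$^*$-algebra object $\cS_{\cC^{\boxtimes n}}$ of Theorem \ref{themA} for $\cC^{\boxtimes n}$ is the external tensor product of $n$ copies of $\cS_\cC$, so that the universal C$^*$-algebra attached to it by the generalized Tannaka-Krein construction of \cite{Hataishi_Palomares_2025} is the tensor product of those attached to $\cS_\cC$. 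Applying Theorem \ref{thmD} to the pointed module $(\int_\Sigma \cZ\Hilb(\cC), m_\Sigma)$ and the functor $F^{\boxtimes n}$ then delivers the extension $B_F^{\otimes n} \subset B_\Sigma$.

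For the fully faithful unitary functor into $\Corr^l(B_\Sigma, B_F^{\otimes n})$, I invoke Theorem \ref{thmB} to pass from the pointed $\cZ\Hilb(\cC^{\boxtimes n})$-module $(\int_\Sigma \cZ\Hilb(\cC), m_\Sigma)$ to the centrally pointed $\cC^{\boxtimes n}$-bimodule C$^*$-category $\int_\Sigma \cZ\Hilb(\cC) \boxtimes_{\cZ\Hilb(\cC)^{\boxtimes n}} \Hilb(\cC)^{\boxtimes n}$, and then apply the reconstruction of \cite{Hataishi_Palomares_2025} underlying Theorem \ref{thmD}: the central point provides the right $B_F^{\otimes n}$-action on the left $B_\Sigma$-module, producing on each object a left Hilbert $(B_\Sigma, B_F^{\otimes n})$-correspondence structure. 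Functoriality in $\Sigma$ is inherited from the functoriality of factorization homology under boundary-preserving framed embeddings combined with the naturality of Theorem \ref{thmD}. The modular group $\Gamma(\Sigma)$ acts on $\int_\Sigma \cZ\Hilb(\cC)$ by autoequivalences of pointed $\cZ\Hilb(\cC^{\boxtimes n})$-modules, since its elements are framed diffeomorphisms fixing the boundary up to isotopy; by naturality of the extension construction, this yields a $\Gamma(\Sigma)$-action on $B_\Sigma$ by $*$-automorphisms preserving the boundary-encoded subalgebra $B_F^{\otimes n}$.

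The main obstacle I anticipate is verifying the identification $B_{F^{\boxtimes n}} \simeq B_F^{\otimes n}$ at the level of the Tannaka-Krein reconstruction: one must show that the construction of \cite{Hataishi_Palomares_2025} is symmetric monoidal in the tensor functor input, so that the factorization $\cS_{\cC^{\boxtimes n}} \simeq \cS_\cC^{\boxtimes n}$ produces a tensor product of symmetric enveloping algebras. A closely related secondary difficulty is proving that the resulting functor into $\Corr^l(B_\Sigma, B_F^{\otimes n})$ is fully faithful as a $\cC^{\boxtimes n}$-bimodule functor; this will rely on the full faithfulness of $F$ together with a faithfulness property of the reconstruction applied to centrally pointed bimodules.
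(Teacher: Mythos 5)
Your route is essentially the paper's: the boundary collar makes $\int_\Sigma \cZ\Hilb(\cC)$ a module C$^*$-category over the center, the quantum structure sheaf $\cO_\Sigma$ (the image of the unit, as in your collar-disk description) gives the distinguished object, Proposition \ref{prop:frommodulerforthecentertocentrallypointedbimodules} (Theorem \ref{thmB}) converts this into a centrally pointed $\cC$-bimodule with internal endomorphism algebra $\cY_\Sigma \supset \cS$, Theorem \ref{thm:extensionsofsymmetricenvelopingalgebra} (Theorem \ref{thmD}) produces the extension of the symmetric enveloping algebra, Theorem \ref{thm:realizationofcategoriesofinternalmodules} supplies the fully faithful functor into left correspondences, and functoriality plus naturality handles embeddings and the mapping class group action. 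So the skeleton matches.

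Two points, however, go beyond what the cited results deliver. First, by treating all $n$ boundary components simultaneously you need $\cZ\Hilb(\cC)^{\boxtimes n}\simeq \cZ\Hilb(\cC^{\boxtimes n})$, $\cS_{\cC^{\boxtimes n}}\simeq \cS^{\boxtimes n}$ and $B_{F^{\boxtimes n}}\simeq B_F^{\otimes n}$ (including a choice of C$^*$-tensor product in the last isomorphism). You flag this yourself; note that the paper sidesteps it entirely, since its Corollary \ref{cor:extensionofSEfromFH} is proved for one chosen boundary component, giving $B_F\subset (A^{\op}\otimes A)\underset{F}{\rtimes}\cY_\Sigma$; the $\Irr$-level factorization of $\cS$ is easy, but the compatibility of the max tensor product of W$^*$-ind-completions and the monoidality of the realization of \cite{Hataishi_Palomares_2025} are nowhere established in the paper and would need proofs. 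Second, and this is the genuine gap: your assertion that ``the central point provides the right $B_F^{\otimes n}$-action,'' so that the realized objects become $(B_\Sigma,B_F^{\otimes n})$-correspondences, does not follow from the machinery you invoke. Theorem \ref{thm:realizationofcategoriesofinternalmodules} yields left Hilbert $B_\Sigma$-modules with a right action only of the base algebra $(A^{\op}\otimes A)^{\otimes n}$, and the paper's own Corollary \ref{cor:extensionofSEfromFH} is accordingly stated with target $\Corr^l\bigl(B_\Sigma, A^{\op}\otimes A\bigr)$ rather than $\Corr^l(B_\Sigma,B_F^{\otimes n})$. To reach the statement as you (and the introduction) phrase it, you must extend the right action along $(A^{\op}\otimes A)^{\otimes n}\subset B_F^{\otimes n}$, i.e.\ show that the realization of each free $\cY_\Sigma$-module carries a bounded, compatible right action of the crossed product by $\cS^{\boxtimes n}$ induced by the right $\cS$-module structure coming from the central structure of the point, and that all morphisms intertwine it. Neither your sketch nor the paper supplies this construction, so this step needs an explicit argument rather than a citation.
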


\subsection{Acknowledgements}
I would like to thank Makoto Yamashita, Sergey Neshveyev, Corey Jones, David Jordan, Christian Voigt and Thomas Wasserman for helpful conversations on topics related to this paper.  L.H. is supported by the Engineering and Physical Sciences Research Council (EP/X026647/1). For the purpose of Open Access, the author has applied a CC BY public copyright licence to any Author Accepted Manuscript (AAM) version arising from this submission.

\section{Preliminaries}

\subsection{Notation}

The space of linear maps between two vector spaces $V_1$ and $V_2$ is denoted by $\cL(V_1,V_2)$. The space of adjointable operators between Hilbert spaces $H_1$ and $H_2$ will be denoted by $\bB(H_1,H_2)$.

\begin{remark}
	For notational simplicity, when $a$ and $b$ are objects of a strict tensor category, we may sometimes write $ab$ for their tensor product $a \otimes b$.
\end{remark}

\subsection{Unitary Tensor Categories}
In this preliminary section the basics of the theory of unitary tensor categories is spelled, mostly in order to fix the notation that is used througout the paper. For an in-depth study, the reader is referred to \cite{MR3687214,MR3204665}.  As a motivation, keep in mind that unitary tensor category is the axiomatization of the structure of the category of finite dimensional unitary representations of a compact quantum group. 

\begin{definition}
	A C$^*$-category $\cC$ consists of
	\begin{itemize}
		\item  A category $\cC$ enriched over the category of Banach spaces and linear contractions;
		\item An involutive functor $* : \cC \to \cC^{\op}$ which on the objects act as the identity. In other words, it is a natural collection of anti-linear maps $\cC(a,b) \to \cC(b,a)$ which squares to the identity,
	\end{itemize}
	such that
	\begin{itemize}
		\item $\cC$ admits direct sums;
		\item For each $a \in \cC$, the endomorphism algebra $\cC(a)$, with the given Banach space structure and the involution $*: \cC(a) \to \cC(a)$, is a C$^*$-algebra.
	\end{itemize}
	\label{def:C*category}
\end{definition}

\begin{remark}
	The existence of direct sums in the defintion of C$^*$-categories is a simplifying assumption.
\end{remark}

If $\cC$ and $\cD$ are C$^*$-categories, a functor $F: \cC \to \cD$ is said to be {\em unitary} if $F \circ * = * \circ F$. Because endomorphism algebras are C$^*$-algebras, this automatically implies continuity of $F$ on morphism spaces with respect to the given Banach space structures.

\begin{definition}
	Let $\cC$ be a C$^*$-category. A tensor structure $\otimes:
	\cC \times \cC \to \cC$ is a {\em $*$-tensor structure} if, given $a,a',b,b' \in \cC$, morphisms $f: a \to a'$ and $g: b \to b'$, 
	\[
	(f \otimes g)^* = f^* \otimes g^* : a' \otimes b' \to a \otimes b \ .
	\]
	A C$^*$-category equipped with a $*$-tensor structure is called a {\em C$^*$-tensor category}. A rigid C$^*$-tensor category is a C$^*$-tensor category which is rigid as a tensor category.
	\label{def:C*tensorstructure}
\end{definition}

The following notation is used for the duality functor in rigid tensor categories. On objects, we write $a \mapsto \bar{a}$. On morphisms, we write $(f:a \to b) \mapsto (f^\vee: \bar{b} \to \bar{a})$. Unit objects in tensor categories will generically be denoted by $\un$.

\begin{definition}
	An object $a$ in C$^*$-category $\cC$ is {\em simple} if $\End(a) = \C \cdot \id_a$. A rigid C$^*$-tensor category is said to be a {\em unitary tensor category} of its unit $\un$ is simple.
	\label{def:unitarytensorcategories}
\end{definition}

Unitary tensor categories are automatically semisimple and spherical. To simplify the notation, we work only with strictified unitary tensor categories: the tensor structure is strict and the double dual functor is the identity, $(\bar{\bar{(-)}}, (-)^{\vee \vee}) = \id$. If  a given unitary tensor category is not strict on the nose, we replace it by its strictification without mentioning.

Recall that a functor between linear categories with direct sums is additive if it is linear at the level of both objects and morphisms.

\begin{definition}
	Let $\cC$ be a unitary tensor category. The {\em algebraic ind-completion} $\Vect(\cC)$ of $\cC$ is the category of additive functors $\cC^{\op} \to \Vect$. Morphisms in this category are linear natural transformations.
\end{definition}

To specify an object $V \in \Vect(\cC)$ up to isomorphism, it suffices to specify its values, or {\em fibers}, at the objects in $\Irr(\cC)$, i.e., a family $\{V(a)\}_{a \in \Irr(\cC)}$ of vector spaces indexed by $\Irr(\cC)$. Another useful perspective is that of understanding the object $V = \{V_a\}_{a \in \Irr(\cC)} \in \Vect(\cC)$ as a  formal direct sum
\[ V = \bigoplus_{a \in \Irr(\cC)} X \odot V(a) \]
of objects in $\cC$, where the irreducible object $a$ appears $\dim V(a)$ times as a direct summand. We say that $V(a)$ is the {\em multiplicity space} of $a$ in $V$. Moreover, given $V_1 = \{V_1(a)\}_a$ and $V_2=\{V_2(a)\}_a$ in $\Vect(\cC)$, it follows that
\[ \Vect(\cC)(V_1,V_2) \simeq \prod_{a \in \Irr(\cC)} \cL(V_1(a),V_2(a)) , \]
where $\cL(\cdot,\cdot)$ denotes the space of linear maps.

The category $\cC$ embeds into $\Vect(\cC)$ as a linear full subcategory, and the tensor structure $\otimes$ in $\cC$ can be extended to a tensor structure $\odot$ on $\Vect(\cC)$: for $V_1$ and $V_2$ as above,
\[ (V_1 \odot V_2)(a) : = \bigoplus_{b,c \in \Irr(\cC)} \cC(a, b \otimes c) \odot V_1(b) \odot V_2(c) . \]

\begin{definition}
	The {\em unitary ind-completion} of a unitary tensor category $\cC$ is the category $\Hilb(\cC)$ with
	\begin{itemize}
		\item objects: additive functors $\cC^{\op} \to \Hilb$;
		\item morphisms: uniformly bounded linear natural transformations. That is, given $H_1, H_2 \in \Hilb(\cC)$, a morphism $\eta \in \Hilb(\cC)(H_1,H_2)$ is a natural family $\{ \eta_a \in \bB(H_1(a),H_2(a))\}_{a \in \cC}$ such that
		\[ \underset{a \in \cC}{\sup} \| \eta_a \| < \infty . \]
	\end{itemize}
\end{definition}

An object $H \in \Hilb(\cC)$ is determined by a family $\{H(a)\}_{a \in \Irr(\cC)}$ of Hilbert spaces. We have
\[ \Hilb(\cC)(H_1,H_2) \simeq \prod_{a \in \Irr(\cC)}^{\ell^\infty} \bB(H_1(a),H_2(a)) . \]

The category $\Hilb(\cC)$ is a C$^*$-category (actually a W$^*$-category), and $\cC$ is a full C$^*$-subcategory of it. The tensor structure of $\cC$ extends to $\Hilb(\cC)$; seeing $\cC(a,b  \otimes c)$ as a Hilbert space,
\[ (H_1 \otimes H_2)(a) = \bigoplus_{b,c \in \Irr(\cC)}^{\ell^2} \cC(a, b \otimes c) \otimes H_1(b) \otimes H_2(c) , \]
where on the $(b,c)$-direct summand the inner-product is scaled by a factor of $(d_b d_c)^{-1}$. The purpose of this renormalization is to have a clear interpretation of the induced inner product by means of graphical calculus (see \cite{MR3687214}, Section 2).

If $a \in \Irr(\cC)$, for any $H \in \Hilb(\cC)$ the morphism space $\Hilb(\cC)(a,H)$ is a Hilbert space with inner product $\langle f, g \rangle = f^* \circ g$. We fix a choice of an orthonormal basis for it, and denote it by $O(a,H)$.

\medskip

\subsection{C$^*$-algebra objects} 
Since $\Vect(\cC)$ is a tensor category, we can consider algebra objects in it. An algebra object  in $\Vect(\cC)$ consists of a functor $\bA: \cC^{\op} \to \Vect$ together with a lax-natural transformation 
\[\mu^{\bA} = \{ \mu^{\bA}_{a,b} : \bA(a) \odot \bA(b) \to \bA(a \otimes b) \ | \ a,b \in \cC\} ,\]
satisfying coherence with respect to the pentagon diagram associated to the orderings of triple tensor products. 

\begin{definition}
	A $*$-structure on an algebra object $\bA$ is a conjugate linear natural transformation 
	\[  j^{\bA} = \{ j^{\bA}_a: \bA(a) \to \bA(\bar{a})\}_{a \in \cC} , \]
	satisfying
	\begin{itemize}
		\item involution: $ j^{\bA}_{\bar{a}} \circ j^{\bA}_a = \id_{\bA(a)}$, under the identification $\bar{\bar{a}} = a$;
		\item unitality: $ j^{\bA}_{\un} = \id_{\bA(\un)}$, under the identification $\bar{\un} \simeq \un$;
		\item monoidality: the diagram
		\begin{center}
			\begin{tikzcd}
				\bA(a) \odot \bA(b) \arrow[rr, " j^{\bA}_a \odot  j^{\bA}_b"] \arrow[dd, "\mu^{\bA}"'] &  & \bA(\bar{a}) \odot \bA(\bar{b}) \arrow[dd, "\mu^{\bA}"]              \\
				&  &                                                                     \\
				\bA(a \otimes b) \arrow[rr, " j^{\bA}_{a \otimes b}"']                         &  & \bA(\bar{a}\otimes \bar{b}) \simeq \bA(\overline{b \otimes a})
			\end{tikzcd}
		\end{center}
		commutes.
	\end{itemize}
\end{definition}

If $\bA = (\bA, \mu^{\bA})$ is an algebra object in $\Vect(\cC)$, then the vector spaces $\bA(\bar{a} \otimes a)$ become algebras, with multiplication given by
\[ \bA(\bar{a} \otimes a) \odot \bA(\bar{a} \otimes a) \overset{\mu^{\bA}}{\to} \bA( \bar{a} \otimes a \otimes \bar{a} \otimes a) \overset{\bA(\id_{\bar{a}} {\otimes \bar{R}_a^* \otimes \id_a)}}{\to} \bA(\bar{a} \otimes a) . \]
If $j^\bA$ is a $*$-structure on $\bA$, then $\bA(\bar{a} \otimes a)$ becomes $*$-algebra, with involution given by $ j^{\bA}_{\bar{a}\otimes a}$.

\begin{definition}
	Let $\bA = (\bA,\mu^{\bA},  j^{\bA})$ be a $*$-algebra in $\Vect(\cC)$. We say that $\bA$ is a C$^*$-algebra object if its fibers are naturally equipped with Banach space structures and, for each object $a$, the $*$-algebra $\bA(\bar{a} \otimes a)$ is a C$^*$-algebra with respect to the given Banach norm.
\end{definition}

\begin{remark}
	C*-algebra objects can be equivalently defined in terms of C*-module categories as follows: 
	Given a $*$-algebra object $\bA$ as above, the category $\cM_{\bA}$ generated under finite direct sums and idempotent completion by objects of the form $\bA \odot a$, with $a \in \cC$, and with morphisms 
	\[ \cM_{\bA}(\bA \odot a, \bA \odot b) := \Vect(\cC)(a, \bA \odot b) \simeq \bA(a \otimes \bar{b}) \]
	has a right $\cC$-module structure. $\bA$ is a C$^*$-algebra object if and only if $\cM_{\bA}$ is a $\cC$-module C$^*$-category \cite[Definition 25]{MR3687214}. 
\end{remark}

To finish this preliminary section, we set up the notation for Frobenius reciprocity maps in UTCs.

\begin{definition}
    If $\cC$ is a unitary tensor category, $U,U_1,U_2 \in \cC$, 
    \[ \Phi^l_U : \cC(U \otimes U_1,U_2) \to \cC(U_1,\bar{U} \otimes U_2) \]
    denotes the left Frobenius reciprocity map, and 
    \[
    \Phi_U^r: \cC(U_1 \otimes U,U_2) \to \cC(U_1, U_2 \otimes \bar{U})
    \]
    denotes the right Frobenius reciprocity map. The composition
    \[
    \cC(U_1, U_2) \overset{\Phi_{U_1}^l}{\longrightarrow} \cC(\un_{\cD},\bar{U}_1 \otimes U_2) \overset{(\Phi_{\bar{U}_2}^r)^{-1}}{\longrightarrow} \cC(\bar{U}_2 , \bar{U}_1)  
     \]
    is denoted by $(-)^\vee: f \mapsto f^\vee$, and can equivalently be characterized by the equality
    \[
    (\id_{\bar{U}_1} \otimes f) \circ R_{U_1} = (f^\vee \otimes \id_{U_2}) \circ R_{U_2} . 
    \]
\end{definition}

\subsection{Drinfeld center}


\section{Monadicity of the center}

This section enhances the results of  \cites{MR2355605,MR2793022,MR2869176,MR3079759}, summarized in \cite{MR3674995},  in the direction of giving a monadic characterization of $\cZ \Vect(\cC)$ when $\cC$ is a unitary tensor category. The strategy and even the techniques used here are recicled from the given references, the difference being that at some moments extra care is needed when dealing with ind-objects.

\subsection{Centralizer endofunctors}
Given $X \in \Vect(\cC)$, consider the functor $C_X: \cC^{\op} \times \cC \to \Vect(\cC)$ given by
$$C_X(U^{\op},V) := \bar{U} \odot X \odot V \ . $$
If $(f,g):(U,V) \to (U', V')$ is a morphism in $\cC^{\op} \times \cC$, then $C_X(f,g) = f^\vee \odot \id_X \odot g$. 

\begin{lemma}\label{lemma:shiftedind-objects}
	Let $X \in \Vect(\cC)$  and $U \in \cC$. Then there is a canonical natural isomorphism
	\[ 
	X(U \otimes  (-)) \simeq \bar{U} \odot X 
	\]
	in $\Vect(\cC)$.
\end{lemma}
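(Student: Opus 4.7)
The plan is to chain natural isomorphisms, reducing the statement to left Frobenius reciprocity $\Phi^l_U$. Both sides are additive presheaves $\cC^{\op} \to \Vect$, so it suffices to exhibit an isomorphism of their values at each $V \in \cC$, with naturality in $V$ following from the naturality of the individual steps. The basic ingredient is the \emph{density isomorphism} for additive presheaves over a semisimple category: for any $X \in \Vect(\cC)$ and any $W \in \cC$, the natural map
\[
\bigoplus_{c \in \Irr(\cC)} \cC(W,c) \otimes X(c) \;\longrightarrow\; X(W), \qquad g \otimes \xi \mapsto X(g)(\xi),
\]
is an isomorphism; this is immediate from semisimplicity of $\cC$ and additivity of $X$, verified on simples.

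Applied with $W = U \otimes V$, this identifies $X(U \otimes V) \simeq \bigoplus_c \cC(U \otimes V, c) \otimes X(c)$. For the other side, unfolding the definition of $\odot$ in $\Vect(\cC)$ (with $\bar U$ viewed as a presheaf via Yoneda, $\bar U(b) = \cC(b, \bar U)$) gives
\[
(\bar U \odot X)(V) \;=\; \bigoplus_{b,c \in \Irr(\cC)} \cC(V, b \otimes c) \otimes \cC(b, \bar U) \otimes X(c).
\]
The sum over $b$ collapses: since $\bar U \simeq \bigoplus_b \cC(b,\bar U) \otimes b$ in $\cC$, the composition map $\bigoplus_b \cC(V, b\otimes c) \otimes \cC(b,\bar U) \to \cC(V, \bar U \otimes c)$ given by $f \otimes g \mapsto (g \otimes \id_c)\circ f$ is a natural isomorphism. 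Hence $(\bar U \odot X)(V) \simeq \bigoplus_c \cC(V, \bar U \otimes c) \otimes X(c)$. Finally, applying the inverse Frobenius reciprocity map $(\Phi^l_U)^{-1} : \cC(V, \bar U \otimes c) \xrightarrow{\sim} \cC(U \otimes V, c)$ termwise matches the previous expression, closing the chain.

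Naturality in $V$ is a direct consequence of the naturality of each of the three ingredients --- density, the ``absorb $\bar U$'' isomorphism, and Frobenius reciprocity --- in its arguments. The main (mostly bookkeeping) obstacle is verifying that the finite direct sum over $b$, arising from the semisimple decomposition of $\bar U$, interacts with the possibly infinite algebraic direct sum over $c \in \Irr(\cC)$ as expected; since everything is an algebraic direct sum of vector spaces, no convergence issues arise, and the required commutations are automatic.
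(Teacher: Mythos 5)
Your argument is correct: the paper states Lemma \ref{lemma:shiftedind-objects} without proof, and your chain --- the density (co-Yoneda) isomorphism $X(W)\simeq\bigoplus_{c\in\Irr(\cC)}\cC(W,c)\otimes X(c)$ for linear presheaves on a semisimple category, the collapse of the sum over $b$ using $\bar U\simeq\bigoplus_b\cC(b,\bar U)\otimes b$, and Frobenius reciprocity $\cC(U\otimes V,c)\simeq\cC(V,\bar U\otimes c)$ --- is exactly the standard verification the authors are implicitly relying on, with all sums finite for fixed $V$ so no analytic issues arise. The only cosmetic caveat is that the composite isomorphism depends on the chosen solutions of the conjugate equations entering $\Phi^l_U$, which is the usual sense in which such an identification is ``canonical'' in a unitary tensor category.
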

\begin{proof}
	Indeed, for any $a \in \cC$,
	\begin{align*}
		X(U \otimes a) \simeq \Vect(\cC)(U \otimes a,X) \simeq \Vect(\cC) (a, \bar{U} \odot X) \simeq (\bar{U}  \odot X)(a) \ ,
	\end{align*}
	where the penultim isomorphism is given by the extension $\Phi^l_U$ to $\Vect(\cC)$, which is well defined.
\end{proof}

\begin{definition}
	Define an endofunctor $Z_{\Vect} \in \End(\Vect(\cC))$ as follows. For $X \in \Vect(\cC)$, $Z_{\Vect}(X)$ is the object in $\Vect(\cC)$ defined by
	\[
	Z_{\Vect}(X) = \bigoplus_{a \in \Irr(\cC)} \bar{a} \odot X \odot a \ .
	\]
	If $f \in \Vect(\cC)(X,Y)$, then $Z_{\Vect}(f) = \bigoplus_{a \in \Irr(\cC)} \id_{\bar{a}} \odot f \odot \id_{{a}}$. We call $Z_{\Vect}$ the centralizer endofunctor of $\Vect(\cC)$.
	Lemma \ref{lemma:shiftedind-objects} gives an identification, 
	\[
	\cC^{\op} \ni U \mapsto Z_{\Vect}(X)(U) \simeq \bigoplus_{a \in \Irr(\cC)} X(a \odot U \odot \bar{a}) .
	\]
	\label{def:algebraiccentralizer}
\end{definition}

For the next definition, observe that if $(X_i)_{i \in I}$ is an arbitrary family of objects in $\Hilb(\cC)$, we can definite their Hilbert space object direct sum $\bigoplus_{i \in I}^{\ell^2} X_i$ as the additive functor $\cC^{\op} \to \Hilb$ given by
\[
\cC^{\op} \ni U \mapsto \bigoplus_{i \in I}^{\ell^2} X_i(U) \ .
\]

\begin{definition}
	For $X \in \Hilb(\cC)$, let $Z_{\Hilb}(X) \in \Hilb(\cC)$ be 
	\[
	Z_{\Hilb}(X) = \bigoplus_{a \in \Irr(\cC)}^{\ell^2} \bar{a} \otimes X \otimes a \ ,
	\]
	
	Given $f \in \Hilb(\cC)(X,Y)$, let $Z_{\Hilb}(f) = \bigoplus_{a \in \Irr(\cC)} \id_{\bar{a}} \otimes f \otimes \id_{a}$. Then $Z_{\Hilb} \in \End(\Hilb(\cC))$ is a unitary functor, and we call it the centralizer functor of $\Hilb(\cC)$. Lemma~\ref{lemma:shiftedind-objects} gives an identification
	\[
	\cC^{\op} \ni U \mapsto Z_{\Hilb}(X)(U) \simeq \bigoplus_{a \in \Irr(\cC)}^{\ell^2} X(a\otimes U \otimes \bar{a})  . 
	\]
	\label{def:unitarycentralizer}
\end{definition}

\begin{definition}
	Let $F: \cC^{\op} \times \cC \to \Hilb(\cC)$ be a bilinear unitary functor, and let $K \in \Hilb(\cC)$. A {\em dinatural transformation $\theta: F \to K$ in $\Hilb(\cC)$} is a family 
	\[
	\theta = \{ \theta_U \in \Hilb(\cC)(F(U^{\op},U) , K)\}_{U \in \cC}
	\]
	such that 
	\begin{itemize}
		\item for every $f \in \cC(U_1,U_2)$, the following diagram commutes:
		\begin{center}
			\begin{tikzcd}
				& {F(U_2^{\op},U_1)} \arrow[rd, "{F(\id_{U_2^{\op}},f)}"] \arrow[ld, "{F(f^{\op},\id_{U_1})}"'] &                                             \\
				{F(U_1^{\op},U_1)} \arrow[rd, "\theta_{U_1}"'] &                                                                                         & {F(U_2^{\op},U_2)} \arrow[ld, "\theta_{U_2}"] \\
				& K                                                                                       &                                            
			\end{tikzcd}
		\end{center}
		
		\item $\sup_{U \in \cC} \| \theta_U\| < \infty$.
	\end{itemize}
	
	\label{def:dinaturaltransfinHilb}
\end{definition}

\begin{lemma}
	Let $X \in \Vect(\cC)$. For each $U \in \cC$ and $a \in \Irr(\cC)$,  
	\[
	\rho_{X,U} :=  \sum_{a \in \Irr(\cC)} \sum_{\omega \in O(a,U)} ((\omega^\vee)^* \odot \id_X \odot \omega^*)
	\]
	defines a morphism $\rho_{X,U} \in \Vect(\cC)(\bar{U} \odot X \odot U, Z_{\Vect}(X))$. Moreover, $\rho_X:= \{\rho_{X,U}\}_{U \in \cC}$ is a dinatural transformation $C_X \to Z_{\Vect}(X)$ in $\Vect(\cC)$. If $X \in \Hilb(\cC)$, the analogous  formula with the algebraic tensor product $\odot$ replaced by the Hilbert space tensor product $\otimes$ defines a morphism in $ \rho_{X,U} \in \Hilb(\cC)(\bar{U} \otimes X \otimes U, Z_{\Hilb}(X))$. In this case, $\rho_X$ is a dinatural transformation $C_X \to Z_{\Hilb(X)}$ in $\Hilb(\cC)$.
	\label{lemma:centraldinaturaltransformation}
\end{lemma}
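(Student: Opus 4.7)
The plan is to handle the three parts of the statement in turn: (a) well-definedness of $\rho_{X,U}$ and independence from the choice of ONB; (b) the dinatural identity for $\rho_X$; and (c) the analogous Hilbert-space version together with the uniform norm bound required by Definition~\ref{def:dinaturaltransfinHilb}.

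For part (a), semisimplicity of $\cC$ yields a decomposition $U \cong \bigoplus_{a \in \Irr(\cC)} \cC(a,U) \odot a$ with $\cC(a,U)$ finite-dimensional and non-zero for only finitely many $a$; hence the outer sum over $\Irr(\cC)$ in the definition of $\rho_{X,U}$ has only finitely many non-zero terms, and each $\ONB(a,U)$ is finite, so the expression defines an honest morphism in $\Vect(\cC)$. Independence from the choice of ONB is a standard unitary change-of-basis argument: under $\omega'_i = \sum_j u_{ji}\omega_j$ for a unitary $u$ on $\cC(a,U)$, the identities $\sum_i u_{ji}\overline{u}_{ki} = \delta_{jk}$ combined with the sesquilinearity of $\omega \mapsto (\omega^\vee)^* \odot \id_X \odot \omega^*$ in $\omega$ leave the sum invariant.

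For part (b), I would check the dinatural diagram componentwise on each summand $\bar{a}\odot X \odot a$ of $Z_{\Vect}(X)$. For $f \in \cC(U_1,U_2)$, semisimplicity gives the matrix-element expansion
\[
f = \sum_{b \in \Irr(\cC)} \; \sum_{\omega_1 \in \ONB(b, U_1)} \; \sum_{\omega_2 \in \ONB(b, U_2)} \lambda_{\omega_2,\omega_1}\, \omega_2 \,\omega_1^*,
\]
with $\lambda_{\omega_2,\omega_1} = \omega_2^* f \omega_1 \in \cC(b,b) \simeq \C$. Substituting this, using the orthogonality $\omega_2^* \omega'_2 = \delta_{\omega_2,\omega'_2}\,\id_a$ for $\omega_2,\omega'_2 \in \ONB(a,U_2)$, and the functoriality of $(-)^\vee$, both legs of the dinatural square restrict on the $a$-summand to the common expression $\sum_{\omega_1,\omega_2} \lambda_{\omega_2,\omega_1}\, (\omega_2^\vee)^* \odot \id_X \odot \omega_1^*$ (the sums being over $\omega_i \in \ONB(a,U_i)$), which establishes dinaturality.

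For part (c), the same algebraic manipulation carries over verbatim with $\odot$ replaced by the Hilbert-space tensor product $\otimes$, so the only remaining task is to prove $\sup_{U \in \cC}\|\rho_{X,U}\|<\infty$. Here I would exploit the renormalization by $(d_b d_c)^{-1}$ built into the inner product on the Hilbert tensor product: this rescaling is designed so that, after summing over $\omega \in \ONB(a,U)$, the resulting map into $\bar a \otimes X \otimes a$ is an isometry onto the $a$-isotypic contribution of $\bar U \otimes X \otimes U$. Assembling these pieces into the $\ell^2$-direct sum defining $Z_{\Hilb}(X)$ then produces a contraction, giving the uniform bound $\|\rho_{X,U}\|\le 1$ for every $U$. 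The main obstacle I expect is precisely this norm estimate, since it requires tracking how the duality operation $\omega \mapsto \omega^\vee$ scales with the intrinsic dimensions $d_a$ and $d_U$, and how that scaling is compensated by the $(d_b d_c)^{-1}$ renormalization; all the other steps are purely algebraic and depend only on semisimplicity and rigidity, whereas this is the one place where the specifically unitary (rather than merely $\C$-linear) structure of $\cC$ is essential, via the graphical calculus for unitary spherical categories of \cite{MR3687214}.
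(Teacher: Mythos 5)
Your parts (a) and (b) are correct and essentially the paper's argument: well-definedness follows because only finitely many $a\in\Irr(\cC)$ have $\cC(a,U)\neq 0$, and dinaturality is an ONB manipulation — the paper absorbs $f^\vee$ into the dual leg and re-expands the resulting sum over $\ONB(a,U_2)$ rather than expanding $f$ itself in matrix elements, but the computation is the same; your ONB-independence check is a harmless addition.

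The gap is in part (c). The uniform bound $\sup_{U}\|\rho_{X,U}\|<\infty$ is the only genuinely analytic content of the lemma (it is what makes $\rho_X$ a dinatural transformation in $\Hilb(\cC)$ in the sense of Definition \ref{def:dinaturaltransfinHilb}), and your plan does not prove it: you assert that the $(d_b d_c)^{-1}$ renormalization makes each $a$-component "an isometry onto the $a$-isotypic contribution" and then explicitly defer the bookkeeping of how $\omega\mapsto\omega^\vee$ scales with $d_a$ and $d_U$, naming it as the expected main obstacle. That deferred step is exactly the proof; no isotypic decomposition of $\bar U\otimes X\otimes U$ is set up, and the claimed (co)isometry property is never verified. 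The paper closes this with a short C$^*$-computation that sidesteps all dimension bookkeeping: it computes $\rho_{X,U}^*\rho_{X,U}=\sum_{a}\sum_{\omega\in\ONB(a,U)}(\omega\omega^*)^\vee\otimes\id_X\otimes\omega\omega^*$, notes that each summand is a product of the mutually orthogonal projections $(\omega\omega^*)^\vee$ and $\omega\omega^*$ (the key point being that the dual of a projection is again a projection, regardless of how $\omega^\vee$ itself is normalized), so the sum is dominated by $\id_{\bar U\otimes X\otimes U}$, and concludes $\|\rho_{X,U}\|^2=\|\rho_{X,U}^*\rho_{X,U}\|\le 1$ by the C$^*$-identity. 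To complete your plan you should either carry out this computation of $\rho^*\rho$ or actually prove your partial-isometry claim, which amounts to the same estimate.
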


\begin{proof}
	We prove the Lemma only in the unitary case, the algebraic version being simpler.  Given $U \in \cC$, there are finitely many $a \in \Irr(\cC)$ for which $\Hom_\cC(a,U) \neq 0$. Thus the sum appearing in the definition of $\rho_{X,U}$ can be written as a sum over a finite index set, and it is well defined as a morphism in $\Hilb(\cC)$ due to the tensor-structure that this category canonically inherits from $\cC$. 
	
	Let us show that $\rho_X$ satisfies the dinaturality condition. Let $f \in \cC(U_1,U_2)$. Then
	\begin{align*}
		\rho_{X,U_1} \circ C_X(f^{\op},\id_{U_1}) & = \sum_{a \in \Irr(\cC)} \sum_{w \in O(a,U_1)} ((\omega^\vee)^* \otimes \id_X \otimes \omega^*) (f^\vee \otimes \id_X \otimes \id_{U_1}) \\
		& = \sum_{a \in \Irr(\cC)} \sum_{w \in O(a,U_1)} ((f\circ \omega^*)^\vee \otimes \id_X \otimes \omega^*) 
		\\
		& = \sum_{a \in \Irr(\cC)} \sum_{v \in O(a,U_2)} (( v^*)^\vee \otimes \id_X \otimes (f \circ v^*))  \\
		& = (\id_{\bar{U}_2} \otimes \id_X \otimes f) \circ \rho_{X,U_2}
	\end{align*}
	
	To finish the proof, we have to verify that $(\rho_{X,U})_{U \in \cC}$ is uniformly bounded. For this, observe that 
	\begin{align*}
		\rho_{X,U}^* \rho_{X,U} & = \sum_{i,j \in \Irr(\cC)} \sum_{\omega_i \in O(U_i,U)} \sum_{\omega_j \in O(U_j,U)} \omega_j^{\vee *} \omega_i^\vee \otimes \id_X \otimes \omega_j \omega_i^*\\
		&= \sum_{i \in \Irr(\cC)} \sum_{\omega \in O(U_i,U)} (\omega \omega^*)^\vee \otimes \id_X \otimes \omega \omega^* \\
		& \leq \id_{\bar{U}} \otimes \id_X \otimes \id_U \ . 
	\end{align*}
	Consequently,
	$$\| \rho_{X,U}\|^2 = \|\rho_{X,U}^* \rho_{X,U} \| \leq 1 \ . $$
\end{proof}

\begin{theorem}
	Let $X \in \Vect(\cC)$. Then the centralizer $Z_{\Vect}(X)$ is the coend of $C_X$ in $\Vect(\cC)$. Similarly, given $X \in \Hilb(\cC)$, $Z_{\Hilb}(X)$ is the coend of $C_X$ in $\Hilb(\cC)$.
	\label{thm:definitionmonadZ}
\end{theorem}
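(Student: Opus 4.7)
The plan is to verify the universal property of the coend directly in both cases. Given a dinatural transformation $\theta : C_X \to K$, I need to exhibit a unique morphism $\hat{\theta}$ with $\hat{\theta} \circ \rho_{X,U} = \theta_U$ for every $U \in \cC$.

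First, I would construct $\hat{\theta}$ summand by summand. Since $Z_{\Vect}(X) = \bigoplus_{a \in \Irr(\cC)} \bar{a} \odot X \odot a$ in the algebraic case (respectively $Z_{\Hilb}(X) = \bigoplus_{a \in \Irr(\cC)}^{\ell^2} \bar{a} \otimes X \otimes a$ in the unitary case), a morphism out of $Z(X)$ is determined by its restrictions $\hat{\theta}_a$ to each simple summand $\bar{a} \odot X \odot a$. For $a \in \Irr(\cC)$, the morphism $\rho_{X,a}$ involves only the $a$-summand, because $\ONB(b,a)$ is empty for simple $b$ not isomorphic to $a$. Writing $\ONB(a,a) = \{\lambda \cdot \id_a\}$ for a nonzero normalization $\lambda$, a short computation shows that $\rho_{X,a}$ is a nonzero scalar multiple of the canonical inclusion $\iota_a : \bar{a} \odot X \odot a \hookrightarrow Z(X)$. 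The required equation $\hat{\theta} \circ \rho_{X,a} = \theta_a$ then forces $\hat{\theta}_a$ up to this scalar, giving simultaneously existence and uniqueness of the restrictions.

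Next, I would verify $\hat{\theta} \circ \rho_{X,U} = \theta_U$ for arbitrary $U \in \cC$. Expanding $\rho_{X,U}$ as a sum over pairs $(a,\omega)$ with $a \in \Irr(\cC)$ and $\omega \in \ONB(a,U)$, the composition becomes a sum of terms built from $\theta_a$ together with the morphisms $\omega^*$ and $(\omega^\vee)^*$. Applying dinaturality of $\theta$ at each $\omega : a \to U$ rewrites the $(a,\omega)$-term as $\theta_U \circ ((\omega\omega^*)^\vee \odot \id_X \odot \omega\omega^*)$, and the completeness relations $\sum_a \sum_{\omega \in \ONB(a,U)} \omega\omega^* = \id_U$ together with its image under $(-)^\vee$ collapse the sum to $\theta_U$.

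The main obstacle is the Hilbert case, where I must verify that $\hat{\theta}$ is bounded, hence a well-defined morphism in $\Hilb(\cC)$. The uniform estimate $M := \sup_{U \in \cC}\|\theta_U\| < \infty$ from Definition \ref{def:dinaturaltransfinHilb}, combined with the bound $\|\rho_{X,U}\| \le 1$ from Lemma \ref{lemma:centraldinaturaltransformation}, suggest the target inequality $\|\hat{\theta}\| \le M$. The delicate point is that the scalar $\lambda$ from $\ONB(a,a)$ scales precisely with $d_a^{-1/2}$, so that after assembling the pieces along the $\ell^2$-direct sum defining $Z_{\Hilb}(X)$ and accounting for the $(d_b d_c)^{-1}$-renormalization of the inner products on the tensor summands, the dimension factors cancel and the bound $M$ propagates to the whole of $Z_{\Hilb}(X)$. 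This same matching of scalars with renormalized inner products also guarantees that all sums appearing in the previous step converge in operator norm.
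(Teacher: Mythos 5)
Your overall skeleton is the same as the paper's: define the mediating morphism componentwise on the summands $\bar a \odot X \odot a$, check the factorization over a general $U$ via dinaturality and the completeness relation $\sum_a\sum_{\omega\in\ONB(a,U)}\omega\omega^*=\id_U$, and get uniqueness because $\rho_{X,a}$ for simple $a$ only sees the $a$-summand. The algebraic half of your argument is fine. But two points go wrong, one of them fatal. First, with the normalization used in the paper the elements of $\ONB(a,U)$ are isometries ($\omega^*\omega'=\delta_{\omega,\omega'}\id_a$ -- this is exactly what makes the estimate $\rho_{X,U}^*\rho_{X,U}\le\id$ in Lemma \ref{lemma:centraldinaturaltransformation} work), so $\ONB(a,a)=\{\id_a\}$ up to a phase and $\rho_{X,a}$ \emph{is} the canonical inclusion, with no scalar; there is no $d_a^{-1/2}$ anywhere. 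Note also that your own step two is only consistent with $\lambda=1$: if $\hat\theta_a$ carried a factor $\lambda^{-1}=d_a^{1/2}$, the sum over $(a,\omega)$ would no longer collapse to $\theta_U$ under the completeness relation.

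Second, and this is the genuine gap, the boundedness of $\hat\theta$ in the Hilbert case -- the very point you single out as the main obstacle -- is not actually proved. It is not a matter of dimension factors cancelling against the $(d_bd_c)^{-1}$ renormalization (that renormalization concerns the hom-space factors $\cC(a,b\otimes c)$ in the definition of $\otimes$ on $\Hilb(\cC)$ and plays no role here). The difficulty is that the components $\theta_{a,b}$ all map into the \emph{same} Hilbert space $K(b)$, so the individual bounds $\|\theta_a\|\le M$ do not control the diagonal sum $\xi=(\xi_a)\mapsto\sum_a\theta_{a,b}(\xi_a)$ on the $\ell^2$-direct sum: the ranges need not be orthogonal and an infinite sum of norm-$M$ maps into a common target can be unbounded. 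The missing ingredient is a uniform estimate on finite blocks: for finite $J\subset\Irr(\cC)$, set $U_J=\bigoplus_{a\in J}a$ and use dinaturality applied to the inclusions $a\hookrightarrow U_J$ to identify $\sum_{a\in J}\theta_{a,b}$ with the restriction of $\theta_{U_J,b}$ to the diagonal summand of $\bar U_J\otimes X\otimes U_J$; this gives $\bigl\|\sum_{a\in J}\theta_{a,b}(\xi_a)\bigr\|\le\|\theta\|\bigl(\sum_{a\in J}\|\xi_a\|^2\bigr)^{1/2}$ for every finite $J$, which is exactly the Gram-type inequality the paper's proof invokes; it yields both the convergence of the series defining $\Theta(\xi)$ and the bound $\|\Theta\|\le\sup_U\|\theta_U\|$. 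Without this (or an equivalent) argument, your construction does not produce a morphism in $\Hilb(\cC)$.
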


\begin{proof}
	Once again, the proof of the algebraic version of the claim is similar to the unitary case, but without the analytical issues and therefore simpler. We shall only prove the unitary version then.
	
	Suppose $\theta = \{ \theta_U: \bar{U} \otimes X \otimes U \to K\}_{U \in \cC}$ is a dinatural transformation $C_X \to Z_{\Hilb(\cC)}$ in $\Hilb(\cC)$. Recall that in particular $\| \theta \| = \sup_U \|\theta_U \| < \infty$. From $\theta$ we want to produce a morphism $\Theta: Z_\Hilb(X) \to K$, satisfying $\Theta \circ \rho = \theta$, and then show that $\Theta$ is the unique morphism having this property.
	
	Each $\theta_a$, for $a \in \Irr(\cC)$, is a uniformly bounded collection of bounded linear maps 
	\[\{ \theta_{a,b}: (\bar{a} \otimes X \otimes  a)(b) \simeq X(a \otimes b \otimes \bar{a}) \to K(b) \ \}_{b \in \Irr(\cC)}.
	\]
	
	Given 
	\[\xi = (\xi_a)_{a \in \Irr(\cC)} \in Z_{\Hilb}(X)(b) = \bigoplus_{a \in \Irr(\cC)}^{\ell^2} X(a \otimes b \otimes \bar{a}),
	\]
	the net $\{ \sum_{a \in I} \theta_{a,b}(\xi_a) \ | I \Subset \Irr(\cC)\}$, where $\Subset$ means finite contained, converges in $K(b)$ due to the uniform boundedness of $\theta$. Indeed, given $\epsilon > 0$ 
	\begin{align*}
		\| \sum_{a \in \Irr(\cC) \setminus I} \theta_{a,b}(\xi_a) \|^2 = \sum_{a,c \in \Irr(\cC) \setminus I} \langle \theta_{c,b}^* \theta_{a,b} (\xi_a),\xi_c \rangle \leq \|\theta\|^2 \sum_{a \in \Irr(\cC) \setminus I} \|\xi_a\|^2 < \epsilon
	\end{align*}
	for every $I \Subset \Irr(\cC)$ sufficiently large. We have then a well defined morphism $\Theta: Z_{\Hilb}(X) \to K$ associating to $(\xi_a)_{a \in \Irr(\cC)} \in Z_{\Hilb}(X)(b)$ as above the vector $\sum_a\theta_{a,b}(\xi_a) \in K(b)$. Naturality of $\theta$ implies then
	$$\Theta \circ \rho_{X,U} = \theta_U \ \forall \ U \in \cC \ . $$
	
	More concisely, we can write
	$$\Theta = \sum_{a \in \Irr(\cC)} \theta_a  \ . $$
	Now we show uniqueness. If $\tilde{\Theta}: Z_{\Hilb}(X) \to K$ is another morphism in $\Hilb(\cC)$ such that $\tilde{\Theta} \circ  \rho_{X,U} = \theta_U$ for each $U \in \cC$, it holds in particular
	$$\theta_a = \tilde{\Theta} \circ \rho_{X,a} \ . $$
	But $\rho_{X,a}$ is just the inclusion $$ \bar{a} \otimes X \otimes a \hookrightarrow \bigoplus_{c \in \Irr(\cC)} \bar{c} \otimes X \otimes c = Z_{\Hilb}(X) \ . $$
	Thus, it must hold
	$$\tilde{\Theta} = \sum_{a \in \Irr(\cC)} \theta_a = \Theta \ . $$
\end{proof}

\subsection{Free half-braidings}
We shall now discuss how to enhance the functor $Z_{\Hilb} \in \End(\Hilb(\cC))$ to a functor $\Hilb(\cC) \to \cZ \Hilb(\cC)$. We start by introducing the {\em free half-braidings}.

Let $X \in \Hilb(\cC)$. For each $U \in \cC$, let $(\sigma^X_U)_{a,b}: \bar{a} \otimes X \otimes a \otimes U \to U \otimes\bar{b} \otimes X \otimes b$ be given by
\[
(\sigma^X_U)_{a,b} : = \left( \frac{d_a}{d_b} \right)^{1/2} \sum_{\omega \in O(b,a \otimes U)} (\id_U \otimes \omega^\vee \otimes \id_X\otimes \omega^*) (\bar{R}_U \otimes \id_{\bar{a}} \otimes \id_X \otimes \id_{a}).
\]

\begin{theorem}[\cite{MR3509018}, Theorem 3.4]
	Given $X \in \Hilb(\cC)$, $\sigma^X := \{ (\sigma^X)_{a,b}\}_{a,b \in \Irr(\cC)}$ is a unitary half-braiding on $Z_{\Hilb}(X)$. 
	\label{thm:freehalfbraiding}
\end{theorem}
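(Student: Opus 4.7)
My approach would be to verify the three defining properties of a unitary half-braiding on $Z_\Hilb(X) \in \Hilb(\cC)$: first, that each $\sigma^X_U$ is a well-defined unitary isomorphism $Z_\Hilb(X) \otimes U \to U \otimes Z_\Hilb(X)$; second, naturality in $U$; third, the hexagon identity $\sigma^X_{U \otimes V} = (\id_U \otimes \sigma^X_V)(\sigma^X_U \otimes \id_V)$ together with $\sigma^X_\un = \id$. Since the statement is attributed to \cite{MR3509018}, the most economical route is to check that the explicit formula stated here agrees with the one in that reference (up to conventions for $\bar{R}_U$ and the $(d_b d_c)^{-1}$ rescaling of the inner product in $\Hilb(\cC)$ used in the preliminaries) and invoke that result directly. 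Absent this, I would argue as follows.

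For well-definedness and boundedness, I would show that the block matrix $(\sigma^X_U)_{a,b}$ assembles into a bounded morphism in $\Hilb(\cC)$. For fixed $U$ the entry is zero unless $b$ embeds as a summand of $a \otimes U$, so for each $a$ only finitely many $b$ contribute. A computation of $(\sigma^X_U)^* \sigma^X_U$ modeled on the norm estimate at the end of the proof of Lemma \ref{lemma:centraldinaturaltransformation} --- using the partition of unity $\sum_{\omega \in \ONB(b,a \otimes U)} \omega \omega^*$ equal to the projection onto the $b$-isotypic component of $a \otimes U$, combined with the $(d_b d_c)^{-1}$ renormalization of the inner products in $\Hilb(\cC)$ and the balancing factor $(d_a/d_b)^{1/2}$ --- yields $\|\sigma^X_U\| \leq 1$.

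For unitarity, I would use Frobenius reciprocity to produce an explicit inverse: a basis element $\omega \in \ONB(b, a \otimes U)$ corresponds, up to an explicit dimension factor, to a basis element of $\ONB(a, b \otimes \bar{U})$, and sphericality balances the quantum dimensions so that the resulting formula for $(\sigma^X_U)^{-1}$ has the same shape as $\sigma^X_U$ with the roles of $a,b$ and $U,\bar{U}$ swapped. Verifying $(\sigma^X_U)^*(\sigma^X_U) = \id$ and $(\sigma^X_U)(\sigma^X_U)^* = \id$ then reduces to two applications of the ONB completeness $\sum_\omega \omega \omega^* = p_b$; the prefactor $(d_a/d_b)^{1/2}$ is engineered precisely to absorb the dimension weights that appear, by the same mechanism underlying the unitarity of Longo--Rehren-type tube algebra braidings.

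Naturality in $U$ follows from applying $f : U_1 \to U_2$ to the defining expression, using the naturality of the coevaluation family $\bar R$ together with the transformation of ONB sums under morphisms. The hexagon reduces to the combinatorial identity that $\ONB(c, a \otimes U \otimes V)$ can be built by concatenating bases of $\ONB(b, a \otimes U)$ and $\ONB(c, b \otimes V)$ summed over $b \in \Irr(\cC)$, together with multiplicativity of quantum dimensions under tensor product; the unit case $\sigma^X_\un = \id$ is immediate from the formula. The main obstacle is the unitarity check: ensuring that the dimension prefactors and the rescaled inner product in $\Hilb(\cC)$ conspire to give genuine unitarity rather than just boundedness or invertibility. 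The remaining axioms are then clean combinatorial manipulations on the ONB side.
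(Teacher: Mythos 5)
The paper offers no proof of this statement at all: it is imported verbatim as \cite{MR3509018}, Theorem 3.4, so your ``most economical route'' of matching conventions (the $\bar R_U$ normalization and the $(d_b d_c)^{-1}$ rescaling) and invoking that reference is precisely what the paper does. Your direct-verification sketch---boundedness and unitarity from the ONB completeness relations balanced by the $(d_a/d_b)^{1/2}$ prefactor, plus naturality, the multiplicativity $\sigma^X_{U\otimes V}=(\id_U\otimes\sigma^X_V)(\sigma^X_U\otimes\id_V)$ via concatenation of ONBs over intermediate simples, and the trivial unit case---is the same strategy as the cited proof, so the proposal is correct and essentially aligned with the paper.
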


\begin{definition}
	The functor $\cF_{\Hilb}: \Hilb(\cC) \to \cZ\Hilb(\cC)$ given by
	\begin{center}
		\begin{tikzcd}
			X \arrow[dd, "f"'] \arrow[rr, "\cF_{\Hilb}", dashed, maps to] &  & {(Z_{\Hilb}(X),\sigma^X)} \arrow[dd, "Z_{\Hilb}(f)"] \\
			{} \arrow[rr, "\cF_{\Hilb}", dashed, maps to]                 &  & {}                                   \\
			Y \arrow[rr, "\cF_{\Hilb}", dashed, maps to]                  &  & {(Z_{\Hilb}(Y),\sigma^Y)}                   
		\end{tikzcd}
	\end{center}
	is called the {\em free half-braiding functor} on $\Hilb(\cC)$
	\label{def:freehalfbraidingfunctor}
\end{definition}
\medskip

Denoting by $\cU_{\Hilb}: \cZ \Hilb(\cC) \to \Hilb(\cC)$ the forgetful functor, we have $Z_{\Hilb} = \cU_{\Hilb} \circ \cF_{\Hilb}$, and one could think of establishing an adjunction $\cF_{\Hilb} \dashv \cU_{\Hilb}$ so that $Z_{\Hilb}$ is the monad of this adjunction. The counit of this adjunction will not be bounded, i.e. not a morphism in $\Hilb(\cC)$. The monadic data will be well defined in the algebraic ind-completion, but we have to modify the free-half braiding construction a little.
\medskip

Let $X \in \Vect(\cC)$. For each $U \in \cC$, let $(\beta^X_U)_{a,b}: \bar{a} \odot X \odot a \odot U \to U \odot \bar{b} \odot X \odot b$ be given by
\[
(\beta^X_U)_{a,b} : =  \sum_{\omega \in O(b,a \otimes U)} (\id_U \odot\omega^\vee \odot \id_X\odot \omega^*) (\bar{R}_U \odot \id_{\bar{a}} \odot \id_X \odot \id_{a}).
\]

A simplified verison of the proof of \cite{MR3509018}, Theorem 3.4, shows the following.

\begin{theorem}
	Given $X \in \Vect(\cC)$, $\beta^X := \{ (\beta^X)_{a,b}\}_{a,b \in \Irr(\cC)}$ is a  half-braiding on $Z_{\Vect}(X)$.
	\label{thm:algfreehalfbraiding}
\end{theorem}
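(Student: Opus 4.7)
The plan is to adapt the proof of Theorem~3.4 in \cite{MR3509018} to the algebraic setting, where the omission of the normalization factor $(d_a/d_b)^{1/2}$ is harmless because we no longer require unitarity of the half-braiding, only invertibility. First, I would verify well-definedness of the components: for each $U \in \cC$ and $a \in \Irr(\cC)$, only finitely many $b \in \Irr(\cC)$ satisfy $\ONB(b, a \otimes U) \neq \emptyset$, so the sum over $b$ in the definition of $(\beta^X_U)_{a,b}$ collapses to a finite sum and defines a bona-fide morphism in $\Vect(\cC)$. Assembling over $a,b \in \Irr(\cC)$ yields a morphism $\beta^X_U : Z_{\Vect}(X) \odot U \to U \odot Z_{\Vect}(X)$.

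Next, I would verify the axioms of a half-braiding. The essential verifications are: (i) naturality in $U$, namely that for $f \in \cC(U_1, U_2)$ the expected naturality square for $\beta^X_{(-)}$ commutes, which follows by expanding both sides over $\ONB(b, a \otimes U_i)$ and using the Frobenius reciprocity identity for $f \circ \omega^*$ (essentially the same computation as in the proof of Lemma~\ref{lemma:centraldinaturaltransformation}, but without the requirement of uniform bounds); (ii) the hexagon identity $\beta^X_{U \otimes V} = (\id_U \odot \beta^X_V) \circ (\beta^X_U \odot \id_V)$, obtained by decomposing an ONB of $\cC(c, a \otimes U \otimes V)$ through an intermediate simple $b$ arising from $a \otimes U$ and then in $b \otimes V$, and regrouping sums via the Parseval-type relation $\sum_{\omega} \omega \omega^* = \id$; and (iii) invertibility of $\beta^X_U$, established by exhibiting an explicit inverse built from the analogous formula based on the conjugate equation $R_U$ in place of $\bar{R}_U$, with the zigzag identities and Parseval formulas giving both composites equal to the identity.

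The most delicate step is the hexagon, which requires careful bookkeeping of nested sums over ONBs together with the multiplicativity of the map $\omega \mapsto \omega^\vee$ under tensor decompositions. However, all manipulations are purely algebraic graphical-calculus identities, and the absence of quantum-dimension factors makes the verification strictly cleaner than in the unitary version of \cite{MR3509018}, so no new ideas are required beyond that template.
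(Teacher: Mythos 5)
Your proposal is correct and follows essentially the same route as the paper, whose entire proof of this statement is the remark that a simplified version of the argument for \cite{MR3509018}, Theorem 3.4 (dropping the factors $(d_a/d_b)^{1/2}$, which are only needed for unitarity with respect to the renormalized inner products on $\Hilb(\cC)$) establishes the claim; your checks of well-definedness, naturality and the multiplicativity relation are exactly that adaptation. For invertibility, rather than verifying an explicit ONB formula for the inverse (where one must be careful that dropping the normalization can introduce dimension factors), it is safer to observe that once naturality, $\beta^X_{\un}=\id$ and $\beta^X_{U\otimes V}=(\id_U\odot\beta^X_V)(\beta^X_U\odot\id_V)$ are in place, invertibility at each $U\in\cC$ follows automatically from rigidity via the zigzag morphism $(\bar{R}_U^{\,*}\odot\id_{Z_{\Vect}(X)}\odot\id_U)\circ(\id_U\odot\beta^X_{\bar{U}}\odot\id_U)\circ(\id_U\odot\id_{Z_{\Vect}(X)}\odot R_U)$, exactly as the paper argues later in Proposition \ref{prop:functormodulestocenter}.
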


Analogous to $\cF_{\Hilb}$, we have the functor 
\[
\cF_{\Vect}: \Vect(\cC) \to \cZ \Vect(\cC)
\]
that takes $X \in \Vect(\cC)$ to $(Z_{\Vect}(X),\beta^X)$.


\subsection{Monadic reconstruction of the algebraic Drinfeld center}

We  now present an algebraic treatment, showing that the algebraic Drinfeld center $\cZ \Vect(\cC)$ is monadic over $\Vect(\cC)$. The statements and proofs in this section follow \cite{MR3674995} very closely. Though we have to be careful with the presence of ind-objects, which are non-dualizable, the adaptations of the arguments are very straightforward. The reader familiar with this reference could skip the proofs in this section without loosing much content.

\begin{definition}
	The unit of $Z_{\Vect}$ is the natural transformation $\eta: \id_{\Vect(\cC)} \to Z_{\Vect}$ defined by 
	\[
	\eta_X: X = \bar{\un} \odot X \odot \un \hookrightarrow \bigoplus_{a \in \Irr(\cC)} \bar{a} \odot X \odot a= Z_{\Vect}(X) .
	\]
\end{definition}

\begin{definition}
	Given $X,Y \in \Vect(\cC)$, define $\partial_{X,Y}: X \odot Y \to Y \odot Z_{\Vect}(X)$ by
	\[
	\partial_{X,Y} = \beta^X_Y \circ (\eta_X \odot \id_Y) .
	\]
\end{definition}

\begin{lemma}
	For $U \in \cC$, 
	$$\partial_{X,U} = (\id_U \odot \rho_{X,U})(\bar{R}_U \odot \id_X \odot \id_U) \ . $$
	\label{lemma:recoveringVirelizierTuraevformula}
\end{lemma}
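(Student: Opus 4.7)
The plan is a direct unfolding of the definitions. By definition $\partial_{X,U} = \beta^X_U \circ (\eta_X \odot \id_U)$, and $\eta_X$ is the inclusion of $X = \bar{\un} \odot X \odot \un$ as the $a = \un$ summand of $Z_{\Vect}(X) = \bigoplus_{a \in \Irr(\cC)} \bar{a} \odot X \odot a$. Consequently, only the $a=\un$ row of the half-braiding $\beta^X_U = \{(\beta^X_U)_{a,b}\}$ contributes. First I would isolate these components and observe that, under the canonical identifications $\bar{\un} = \un$ and $\un \otimes U = U$, one has $\ONB(b,\un \otimes U) = \ONB(b,U)$, so
\[
(\beta^X_U)_{\un,b} \;=\; \sum_{\omega \in \ONB(b,U)} (\id_U \otimes \omega^{\vee} \otimes \id_X \otimes \omega^*) (\bar{R}_U \otimes \id_X \otimes \id_U) .
\]

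Next I would assemble these components into a single morphism into $Z_{\Vect}(X)$. Summing over $b \in \Irr(\cC)$ and factoring out the common left factor $\bar{R}_U \otimes \id_X \otimes \id_U$, we obtain
\[
\partial_{X,U} \;=\; \Bigl( \id_U \odot \sum_{b \in \Irr(\cC)} \sum_{\omega \in \ONB(b,U)} (\omega^{\vee} \odot \id_X \odot \omega^*) \Bigr) \circ \bigl(\bar{R}_U \odot \id_X \odot \id_U\bigr) .
\]
Comparing the bracketed expression with Lemma~\ref{lemma:centraldinaturaltransformation}, we recognize it as exactly $\id_U \odot \rho_{X,U}$, which yields the asserted identity.

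There is no serious obstacle; the only thing to be careful about is bookkeeping of the duality and adjoint conventions, and the trivial identifications $\bar{\un} = \un$, $\un \otimes U = U$, $\id_{\bar{\un}} = \id_\un$ used to absorb the $\un$-summand and its $\bar{R}_\un$-type factors into the formula for $\beta^X_U$. Once one has written $\eta_X \odot \id_U$ as landing in the $a=\un$ component, the rest is a purely notational comparison, which is why the formula can legitimately be packaged as a short computation.
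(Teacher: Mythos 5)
Your proposal is correct and follows essentially the same route as the paper: unfold $\partial_{X,U}=\beta^X_U(\eta_X\odot\id_U)$, note that $\eta_X$ lands in the $a=\un$ summand so only the components $(\beta^X_U)_{\un,b}$ survive, simplify using $\ONB(b,\un\otimes U)=\ONB(b,U)$, and recognize the resulting sum over $b$ as $(\id_U\odot\rho_{X,U})(\bar{R}_U\odot\id_X\odot\id_U)$. The only difference is cosmetic bookkeeping (you keep the trailing $\id_U$ explicit, which if anything matches the lemma statement more closely than the paper's displayed component).
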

\begin{proof}
	Fixing $U \in \cC$ and $b \in \Irr(\cC)$, the $b$-th component $X \odot U \to U \odot \bar{b} \odot X \odot b$ of $\beta^X_U(\eta_X \odot \id_U)$ is
	\begin{align*}
		\sum_{a \in \Irr(\cC)} \sum_{ \omega \in O(b,a\odot U)}  (\id_U \odot \omega^\vee \odot \id_X \odot \omega^*)(\bar{R}_U \odot \id_{\bar{a}} \odot \id_X \odot \id_{a})(\eta_X \odot \id_U) \ ,
	\end{align*}
	which equals to 
	\begin{align*}
		\sum_{\omega \in O(b,U)}  (\id_U \odot \omega^\vee \odot \id_X \odot \omega^*)(\bar{R}_U \odot \id_X) \ . 
	\end{align*}
\end{proof}

\begin{lemma}
	Let $X,Y \in \Vect(\cC)$ and $U \in \cC$. If $\xi = \{\xi_U: X \odot U \to U \odot Y \}_{U \in \cC}$ is a natural transformation, then there is a unique $r: Z_{\Vect}(X) \to Y$ such that
	$$\xi_U = (\id_U \odot r) \partial_{X,U} \ \forall \ U \in \cC \ . $$
	\label{lemma:factorizationpropofZpart1}
\end{lemma}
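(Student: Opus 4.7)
The plan is to construct $r$ explicitly using Frobenius reciprocity, verify the factorization identity by combining Lemma \ref{lemma:recoveringVirelizierTuraevformula} with the naturality of $\xi$, and deduce uniqueness by specializing the equation to simple objects $U = a$.

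For each $a \in \Irr(\cC)$, set
\[
r_a := (R_a^* \odot \id_Y) \circ (\id_{\bar a} \odot \xi_a) \in \Vect(\cC)(\bar a \odot X \odot a, Y),
\]
and let $r : Z_{\Vect}(X) \to Y$ be the unique morphism whose restriction to the $a$-summand is $r_a$. Equivalently, $r_a$ is the Frobenius-reciprocal of $\xi_a$, and the snake identity immediately gives the compatibility $(\id_a \odot r_a)(\bar R_a \odot \id_X \odot \id_a) = \xi_a$.

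For existence, Lemma \ref{lemma:recoveringVirelizierTuraevformula} rewrites the left-hand side as
\[
(\id_U \odot r) \partial_{X, U} = \bigl( \id_U \odot (r \circ \rho_{X, U}) \bigr) \circ (\bar R_U \odot \id_X \odot \id_U).
\]
Expanding $\rho_{X, U}$ as a finite sum over $a \in \Irr(\cC)$ and $\omega \in \ONB(a, U)$, applying naturality of $\xi$ to each $\omega^*: U \to a$ (to convert $\xi_a \circ (\id_X \odot \omega^*)$ into $(\omega^* \odot \id_Y) \circ \xi_U$), and then invoking the semisimple resolution of the identity $\id_U = \sum_{a, \omega} \omega \omega^*$ together with the snake identity collapses the resulting expression to $\xi_U$.

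For uniqueness, specialize the defining equation to $U = a$ with $a \in \Irr(\cC)$. Because $\ONB(b, a) = \emptyset$ for $b \not\simeq a$ and $\ONB(a, a) = \{\id_a\}$, the morphism $\rho_{X, a}$ is just the canonical inclusion of $\bar a \odot X \odot a$ into $Z_{\Vect}(X)$, so the equation reduces to $\xi_a = (\id_a \odot r|_{\bar a \odot X \odot a}) \circ (\bar R_a \odot \id_X \odot \id_a)$. By the invertibility of Frobenius reciprocity this forces $r|_{\bar a \odot X \odot a} = r_a$, and since any morphism out of the direct sum $Z_{\Vect}(X)$ is determined by its restrictions to the summands, $r$ is unique. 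The main obstacle is bookkeeping the dualities and tensor slots in the existence verification: several parallel applications of naturality and the snake identity must cancel simultaneously across the $(a, \omega)$-summation for the clean identity $\xi_U$ to emerge.
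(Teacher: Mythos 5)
Your proof is correct, and the morphism you build is literally the one the paper produces: your $r_a = (R_a^*\odot\id_Y)(\id_{\bar a}\odot\xi_a)$ is the value at a simple object of the dinatural transformation $\tilde\xi_U = (R_U^*\odot\id_Y)(\id_{\bar U}\odot\xi_U)$ that the paper feeds into the coend property. The difference is organizational rather than conceptual: the paper checks that $\tilde\xi$ is dinatural and then invokes Theorem \ref{thm:definitionmonadZ} to obtain existence and uniqueness of $r$ with $r\circ\rho_{X,U}=\tilde\xi_U$, converting back to the stated identity via Lemma \ref{lemma:recoveringVirelizierTuraevformula} and the conjugate equations; you bypass the coend theorem and re-derive its algebraic special case by hand, defining $r$ summand-wise (legitimate in $\Vect(\cC)$, where, unlike in $\Hilb(\cC)$, no uniform-boundedness constraint arises) and proving uniqueness by restricting to simple $U=a$, where $\rho_{X,a}$ is the inclusion and Frobenius reciprocity pins down $r|_{\bar a\odot X\odot a}$. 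Both routes use exactly the same inputs, namely naturality of $\xi$ at the isometries $\omega^*$ and the conjugate equations; the paper's version is shorter because the $(a,\omega)$-bookkeeping was done once and for all in the proof of Theorem \ref{thm:definitionmonadZ}, while yours is self-contained at the cost of duplicating that work. One small point of precision in your existence step: the collapse is not quite ``resolution of the identity plus snake identity'' applied directly; what is needed is the expansion $R_U^* = \sum_{a\in\Irr(\cC)}\sum_{\omega\in\ONB(a,U)} R_a^*\circ(\omega^\vee\odot\omega^*)$, obtained by transporting $\id_U=\sum_{a,\omega}\omega\omega^*$ through $(-)^\vee$ using $(\omega^*)^\vee=(\omega^\vee)^*$; once the sum is recognized as $R_U^*$, the snake identity $(\id_U\odot R_U^*)(\bar R_U\odot\id_U)=\id_U$ reduces the expression to $\xi_U$, as you claim.
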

\begin{proof}
	Define 
	\[
	\tilde{\xi}_U = (R_U^* \odot \id_Y)(\id_{\bar{U}} \odot \xi_U): \bar{U} \odot X \odot U \to Y.
	\]
	Then $\tilde{\xi} := \{ \tilde{\xi}_U: C_X(U) \to Y \}_{U \in \cC}$ is a dinatural transformation. By Theorem \ref{thm:definitionmonadZ}, there is a unique $r: Z_{\Vect}(X) \to Y$ such that $\tilde{\xi}_U = r \circ \rho_{X,U}$ for all $U \in \cC$. From the conjugate equation for $U$, it holds that $(\id_U \odot \tilde{\xi}_U)(\bar{R}_U \odot \id_X \odot \id_U) = \xi_U$. But
	\[
	(\id_U \odot \tilde{\xi}_U)(\bar{R}_U \odot \id_X \odot \id_U) = (\id_U \odot r)(\id_U \odot \rho_{X,U})(\bar{R}_U \odot \id_X \odot \id_U) = (\id_U \odot r) \partial_{X,U} ,
	\]
	where the last equality follows from Lemma~\ref{lemma:recoveringVirelizierTuraevformula}.
\end{proof}

\begin{remark}
	Observe that if $\xi = \{\xi_U: X \odot U \to U \odot Y \}_{U \in \cC}$ is a natural transformation between the functors $X \odot (-)$ and $(-) \odot Y: \cC \to \Vect(\cC)$, it has a unique extension to a natural transformation $\xi = \{\xi_K: X \odot K \to K \odot Y \}_{K \in \Vect(\cC)}$. Indeed, taking $r:Z_{\Vect}(X) \to Y$ to be the unique morphism for which $\xi_U = (\id_U \odot r) \partial_{X,U}$ for each $U \in \cC$, the unique extension is given by
	\[
	\xi_K = (\id_K \odot r) \partial_{X,K} ,
	\]
	$K \in \Vect(\cC)$. It is clearly well defined, natural, and since every object in $\Vect(\cC)$ is an inductive limit of objects of $\cC$, it is the unique extension. Then Lemma \ref{lemma:factorizationpropofZpart1} implies that any natural transformation of the form $\xi: X \odot (-) \to (-) \odot Y$ on $\Vect(\cC)$ is determined by a morphism $r: Z_{\Vect}(X) \to Y$, which is determined in turn by the values of $\xi$ on objects of $\cC$.
\end{remark}

\begin{lemma}
	If $X, Y \in \Vect(\cC)$, $V \in \cC$ and $\xi = \{\xi_U: X \odot U \odot V \to U \odot Y \}_{U \in \cC}$ is natural, then there is a unique $r: Z_{\Vect}(X) \odot V \to Y$ such that 
	$$\xi_U = (\id_U \odot r)(\partial_{X,U} \odot \id_V) \ \forall \ U \in \cC \ . $$
	\label{lemma:factorizationpropofZpart2}
\end{lemma}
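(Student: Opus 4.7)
The plan is to reduce this to Lemma~\ref{lemma:factorizationpropofZpart1} by absorbing the auxiliary factor $V$ into the codomain via right Frobenius reciprocity. Crucially, even though $X, Y$ are ind-objects and hence not dualizable, the object $V$ is an object of $\cC$ and therefore has a dual $\bar V$, so this step is unproblematic.

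Concretely, for each $U \in \cC$ I would set
\[
\xi'_U := (\xi_U \odot \id_{\bar V}) \circ (\id_{X \odot U} \odot \bar R_V) \; : \; X \odot U \to U \odot Y \odot \bar V,
\]
where $\bar R_V : \un \to V \odot \bar V$ is the right-dual coevaluation. The family $\xi' = \{\xi'_U\}_{U \in \cC}$ is a natural transformation between $X \odot (-)$ and $(-) \odot (Y \odot \bar V) : \cC \to \Vect(\cC)$, since $\bar R_V$ is independent of $U$ and naturality in $U$ transfers directly from that of $\xi$. Applying Lemma~\ref{lemma:factorizationpropofZpart1} with $Y$ replaced by $Y \odot \bar V \in \Vect(\cC)$ produces a unique $\tilde r : Z_{\Vect}(X) \to Y \odot \bar V$ such that $\xi'_U = (\id_U \odot \tilde r) \circ \partial_{X,U}$ for every $U \in \cC$. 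I then define
\[
r := (\id_Y \odot R_V^*) \circ (\tilde r \odot \id_V) \; : \; Z_{\Vect}(X) \odot V \to Y,
\]
with $R_V^* : \bar V \odot V \to \un$ the evaluation.

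To verify $\xi_U = (\id_U \odot r)(\partial_{X,U} \odot \id_V)$, I would unfold the definition of $r$, apply the defining property of $\tilde r$, and then substitute the definition of $\xi'_U$; the combination $(\id_V \odot R_V^*) \circ (\bar R_V \odot \id_V)$ appears on the $V$-strand and reduces to $\id_V$ by the zig-zag identity, leaving exactly $\xi_U$. Uniqueness of $r$ follows from uniqueness of $\tilde r$ because the Frobenius correspondence $r \mapsto \tilde r = (\id_Y \odot \id_{\bar V}) \circ \cdots$ (or rather, the bijection $\Vect(\cC)(Z_{\Vect}(X) \odot V, Y) \simeq \Vect(\cC)(Z_{\Vect}(X), Y \odot \bar V)$ afforded by the duality $V \dashv \bar V$) is a natural isomorphism that takes the factorization condition for $r$ to the factorization condition for $\tilde r$.

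I do not anticipate a genuine obstacle: the argument is essentially formal, requiring only duality for the fixed object $V \in \cC$ rather than for $X$ or $Y$, so no analytical or ind-object pathology arises. The only care needed is the convention-dependent bookkeeping for the pair $(R_V, \bar R_V)$ and the application of the zig-zag identity in the correct direction.
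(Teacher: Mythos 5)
Your proposal is correct and is essentially the paper's own proof: the paper likewise defines $\tilde{\xi}_U := (\xi_U \odot \id_{\bar{V}})(\id_X \odot \id_U \odot \bar{R}_V)$, applies Lemma \ref{lemma:factorizationpropofZpart1} to obtain $\tilde{r}: Z_{\Vect}(X) \to Y \odot \bar{V}$, and sets $r = (\id_Y \odot R_V^*)(\tilde{r} \odot \id_V)$, with the conjugate (zig-zag) equations for $V$ giving the factorization. Your explicit remark that uniqueness transfers through the duality bijection $\Vect(\cC)(Z_{\Vect}(X) \odot V, Y) \simeq \Vect(\cC)(Z_{\Vect}(X), Y \odot \bar{V})$ is a fine way to spell out what the paper leaves implicit.
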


\begin{proof}
	Define $\tilde{\xi}_U := (\xi_U \odot \id_{\bar{V}})(\id_X \odot \id_U \odot \bar{R}_V): X \odot U \to U \odot Y \odot \bar{V}$. Then $\tilde{\xi} = \{ \tilde{\xi}_U\}_{U \in \cC}$ is natural. By Lemma \ref{lemma:factorizationpropofZpart1}, there is a unique $\tilde{r}:Z_{\Vect}(X) \to Y \odot \bar{V}$ such that $\tilde{\xi}_U = (\id_U \odot \tilde{r})\partial_{X,U}$ for each $U \in \cC$. From the conjugate equations for $V$, by defining $r = (\id_Y \odot R_V^*) (\tilde{r} \odot \id_V)$ we see that
	$$(\id_U \odot r) (\partial_{X,U} \odot \id_V) = \xi_U \ . $$
\end{proof}

\begin{lemma}
	Let $X,Y \in \Vect(\cC)$, and let $\xi: \{ \xi_{U,V}: X \odot U \odot V \to U \odot V \odot Y\}_{U,V \in \cC}$ be natural. There exists a unique $r: Z_{\Vect}^2(X) \to Y$ such that
	$$\xi_{U,V} = (\id_U \odot \id_V \odot r)(\id_U \odot \partial_{Z_{\Vect}(X),V})(\partial_{X,U} \odot \id_V) \ . $$
	Any such natural transformation has a unique extension to a natural transformation $\xi = \{\xi_{K,L}: X \odot K \odot L \to K \odot L \odot Y\}_{K,L \in \Vect(\cC)}$.
	\label{lemma:factorizationpropofZpart3}
\end{lemma}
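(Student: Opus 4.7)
The strategy is to iterate the previous two lemmas, first applying Lemma~\ref{lemma:factorizationpropofZpart2} pointwise in $V$ and then Lemma~\ref{lemma:factorizationpropofZpart1} as $V$ varies. Concretely, for each fixed $V \in \cC$, the family $\{\xi_{U,V}: X \odot U \odot V \to U \odot (V \odot Y)\}_{U \in \cC}$ is natural in $U$, so by Lemma~\ref{lemma:factorizationpropofZpart2} applied with target $V \odot Y$ in place of $Y$, there exists a unique morphism
\[
r_V: Z_{\Vect}(X) \odot V \to V \odot Y
\]
satisfying $\xi_{U,V} = (\id_U \odot r_V)(\partial_{X,U} \odot \id_V)$ for every $U \in \cC$.

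The next step is to promote $\{r_V\}_{V \in \cC}$ to a natural transformation $Z_{\Vect}(X) \odot (-) \to (-) \odot Y$. Naturality of $\xi$ in the second variable, combined with the uniqueness clause just invoked, forces this naturality on $\{r_V\}$. Applying Lemma~\ref{lemma:factorizationpropofZpart1} with $Z_{\Vect}(X)$ in the role of $X$ then produces a unique $r: Z_{\Vect}^2(X) \to Y$ with
\[
r_V = (\id_V \odot r)\,\partial_{Z_{\Vect}(X), V}, \qquad V \in \cC.
\]
Substituting this expression into the factorization of $\xi_{U,V}$ produced in the first step yields the required identity, and uniqueness of $r$ follows by chaining the uniqueness assertions of the two lemmas invoked.

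For the extension claim, use the remark following Lemma~\ref{lemma:factorizationpropofZpart1}, which produces the unique natural extension of a transformation $Z_{\Vect}(X) \odot (-) \to (-) \odot Y$ from $\cC$ to $\Vect(\cC)$ via $K \mapsto (\id_K \odot r)\partial_{Z_{\Vect}(X),K}$, and symmetrically in the outer slot. Composing these two extensions gives
\[
\xi_{K,L} = (\id_K \odot \id_L \odot r)(\id_K \odot \partial_{Z_{\Vect}(X), L})(\partial_{X, K} \odot \id_L),
\]
which is natural in $K, L \in \Vect(\cC)$ and is the unique such extension since both $K$ and $L$ are inductive limits of objects of $\cC$.

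The only nontrivial bookkeeping is verifying naturality of $\{r_V\}_{V \in \cC}$: given $g: V_1 \to V_2$ in $\cC$, the two morphisms $(g \odot \id_Y) r_{V_1}$ and $r_{V_2}(\id_{Z_{\Vect}(X)} \odot g)$ both produce, upon precomposition with $\partial_{X,U} \odot \id_{V_1}$, the morphism $(\id_U \odot g \odot \id_Y)\,\xi_{U,V_1} = \xi_{U,V_2}(\id_X \odot \id_U \odot g)$ by naturality of $\xi$ and of $\partial_{X,U}$, so the uniqueness clause of Lemma~\ref{lemma:factorizationpropofZpart2} identifies them. Everything else is a direct composition of previously established reductions, so this is essentially the only place where real checking is required.
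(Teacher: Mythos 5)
Your proof is correct and follows the paper's argument essentially verbatim: apply Lemma~\ref{lemma:factorizationpropofZpart2} for each fixed $V$ (with target $V \odot Y$), deduce naturality of the resulting family $\{r_V\}$ from uniqueness, then apply Lemma~\ref{lemma:factorizationpropofZpart1} to obtain $r$, with the extension handled by the remark following Lemma~\ref{lemma:factorizationpropofZpart1}. The only difference is that you spell out the naturality check of $\{r_V\}$, which the paper leaves implicit; this is a correct and welcome elaboration, not a deviation.
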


\begin{proof}
	Fixing $V \in \cC$, $\xi_V:= \{\xi_{U,V}: X \odot U \odot V \to U \odot V \odot Y\}_{U \in \cC}$ is natural in $U \in \cC$. By Lemma \ref{lemma:factorizationpropofZpart2}, there is a unique $\tilde{r}_V: Z_{\Vect}(X) \odot V \to V \odot Y$ such that $\xi_{U,V} = (\id_U \odot \tilde{r}_V)(\partial_{X,U} \odot \id_V)$ for each $U \in \cC$. But since $\xi$ is also natural in the second variable, the collection $\tilde{r} = \{ \tilde{r}_V: Z_{\Vect}(X) \odot V \to V \odot Y\}_{V \in \cC}$ must be natural in $V \in \cC$, by unicity of the $\tilde{r}_V$'s. Applying Lemma \ref{lemma:factorizationpropofZpart1} to $\tilde{r}$, we conclude that there is a unique $r: Z_{\Vect}(Z_{\Vect}(X)) = Z_{\Vect}^2(X) \to Y$ such that $\tilde{r}_V = (\id_V \odot r) \partial_{Z_{\Vect}(X),V}$ for each $V \in \cC$. We have then
	\[
	\xi_{U,V} = (\id_U \odot \tilde{r}_V)(\partial_{X,U} \odot \id_V) = (\id_U \odot \id_V \odot r)(\id_U \odot \partial_{Z_{\Vect}(X),V})(\partial_{X,U} \odot \id_V).
	\]
	By the same argument in the discussion following the proof of Lemma \ref{lemma:factorizationpropofZpart1}, we conclude that $\xi$ extends uniquely to a natural transformations between the endofunctors $X \odot (-) \odot(\bullet) \to (-) \odot (\bullet) \odot Y$ of $\Vect(\cC) \times \Vect(\cC)$.
\end{proof}

\subsection{The adjunction}
Recall the functor $\cF_{\Vect}: \Vect(\cC) \to \cZ \Vect(\cC);\ X \mapsto (Z_{\Vect}(X),\beta^X)$. Let $\cU_{\Vect}: \cZ \Vect(\cC) \to \Vect(\cC)$ be the forgetful functor. Then $\cU_{\Vect} \circ \cF_{\Vect} = Z_{\Vect}$. For simplicity, we shall write $\cF = \cF_{\Vect}$, $Z = Z_{\Vect}$ and $\cU = \cU_{\Vect}$.

\begin{lemma}
	The collection $\{\beta^X_Y: Z(X) \odot Y \to Y \odot Z(X)\}_{X,Y}$ is a natural transformation $Z(-) \odot (\bullet) \to (\bullet) \odot Z(-)$.
	\label{lemma:naturarilityfreehalfbraiding}
\end{lemma}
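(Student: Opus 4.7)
The plan is to decouple the joint naturality condition into separate naturalities in $X$ and in $Y$. Naturality of $\beta^X_Y$ in the second variable $Y$, for $Y \in \cC$, is part of the definition of a half-braiding on $Z(X)$ and is therefore already built into Theorem \ref{thm:algfreehalfbraiding}. The extension of this naturality to $Y \in \Vect(\cC)$ will be handled together with the naturality in $X$ at the end, via the uniqueness statement underlying the Remark after Lemma \ref{lemma:factorizationpropofZpart1}. So the substantive task is to verify naturality in $X$.

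Given $f: X_1 \to X_2$ in $\Vect(\cC)$, I first reduce to $Y = U \in \cC$ and check the equality
\[
(\id_U \odot Z(f)) \circ \beta^{X_1}_U = \beta^{X_2}_U \circ (Z(f) \odot \id_U)
\]
block-by-block on the $(a,b) \in \Irr(\cC) \times \Irr(\cC)$ decomposition. The explicit formula
\[
(\beta^X_U)_{a,b} = \sum_{\omega \in \ONB(b, a \otimes U)} (\id_U \odot \omega^\vee \odot \id_X \odot \omega^*)(\bar{R}_U \odot \id_{\bar{a}} \odot \id_X \odot \id_a)
\]
depends on $X$ only through the factor $\id_X$ sandwiched between morphisms in $\cC$. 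Consequently, postcomposing the $X = X_1$ formula with the $b$-summand $\id_U \odot \id_{\bar{b}} \odot f \odot \id_b$ of $\id_U \odot Z(f)$ replaces $\id_{X_1}$ by $f$; precomposing the $X = X_2$ formula with the $a$-summand $\id_{\bar a} \odot f \odot \id_a \odot \id_U$ of $Z(f) \odot \id_U$ likewise replaces $\id_{X_2}$ by $f$ (the other identities pass through). The two resulting expressions are manifestly identical, so the two sides agree on every $(a,b)$-block.

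To pass from $Y = U \in \cC$ to arbitrary $Y \in \Vect(\cC)$, I observe that both sides of the desired equality define, for fixed $X_1, X_2, f$, natural transformations of the form $Z(X_1) \odot (-) \to (-) \odot Z(X_2)$ on $\cC$; by the Remark following Lemma \ref{lemma:factorizationpropofZpart1} such a natural transformation extends uniquely to $\Vect(\cC)$, so agreement on $\cC$ forces agreement on $\Vect(\cC)$. The same remark, applied with $X_1 = X_2$ and $f = \id$, also promotes naturality in $Y \in \cC$ to naturality in $Y \in \Vect(\cC)$, completing the argument. I do not anticipate any real obstacle: the conceptual content is simply that $X$ enters the formula for $\beta^X_U$ only as $\id_X$; the rest is bookkeeping over the indices $a, b$ and the orthonormal bases $\ONB(b, a \otimes U)$.
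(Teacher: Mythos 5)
Your argument is correct. It differs from the paper's proof mainly in where the naturality in the first variable comes from: the paper simply invokes that $Z(f)$ is a morphism in $\cZ\Vect(\cC)$, i.e.\ that the free half-braiding construction $\cF_{\Vect}$ is functorial into the center (so $\beta^{X'}_{Y}(Z(f)\odot\id_Y)=(\id_Y\odot Z(f))\beta^{X}_{Y}$ holds by fiat), and then combines this with the naturality of the half-braiding $\beta^X$ in $Y$ over all of $\Vect(\cC)$ in a two-line interchange computation $\beta^{X'}_{Y'}(Z(f)\odot g)=(\id_{Y'}\odot Z(f))\beta^{X}_{Y'}(\id_{Z(X)}\odot g)=(\id_{Y'}\odot Z(f))(g\odot\id_{Z(X)})\beta^{X}_{Y}$. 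Your blockwise verification from the explicit formula for $(\beta^X_U)_{a,b}$ is precisely a proof of the fact the paper cites (restricted to $Y\in\cC$), so your route is more self-contained, at the cost of the extra bookkeeping over $(a,b)$ and the explicit passage from $\cC$ to $\Vect(\cC)$ via the uniqueness of extensions in the Remark after Lemma \ref{lemma:factorizationpropofZpart1}; the paper's route is shorter but leans on the earlier, unproved assertion that $X\mapsto(Z(X),\beta^X)$, $f\mapsto Z(f)$ lands in $\cZ\Vect(\cC)$. Both decouple joint naturality into the two separate naturalities in the same way, and your use of the extension/uniqueness remark to upgrade from $Y\in\cC$ to $Y\in\Vect(\cC)$ is legitimate since both sides are natural in $Y$ over $\Vect(\cC)$.
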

\begin{proof}
	Given $f \in \Vect(\cC)(X,X')$ and $g \in \Vect(\cC)(Y,Y')$, $Z(f) \in \cZ \Vect(\cC)(Z(X),Z(X'))$, and since $\beta^X$ is a half-braiding, $\beta^X_{Y'}(\id_{Z(X)} \odot g) = (g \odot \id_{Z(X)}) \beta^X_Y$. Therefore
	\begin{align*}
		\beta^{X'}_{Y'}(Z(f) \odot g) & = (\id_{Y'} \odot Z(f))\beta^X_{Y'}(\id_{Z(X)} \odot g) \\
		& = (\id_{Y'} \odot Z(f))(g \odot \id_{Z(X)})\beta^X_Y \ .
	\end{align*}
\end{proof}

Now we introduce a natural transformation that will latter be shown to be the counit of the adjunction $\cF \dashv \cU$. Given $(M,\gamma) \in \cZ \Vect(\cC)$, the naturality of $\{\gamma_Y: M \odot Y \to Y \odot M \}_{Y \in \Vect(\cC)}$ implies, by Lemma \ref{lemma:factorizationpropofZpart1}, the existence of a unique $\varepsilon_{(M,\gamma)}: Z(M) \to M$ such that
$$ \gamma_Y = (\id_Y \odot \varepsilon_{(M,\gamma)})\partial_{M,Y} = (\id_Y \odot \varepsilon_{(M,\gamma)}) \beta^M_Y (\eta_M \odot \id_Y) $$
for each $Y \in \Vect(\cC)$.

\begin{proposition}
	For each $(M,\gamma) \in \cZ \Vect(\cC)$, $$\varepsilon_{(M,\gamma)} \in \cZ \Vect(\cC)((Z(M),\beta^M),(M,\gamma)) \ . $$
\end{proposition}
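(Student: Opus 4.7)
The plan is to verify the intertwining identity
\[
\gamma_Y \circ (\varepsilon_{(M,\gamma)} \odot \id_Y) \;=\; (\id_Y \odot \varepsilon_{(M,\gamma)}) \circ \beta^M_Y
\]
for all $Y \in \Vect(\cC)$, which is the defining property of a morphism in $\cZ \Vect(\cC)$. By the extension property noted after Lemma~\ref{lemma:factorizationpropofZpart1}, it suffices to check the identity for $Y \in \cC$. Both sides are natural transformations $Z_{\Vect}(M) \odot (-) \to (-) \odot M$, so Lemma~\ref{lemma:factorizationpropofZpart1} associates to each a unique morphism $Z_{\Vect}^2(M) \to M$. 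I will show both correspond to $\varepsilon_{(M,\gamma)} \circ Z_{\Vect}(\varepsilon_{(M,\gamma)})$, which forces the desired equality. To reduce clutter, let me write $\varepsilon = \varepsilon_{(M,\gamma)}$ and $Z = Z_{\Vect}$ in what follows.

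The left-hand side is straightforward: the defining equation $\gamma_Y = (\id_Y \odot \varepsilon)\partial_{M,Y}$ together with the naturality of $\partial_{-,Y}$ in the first argument, $\partial_{M,Y}(\varepsilon \odot \id_Y) = (\id_Y \odot Z(\varepsilon))\partial_{Z(M),Y}$, yields
\[
\gamma_Y (\varepsilon \odot \id_Y) \;=\; \bigl(\id_Y \odot \varepsilon \circ Z(\varepsilon)\bigr)\,\partial_{Z(M),Y},
\]
so the LHS factors through $\varepsilon \circ Z(\varepsilon)$.

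For the right-hand side, the key inputs are the hexagon axioms for $\gamma$ and for the free half-braiding $\beta^M$ (Theorem~\ref{thm:algfreehalfbraiding}), together with Lemma~\ref{lemma:factorizationpropofZpart2}. I would expand $\gamma_{U \odot Y}$ in two ways for $U \in \cC$. First, the hexagon for $\gamma$ and the defining equation of $\varepsilon$, combined again with the naturality of $\partial$ in the first argument, gives
\[
\gamma_{U \odot Y} \;=\; (\id_U \odot \gamma_Y)(\gamma_U \odot \id_Y) \;=\; \bigl(\id_U \odot (\id_Y \odot \varepsilon \circ Z(\varepsilon))\partial_{Z(M),Y}\bigr)(\partial_{M,U} \odot \id_Y).
\]
Alternatively, $\gamma_{U \odot Y} = (\id_{U \odot Y} \odot \varepsilon)\partial_{M,U \odot Y}$, and the hexagon axiom for $\beta^M$ rewrites $\partial_{M,U \odot Y} = \beta^M_{U \odot Y}(\eta_M \odot \id_{U \odot Y})$ as $(\id_U \odot \beta^M_Y)(\partial_{M,U} \odot \id_Y)$, yielding
\[
\gamma_{U \odot Y} \;=\; \bigl(\id_U \odot (\id_Y \odot \varepsilon)\beta^M_Y\bigr)(\partial_{M,U} \odot \id_Y).
\]
Both displays are in the canonical form of Lemma~\ref{lemma:factorizationpropofZpart2} (natural in $U$, with $V = Y$ and target $Y \odot M$), so uniqueness of the factoring morphism forces
\[
(\id_Y \odot \varepsilon)\beta^M_Y \;=\; \bigl(\id_Y \odot \varepsilon \circ Z(\varepsilon)\bigr)\partial_{Z(M),Y}.
\]
Hence the RHS also factors through $\varepsilon \circ Z(\varepsilon)$, matching the LHS and completing the proof. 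The main obstacle is the hexagon-for-$\beta^M$ rewriting of $\partial_{M, U \odot Y}$, which encodes the compatibility of the free half-braiding with the coend structure; once this identity is in hand, the two invocations of the factoring lemmas produce the identification automatically.
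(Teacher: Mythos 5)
Your proof is correct and follows essentially the same route as the paper: both reduce to objects of $\cC$ and apply the uniqueness clause of Lemma~\ref{lemma:factorizationpropofZpart2} to the family $\{\gamma_{U \odot Y}\}_{U}$, expanded via the braid relations for $\gamma$ and $\beta^M$ together with the defining equation of $\varepsilon$. The only difference is cosmetic: where the paper simply checks that both sides agree after precomposition with $\partial_{M,V} \odot \id_U$ (both becoming $\gamma_{V \odot U}$), you additionally identify the common factoring morphism as $\varepsilon \circ Z_{\Vect}(\varepsilon)$ via Lemma~\ref{lemma:factorizationpropofZpart1} and the naturality of $\partial$ in its first argument (Lemma~\ref{lemma:naturarilityfreehalfbraiding}), a harmless extra step.
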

\begin{proof}
	Since a half-braiding on an ind-object is determined by its values at objects of $\cC$, it suffices to show
	$$(\id_U \odot \varepsilon_{(M,\gamma)})\beta^M_U = \gamma_U (\varepsilon_{(M,\gamma)} \odot \id_U)$$
	for each $U \in \cC$. Due to lemma \ref{lemma:factorizationpropofZpart2} it suffices in turn to prove 
	$$(\id_V \odot (\id_U \odot \varepsilon_{(M,\gamma)})\beta^M_U)(\partial_{M,V} \odot \id_U) = (\id_V \odot \gamma_U(\varepsilon_{(M,\gamma)} \odot \id_U))(\partial_{M,V} \odot \id_U)$$
	for each $V \in \cC$. Now
	\begin{align*}
		(\id_V \odot (\id_U \odot \varepsilon_{(M,\gamma)})\beta^M_U)(\partial_{M,V} \odot \id_U) & = (\id_V \odot \id_U \odot \varepsilon_{(M,\gamma)}) \beta^M_{V \odot U} (\eta_M \odot \id_V \odot \id_U) \\
		& = (\id_{V \odot U} \odot \varepsilon_{(M,\gamma)}) \partial_{M, V \odot U} \\
		& = \gamma_{V \odot U} \\
		& = (\id_V \odot \gamma_U)(\gamma_V \odot \id_U) \\
		& = (\id_V \odot \gamma_U)(\id_V \odot \varepsilon_{(M,\gamma)} \odot \id_U) (\partial_{M,V}\odot \id_U) \ .
	\end{align*}
\end{proof}

\begin{proposition}
	The collection $\varepsilon := \{ \varepsilon_{(M,\gamma)}\}_{(M,\gamma) \in \cZ \Vect(\cC)}$ is a natural transformation $\cF \cU \to \id_{\cZ \Vect(\cC)}$.
	\label{prop:naturalityofvarepsilon}
\end{proposition}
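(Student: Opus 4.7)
The plan is to prove naturality by invoking the uniqueness clause of Lemma \ref{lemma:factorizationpropofZpart1}. Given a morphism $f \in \Hom_{\cZ \Vect(\cC)}((M,\gamma),(M',\gamma'))$, I want to show the identity $f \circ \varepsilon_{(M,\gamma)} = \varepsilon_{(M',\gamma')} \circ Z(f)$ of morphisms $Z(M) \to M'$ in $\Vect(\cC)$. Both sides are candidates for the unique morphism $r \colon Z(M) \to M'$ satisfying a certain natural transformation equation; it thus suffices to exhibit a single natural transformation $\xi \colon M \odot (-) \to (-) \odot M'$ of functors $\cC \to \Vect(\cC)$ for which both $f \circ \varepsilon_{(M,\gamma)}$ and $\varepsilon_{(M',\gamma')} \circ Z(f)$ satisfy $\xi_U = (\id_U \odot r) \partial_{M,U}$.

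The natural candidate is $\xi_U := (\id_U \odot f) \circ \gamma_U = \gamma'_U \circ (f \odot \id_U)$, the two expressions agreeing precisely because $f$ is a morphism of half-braided objects. For the left side, using the defining property of $\varepsilon_{(M,\gamma)}$ directly yields
\[
(\id_U \odot (f \circ \varepsilon_{(M,\gamma)})) \partial_{M,U} = (\id_U \odot f) \circ (\id_U \odot \varepsilon_{(M,\gamma)}) \partial_{M,U} = (\id_U \odot f) \circ \gamma_U = \xi_U.
\]
For the right side, I first establish naturality of $\partial$ in its first variable: combining the naturality of $\eta \colon \id \to Z$ (immediate from its definition) with the naturality of $\beta$ in the first variable (Lemma \ref{lemma:naturarilityfreehalfbraiding}), one computes
\[
\partial_{M',U} \circ (f \odot \id_U) = \beta^{M'}_U \circ (\eta_{M'} \circ f \odot \id_U) = \beta^{M'}_U \circ (Z(f) \odot \id_U) \circ (\eta_M \odot \id_U) = (\id_U \odot Z(f)) \circ \partial_{M,U}.
\]
Plugging this in gives
\[
(\id_U \odot (\varepsilon_{(M',\gamma')} \circ Z(f))) \partial_{M,U} = (\id_U \odot \varepsilon_{(M',\gamma')}) \partial_{M',U} \circ (f \odot \id_U) = \gamma'_U \circ (f \odot \id_U) = \xi_U.
\]

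By the uniqueness in Lemma \ref{lemma:factorizationpropofZpart1}, the two morphisms $Z(M) \to M'$ coincide, proving naturality. The only delicate point is verifying the naturality of $\partial$ in the first argument; once that is in hand everything else is an automatic diagram chase, so I do not anticipate any real obstacle. Since the argument only uses universal properties already established, it is essentially formal, and the same strategy will later adapt verbatim to the unitary setting once uniform boundedness of the counit is addressed separately.
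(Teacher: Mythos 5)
Your proof is correct and follows essentially the same route as the paper: both arguments reduce the identity $\varepsilon_{(M',\gamma')}\circ Z(f)=f\circ\varepsilon_{(M,\gamma)}$ to an equality after applying $(\id_U\odot-)\,\partial_{M,U}$, using the naturality of $\eta$, Lemma \ref{lemma:naturarilityfreehalfbraiding}, the defining property of $\varepsilon$, and the fact that $f$ intertwines the half-braidings, and then conclude by the uniqueness clause of Lemma \ref{lemma:factorizationpropofZpart1}. Your only departure is organizational, isolating the naturality of $\partial$ in its first variable as an intermediate step rather than running one chain of equalities, which changes nothing of substance.
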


\begin{proof}
	We shall show that, for any $X \in \Vect(\cC)$ and any morphism $f \in \cZ \Vect(\cC)((M,\gamma),(N,\gamma'))$,
	\[
	(\id_X \odot \varepsilon_{(N,\gamma')}Z(f)) \partial_{M,X} = (\id_X \odot f \varepsilon_{(M,\gamma)}) \partial_{M,X}.
	\]
	By Lemma \ref{lemma:factorizationpropofZpart1}, this implies $\varepsilon_{(N,\gamma')}Z(f) = f \varepsilon_{(M,\gamma)}$ and therefore the naturality of $\varepsilon$.
	\begin{align*}
		(\id_X \odot \varepsilon_{(N,\gamma')}Z(f)) \partial_{M,X} & = (\id_X \odot \varepsilon_{(N,\gamma')}Z(f)) \beta^M_X (\eta_M \odot \id_X) \\
		& = (\id_X \odot \varepsilon_{(N,\gamma')})\beta^N_X (Z(f) \eta_M \odot \id_X) \\
		& = (\id_X \odot \varepsilon_{(N,\gamma')})\beta^N_X (\eta_N f \odot \id_X) \\
		& = \gamma'_X (f \odot \id_X) \\
		& = (\id_X \odot f)\gamma_X
		\\
		& = (\id_X \odot f)(\id_X \odot \varepsilon_{(M,\gamma)})\partial_{M,X} \\
		& = (\id_X \odot f \varepsilon_{(M,\gamma)}) \partial_{M,X} \ . 
	\end{align*}
\end{proof}

\begin{lemma}
	For all $X \in \Vect(\cC)$ we have $\varepsilon_{\cF(X)} Z(\eta_X) = \id_{\cF(X)}$, where $\varepsilon_{\cF(X)} = \varepsilon_{(Z(X),\beta^X)}$.
\end{lemma}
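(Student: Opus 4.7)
The plan is to invoke the uniqueness clause of Lemma~\ref{lemma:factorizationpropofZpart1}. Observe that $\partial_{X,U}: X \odot U \to U \odot Z(X)$ is a natural transformation in $U \in \cC$, so Lemma~\ref{lemma:factorizationpropofZpart1} identifies a unique $r: Z(X) \to Z(X)$ with $\partial_{X,U} = (\id_U \odot r)\partial_{X,U}$ for all $U$; evidently $r = \id_{Z(X)}$ is a witness. Thus it suffices to verify that
\[
\partial_{X,U} = (\id_U \odot \varepsilon_{\cF(X)} Z(\eta_X))\,\partial_{X,U} \qquad \forall \, U \in \cC,
\]
and uniqueness forces $\varepsilon_{\cF(X)} Z(\eta_X) = \id_{Z(X)} = \id_{\cF(X)}$.

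The key preliminary step is naturality of $\partial$ in its first argument: for $f \in \Vect(\cC)(X,X')$,
\[
(\id_U \odot Z(f))\,\partial_{X,U} = \partial_{X',U}\,(f \odot \id_U).
\]
This is a short manipulation combining the definition $\partial_{X,U} = \beta^X_U(\eta_X \odot \id_U)$, the naturality of $\eta$ (which gives $Z(f)\eta_X = \eta_{X'} f$), and the naturality of $\beta$ established in Lemma~\ref{lemma:naturarilityfreehalfbraiding} (which gives $(\id_U \odot Z(f))\beta^X_U = \beta^{X'}_U(Z(f) \odot \id_U)$).

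Specializing this naturality to $f = \eta_X : X \to Z(X)$ yields $(\id_U \odot Z(\eta_X))\partial_{X,U} = \partial_{Z(X),U}(\eta_X \odot \id_U)$. Combining with the defining identity $\beta^X_U = (\id_U \odot \varepsilon_{\cF(X)})\partial_{Z(X),U}$ of $\varepsilon_{\cF(X)}$, one then computes
\begin{align*}
(\id_U \odot \varepsilon_{\cF(X)} Z(\eta_X))\,\partial_{X,U}
&= (\id_U \odot \varepsilon_{\cF(X)})\,\partial_{Z(X),U}\,(\eta_X \odot \id_U) \\
&= \beta^X_U\,(\eta_X \odot \id_U) = \partial_{X,U},
\end{align*}
as required. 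No genuine obstacle is expected: the only subtlety is checking that the naturality of $\beta$ and of $\eta$ applies at ind-objects like $X$ and $Z(X)$, which is automatic from how these natural transformations are defined on all of $\Vect(\cC)$.
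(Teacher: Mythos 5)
Your proof is correct and follows essentially the same route as the paper: verify $(\id_U \odot \varepsilon_{\cF(X)} Z(\eta_X))\,\partial_{X,U} = \partial_{X,U}$ using the naturality of $\beta$ (Lemma~\ref{lemma:naturarilityfreehalfbraiding}), the naturality of $\eta$, and the defining identity of $\varepsilon_{\cF(X)}$, then conclude by the uniqueness clause of Lemma~\ref{lemma:factorizationpropofZpart1}. The only cosmetic difference is that you package the naturality steps into an explicit "naturality of $\partial$ in its first argument" claim, which the paper carries out inline (writing $\eta_{Z(X)}\eta_X$ where you write the equal morphism $Z(\eta_X)\eta_X$).
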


\begin{proof}
	We shall show that $(\id_U \odot \varepsilon_{\cF(X)}Z(\eta_X)) \partial_{X,U} = \partial_{X,U}$ for each $U \in \cC$ and invoke Lemma \ref{lemma:factorizationpropofZpart1} to conclude the proof. We have
	\begin{align*}
		(\id_U \odot \varepsilon_{\cF(X)}Z(\eta_X)) \partial_{X,U} & = (\id_U \odot \varepsilon_{\cF(X)}Z(\eta_X)) \beta^X_U (\eta_X \odot \id_U) \\
		&= (\id_U \odot \varepsilon_{\cF(X)}) \beta^{Z(X)}_U (\eta_{Z(X)} \eta_X \odot \id_U) \\
		& = (\id_U \odot \varepsilon_{\cF(X)}) \partial_{Z(X),U}(\eta_X \odot \id_U) \\
		& = \beta^X_U (\eta_X \odot \id_U) \\
		& = \partial_{X,U} \ .
	\end{align*}
\end{proof}

\begin{lemma}
	For all $(M,\sigma) \in \cZ \Vect(\cC)$, it holds $\cU(\varepsilon_{(M,\sigma)})\eta_{\cU(M,\sigma)} = \id_{\cU(M,\sigma)}$.
\end{lemma}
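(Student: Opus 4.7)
The plan is to evaluate the defining equation of $\varepsilon_{(M,\sigma)}$ at the unit object. Recall that $\varepsilon_{(M,\sigma)}: Z(M) \to M$ is the unique morphism such that
\[
\sigma_Y = (\id_Y \odot \varepsilon_{(M,\sigma)}) \partial_{M,Y}
\]
for every $Y \in \Vect(\cC)$. Since $\cU$ only forgets half-braidings, $\cU(\varepsilon_{(M,\sigma)}) = \varepsilon_{(M,\sigma)}$ and $\eta_{\cU(M,\sigma)} = \eta_M$, so the statement reduces to $\varepsilon_{(M,\sigma)} \circ \eta_M = \id_M$.

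First, I would identify $\partial_{M,\un}$ with $\eta_M$. By Lemma \ref{lemma:recoveringVirelizierTuraevformula},
\[
\partial_{M,\un} = (\id_\un \odot \rho_{M,\un})(\bar R_\un \odot \id_M \odot \id_\un).
\]
Under strictification, $\bar R_\un = \id_\un$, and from the defining formula of $\rho_{M,U}$, the only nonzero contribution when $U = \un$ comes from $a = \un$ with $\omega = \id_\un$. Hence $\rho_{M,\un}$ is precisely the inclusion $M = \bar\un \odot M \odot \un \hookrightarrow \bigoplus_{a \in \Irr(\cC)} \bar a \odot M \odot a = Z(M)$, which by definition is $\eta_M$. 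Thus $\partial_{M,\un} = \eta_M$.

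Second, I would observe that $\sigma_\un = \id_M$ under the strict identification $\un \odot M = M = M \odot \un$; this is a direct consequence of the monoidal compatibility (hexagon) axiom for half-braidings applied with $Y_1 = Y_2 = \un$, which forces $\sigma_\un$ to be idempotent and, since it is invertible, the identity. Combining these two facts and substituting $Y = \un$ in the defining equation of $\varepsilon_{(M,\sigma)}$ yields
\[
\id_M = \sigma_\un = (\id_\un \odot \varepsilon_{(M,\sigma)}) \partial_{M,\un} = \varepsilon_{(M,\sigma)} \circ \eta_M,
\]
which is exactly $\cU(\varepsilon_{(M,\sigma)}) \eta_{\cU(M,\sigma)} = \id_{\cU(M,\sigma)}$. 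There is no real obstacle here; the only points requiring care are the strictification conventions used to evaluate $\rho_{M,\un}$ and $\bar R_\un$, both of which are purely bookkeeping.
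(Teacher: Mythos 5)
Your proof is correct and follows essentially the same route as the paper: evaluate the defining equation of $\varepsilon_{(M,\sigma)}$ at $Y=\un$, identify $\partial_{M,\un}$ with $\eta_M$, and use that $\sigma_\un = \id_M$ for any half-braiding. The only cosmetic difference is that you compute $\partial_{M,\un}=\eta_M$ from the explicit formula for $\rho_{M,\un}$, whereas the paper gets it from $\beta^M_\un = \id_{Z(M)}$; both amount to the same bookkeeping at the unit object.
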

\begin{proof}
	Recall that, for any half-braiding $(N,\gamma)$, $\gamma_\un = \id_N$. In particular $\beta^M_\un = \id_{Z(M)}$. Thus
	\begin{align*}
		\cU(\varepsilon_{(M,\beta)})\eta_{\cU(M,\beta)} = \varepsilon_{(M,\beta)} \beta^M_\un \eta_M = (\id_\un \otimes \varepsilon_{(M,\beta)}) \partial_{M,\un} = \beta_\un = \id_M \ .
	\end{align*}
\end{proof}

\begin{corollary}
	The pair $(\eta,\varepsilon)$ constitutes an adjunction data for $\cF \dashv \cU$.
	\label{cor:adjunctionbetweenFandU}
\end{corollary}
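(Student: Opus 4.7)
The corollary is essentially a bookkeeping statement: all the nontrivial work has already been done in the preceding two lemmas together with Proposition on the naturality of $\varepsilon$. My plan is to observe that the standard unit/counit formulation of an adjunction $\cF \dashv \cU$ requires exactly four pieces of data, and to check that each of them has already been verified.

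First, I would recall that to exhibit $(\eta, \varepsilon)$ as adjunction data for $\cF \dashv \cU$ it suffices to check: (a) $\eta: \id_{\Vect(\cC)} \to \cU \cF = Z_{\Vect}$ is a natural transformation; (b) $\varepsilon: \cF \cU \to \id_{\cZ \Vect(\cC)}$ is a natural transformation; (c) the triangle identity $\varepsilon_{\cF(X)} \circ \cF(\eta_X) = \id_{\cF(X)}$ holds for every $X \in \Vect(\cC)$; and (d) the triangle identity $\cU(\varepsilon_{(M,\sigma)}) \circ \eta_{\cU(M,\sigma)} = \id_{\cU(M,\sigma)}$ holds for every $(M,\sigma) \in \cZ \Vect(\cC)$.

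Next, I would dispatch each point in turn. For (a), the naturality of $\eta_X: X \hookrightarrow Z_{\Vect}(X)$ follows immediately from its definition as the inclusion of the $\un$-summand $\bar{\un} \odot X \odot \un$ into $\bigoplus_a \bar{a} \odot X \odot a$, since this inclusion is preserved by the block-diagonal action of $Z_{\Vect}(f) = \bigoplus_a \id_{\bar{a}} \odot f \odot \id_a$. For (b), naturality is exactly the content of Proposition \ref{prop:naturalityofvarepsilon}. For (c), we observe that $\cF(\eta_X)$ is by definition $Z_{\Vect}(\eta_X)$ equipped with the half-braiding structure, so the triangle identity is precisely the preceding lemma. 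For (d), the identity is exactly the last preceding lemma.

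There is essentially no obstacle; the only point that requires minor care is the interpretation of $\cF(\eta_X)$ in the triangle identity (c), where one must recognize that a morphism in $\cZ \Vect(\cC)$ over $Z_{\Vect}(\eta_X)$ is determined by its underlying morphism in $\Vect(\cC)$, so the identity stated in the lemma is indeed the full triangle identity in $\cZ \Vect(\cC)$ and not merely its image under $\cU$. With this remark in place, the proof consists simply of assembling references to the four preceding results.
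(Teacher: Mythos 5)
Your proposal is correct and matches the paper's intent exactly: the corollary is stated without further proof precisely because the two preceding lemmas supply the triangle identities, Proposition \ref{prop:naturalityofvarepsilon} supplies naturality of $\varepsilon$, and naturality of $\eta$ is immediate from its definition as the inclusion of the $\un$-summand. Your remark that the first triangle identity holds in $\cZ\Vect(\cC)$ (not just after applying $\cU$) because the forgetful functor is faithful and both sides are morphisms of half-braidings is a correct and appropriate clarification.
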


Corollary \ref{cor:adjunctionbetweenFandU} implies that the endofunctor $Z$ of $\Vect(\cC)$ has a monadic structure coming from the adjunction data $\cF \dashv \cU$. We already have a concrete presentation for the unit $\eta$. From the proof of Theorem \ref{thm:definitionmonadZ} we can also extract a concrete presentation for the counit $\varepsilon$. If $(M,\gamma) \in \cZ \Vect(\cC)$, then
\[
\varepsilon_{(M,\gamma)} = \sum_{a \in \Irr(\cC)} (R_{a}^* \odot \id_M)(\id_{\bar{a}} \odot \gamma_{a}) \ , 
\]
which is well defined as a natural transformation between the functors $Z(M)$ and $M$ from $\cC^{\op} \to \Vect$. Indeed, given $U \in \cC$,
\[Z(M)(U) = \bigoplus_{a\in \Irr(\cC)} (\bar{a} \odot M \odot a) (U) .
\]
If $v \in Z(M)(U)$, there is a finite set $I_v \subset \Irr(\cC)$ such that $v \in \bigoplus_{a \in I_v} (\bar{a} \odot M \odot a) (U)$. Then
\[
\varepsilon_{(M,\gamma)}(v) = \sum_{a \in I_v}(R_{a}^* \odot \id_M)(\id_{\bar{a}} \odot \gamma_{a}) (v).
\]

From the general theory of monads and adjunctions, we conclude that $ \cU \circ \cF = Z$ is a monad in $\Vect(\cC)$: its unit is $\eta$, and its multiplication $\mu: Z^2 \to Z$ is given by
\[
\mu_X = \cU(\varepsilon_{\cF(X)}) = \varepsilon_{(Z(X),\sigma^X)} ,
\]
which can be computed, using conjugate equations, to be
$$\mu_X = \sum_{a,b,c \in \Irr(\cC)} \sum_{\omega \in O(c,a \odot b)} (\omega^\vee \odot \id_X \odot \omega^*) \ ,$$
again understood as a natural transformation $Z^2(X) \to Z(X)$ between functors $\cC^{\op} \to \Vect(\cC)$. 

\subsection{Linear isomorphism}

\begin{proposition}
	There is a canonical functor $\Phi: \cZ \Vect(\cC) \to Z \- \Mod_{\Vect(\cC)}$
	\label{prop:functorcentertomodules}
\end{proposition}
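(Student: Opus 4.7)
The plan is to use the Eilenberg--Moore comparison functor associated to the adjunction $\cF \dashv \cU$ from Corollary \ref{cor:adjunctionbetweenFandU}. Since $Z = \cU\cF$ is a monad in $\Vect(\cC)$ whose unit is $\eta$ and whose multiplication is $\mu = \cU \varepsilon \cF$, I define $\Phi$ on objects by
\[
\Phi(M,\gamma) := \bigl(M,\ \cU(\varepsilon_{(M,\gamma)})\bigr),
\]
where $\varepsilon_{(M,\gamma)} : (Z(M),\beta^M) \to (M,\gamma)$ is the morphism constructed in the previous subsection, viewed here simply as a morphism $Z(M) \to M$ in $\Vect(\cC)$. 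On morphisms, a morphism $f : (M,\gamma) \to (N,\gamma')$ in $\cZ\Vect(\cC)$ is sent to $\Phi(f) := \cU(f) = f$, regarded as a morphism of underlying objects in $\Vect(\cC)$.

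I then have two things to check. First, that $\varepsilon_{(M,\gamma)}$ really equips $M$ with the structure of a $Z$-module. The unit axiom $\varepsilon_{(M,\gamma)} \circ \eta_M = \id_M$ is precisely the last lemma of the previous subsection. The associativity axiom asks that
\[
\varepsilon_{(M,\gamma)} \circ \mu_M \ =\ \varepsilon_{(M,\gamma)} \circ Z(\varepsilon_{(M,\gamma)}).
\]
Unfolding $\mu_M = \cU(\varepsilon_{\cF(M)})$ and $Z(\varepsilon_{(M,\gamma)}) = \cU\cF\cU(\varepsilon_{(M,\gamma)})$, this reduces to
\[
\varepsilon_{(M,\gamma)} \circ \varepsilon_{\cF(M)} \ =\ \varepsilon_{(M,\gamma)} \circ \cF\cU(\varepsilon_{(M,\gamma)}),
\]
which is exactly the naturality of $\varepsilon$ applied to $\varepsilon_{(M,\gamma)} \in \Hom_{\cZ\Vect(\cC)}(\cF\cU(M,\gamma),(M,\gamma))$, established in Proposition \ref{prop:naturalityofvarepsilon}.

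Second, that $\Phi(f) = \cU(f)$ is $Z$-equivariant for every $f \in \Hom_{\cZ\Vect(\cC)}((M,\gamma),(N,\gamma'))$, i.e.\ that
\[
\cU(f) \circ \cU(\varepsilon_{(M,\gamma)}) \ =\ \cU(\varepsilon_{(N,\gamma')}) \circ Z(\cU(f)).
\]
Since $Z = \cU\cF$, this is $\cU(f \circ \varepsilon_{(M,\gamma)}) = \cU(\varepsilon_{(N,\gamma')} \circ \cF\cU(f))$, which again is the naturality statement of Proposition \ref{prop:naturalityofvarepsilon}. Functoriality in $f$ is immediate from the fact that $\cU$ is a functor and $\Phi$ agrees with $\cU$ on morphisms.

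The construction is therefore essentially a direct invocation of the comparison functor of an adjunction, and there is no real obstacle: the only subtlety worth double-checking is that the infinite-sum formula for $\varepsilon_{(M,\gamma)}$ recorded earlier is a well defined morphism in $\Vect(\cC)$ (which was already observed, since any element of $Z(M)(U)$ is supported on finitely many $a \in \Irr(\cC)$), so both the unit and associativity identities make unambiguous sense on each fiber.
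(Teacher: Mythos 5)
Your proposal is correct and follows essentially the same route as the paper: both define $\Phi(M,\gamma)=(M,\varepsilon_{(M,\gamma)})$, i.e.\ the Eilenberg--Moore comparison functor for $\cF \dashv \cU$, with the morphism condition coming from the naturality of $\varepsilon$ (Proposition \ref{prop:naturalityofvarepsilon}); you merely spell out the unit and associativity axioms (via the triangle identity and naturality of $\varepsilon$ applied to $\varepsilon_{(M,\gamma)}$ itself), which the paper leaves implicit. The only tiny slip is attributing the fact that $\varepsilon_{(M,\gamma)}$ is a morphism in $\cZ\Vect(\cC)$ to Proposition \ref{prop:naturalityofvarepsilon}, whereas it is the preceding proposition that establishes it.
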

\begin{proof}
	If $(M,\gamma) \in \cZ \Vect(\cC)$, then $(M,\varepsilon_{(M,\gamma)}) \in Z \- \Mod_{\Vect(\cC)}$. The proof of Proposition \ref{prop:naturalityofvarepsilon} shows exactly that any morphism $f: (M,\gamma) \to (N,\theta)$ of half-braidings is also a morphism between the $Z$-modules $(M,\varepsilon_{(M,\gamma)})$ and $(N,\varepsilon_{(N,\theta)})$. Define then $\Phi$ by $\Phi(M,\gamma) = (M,\varepsilon_{(M,\gamma)})$ on objects and $\Phi(f) = f$ on morphisms.
\end{proof}

\begin{proposition}
	There is a canonical functor $\Psi: Z \- \Mod_{\Vect(\cC)} \to \cZ \Vect(\cC)$.
	\label{prop:functormodulestocenter}
\end{proposition}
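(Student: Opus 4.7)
The plan is to construct the functor in the opposite direction to Proposition \ref{prop:functorcentertomodules}: given a $Z$-module $(M,r)$ with structure map $r: Z(M) \to M$, define a half-braiding $\gamma^{(M,r)}$ on $M$ by
\[
\gamma^{(M,r)}_U := (\id_U \odot r) \circ \partial_{M,U} : M \odot U \to U \odot M, \qquad U \in \cC,
\]
and extend to all $Y \in \Vect(\cC)$ by the universal property of Lemma \ref{lemma:factorizationpropofZpart1} (equivalently, by the same formula with $U$ replaced by $Y$). On morphisms, send a $Z$-module map $f:(M,r) \to (N,s)$ to $f$ itself viewed as a map of half-braidings. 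What must be checked is that (a) $\gamma^{(M,r)}$ really is a half-braiding, (b) $f$ is compatible with the induced half-braidings, and (c) the assignment is functorial.

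\textbf{Verifying the half-braiding axioms.} Naturality of $\gamma^{(M,r)}$ in $Y$ follows directly from naturality of $\partial_{M,-}$. Unitality is immediate: since $\beta^M_\un = \id_{Z(M)}$, one gets $\gamma^{(M,r)}_\un = r \circ \eta_M = \id_M$ by the unit axiom of the $Z$-module $(M,r)$. The key computation is hexagon compatibility
\[
\gamma^{(M,r)}_{U \odot V} = (\id_U \odot \gamma^{(M,r)}_V)(\gamma^{(M,r)}_U \odot \id_V).
\]
Starting from $\gamma^{(M,r)}_{U \odot V} = (\id_{U \odot V} \odot r)\, \partial_{M, U \odot V}$, I expand $\partial_{M, U\odot V}$ using the fact that $\beta^M$ is a half-braiding on $Z(M)$, so
\[
\partial_{M,U \odot V} = (\id_U \odot \beta^M_V)(\partial_{M,U} \odot \id_V).
\]
Then I use the explicit description $\beta^M_V = (\id_V \odot \mu_M) \circ \partial_{Z(M),V}$ obtained from $\mu_M = \varepsilon_{\cF(M)}$ and the defining property of $\varepsilon_{(Z(M),\beta^M)}$. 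This reduces the identity to the associativity axiom $r \circ \mu_M = r \circ Z(r)$ of the $Z$-module $(M,r)$, together with naturality of $\partial$ in its first argument: $(\id_V \odot Z(r))\, \partial_{Z(M),V} = \partial_{M,V}\, (r \odot \id_V)$. Combining these steps produces exactly $(\id_U \odot \gamma^{(M,r)}_V)(\gamma^{(M,r)}_U \odot \id_V)$.

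\textbf{Functoriality.} For $f : (M,r) \to (N,s)$ a morphism of $Z$-modules (so $f \circ r = s \circ Z(f)$), and any $U \in \cC$,
\[
(\id_U \odot f)\, \gamma^{(M,r)}_U = (\id_U \odot fr)\, \partial_{M,U} = (\id_U \odot sZ(f))\, \partial_{M,U} = (\id_U \odot s)\, \partial_{N,U}\, (f \odot \id_U) = \gamma^{(N,s)}_U\, (f \odot \id_U),
\]
using naturality of $\partial$ in the first variable. Since half-braidings in $\Vect(\cC)$ are determined by their values on objects of $\cC$, this shows $f$ is a morphism of half-braidings; composition is preserved trivially since the underlying maps are unchanged.

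\textbf{Expected obstacle.} The only mildly delicate step is the hexagon identity for $\gamma^{(M,r)}$, where one has to keep track of the interplay between the free half-braiding $\beta^M$ on $Z(M)$ and the monad multiplication $\mu_M$. Once one recognizes that the definition of $\mu_M$ from the adjunction precisely expresses $\beta^M$ through $\partial_{Z(M),-}$, the $Z$-module associativity axiom translates cleanly into the half-braiding hexagon. All other verifications are formal and follow from the universal property encapsulated in Lemma \ref{lemma:factorizationpropofZpart1} and Lemma \ref{lemma:factorizationpropofZpart3}. Together with Proposition \ref{prop:functorcentertomodules}, this construction should be the inverse on objects up to canonical isomorphism, yielding the monadic equivalence $\cZ \Vect(\cC) \simeq Z\text{-}\Mod_{\Vect(\cC)}$ to be proved in the next statements.
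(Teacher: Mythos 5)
Your construction coincides with the paper's: the same formula $\gamma^{(M,r)}_U=(\id_U\odot r)\,\partial_{M,U}$, the same proof of unitality via $r\circ\eta_M=\id_M$, the same reduction of the braid relation to the associativity $r\circ\mu_M=r\circ Z(r)$ combined with $\beta^M_V=(\id_V\odot\mu_M)\,\partial_{Z(M),V}$ and naturality of $\partial$ in the first variable, and the same functoriality argument. So in substance you have reproduced the paper's computation.

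There is, however, one genuine omission: you never verify that $\gamma^{(M,r)}$ is \emph{invertible}, and a half-braiding is by definition a natural isomorphism $M\odot(-)\to(-)\odot M$. Invertibility is not a formal consequence of the module axioms (the structure map $r\colon Z(M)\to M$ is certainly not invertible), nor of naturality, unitality and the hexagon alone; it uses rigidity of $\cC$. The paper closes this point by exhibiting, for $U\in\cC$, the candidate inverse
\[
\tilde{\gamma}_U:=(\bar{R}_U^*\odot\id_M\odot\id_U)\,(\gamma^{(M,r)}_{\bar{U}}\odot\id_U)\,(\id_U\odot\id_M\odot R_U),
\]
and checking via the conjugate equations and naturality that it inverts $\gamma^{(M,r)}_U$; since a natural transformation of this shape is determined by its components on objects of $\cC$ (the extension argument following Lemma \ref{lemma:factorizationpropofZpart1}), the family $\{\tilde{\gamma}_U\}_{U\in\cC}$ extends uniquely to an inverse of $\gamma^{(M,r)}$ on all of $\Vect(\cC)$. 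Adding this step makes your argument complete and essentially identical to the paper's proof.
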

\begin{proof}
	Let $(M,\tau) \in Z \- \Mod_{\Vect(\cC)}$. Given $X \in \Vect(\cC)$, let
	\[
	\beta^\tau_X := (\id_X \otimes \tau) \partial_{M,X} = (\id_X \otimes \tau) \beta^M_X (\eta_M \otimes \id_X).
	\]
	We claim that $\beta^\tau := \{\beta^\tau_X\}_{X \in \Vect(\cC)}$ is a half-braiding on $M$. First we show $\beta^\tau_\un = \id_M$. Since $(Z,\mu,\eta)$ is a monad, 
	\[
	\beta^\tau_\un = \tau \beta^M_\un \eta_M = \tau \eta_M = \id_M.
	\]
	Now we show the braid relation $\beta^\tau_{X \odot Y} = (\id_X \odot \beta^\tau_Y)(\beta^\tau_X \odot \id_Y)$ for arbitrary $X,Y \in \Vect(\cC)$. By definition,
	\begin{align*}
		\beta^\tau_{X \odot Y} & = (\id_X \odot \id_Y \odot \tau) \partial_{M,X \odot Y} \\
		& = (\id_X \odot \id_Y \odot \tau \mu_M) (\id_X \odot \partial_{Z(M),Y})(\partial_{M,X} \odot \id_Y) \\
		& = (\id_X \odot \id_Y \odot \tau Z(\tau)(\id_X \odot \partial_{Z(M),Y})(\partial_{M,X}\odot \id_Y) \\
		& = (\id_X \odot \id_Y \odot \tau)(\id_X \odot \partial_{Z(M),Y})(\id_X \odot \tau \odot \id_Y)(\partial_{M,X}\odot \id_Y) \\
		& = (\id_X \odot \beta^\tau_Y)(\beta^\tau_X \odot \id_Y) \ . 
	\end{align*}
	We show now that $\beta^\tau$ is invertible. Let $U \in \cC$. By naturality and conjugate equations, the morphism
	\[
	\tilde{\beta}^\tau_U := (\bar{R}_U^* \odot \id_M \odot \id_U) (\beta^\tau_{\bar{U}} \odot \id_U)(\id_U \odot \id_M \odot R_U)
	\]
	is an inverse for $\beta^\tau_U$. By naturality, $\beta^\tau$ is determined by the family $\{\beta^\tau_U\}_{U \in \cC}$, and the inverse of $\beta^\tau$ is also defined by its values at objects $U \in \cC$. Thus, $\tilde{\beta} = \{ \tilde{\beta}^\tau_U\}_{U \in \cC}$ has a unique extension to a natural transformation $(-) \odot M \to M \odot (-)$ between endofunctors of $\Vect(\cC)$, and this extension is $(\beta^\tau)^{-1}$.

	Let $f: (M,\tau) \to (N,\upsilon)$ be a morphism of $Z$-modules. We shall show that $f$ is automatically a morphism $(M,\beta^\tau) \to (N,\beta^\upsilon)$ of half-braidings, i.e., that $(\id_X \odot f) \beta^\tau_X = \beta^\upsilon_X (f \odot \id_X)$ for all $X \in \Vect(\cC)$. From the definition of $\beta^\tau$ and the hypothesis that $f$ is a morphism of $Z$-modules, we obtain
	\begin{align*}
		(\id_X \odot f) \beta^\tau_X = (\id_X \odot f) (\id_X \odot \tau) \partial_{M,X} 
		= (\id_X \odot \upsilon) (\id_X \odot Z(f)) \partial_{M,X} \ . 
	\end{align*}
	From the naturality of $\partial_{(-),(\bullet)}$, we have
	\begin{align*}
		(\id_X \odot \upsilon) (\id_X \odot Z(f)) \partial_{M,X}  = (\id_X \odot \upsilon) \partial_{N,X} (f \odot \id_X) = \beta^\upsilon_X (f \odot \id_X) \ . 
	\end{align*}
	
	Hence, defining $\Psi(M,\tau) := (M,\beta^\tau)$ on objects $(M,\tau) \in Z \- \Mod_{\Vect(\cC)}$, and $\Psi(f) = f$ on morphisms, we get a functor $\Psi: Z \- \Mod_{\Vect(\cC)} \to \cZ \Vect(\cC)$. 
\end{proof}

\begin{theorem}
	The functors $\Phi$ and $\Psi$ from propositions \ref{prop:functorcentertomodules} and \ref{prop:functormodulestocenter} are mutually inverses. Hence, there is an isomorphism
	$$\cZ \Vect(\cC) \overset{\text{isom.}}{\simeq} Z \- \Mod_{\Vect(\cC)}  $$
	of linear categories.
	\label{thm:linearisom}
\end{theorem}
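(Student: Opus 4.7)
The plan is to verify that the two compositions $\Psi\circ\Phi$ and $\Phi\circ\Psi$ equal the respective identity functors. Since both $\Phi$ and $\Psi$ act as the identity on morphisms, the entire content lies at the level of objects: I need to show that the module structure and the half-braiding produced from one another recover the original data. The crucial input for both directions is the universal property encoded in Lemma \ref{lemma:factorizationpropofZpart1}, i.e.\ that natural transformations of the form $X\odot(-)\to(-)\odot Y$ on $\Vect(\cC)$ are in bijection with morphisms $Z(X)\to Y$ via $\xi_U=(\id_U\odot r)\partial_{X,U}$.

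For $\Psi\circ\Phi=\id$, I would start with $(M,\gamma)\in\cZ\Vect(\cC)$. Applying $\Phi$ gives $(M,\varepsilon_{(M,\gamma)})$, and then applying $\Psi$ produces the half-braiding $\beta^{\varepsilon_{(M,\gamma)}}$ defined on $X\in\Vect(\cC)$ by
\[
\beta^{\varepsilon_{(M,\gamma)}}_X=(\id_X\odot\varepsilon_{(M,\gamma)})\partial_{M,X}.
\]
But by the very definition of $\varepsilon_{(M,\gamma)}$ used in Proposition \ref{prop:functorcentertomodules}, the right-hand side equals $\gamma_X$ for every $X$. Hence $\beta^{\varepsilon_{(M,\gamma)}}=\gamma$, so $\Psi\Phi(M,\gamma)=(M,\gamma)$.

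For $\Phi\circ\Psi=\id$, take $(M,\tau)\in Z\text{-}\Mod_{\Vect(\cC)}$. Applying $\Psi$ yields $(M,\beta^\tau)$ with $\beta^\tau_X=(\id_X\odot\tau)\partial_{M,X}$, and then $\Phi$ produces the $Z$-module structure $\varepsilon_{(M,\beta^\tau)}\colon Z(M)\to M$ which is, by construction, the unique morphism satisfying $\beta^\tau_Y=(\id_Y\odot\varepsilon_{(M,\beta^\tau)})\partial_{M,Y}$ for every $Y\in\Vect(\cC)$. Since $\tau$ itself satisfies this relation tautologically, Lemma \ref{lemma:factorizationpropofZpart1} forces $\varepsilon_{(M,\beta^\tau)}=\tau$, so $\Phi\Psi(M,\tau)=(M,\tau)$.

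There is no real obstacle once Lemma \ref{lemma:factorizationpropofZpart1} is in place: both identities collapse to a uniqueness statement. The only subtlety worth being explicit about is the ind-categorical one, namely that a half-braiding and its candidate inverse are determined by their values on objects of $\cC$ (as noted in the remark following Lemma \ref{lemma:factorizationpropofZpart1} and used in Proposition \ref{prop:functormodulestocenter}); this justifies reducing the equality $\beta^{\varepsilon_{(M,\gamma)}}_U=\gamma_U$ to $U\in\cC$ and then invoking uniqueness. Functoriality (i.e.\ naturality in morphisms) is automatic because $\Phi$ and $\Psi$ are defined to be the identity on $\Hom$-spaces, and Propositions \ref{prop:functorcentertomodules} and \ref{prop:functormodulestocenter} already verified that a morphism $f$ in one category is simultaneously a morphism in the other.
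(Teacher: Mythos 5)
Your proposal is correct and follows essentially the same route as the paper: reduce to the object-level identities $\beta^{\varepsilon_{(M,\gamma)}}=\gamma$ and $\varepsilon_{(M,\beta^\tau)}=\tau$, which are exactly what the defining (uniqueness) property of $\varepsilon$ from Lemma \ref{lemma:factorizationpropofZpart1} gives. The paper states this more tersely ("the identities follow from the definition of the counit"), while you spell out the uniqueness argument explicitly; the content is the same.
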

\begin{proof}
	Since both functors act identically on morphisms, we just have to show the identities $\Psi \circ \Phi = \id_{\cZ \Vect(\cC)}$ and $\Phi \circ \Psi = \id_{Z \- \Mod_{\Vect(\cC)}}$ on objects. We have
	\begin{align*}
		\Psi \Phi (M,\gamma) & = (M,\sigma^{\varepsilon_{(M,\gamma)}}) \\
		\Phi \Psi (M,\tau) &= (M,\varepsilon_{(M,\beta^\tau)}) \ .
	\end{align*}
	But the identities $\varepsilon_{(M,\beta^\tau)} = \tau$ and $\beta^{\varepsilon_{(M,\gamma)}} = \gamma$ follow from the definition of the counit $\varepsilon$.
\end{proof}

\subsection{Bimonadic structure of $Z$}

Now we shall use lemmas \ref{lemma:factorizationpropofZpart1}, \ref{lemma:factorizationpropofZpart2} and \ref{lemma:factorizationpropofZpart3} to give to $Z$ the structure of a bimonad. Let $X_1,X_2 \in \Vect(\cC)$. The collection 
\[
\{(\partial_{X_1,K} \odot \id_{Z(X_2)})(\id_{X_1} \odot \partial_{X_2,K}): X_1 \odot X_2 \odot K \to K \odot Z(X_1) \odot Z(X_2)\}_{K \in \Vect(\cC)}
\]
is natural in $K \in \Vect(\cC)$. By Lemma \ref{lemma:factorizationpropofZpart1}, there is a unique $Z_2(X_1,X_2):Z(X_1 \odot X_2) \to Z(X_1) \odot Z(X_2)$ such that
\[
(\partial_{X_1,U} \odot \id_{Z(X_2)})(\id_{X_1} \odot \partial_{X_2,U}) = (\id_U \odot Z_2(X_1,X_2)) \partial_{X_1 \odot X_2,U} 
\]

\begin{lemma}
	For every $X_1, X_2 \in \Vect(\cC)$, 
	\[
	Z_2(X_1,X_2) = \sum_{a \in \Irr(\cC)} \id_{\bar{a}} \odot \id_{X_1} \odot \bar{R}_a \odot \id_{X_2} \odot \id_{a} .
	\]
\end{lemma}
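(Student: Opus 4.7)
The plan is to invoke the uniqueness half of Lemma \ref{lemma:factorizationpropofZpart1} and then verify the defining equation by a direct expansion. Concretely, write
\[
r := \sum_{a \in \Irr(\cC)} \id_{\bar a} \odot \id_{X_1} \odot \bar R_a \odot \id_{X_2} \odot \id_a \ \in \ \Vect(\cC)\bigl(Z_{\Vect}(X_1 \odot X_2),\, Z_{\Vect}(X_1) \odot Z_{\Vect}(X_2)\bigr).
\]
By the defining property of $Z_2(X_1,X_2)$ and uniqueness in Lemma \ref{lemma:factorizationpropofZpart1}, to show $r = Z_2(X_1,X_2)$ it suffices to check the single identity
\[
(\partial_{X_1,U} \odot \id_{Z_{\Vect}(X_2)})(\id_{X_1} \odot \partial_{X_2,U}) \; = \; (\id_U \odot r)\,\partial_{X_1 \odot X_2, U}
\]
for every $U \in \cC$.

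The main tool is the explicit presentation from Lemma \ref{lemma:recoveringVirelizierTuraevformula},
\[
\partial_{X,U} = (\id_U \odot \rho_{X,U})(\bar R_U \odot \id_X \odot \id_U),
\]
together with the formula for $\rho_{X,U}$ in Lemma \ref{lemma:centraldinaturaltransformation}. Expanding the left-hand side, the composition produces a double sum indexed by $(a_1, \omega_1) \in \Irr(\cC) \times \ONB(a_1, U)$ (from $\partial_{X_1,U}$) and $(a_2,\omega_2) \in \Irr(\cC) \times \ONB(a_2,U)$ (from $\partial_{X_2,U}$), with the two copies of $\bar R_U$ producing two pairings on either side of $X_1 \odot X_2$. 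Expanding the right-hand side with $r$ substituted in gives a single sum over $a \in \Irr(\cC)$ and $\omega \in \ONB(a,U)$, plus the factor of $\bar R_a$ inserted between $X_1$ and $X_2$.

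The verification then reduces to a book-keeping argument: the inserted $\bar R_a$ in the right-hand side can be resolved against the adjacent $\rho$-pieces using the conjugate equation $(\id_{\bar a} \odot R_a^*)(\bar R_a \odot \id_{\bar a}) = \id_{\bar a}$ together with the completeness relation $\sum_{\omega \in \ONB(a_2, U)} \omega \omega^* = p_{a_2}^U$. After collapsing these identities and relabelling ONB indices, the single sum on the right reorganizes into the double sum on the left. The only anticipated obstacle is the combinatorial bookkeeping of ONB labels with the conjugate equations for both $U$ and for the irreducibles $a$; no new conceptual input beyond Lemmas \ref{lemma:centraldinaturaltransformation}, \ref{lemma:recoveringVirelizierTuraevformula}, and \ref{lemma:factorizationpropofZpart1} is needed, so once the matching is carried out carefully for one pair of summands, the general case follows formally.
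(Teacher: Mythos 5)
Your top-level strategy is the same as the paper's: characterize $Z_2(X_1,X_2)$ as the unique morphism satisfying
\[
(\partial_{X_1,U} \odot \id_{Z_{\Vect}(X_2)})(\id_{X_1} \odot \partial_{X_2,U}) = (\id_U \odot Z_2(X_1,X_2))\, \partial_{X_1 \odot X_2,U}
\]
(Lemma \ref{lemma:factorizationpropofZpart1}) and verify that the explicit candidate $r$ satisfies it. Two things are missing, though. First, you assert $r \in \Vect(\cC)(Z_{\Vect}(X_1\odot X_2), Z_{\Vect}(X_1)\odot Z_{\Vect}(X_2))$ without checking that the infinite sum over $a \in \Irr(\cC)$ makes sense; this is harmless but does need the observation (made first in the paper's proof) that on each fiber only finitely many components of a given vector are nonzero. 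Second, and more seriously, the ``book-keeping'' you defer is exactly where the mathematical content sits, and the identities you cite are not the ones that do the work. The displayed ``conjugate equation'' $(\id_{\bar a}\odot R_a^*)(\bar R_a \odot \id_{\bar a})=\id_{\bar a}$ does not even typecheck, and the completeness relation $\sum_{\omega}\omega\omega^* = p^U_{a_2}$ is not what is needed. When you expand the left-hand side, the cap $\omega_1^*$ coming from $\rho_{X_1,U}$ and the conjugate cap $(\omega_2^\vee)^*$ coming from $\rho_{X_2,U}$ both land on the two legs of the \emph{same} copy of $\bar R_U$ (the one created by the inner $\partial_{X_2,U}$), so the crux is the identity $(\omega_1^* \odot (\omega_2^\vee)^*)\,\bar R_U = \delta_{\omega_1,\omega_2}\,\bar R_{a}$: Schur's lemma kills the terms with $a_1 \not\simeq a_2$ and orthonormality $\omega_1^*\omega_2 = \delta_{\omega_1,\omega_2}\id_a$ kills the off-diagonal ONB terms, leaving precisely the $\bar R_a$ that $Z_2$ inserts. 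Describing this as ``relabelling ONB indices'' so that ``the single sum reorganizes into the double sum'' misses the point that the double sum has many more terms, almost all of which must be shown to vanish; without that step the verification does not close.

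You should also be aware that the paper avoids this computation entirely: since both sides of the defining identity are natural in $U$, it suffices to check it for $U = c$ irreducible, where $\ONB(a,c)$ is nontrivial only for $a = c$ and $\partial_{Y,c} = \bar R_c \odot \id_Y \odot \id_c$ under the inclusion of $\bar c \odot Y \odot c$ into $Z_{\Vect}(Y)$. At an irreducible object the identity is then a two-line computation with no ONB sums at all. Your route can be completed, but only after supplying the Schur/orthonormality collapse described above; if you want a short proof, reduce to irreducibles first.
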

\begin{proof}
	Observe first that the right-hand-side is well defined as a natural transformation. Indeed, for each $c \in \Irr(\cC)$, 
	\[ Z(X_1 \odot X_2)(c) = \bigoplus_{a \in \Irr(\cC)} \Vect(\cC)(c, \bar{a} \odot X_1 \odot X_2 \odot a),
	\]
	and therefore for each $v \in Z(X_1 \odot X_2)(c)$ there are only finitely many $a \in \Irr(\cC)$ for which the image of $v$ under
	\[
	\id_{\bar{a}} \odot \id_{X_1} \odot \bar{R}_a \odot \id_{X_2} \odot \id_{a} : \Vect(\cC)(c, \bar{a} \odot X_1 \odot X_2 \odot a) \to \Vect(\cC)(c, \bar{a} \odot X_1 \odot a \odot \bar{a} \odot X_2 \odot a)
	\]
	is non-zero. By naturarility, it suffices to prove 
	\[
	(\partial_{X_1,U} \odot \id_{Z(X_2)})(\id_{X_1} \odot \partial_{X_2,U}) = (\id_U \odot Z_2(X_1,X_2)) \partial_{X_1 \odot X_2,U} 
	\]
	when $K = c \in \Irr(\cC)$. This is a consequence of the following observation: given $Y \in \Vect(\cC)$, 
	\[
	\partial_{Y,c} = \bar{R}_c \odot \id_Y \odot \id_c,
	\]
	upon identification of $\bar{c} \odot Y \odot c$ as a subobject of $\cF(Y) = \bigoplus_a \bar{a} \odot Y \odot a$. Hence,
	\begin{align*}
	(\partial_{X_1,U} \odot \id_{Z(X_2)})(\id_{X_1} \odot \partial_{X_2,U}) &= \bar{R}_c \odot \id_{X_1} \odot \bar{R}_c \odot \id_{X_2} \odot c \\
	& = (\id_c \odot (\id_{\bar{c}} \odot \id_{X_1} \odot \bar{R}_c \odot \id_{X_2} \odot \id_c))(\bar{R}_c \odot \id_{X_1} \odot \id_{X_2} \odot \id_c)\\
	& = (\id_c \odot Z_2(X_1,X_2)) \partial_{X_1 \odot X_2,c} .
	\end{align*}
\end{proof}	

From the above Lemma we deduce that $Z_2$ is natural in both variables.
\medskip

Looking at $\id_{\Vect(\cC)}$ as a natural transformation $\id_K: K \simeq \un \odot K \to K \odot \un \simeq K$, Lemma \ref{lemma:factorizationpropofZpart1} implies the existence of a unique $Z_0: Z_{\Vect}(\un) \to \un$ such that 
$$\id_K = (\id_K \odot Z_0) \partial_{\un,K}$$
for each $K \in \Vect(\cC)$.

\begin{lemma}
	The unique morphism $Z_0: Z(\un) \to \un$ saitsfying
	\[
	\id_K = (\id_K \odot Z_0) \partial_{\un,K}
	\]
	for every $K \in \Vect(\cC)$ is given by
	\[
	Z_0 = \sum_{a \in \Irr(\cC)} R_a^* : \bigoplus_{a \in \Irr(\cC)} \bar{a} \odot a \to \un .
	\]
\end{lemma}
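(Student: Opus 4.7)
The plan is to verify the stated formula for $Z_0$ by direct computation, using the uniqueness established by Lemma~\ref{lemma:factorizationpropofZpart1}.

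First, I would invoke the uniqueness clause: since $Z_0$ is determined by the condition $\id_K = (\id_K \odot Z_0)\partial_{\un,K}$, and (by the extension argument in the remark following Lemma~\ref{lemma:factorizationpropofZpart1}) any natural transformation $\un \odot (-) \to (-) \odot \un$ on $\Vect(\cC)$ is determined by its values at objects of $\cC$, it suffices to verify the identity $(\id_U \odot Z_0)\partial_{\un,U} = \id_U$ for every $U \in \cC$.

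Next, I would use Lemma~\ref{lemma:recoveringVirelizierTuraevformula} to rewrite $\partial_{\un,U} = (\id_U \odot \rho_{\un,U})(\bar R_U \odot \id_U)$, reducing the task to proving the composite identity
\[
(\id_U \odot (Z_0 \circ \rho_{\un,U}))(\bar R_U \odot \id_U) = \id_U .
\]
By the conjugate equation $(\id_U \odot R_U^*)(\bar R_U \odot \id_U) = \id_U$, this in turn reduces to the single identity $Z_0 \circ \rho_{\un,U} = R_U^*$ as morphisms $\bar U \odot U \to \un$.

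Finally, I would expand $Z_0 \circ \rho_{\un,U}$ using the definition of $\rho_{\un,U}$ from Lemma~\ref{lemma:centraldinaturaltransformation}. Since $R_a^*$ is supported on the summand $\bar a \odot a$ of $Z(\un)$, only the $a$-th component of $\rho_{\un,U}$ survives, giving
\[
Z_0 \circ \rho_{\un,U} = \sum_{a \in \Irr(\cC)} \sum_{\omega \in \ONB(a,U)} R_a^* \circ \bigl((\omega^\vee)^* \odot \omega^*\bigr).
\]
The key algebraic fact to invoke is the Frobenius/conjugate-equation identity relating $R_a$ and $R_U$ through $\omega$ (namely, the adjoint form of $(\id_{\bar U} \odot \omega^*) R_U = ((\omega^\vee)^* \odot \id_a) R_a$), which rewrites each summand as $R_U^*$ composed with a rank-one term involving $\omega \omega^*$ on the $U$-factor. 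Summing over $a$ and $\omega$, the resolution of identity $\sum_{a,\omega} \omega \omega^* = \id_U$ collapses the expression to $R_U^*$, completing the proof.

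The main obstacle is purely bookkeeping: keeping the directions of $\omega$, $\omega^*$, $\omega^\vee$, and $(\omega^\vee)^*$ straight, and applying the conjugate equations in the correct form. There is no conceptual difficulty beyond the standard graphical-calculus manipulations already used in the proofs of Lemmas~\ref{lemma:centraldinaturaltransformation} and~\ref{lemma:recoveringVirelizierTuraevformula}.
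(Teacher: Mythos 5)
Your proposal is correct, and its overall skeleton (pin down $Z_0$ by the uniqueness clause of Lemma~\ref{lemma:factorizationpropofZpart1}, reduce by naturality to objects of $\cC$, then verify with conjugate equations) matches the paper's. The execution differs in one respect: the paper reduces further to irreducible objects $c \in \Irr(\cC)$, where $\partial_{\un,c}$ is literally $\bar R_c \odot \id_c$ composed with the inclusion of the $c$-summand (the same observation used in the computation of $Z_2$), so the verification is a single application of the conjugate equation; you instead verify the identity at an arbitrary $U \in \cC$ by factoring $\partial_{\un,U}$ through $\rho_{\un,U}$ via Lemma~\ref{lemma:recoveringVirelizierTuraevformula} and then proving $Z_0 \circ \rho_{\un,U} = R_U^*$ with the Frobenius identity $(\id_{\bar U} \odot \omega^*)R_U = ((\omega^\vee)^*\odot\id_a)R_a$ and the resolution of identity $\sum_{a,\omega}\omega\omega^* = \id_U$. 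Both routes are valid; yours is slightly longer but self-contained, while the paper's exploits the simpler form of $\partial$ at irreducibles. The only point the paper makes that you skip is the one-sentence check that $\sum_a R_a^*$ is well defined as a morphism out of $Z_{\Vect}(\un)$ (for each $c$ the fiber $Z_{\Vect}(\un)(c) \simeq \bigoplus_a \cC(c,\bar a \odot a)$ is an algebraic direct sum, so only finitely many components act nontrivially on any vector); it is immediate, but worth stating since the formula is an infinite sum.
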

\begin{proof}
	Given $c \in \Irr(\cC)$ and $v \in Z(\un)(c) \simeq \bigoplus_a \cC(c, \bar{a} \odot a)$, there are only finitely many $a \in \Irr(\cC)$ for which the image of $v$ under $R_a^*: \cC(c,\bar{a} \odot a) \to \cC(c,\un)$ is non-zero. Hence $Z_0$ is well-defined. By naturality, it suffices to show that
	\[
	\id_c = (\id_c\odot Z_0) \partial_{\un,c}
	\]
	for every $c \in \Irr(\cC)$. This follows from the argument used in the computation of $Z_2$, as can be easily checked.
\end{proof}

If $(T,T_2,T_0)$ is an op-lax endofunctor of a  tensor category $\cD$, then $T^2 = T \circ T$ can be equipped with a canonical op-lax structure, the op-unitor given by $T_0 \circ T(T_0)$ and the op-tensorator given by
\[
T^2(X \otimes Y) \overset{T(T_2)}{\longrightarrow} T(T(X) \otimes T(Y)) \overset{T_2}{\longrightarrow} T^2(X) \otimes T^2(Y) .
\]

\begin{definition}\label{def:bimonad}
	Suppose $(T,\mu, \eta)$ is a monad over a tensor category $\cD$. If $(T_2,T_0)$ is an op-lax structure on $T$, the tuple $(T,\mu,\eta,T_2,T_0)$ is called a {\em bimonad} if $\mu$ and $\eta$ are op-lax natural transformations.
\end{definition}

The proof of the following Proposition follows easily from diagrammatical calculus.

\begin{proposition}
	The pair $(Z_2,Z_0)$ is an op-lax  structure on $Z$, turning $(Z,\mu,\eta,Z_2,Z_0)$ into a bimonad in $\Vect(\cC)$.
	\label{prop:Zisbimonad}
\end{proposition}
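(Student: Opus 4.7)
The plan is to use the factorization lemmas (Lemmas~\ref{lemma:factorizationpropofZpart1}, \ref{lemma:factorizationpropofZpart2}, \ref{lemma:factorizationpropofZpart3}) as the main tool: every morphism we need to equate has codomain of the form $K \odot Z_{\Vect}(\cdots)$ or $K_1 \odot K_2 \odot Z_{\Vect}(\cdots)$ after precomposing with the appropriate $\partial$'s, and the lemmas reduce equality of the morphisms from $Z_{\Vect}$-objects to equality of their composites with $\partial$'s, for which we can use the defining identities of $Z_2$, $Z_0$, $\eta$ and $\mu$ together with the dinatural identity $\partial_{-,V} = (\id_V \odot \rho_{-,V})(\bar{R}_V \odot \id_- \odot \id_V)$ of Lemma~\ref{lemma:recoveringVirelizierTuraevformula}.

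First I would verify that $(Z_2, Z_0)$ is an op-lax comonoidal structure. For op-lax counitality $(Z_0 \odot \id)\circ Z_2(\un,X) = \id_{Z(X)} = (\id \odot Z_0)\circ Z_2(X,\un)$, I would compose both sides with $\partial_{\un \odot X, U}$ (resp.\ $\partial_{X \odot \un, U}$) for $U \in \cC$ and use the defining equations of $Z_2$ and $Z_0$ to reduce each side to $\partial_{X,U}$; by Lemma~\ref{lemma:factorizationpropofZpart1} this forces equality. For op-lax coassociativity
\[
(Z_2(X,Y) \odot \id_{Z(W)}) \circ Z_2(X \odot Y, W) = (\id_{Z(X)} \odot Z_2(Y,W)) \circ Z_2(X, Y \odot W),
\]
I would precompose with $\partial_{X \odot Y \odot W, U}$: using the defining property of $Z_2$ twice, both sides become $(\partial_{X,U} \odot \id \odot \id)(\id_X \odot \partial_{Y,U} \odot \id)(\id_X \odot \id_Y \odot \partial_{W,U})$, so Lemma~\ref{lemma:factorizationpropofZpart1} again yields the claim.

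Next I would check the op-lax compatibility of $\eta$. The identity $Z_0 \circ \eta_\un = \id_\un$ follows by testing against $K = \un$ in the defining relation of $Z_0$ since $\partial_{\un,\un} = \eta_\un$ (using $\bar{R}_\un = \id_\un$ and $\rho_{\un,\un} = \id$). For $Z_2(X,Y) \circ \eta_{X \odot Y} = \eta_X \odot \eta_Y$, the explicit formula for $Z_2$ shows that $Z_2$ restricted to the $a=\un$ summand of $Z(X\odot Y)$ is $\id_X \odot \bar{R}_\un \odot \id_Y = \eta_X \odot \eta_Y$, and $\eta_{X \odot Y}$ lands precisely in that summand.

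The remaining and most substantive step is the op-lax naturality of $\mu$. I would show
\[
Z_2(X,Y) \circ \mu_{X \odot Y} = (\mu_X \odot \mu_Y) \circ Z_2(Z(X), Z(Y)) \circ Z(Z_2(X,Y))
\]
and the analogous counit equality with $Z_0$. Using $\mu_X = \varepsilon_{\cF(X)}$ and the defining property $\gamma_U = (\id_U \odot \varepsilon_{(M,\gamma)}) \partial_{M,U}$ of $\varepsilon$ applied to $(M,\gamma) = (Z(X \odot Y),\beta^{X \odot Y})$, the composite with $\partial_{Z(X \odot Y), U}$ on the left becomes $(\id_U \odot Z_2(X,Y)) \beta^{X\odot Y}_U$, which the explicit formula for $\beta^{X\odot Y}$ rewrites as $(\id_U \odot \beta^X_{\cdot})(\beta^Y_{\cdot} \odot \id)$ sandwiched with $Z_2$'s; the right-hand side is manipulated identically via the canonical op-lax structure on $Z_{\Vect}^2$. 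The main obstacle here is purely bookkeeping: one has to track several nested $\partial$'s and $Z_2$'s and recognise that the resulting large diagram collapses after applying the factorization Lemma~\ref{lemma:factorizationpropofZpart3} in the two-variable case. Once both sides are brought to the normal form of a single morphism out of a tensor product of $\partial$'s, the string-diagram identities (snake relations for $\bar{R}$, $R$ and the resolution-of-identity $\sum_a \id_a \otimes \id_{\bar a}$) finish the verification.
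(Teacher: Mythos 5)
Your proposal is correct, and it is in fact more explicit than the paper's own argument, which consists only of the assertion that the statement "follows easily from diagrammatical calculus." Your organisation — reducing every identity among morphisms out of $Z_{\Vect}$-objects to an identity of natural transformations by precomposing with the $\partial$'s and invoking the uniqueness clauses of Lemmas~\ref{lemma:factorizationpropofZpart1}--\ref{lemma:factorizationpropofZpart3}, together with the defining relations of $Z_2$, $Z_0$, $\eta$ and $\varepsilon$ — is exactly how the paper handles the neighbouring statements (naturality of $\varepsilon$, the triangle identities, the description of $\mu$), so it is the natural implementation of the intended diagrammatic proof. In particular your reduction of the comonoidality of $\mu$ to the identity $(\id_U \odot Z_2(X,Y))\,\beta^{X\odot Y}_U = (\beta^X_U \odot \id_{Z(Y)})(\id_{Z(X)} \odot \beta^Y_U)(Z_2(X,Y)\odot \id_U)$ isolates precisely the residual string-diagram content the paper is alluding to (and which also underlies the subsequent Lemma on $(M\odot N,\beta^{(\tau\odot\upsilon)\circ Z_2})$); verifying it from the explicit ONB formulas for $\beta$ and $Z_2$ is routine Frobenius-reciprocity bookkeeping. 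Two cosmetic points: $\rho_{\un,\un}$ is not the identity but the inclusion of the $\un$-summand into $Z_{\Vect}(\un)$ (the conclusion $\partial_{\un,\un}=\eta_\un$ is still immediate, e.g.\ from $\partial_{\un,\un}=\beta^\un_\un(\eta_\un\odot\id_\un)$ and $\beta^\un_\un=\id$); and for the $\mu$-compatibility the one-variable Lemma~\ref{lemma:factorizationpropofZpart1} already suffices, since both sides are morphisms $Z_{\Vect}^2(X\odot Y)\to Z_{\Vect}(X)\odot Z_{\Vect}(Y)$ tested against a single $U\in\cC$, so the appeal to Lemma~\ref{lemma:factorizationpropofZpart3} is unnecessary though harmless.
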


By making use of the bimonadic structure on $Z$ given by $(Z_2,Z_0)$, the category $Z \- \Mod_{\Vect(\cC)}$ can be given a tensor structure such that the forgetful functor $Z \- \Mod_{\Vect(\cC)} \to \Vect(\cC)$ is strictly monoidal. Indeed, if $(M,\tau)$ and $(N,\upsilon)$ are $Z$-modules,
\[
Z(M \odot N) \overset{Z_2}{\longrightarrow} Z(M) \odot Z(N) \overset{\tau \odot \upsilon}{\longrightarrow} M \odot N
\]
defines a $Z$-module structure on $M \odot N$.

The Lemma below is also easily proven via diagrammatical calculus.

\begin{lemma}
	Let $(M,\tau)$ and $(N,\upsilon)$ be $Z$-modules. Then
	\[
	(M \odot N, \beta^{(\tau \odot \upsilon) \circ Z_2}) = (M,\beta^\tau) \odot (N,\beta^\upsilon) \in \cZ \Vect(\cC) .
	\]
\end{lemma}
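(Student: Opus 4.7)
The statement asserts that two a priori distinct half-braidings on $M \odot N$ coincide: on the right-hand side the standard tensor-product half-braiding in $\cZ\Vect(\cC)$, namely
\[
(\beta^\tau_X \odot \id_N)(\id_M \odot \beta^\upsilon_X),
\]
and on the left-hand side the half-braiding $\beta^\omega$ produced via Proposition~\ref{prop:functormodulestocenter} from the $Z_{\Vect}$-module structure $\omega := (\tau \odot \upsilon)\circ Z_2(M,N)$ that the bimonad tensor structure induces on $M \odot N$.

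My plan is to unpack $\beta^\omega_X$ using its defining formula and then massage the expression via the characterizing equation of $Z_2$ and the interchange law. Concretely, I would start from
\[
\beta^\omega_X \;=\; (\id_X \odot (\tau \odot \upsilon) Z_2(M,N))\,\partial_{M \odot N,X},
\]
and substitute the identity
\[
(\id_X \odot Z_2(M,N))\,\partial_{M \odot N,X} \;=\; (\partial_{M,X}\odot\id_{Z(N)})(\id_M \odot \partial_{N,X}),
\]
which is the defining property of $Z_2(M,N)$. This equation is given for $X = U \in \cC$, but extends uniquely to arbitrary $X \in \Vect(\cC)$ by the extension principle in the Remark following Lemma~\ref{lemma:factorizationpropofZpart1}; alternatively, since half-braidings on ind-objects are determined by their values on $\cC$, one may verify the whole identity pointwise on $U \in \cC$.

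Next I would use the interchange law to split $(\id_X \odot \tau \odot \upsilon)$ across the composition, regrouping
\[
(\id_X \odot \tau \odot \upsilon)(\partial_{M,X}\odot \id_{Z(N)})(\id_M \odot \partial_{N,X}) \;=\; \bigl((\id_X \odot \tau)\partial_{M,X} \odot \id_N\bigr)\bigl(\id_M \odot (\id_X \odot \upsilon)\partial_{N,X}\bigr),
\]
and recognizing the two parenthesized compositions as $\beta^\tau_X$ and $\beta^\upsilon_X$ by the definition from Proposition~\ref{prop:functormodulestocenter}. This yields exactly $(\beta^\tau_X\odot \id_N)(\id_M \odot \beta^\upsilon_X)$, establishing the equality of half-braidings, and hence of objects in $\cZ\Vect(\cC)$.

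The main obstacle is purely bookkeeping: carefully pairing the two factors of $\tau \odot \upsilon$ with $\partial_{M,X}$ and $\partial_{N,X}$ respectively via the interchange law, and justifying the use of the $Z_2$-identity with $X \in \Vect(\cC)$ rather than $U \in \cC$. Neither is substantive, which is why the author observes that the lemma is easily proven by diagrammatic calculus.
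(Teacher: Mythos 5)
Your proposal is correct. The verification works exactly as you say: writing $\omega=(\tau\odot\upsilon)\circ Z_2(M,N)$, one has $\beta^\omega_U=(\id_U\odot\tau\odot\upsilon)(\id_U\odot Z_2(M,N))\partial_{M\odot N,U}$, the defining equation of $Z_2$ replaces $(\id_U\odot Z_2(M,N))\partial_{M\odot N,U}$ by $(\partial_{M,U}\odot\id_{Z_{\Vect}(N)})(\id_M\odot\partial_{N,U})$, and the interchange law regroups this as $(\beta^\tau_U\odot\id_N)(\id_M\odot\beta^\upsilon_U)$, which is precisely the half-braiding of $(M,\beta^\tau)\odot(N,\beta^\upsilon)$; checking the identity only for $U\in\cC$ suffices since, as the paper uses repeatedly, a half-braiding on an ind-object is determined by its components at objects of $\cC$ (and your alternative justification, extending the $Z_2$-equation to all of $\Vect(\cC)$ via the uniqueness-of-extension remark after Lemma~\ref{lemma:factorizationpropofZpart1}, is equally valid).

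The paper itself records no argument: it merely asserts that the lemma "is easily proven via diagrammatical calculus," implicitly relying on the explicit formula $Z_2(M,N)=\sum_{a\in\Irr(\cC)}\id_{\bar a}\odot\id_M\odot\bar R_a\odot\id_N\odot\id_a$ computed just before, so that both half-braidings can be expanded in terms of cups and orthonormal bases of morphism spaces and compared componentwise. Your route is different in flavor and arguably cleaner: it never touches the explicit presentation of $Z_2$, using only its characterizing property as the unique morphism satisfying the factorization identity, together with naturality/interchange and the definition $\beta^\tau_X=(\id_X\odot\tau)\partial_{M,X}$. What the diagrammatic approach buys is a concrete formula-level check in the spirit of the rest of the section; what your approach buys is independence from the explicit sum over $\Irr(\cC)$ (so no convergence or finiteness bookkeeping on fibers is needed) and a proof that would survive verbatim in any setting where the bimonad structure is defined by the same universal property. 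I see no gap.
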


\begin{corollary}
	The linear isomorphism $Z \- \Mod_{\Vect(\cC)} \simeq \cZ \Vect(\cC)$ is an isomorphism of tensor categories.
\end{corollary}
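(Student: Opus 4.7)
The plan is to assemble the pieces already in place: we have an isomorphism of linear categories $\Phi : \cZ \Vect(\cC) \to Z_{\Vect}\-\Mod_{\Vect(\cC)}$ with inverse $\Psi$ (Theorem \ref{thm:linearisom}), a bimonadic structure $(Z_2,Z_0)$ on $Z_{\Vect}$ inducing a tensor structure on $Z_{\Vect}\-\Mod_{\Vect(\cC)}$ for which the forgetful functor is strictly monoidal, and the preceding Lemma relating the two tensor products at the level of underlying objects and half-braidings. Since both $\Phi$ and $\Psi$ are the identity on underlying objects and on morphisms, the only content left is to verify that one of them (say $\Psi$) is strictly monoidal.

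First, I would verify compatibility with the tensor product. Given $Z_{\Vect}$-modules $(M,\tau)$ and $(N,\upsilon)$, the tensor product in $Z_{\Vect}\-\Mod_{\Vect(\cC)}$ is by construction $\bigl(M \odot N,(\tau \odot \upsilon)\circ Z_2\bigr)$, so
\[
\Psi\bigl((M,\tau)\odot(N,\upsilon)\bigr) = \bigl(M\odot N,\beta^{(\tau\odot\upsilon)\circ Z_2}\bigr),
\]
and by the preceding Lemma this equals $(M,\beta^\tau)\odot(N,\beta^\upsilon) = \Psi(M,\tau)\odot\Psi(N,\upsilon)$. So $\Psi$ preserves tensor products on the nose, and the tensorator is the identity.

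Next, I would verify compatibility with the unit. The monoidal unit of $Z_{\Vect}\-\Mod_{\Vect(\cC)}$ is $(\un,Z_0)$, while the unit of $\cZ \Vect(\cC)$ is $(\un,\id)$ (trivial half-braiding). By the defining relation of $Z_0$,
\[
\beta^{Z_0}_X = (\id_X \odot Z_0)\,\partial_{\un,X} = \id_X \quad \text{for every } X \in \Vect(\cC),
\]
so $\Psi(\un,Z_0) = (\un,\id)$, and the unitor is also the identity. Since the underlying linear categories are strictly monoidal in our conventions, no associator or unitor coherence remains to check. Assembling these observations, $\Psi$ is a strict monoidal isomorphism of linear categories, and this upgrades the equivalence of Theorem \ref{thm:linearisom} to an isomorphism of tensor categories. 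The only step that required genuine input beyond bookkeeping is the preceding Lemma, whose proof (done diagrammatically) is the real heart of the compatibility.
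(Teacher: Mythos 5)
Your proposal is correct and follows essentially the same route as the paper: the paper derives the corollary directly from the preceding Lemma identifying $\beta^{(\tau\odot\upsilon)\circ Z_2}$ with the tensor product of half-braidings, which is exactly your key step. Your explicit check that $\Psi(\un,Z_0)=(\un,\id)$ via the defining relation of $Z_0$ is a routine verification the paper leaves implicit, and it is carried out correctly.
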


\section{Unitary modules for C$^*$-algebra objects}

The previous Section shows that the algebraic Drinfeld center $\cZ \Vect(\cC)$ of a unitary tensor category is monoidally equivalent to the category of modules for a monad in $\Vect(\cC)$. In the next Section, we will observe that the same category can be described as bimodules for a concrete $*$-algebra object in $\Vect(\cC^{mp} \boxtimes \cC)$. As with $*$-algebras in $\Vect$, one can ask what is a {\em unitary bimodule} for a $*$-algebra object in $\Vect(\cC^{mp} \boxtimes \cC)$. The present Section is devoted to answering this question, and later it will be shown that it allows for a characterization of unitary half-braidings which is internal to $\Hilb(\cC^{mp} \boxtimes \cC)$.

\subsection{Bounded operators on Hilbert space objects}

\begin{definition}
    For $H \in \Hilb(\cC)$, let $\bB^l_H,\bB^r_H \in \WAlg(\cC)$ be given by
    \[  \bB^l_H(a) := \Hilb(\cC)(a \otimes H,H)\ \ \text{and} \ \ \bB^r_H(a) := \Hilb(\cC)(H \otimes a, H)  , \]
    with multiplications $\mu^l$ and $\mu^r$ given by
    \[
    \mu^l (f \odot g) := f \circ (\id_a  \otimes g)  \ \ , \ \  \mu^r(f' \odot g') := g \circ (f \otimes \id_b) \ ,
    \]
    for $(f,g) \in \bB^l_H(a) \times \bB^l_H(b)$ and $(f',g') \in \bB^r_H(a) \times \bB^r_H(b)$, and $*$-structures given by
    \[
    j^l(f) := \Phi^l_a(f)^* \ \ , \ \ j^r(f') := \Phi^r_a(f')^* \ ,
    \]
    using the Frobenius reciprocity maps. That $\bB^l_H$ and $\bB^r_H$ are W$^*$-algebra objects is seen by realizing that they correspond to the cyclic right $\cC$-module W$^*$-category and the cyclic left $\cC$-module W$^*$-category generated by $H$, respectively.
    
    Similarly for $V \in \Vect(\cC)$, we write $\cL^ l_V$ for the the algebra object in $\Vect(\cC)$ given by
    \[
    \cL^ l_V(-) = \Vect(\cC) ((-) \odot V, V) \ , 
    \]
    \label{def:right-leftboundedoperators}
    with algebra structure analogous to that of $\bB^l_H$. The algebra object $\cL^r_V$ is defined similarly.
\end{definition}

\begin{lemma}
Let $H$ be a Hilbert space object, and let $a \in \Irr{\cC}$. Then
    \[ \bB^l_H(a) \simeq \prod_{a_1,a_2 \in \Irr(\cC)}^{\ell^\infty} \cC(a_1,a \otimes a_2) \otimes \bB(H(a_2),H(a_1)) \ , \] \label{lemma:presentationrightboundedoperators}
    where the norm on the tensor factor $\cC(a_1,a \otimes a_2) $ is
    \[
    \| u \| = \| (\tr_a \otimes \id_{a_2})(uu^*)\|^{1/2} \ .
    \]
\end{lemma}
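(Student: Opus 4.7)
The plan is a direct unpacking of the definitions, combining the explicit description of morphism spaces in $\Hilb(\cC)$ with the formula for the tensor product when one factor is a simple object of $\cC$.

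\textbf{Step 1.} Apply the general presentation of hom-spaces in $\Hilb(\cC)$ recalled earlier in the preliminaries:
\[
\bB^l_H(a) = \Hilb(\cC)(a \otimes H, H) \simeq \prod_{a_1 \in \Irr(\cC)}^{\ell^\infty} \bB\bigl( (a \otimes H)(a_1), H(a_1) \bigr) .
\]

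\textbf{Step 2.} Expand $(a \otimes H)(a_1)$ using the formula $(H_1 \otimes H_2)(a_1) = \bigoplus^{\ell^2}_{b,c}\cC(a_1,b\otimes c)\otimes H_1(b)\otimes H_2(c)$. Since $a \in \Irr(\cC)$, the Yoneda fiber satisfies $a(b) \simeq \C$ for $b \simeq a$ and $0$ otherwise, so only the summand $b = a$ survives, giving
\[
(a \otimes H)(a_1) \simeq \bigoplus_{a_2 \in \Irr(\cC)}^{\ell^2} \cC(a_1, a \otimes a_2) \otimes H(a_2),
\]
with the inner product on $\cC(a_1,a\otimes a_2)$ being the natural C$^*$-category inner product rescaled by $(d_a d_{a_2})^{-1}$.

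\textbf{Step 3.} By Frobenius reciprocity, $\dim \cC(a_1, a \otimes a_2) = \dim \cC(\bar a \otimes a_1, a_2)$, which is nonzero for only finitely many $a_2$ at fixed $a_1$. Hence the $\ell^2$-sum is effectively a finite orthogonal direct sum, and a bounded operator $T_{a_1}: (a \otimes H)(a_1) \to H(a_1)$ decomposes canonically into finitely many components $T_{a_1,a_2}: \cC(a_1,a\otimes a_2) \otimes H(a_2) \to H(a_1)$.

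\textbf{Step 4.} Because each $\cC(a_1, a \otimes a_2)$ is finite-dimensional, the standard tensor-hom identification gives
\[
\bB\bigl( \cC(a_1,a\otimes a_2) \otimes H(a_2),\, H(a_1) \bigr) \simeq \cC(a_1, a \otimes a_2) \otimes \bB(H(a_2), H(a_1)),
\]
where on the left we take the operator norm and on the right the Hilbert-space norm on $\cC(a_1,a\otimes a_2)$ tensor the operator norm on $\bB(H(a_2),H(a_1))$. Putting all pieces together yields the stated presentation.

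\textbf{Step 5 (the key verification).} It remains to check that the specific norm on $\cC(a_1, a \otimes a_2)$ induced by this identification is precisely $\|u\| = \|(\tr_a \otimes \id_{a_2})(uu^*)\|^{1/2}$. For an elementary tensor $u \otimes S$, a direct computation of $TT^*$ using the rescaled inner product on $(a \otimes H)(a_1)$ shows the operator norm equals a positive multiple of $\|u\|_{\mathrm{nat}} \cdot \|S\|$. The claim is that after bookkeeping the factor $(d_a d_{a_2})^{-1}$ coming from the convention on $\Hilb(\cC)$ and using the identity
\[
d_{a_2}\,(\tr_a \otimes \id_{a_2})(uu^*) = \Tr_{a\otimes a_2}(uu^*) = \Tr_{a_1}(u^*u),
\]
the resulting norm on $\cC(a_1, a\otimes a_2)$ is exactly the one indicated in the statement. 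This dimension-bookkeeping, passing between the natural C$^*$-inner product $u^*u = \|u\|_{\mathrm{nat}}^2 \id_{a_1}$, the rescaling convention, and the partial categorical trace, is the only delicate point; the rest of the argument is purely formal.
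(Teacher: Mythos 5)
Your Steps 1--4 reproduce, in a slightly different order, the formal skeleton of the paper's argument: decompose $\bB^l_H(a)=\Hilb(\cC)(a\otimes H,H)$ as an $\ell^\infty$-product over $a_1$, break the fiber into multiplicity spaces, and use the finite-dimensional tensor--hom identification. The paper reaches the multiplicity spaces differently, via $(a\otimes H)(a_1)\simeq H(\bar a\otimes a_1)\simeq\bigoplus_{a_2}\cC(a_2,\bar a\otimes a_1)\otimes H(a_2)$ (Lemma \ref{lemma:shiftedind-objects}), with the \emph{natural} norm $v^*v=\|v\|^2\,\id_{a_2}$ on $\cC(a_2,\bar a\otimes a_1)$, and then obtains both the space $\cC(a_1,a\otimes a_2)$ and its norm in one stroke, by transporting along the antilinear dagger followed by Frobenius reciprocity $\Phi^l_{\bar a}$; in particular it never touches the $(d_ad_{a_2})^{-1}$ rescaling convention of the tensor product in $\Hilb(\cC)$. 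Your route goes through that convention instead, so all the dimension factors must be tracked explicitly and shown to cancel into the trace formula.

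That is exactly where your proposal has a genuine gap: Step 5, which you yourself flag as ``the key verification,'' is asserted rather than carried out. Since $a_1,a_2$ are simple, any two inner products on the multiplicity space $\cC(a_1,a\otimes a_2)$ are proportional, so saying ``the operator norm is a positive multiple of $\|u\|_{\mathrm{nat}}\|S\|$'' has no content; the entire point of the lemma is the value of that multiple, and it depends on data you have not pinned down: (i) the precise form of the identification in Step 4 --- canonically $\bB(E\otimes H_2,H_1)\simeq \overline{E}\otimes\bB(H_2,H_1)$, with the conjugate, and which inner product (natural or rescaled) implements the pairing changes the constant; the paper resolves this conjugation via the dagger-plus-Frobenius maps, which is also where its norm comes from; (ii) whether $\tr_a$ is the normalized or non-normalized partial trace (your identity $d_{a_2}(\tr_a\otimes\id_{a_2})(uu^*)=\Tr_{a_1}(u^*u)$, read in $\End(a_2)\simeq\C$, presupposes the non-normalized one); and (iii) the interaction of the $(d_ad_{a_2})^{-1}$ rescaling with (i) and (ii). Without actually computing $\|T\|$ for an elementary tensor against the rescaled inner product and checking that the factors $d_a$, $d_{a_1}$, $d_{a_2}$ conspire to give precisely $\|(\tr_a\otimes\id_{a_2})(uu^*)\|^{1/2}$ (rather than this norm times some $d$-power), the proof is incomplete exactly at its only non-formal step. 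Either carry out that computation, or follow the paper's shortcut: use Lemma \ref{lemma:shiftedind-objects} to bypass the rescaled tensor product altogether and push the natural norm through $*$ and $\Phi^l_{\bar a}$.
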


\begin{proof}
The proof is a direct computation:
    \begin{align*}
        \bB_H^l(a) &= \Hilb(\cC) (a \otimes H,H)  = \prod_{a_1 \Irr(\cC)}^{\ell^\infty} \bB((a \otimes H)(a_1),H(a_1)) \simeq \prod_{a_1 \in \Irr(\cC)}^{\ell^\infty} \bB(H(\bar{a} \otimes a_1),H(a_1))\\ & \overset{(!)}{\simeq} \prod_{a_1,a_2 \in \Irr(\cC)}^{\ell^\infty} \bB(\cC(a_2,\bar{a} \otimes a_1) \otimes H(a_2),H(a_1))  \simeq \prod_{a_1,a_2 \in \Irr(\cC)}^{\ell^\infty} \overline{\cC(a_2,\bar{a} \otimes a_1)} \otimes \bB(H(a_2),H(a_1)) \ .
    \end{align*}
In the right hand side of $(!)$, $\cC(a_2,\overline{a} \otimes a_2)$ is the Hilbert space with norm determined by  
\[
\| v\| ^2 \id_{a_2} = v^* \circ v \ .
\]
We have canonical isomorphisms
\[ \overline{\cC(a_2,\bar{a} \otimes a_1)} \simeq \cC(\overline{a} \otimes a_1, a_2) \simeq \cC(a_1, a \otimes a_2) \ , \]
obtained by composing the $*$-operation, i.e. the dagger strcuture, with Frobenius reciprocity map $\Phi^l_{\overline{a}}$, which gives the claimed norm on $\cC(a_1, a \otimes a_2)$. Therefore,
    \[\bB_H^l(a) \simeq \prod_{a_1,a_2 \in \Irr(\cC)}^{\ell^\infty} \cC(a_1,a \otimes a_2) \otimes \bB(H(a_2),H(a_1))\]
\end{proof}

Recall that $\For$ denotes the forgetful functor $\Hilb(\cC) \to \Vect(\cC)$.

\begin{lemma}
    Given $V \in \Vect(\cC)$, there is a canonical left action of $\cL_V^l$ on $V$. If $H \in \Hilb(\cC)$, there is a canonical left action of $\bB^l_H$ on $\For(H)$.
    \label{lemma:canonicalactionrightboundedoperators}
\end{lemma}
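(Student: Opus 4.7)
The plan is to identify the action as a tautological evaluation, exploiting the fact that $\cL^l_V(a) = \Vect(\cC)(a \odot V, V)$ \emph{literally} consists of natural transformations from $a \odot V$ to $V$. First I would unfold the fiber of $\cL^l_V \odot V$: for each $c \in \Irr(\cC)$,
\[
(\cL^l_V \odot V)(c) = \bigoplus_{a,b \in \Irr(\cC)} \cC(c, a \otimes b) \odot \cL^l_V(a) \odot V(b),
\]
and on each summand one has a canonical map into $V(c)$ obtained by applying the natural transformation $f \in \cL^l_V(a)$ (viewed as an actual morphism $a \odot V \to V$) to the $(c,b)$-component of its defining family, composed with the map $\cC(c, a\otimes b) \odot V(b) \to (a \odot V)(c)$ coming from the definition of $\odot$. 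Assembling these yields the candidate action $\mu^V : \cL^l_V \odot V \to V$, and naturality in $c$ is immediate from the naturality of the underlying natural transformations.

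Next I would verify the two module axioms. Associativity reduces to the definition of multiplication on $\cL^l_V$, which is given by composing natural transformations: the two ways of iterating $\mu^V$ on two composable sections of $\cL^l_V$ agree by definition. Unitality follows from the fact that the unit of $\cL^l_V$ corresponds to $\id_V$, which acts as identity.

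For the unitary case, I take $\mu^H : \bB^l_H \odot \For(H) \to \For(H)$ defined by the same evaluation formula, now restricted to bounded natural transformations. What has to be checked is only that the formula makes sense on all of $\bB^l_H \odot \For(H)$; since this is an object of $\Vect(\cC)$ defined fiberwise by \emph{finite} direct sums, and each $f \in \bB^l_H(a)$ is a uniformly bounded family of operators, no summability pathology arises. The componentwise presentation afforded by Lemma \ref{lemma:presentationrightboundedoperators} makes the bookkeeping transparent.

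The main obstacle I anticipate is keeping the various normalizations straight --- the $(d_b d_c)^{-1}$ rescaling in the tensor product on $\Hilb(\cC)$ together with the Frobenius-reciprocity-induced norm on $\cC(a_1, a \otimes a_2)$ from Lemma \ref{lemma:presentationrightboundedoperators} --- so that the evaluation genuinely respects the conventions under which $\bB^l_H$ was defined as a W$^*$-algebra object, and is compatible with the algebra structure on $\bB^l_H$. Beyond this combinatorial check, both statements are essentially formal consequences of the definitions of $\cL^l_V$ and $\bB^l_H$ as internal endomorphism objects.
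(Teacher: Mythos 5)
Your proposal is correct and follows essentially the same route as the paper: the action is the tautological evaluation map, defined fiberwise via $\cC(c,a\otimes b)\odot \cL^l_V(a)\odot V(b)\to V(c)$ (the paper writes it as $v\odot f\odot g\mapsto H(v)\bigl(f\circ(\id_a\otimes g)\bigr)$), with associativity reduced to composition of morphisms; the paper simply proves the $\bB^l_H$ case and declares the $\cL^l_V$ case identical, whereas you do the reverse. One small inaccuracy: the fiberwise direct sums $\bigoplus_{a,b}\cC(c,a\otimes b)\odot\cdots$ need not be finite, but since they are algebraic direct sums every element has finite support, so your conclusion that no summability issue arises (the action being a morphism in $\Vect(\cC)$ with target $\For(H)$) still stands.
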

\begin{proof}
The proof of the first claim is identitical to the proof of the second. We only explain the latter. For simplicity, we shall ommit the notation for the forgetful functor $\For: \Hilb(\cC) \to \Vect(\cC)$. There is a canonical linear map
\[ \bB^l_H(a_1) \odot H(a_2) \to H(a_1 \otimes a_2) . \]
Indeed, writing $\bB^l_H(a_1) \odot H(a_2) = \Hilb(\cC)(a_1 \otimes H,H) \odot \Hilb(\cC)(a_2,H)$ and $H(a_1 \otimes a_2) = \Hilb(\cC)(a_1 \otimes a_2, H)$, the above map is given by
    \[ f \odot g \mapsto f \circ (\id_{a_1} \otimes g) \ . \]
    Consider the composition
    \begin{align*}
    \cC(a, a_1 \otimes a_2) \odot \bB^l_H(a_1) \odot H(a_2) &\to \cC(a,a_1 \otimes a_2) \odot H(a_1 \otimes a_2)\to H(a) \\
    v \odot f \odot g &\mapsto v \odot f \circ (\id_{a_1} \otimes g)  \mapsto H(v) \left( f \circ (\id_{a_1} \otimes g) \right). 
    \end{align*}
    Now, since
    \[ (\bB^l_H \odot H) (a) \simeq \bigoplus_{a_1,a_2} \cC(a,a_1 \otimes a_2) \odot \bB^l_H(a_1) \odot H(a_2) \ , \]
    the maps defined above assemble into a morphism $\bB^l_H \odot H \to H$ in $\Vect(\cC)$.

    Using the generic notation $\triangleright: \bB^l_H \odot H \to H$ for the morphism defined above, and denoting by $\mu: \bB^l_H \odot \bB^l_H \to \bB^l_H$ the multiplication morphism for simplicity, we want to show that the diagram
    \begin{center}
\begin{tikzcd}
\bB^l_H \odot \bB^l_H \odot H \arrow[rr, "\mu \odot \id_H"] \arrow[dd, "\id_{\bB^l_H} \odot \triangleright"'] &  & \bB^l_H \odot H \arrow[dd, "\triangleright"] \\
                                                                                                      &  &                                              \\
\bB^l_H \odot H \arrow[rr, "\triangleright"']                                                         &  & H                                           
\end{tikzcd}
    \end{center}
    commutes. Analyzing this diagram component-wisely, one concludes that its commmutativity is equivalent to the commutativity of the diagram
    \begin{center}
\begin{tikzcd}
{\Hilb(\cC)(a_1 \otimes H,H) \odot \Hilb(\cC)(a_2 \otimes H, H) \odot \Hilb(\cC)(a_3,H)} \arrow[rr, "\mu \odot \id_Hd"] \arrow[dd, "\id_{\bB^l_H} \odot \triangleright"'] &  & {\Hilb(\cC)(a_1a_2 \otimes H,H) \odot \Hilb(\cC)(a_3,H)} \arrow[dd, "\triangleright"] &  & f \odot g \odot h \arrow[rr, maps to] \arrow[dd, maps to]  &  & f \circ (\id_{a_2} \otimes g) \odot h \arrow[dd, maps to]   \\
                                                                                                                                                                    &  &                                                                                         &  &                                                            &  &                                                             \\
{\Hilb(\cC)(a_1,H) \odot \Hilb(\cC)(a_2a_3,H)} \arrow[rr, "\triangleright"']                                                                                      &  & {\Hilb(\cC)(a_1a_2a_3,H)}                                                              &  & f \odot (g \circ (\id_{a_2} \otimes h) \arrow[rr, maps to] &  & f \circ (\id_{a_2} \otimes g) (\id_{a_1} \otimes \id_{a_2})
\end{tikzcd}  
    \end{center}
which can be checked directly:
    \begin{center}
        \begin{tikzcd}
            f \odot g \odot h \arrow[rr, maps to] \arrow[dd, maps to]  &  & f \circ (\id_{a_1} \otimes g) \odot h \arrow[dd, maps to] \\
            & & \\
            f \odot (g \circ (\id_{a_2} \otimes h) \arrow[rr, maps to] &  & f \circ (\id_{a_2} \otimes g) \circ (\id_{a_1} \otimes \id_{a_2} \otimes h)
        \end{tikzcd}
    \end{center}
\end{proof}

\subsection{Unitary modules}

Let $(\bA, \mu^\bA)$ be an algebra object in $\Vect(\cC)$. Let $V \in \Vect(\cC)$, and let $\alpha: \bA \odot V \to V$ be a left $\bA$-module structure on $V$.  
The maps
\[
\bA(a_1) \ni f \mapsto \alpha \circ (f \odot \id_H) \in \Vect(\cC)(a_1 \odot H, H)
\]
induce a morphism $\tilde{\alpha}: \bA \to \cL_V^l$ in $\Vect(\cC)$. The associativity of $\alpha$ is then easily checked to be equivalent to $\tilde{\alpha}$ being an algebra morphism. Conversely, invoking the canonical action of $\cL_V^l$ on $V$, any algebra morphism $\tilde{\alpha}: \bA \to \cL_V^l$ induces an action $\alpha$ of $\bA$ on $V$. These constructions are mutually inverses, and thus we have the following Lemma.

\begin{lemma}
	There is a bijective correspondence between left $\bA$-module structures $\alpha$ on $H$ and algebra morphisms $\tilde{\alpha}:\bA \to \cL^ l_H$.
	\label{lemma:correspondenceleftmoduleandalgebramorphism}
\end{lemma}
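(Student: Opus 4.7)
The plan is essentially to make precise the informal discussion immediately preceding the lemma statement. Since both directions of the correspondence are sketched there, the proof becomes a matter of verifying (i) well-definedness of the two constructions, (ii) translation of the module axioms into algebra morphism axioms, and (iii) that the two assignments are mutually inverse.

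First, for the forward direction, I would take a left $\bA$-module structure $\alpha: \bA \odot V \to V$ and package the maps $\bA(a) \ni f \mapsto \alpha \circ (f \odot \id_V)$ as a morphism $\tilde{\alpha}: \bA \to \cL^l_V$ in $\Vect(\cC)$; naturality in $a$ and compatibility with direct sums follow automatically since $\alpha$ is a morphism of ind-objects. The associativity diagram for the $\bA$-action, unwound component-wise using the decomposition
\[
(\bA \odot \bA \odot V)(c) \simeq \bigoplus_{a_1,a_2} \cC(c, a_1 \otimes a_2) \odot \bA(a_1) \odot \bA(a_2) \odot V(a_2),
\]
translates directly into the statement that $\tilde{\alpha} \circ \mu^\bA = \mu^{\cL^l_V} \circ (\tilde{\alpha} \odot \tilde{\alpha})$, where $\mu^{\cL^l_V}$ is composition of operators. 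Unitality is a special case of the same calculation applied to the unit map.

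For the backward direction, given an algebra morphism $\tilde{\alpha}: \bA \to \cL^l_V$, I compose with the canonical left action $\triangleright: \cL^l_V \odot V \to V$ of Lemma~\ref{lemma:canonicalactionrightboundedoperators} to obtain $\alpha := \triangleright \circ (\tilde{\alpha} \odot \id_V)$. Associativity of $\alpha$ follows by stacking two diagrams: the square expressing that $\tilde{\alpha}$ respects multiplication, and the square from Lemma~\ref{lemma:canonicalactionrightboundedoperators} expressing that $\triangleright$ is an action. Unitality is immediate from $\tilde{\alpha}$ preserving units together with the fact that the unit of $\cL^l_V$ acts as the identity on $V$.

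Finally, to show the constructions are mutually inverse, I would check on components. Starting from $\alpha$, forming $\tilde{\alpha}$ and then composing with $\triangleright$ gives back the map $f \odot v \mapsto \alpha(f \odot v)$ by the very definition of $\triangleright$ applied to elements of the form $\alpha(f \odot -) \in \cL^l_V(a)$. Conversely, starting from an algebra morphism $\tilde{\alpha}$, the induced action on $V$ is $f \odot v \mapsto \tilde{\alpha}(f) \triangleright v$, and re-encoding this as a morphism $\bA \to \cL^l_V$ returns $\tilde{\alpha}$ because $\triangleright$ tautologically identifies an element of $\cL^l_V(a)$ with the operator $a \odot V \to V$ it represents. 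I do not anticipate a real obstacle here; the only thing worth being careful about is that all maps are defined purely as natural transformations of functors $\cC^{\op} \to \Vect$, so every verification reduces to a componentwise check at objects $a \in \Irr(\cC)$, and the formal manipulations go through without any analytic subtlety (unlike the subsequent unitary/bounded analogue).
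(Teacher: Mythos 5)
Your proposal is correct and follows essentially the same route as the paper: the paper's proof of this lemma is precisely the informal discussion preceding it (package the action as $\tilde{\alpha}$ via $f \mapsto \alpha \circ (f \odot \id_V)$, recover $\alpha$ from $\tilde{\alpha}$ using the canonical action of $\cL^l_V$ from Lemma~\ref{lemma:canonicalactionrightboundedoperators}, and note the constructions are mutually inverse), which you simply spell out componentwise. No discrepancy to report.
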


\begin{definition}
	Let $(\bA,\mu^\bA, j^\bA)$ be a $*$-algebra object, and let $H \in \Hilb(\cC)$. A left $\bA$-module structure $\alpha$ on $\For(H)$ is a {\em unitary} left $\bA$-module structure on $H$ if the corresponding algebra morphism $\tilde{\alpha}$ factors through a $*$-algebra morphism $\tilde{\alpha}: \bA \to \bB^l_H$, along the embedding $\bB^l_H \hookrightarrow \cL^ l_H$. 
\end{definition}

\begin{remark}
	This definition of unitary modules was already used in \cite{MR3687214}, Section 5. Here, we make explain how it relates to the standard notion of modules for algebra objects.
\end{remark}

\medskip

The proof of the following Lemma is straighforward.

\begin{lemma}
	Let $(H,\alpha)$ be a unitary left module for the $*$-algebra object $\bA$. Given $K \in \Hilb(\cC)$, the $W^*$-algebra object embedding
	\[
	\Hilb(\cC)((-) \otimes H,H) \to \Hilb(\cC)((-) \otimes H \otimes K, H \otimes K)
	\]
	given by the tensor structure induces a unitary left $\bA$-module structure on $H \otimes K$. 
	\label{lemma:amplificationunitarymodules}
\end{lemma}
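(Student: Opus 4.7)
The plan is to write the embedding $\bB_H^l \hookrightarrow \bB_{H \otimes K}^l$ explicitly and check that it is a morphism of $*$-algebra objects, so that composing it with the given $\tilde{\alpha}: \bA \to \bB_H^l$ yields the desired unitary module structure.

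\medskip

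First I would describe the embedding on fibers: for each $a \in \cC$, send
\[
f \in \bB_H^l(a) = \Hilb(\cC)(a \otimes H, H) \longmapsto f \otimes \id_K \in \Hilb(\cC)(a \otimes H \otimes K, H \otimes K) = \bB_{H \otimes K}^l(a).
\]
Since $\otimes$ on $\Hilb(\cC)$ is a unitary bifunctor and contractions with respect to operator norms, this is a natural, uniformly bounded (indeed norm-decreasing) collection of linear maps, hence a morphism $\iota_K: \bB_H^l \to \bB_{H \otimes K}^l$ in $\Hilb(\cC)$. Multiplicativity, unitality and $*$-preservation are immediate from the corresponding properties of the tensor bifunctor: $(f_1 \otimes \id_K) \circ (\id_a \otimes f_2 \otimes \id_K) = (f_1 \circ (\id_a \otimes f_2)) \otimes \id_K$ for $f_1 \in \bB_H^l(a_1)$ and $f_2 \in \bB_H^l(a_2)$, and $(f \otimes \id_K)^* = f^* \otimes \id_K$. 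Thus $\iota_K$ is a $*$-algebra morphism of W$^*$-algebra objects.

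\medskip

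Next I would set $\tilde{\beta} := \iota_K \circ \tilde{\alpha} : \bA \to \bB_{H \otimes K}^l$. Being a composition of $*$-algebra morphisms, $\tilde{\beta}$ is itself a $*$-algebra morphism, and then by the correspondence of Lemma \ref{lemma:correspondenceleftmoduleandalgebramorphism} (applied to the W$^*$-algebra object $\bB_{H \otimes K}^l \hookrightarrow \cL^l_{H \otimes K}$), $\tilde{\beta}$ corresponds to a unitary left $\bA$-module structure $\beta$ on $H \otimes K$.

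\medskip

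Finally, I would unwind the canonical action of Lemma \ref{lemma:canonicalactionrightboundedoperators} to verify that $\beta$ is exactly the module structure one expects: it acts on the first tensor factor by $\alpha$ and trivially on $K$. Concretely, on fibers
\[
\beta : \bA(a_1) \odot (H \otimes K)(a_2) \to (H \otimes K)(a_1 \otimes a_2)
\]
factors through $\alpha \otimes \id_K$ composed with the associators identifying the tensor factors, so associativity of $\beta$ reduces to that of $\alpha$. No serious obstacle is expected here; the only point requiring mild care is tracking the naturality/uniform-boundedness when passing from $\bB_H^l(a)$ to $\bB_{H \otimes K}^l(a)$, but this is a direct consequence of $\otimes$ being a unitary bifunctor on $\Hilb(\cC)$, hence bounded of norm $\leq 1$ on each Hom space (so $\iota_K$ itself has cb-norm $\leq 1$ as a morphism of W$^*$-algebra objects).
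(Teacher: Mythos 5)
Your proposal is correct and is precisely the ``straightforward'' argument the paper has in mind (the paper omits the proof): the tensor-structure embedding $f \mapsto f \otimes \id_K$ is a unital, multiplicative, $*$-preserving (since the involution $j$ on $\bB^l_H$ is built from adjoints and the rigidity morphisms, both of which commute with tensoring by $\id_K$) and contractive morphism of W$^*$-algebra objects $\bB^l_H \to \bB^l_{H\otimes K}$, and composing with $\tilde{\alpha}$ and invoking Lemma \ref{lemma:correspondenceleftmoduleandalgebramorphism} yields the unitary left $\bA$-module structure on $H \otimes K$. The only cosmetic slip is calling $\iota_K$ a morphism ``in $\Hilb(\cC)$'' rather than a morphism of W$^*$-algebra objects in the ind-category, which does not affect the argument.
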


\begin{definition}
	Given a $*$-algebra object $\bA$, a morphism $f: (H,\alpha) \to (K,\beta)$ between unitary $\bA$-modules is a morphism $f \in \Hilb(\cC)$ such that $f \circ \alpha = \beta \circ (\id_\bA \odot f)$. There is a category $\LMod_{\Hilb(\cC)}(\bA)$ whoose objects are unitary left $\bA$-modules and morphisms are unitary $\bA$-module morphisms. 
\end{definition} 

The previous Lemma says that $\LMod_{\Hilb(\cC)}(\bA)$ is a right $\Hilb(\cC)$-module category, and it is straightforward to check that it has a canonical structure of a right $\Hilb(\cC)$-module W$^*$-category.

\section{The Canonical $W^*$-Algebra}

Every unitary tensor category $\cC$ is canonically a right $(\mpcC \boxtimes \cC)$-module C$^*$-category, where the action is given by
\[ c \triangleleft (a_1 \boxtimes a_2) := a_1 \otimes c \otimes a_2 \ . \]
The tensor unit $\un$ is moreover a cyclic object for this action.

\begin{definition}
    The Canonical W$^*$-algebra object of a unitary tensor category $\cC$ is the W$^*$-algebra object $\cS$ in $\Vect(\mpcC \boxtimes \cC)$ corresponding to $(\cC, \un_\cC)$ as a cyclic right $(\mpcC \boxtimes \cC)$-module W$^*$-category. In particular, the functor
    \begin{align*}
        \cM_\cS & \to \cC \\
        \cS  \odot (a \boxtimes b) & \mapsto ab
    \end{align*}
    is an equivalence of right $(\mpcC \boxtimes \cC)$-module W$^*$-categories, where $\cM_\cS$ is the Cauchy completion of the category of free $\cS$-left modules of the form $\cS \odot (a \boxtimes b)$, with $a,b \in \cC$.
    \label{def:SEalgebra}
\end{definition}

\begin{lemma}
    In the context of Definition \ref{def:SEalgebra}, 
    \[ \cS \simeq \bigoplus_{c \in \Irr(\cC)} \overline{c} \boxtimes c \ . \]
    \label{lem:explicitpresentationSEalgebra}
\end{lemma}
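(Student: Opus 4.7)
The plan is to identify $\cS$ as an ind-object in $\Vect(\cC^{mp} \boxtimes \cC)$ by computing its multiplicity at each simple object $x \boxtimes y$ (with $x,y \in \Irr(\cC)$), and then comparing these multiplicities with those of the candidate $\bigoplus_{c \in \Irr(\cC)} \bar{c} \boxtimes c$. Since ind-objects are determined up to isomorphism by their fibers at simple objects, this comparison is sufficient.

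The first step is to extract a formula for the fiber $\cS(x \boxtimes y)$ from Definition \ref{def:SEalgebra}. By the remark on morphisms in $\cM_\bA$ applied to the tensor category $\cC^{mp} \boxtimes \cC$, we have $\cS(x \boxtimes y) \simeq \cM_\cS(\cS \odot (x \boxtimes y), \cS)$, using that the unit is self-dual. The equivalence $\cM_\cS \simeq \cC$ of Definition \ref{def:SEalgebra} sends $\cS \odot (a \boxtimes b)$ to $a \otimes b$ and $\cS$ to $\un_\cC$, so this identifies
\[
\cS(x \boxtimes y) \simeq \cC(x \otimes y, \un_\cC).
\]

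The second step is a one-line application of Frobenius reciprocity: $\cC(x \otimes y, \un_\cC) \simeq \cC(y, \bar{x})$, which is one-dimensional when $y \simeq \bar{x}$ and zero otherwise. On the other side, by Yoneda the fiber of $\bigoplus_{c \in \Irr(\cC)} \bar{c} \boxtimes c$ at $x \boxtimes y$ equals $\bigoplus_c \cC(x,\bar{c}) \otimes \cC(y,c)$, also one-dimensional precisely when $x \simeq \bar{y}$ (taking $c \simeq y$) and zero otherwise. Since $y \simeq \bar{x}$ is equivalent to $x \simeq \bar{y}$, both ind-objects have the same multiplicity spaces at every simple of $\cC^{mp} \boxtimes \cC$, yielding the claimed isomorphism.

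There is no serious obstacle; the lemma is essentially a direct computation, reducing to Frobenius reciprocity once $\cS$ is recognized as the internal endomorphism algebra of the cyclic vector $\un_\cC$ for the right $(\cC^{mp} \boxtimes \cC)$-action on $\cC$. The only point requiring a little care is the bookkeeping around conventions — which factor of $\cC^{mp} \boxtimes \cC$ acts on which side, and the identification of duals in $\cC^{mp}$ with duals in $\cC$ (valid thanks to sphericality of the UTC) — but none of these introduce genuine difficulty.
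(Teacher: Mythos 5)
Your argument is correct; the paper in fact states this lemma without proof (it is the standard internal-endomorphism computation for the cyclic module $(\cC,\un_\cC)$, going back to the algebra-object formalism of the cited Jones--Penneys reference), and your route --- identifying $\cS(x \boxtimes y) \simeq \cM_\cS(\cS \odot (x \boxtimes y), \cS) \simeq \cC(x \otimes y, \un) \simeq \cC(y,\bar{x})$ and matching these fibers against those of $\bigoplus_{c \in \Irr(\cC)} \bar{c} \boxtimes c$ --- is exactly the intended one. The only remark worth making is that this establishes the isomorphism at the level of underlying objects of $\Vect(\cC^{mp} \boxtimes \cC)$, which is all the statement requires; the multiplication on the right-hand side is only pinned down later via the equivalence $\cM_\cS \simeq \cC$.
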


\begin{lemma}
    There are canonical isomorphisms
    \[ \cS \odot (\un \boxtimes a) \simeq \cS \odot (a \boxtimes \un) \  \]
    in $\cM_\cS$, natural in $a \in \cC$.
    \label{lem:mirrorpropertySE}
\end{lemma}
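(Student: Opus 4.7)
The plan is to obtain the isomorphism directly from the equivalence established in Definition~\ref{def:SEalgebra}. That equivalence $\cM_\cS \simeq \cC$ sends the free module $\cS \odot (a_1 \boxtimes a_2)$ to $a_1 \otimes a_2$. Specialising to $(a_1,a_2) = (\un,a)$ and to $(a_1,a_2) = (a,\un)$ and using strictness of the tensor structure, both images equal $a$. Since the functor is an equivalence of right $(\mpcC \boxtimes \cC)$-module C$^*$-categories, in particular fully faithful, the identity $\id_a$ in $\cC$ pulls back to a canonical isomorphism
\[
\varphi_a : \cS \odot (\un \boxtimes a) \xrightarrow{\ \simeq\ } \cS \odot (a \boxtimes \un)
\]
in $\cM_\cS$.

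For naturality in $a$, I would observe that given $f : a \to a'$ in $\cC$, the two induced morphisms $\cS \odot (\id_\un \boxtimes f)$ and $\cS \odot (f \boxtimes \id_\un)$ correspond under the equivalence to $\un \otimes f = f = f \otimes \un$. Hence the square
\[
\begin{tikzcd}
\cS \odot (\un \boxtimes a) \arrow[r, "\varphi_a"] \arrow[d, "\id_\un \boxtimes f"'] & \cS \odot (a \boxtimes \un) \arrow[d, "f \boxtimes \id_\un"] \\
\cS \odot (\un \boxtimes a') \arrow[r, "\varphi_{a'}"'] & \cS \odot (a' \boxtimes \un)
\end{tikzcd}
\]
maps under the equivalence to the trivially commuting square with both legs equal to $f$, so it commutes in $\cM_\cS$ as well.

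I expect no real obstacle: the content of the lemma is essentially a book-keeping consequence of the cyclicity of $\un$ for the right $(\mpcC \boxtimes \cC)$-action on $\cC$, repackaged through the free-module description of $\cM_\cS$. The only subtle point worth verifying is that $\varphi_a$ is genuinely a morphism of $\cS$-modules and not just of the underlying ind-objects; this however is automatic because the equivalence of Definition~\ref{def:SEalgebra} is by construction an equivalence of \emph{right} $(\mpcC \boxtimes \cC)$-module categories from $\cM_\cS$ to $\cC$, and $\cS$-module structure on the free modules is intrinsic to $\cM_\cS$.

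If a more concrete description were desired, one could instead build $\varphi_a$ explicitly from the presentation $\cS \simeq \bigoplus_{c \in \Irr(\cC)} \bar{c} \boxtimes c$ of Lemma~\ref{lem:explicitpresentationSEalgebra}, matching summands via Frobenius reciprocity applied in each tensor slot; the identification of the resulting map with $\id_a$ under the equivalence would then recover the same $\varphi_a$ and simultaneously demonstrate that it is unitary.
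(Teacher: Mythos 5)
Your proof is correct and follows the same route as the paper: the paper's own argument is simply that under the module equivalence $\cM_\cS \to \cC$ of Definition~\ref{def:SEalgebra} both objects have the same image, so fully faithfulness gives the canonical isomorphism. Your additional verification of naturality and the remark on an explicit Frobenius-reciprocity construction are fine but not needed beyond what the paper records.
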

\begin{proof}
    Under the functor $\cM_\cS \to \cC$ of Definition \ref{def:SEalgebra}, these objects have the same image. 
\end{proof}
\medskip

\begin{corollary}
    The $(\mpcC \boxtimes \cC)$-module W$^*$-category $\cM_\cS$ is equivalent of the Cauchy completion of the full subcategory generated by $\cS$-modules of the form $\cS \odot (\un \boxtimes a)$, with $a \in \cC$. 
\end{corollary}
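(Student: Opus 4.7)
My plan is to apply the equivalence $\cM_\cS \simeq \cC$ of Definition \ref{def:SEalgebra} and observe that, under it, the modules $\cS \odot (\un \boxtimes a)$ already exhaust every isomorphism class in $\cC$. Concretely, under this equivalence of $(\mpcC \boxtimes \cC)$-module C$^*$-categories, $\cS \odot (\un \boxtimes a)$ is sent to $\un \triangleleft (\un \boxtimes a) = a$; as $a$ ranges over $\cC$ these objects therefore hit, up to isomorphism, every object of $\cC$, and hence every object of $\cM_\cS$. In particular each generating free module $\cS \odot (a \boxtimes b)$, whose image is $a \otimes b$, is already isomorphic in $\cM_\cS$ to $\cS \odot (\un \boxtimes (a \otimes b))$. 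The same fact admits an intrinsic derivation from Lemma \ref{lem:mirrorpropertySE} together with the free-module identity $(\cS \odot X) \triangleleft Y = \cS \odot (X \otimes Y)$: acting on $\cS \odot (\un \boxtimes b) \simeq \cS \odot (b \boxtimes \un)$ by $(a \boxtimes \un)$ produces
\[
\cS \odot (a \boxtimes b) \simeq \cS \odot ((a \otimes b) \boxtimes \un) \simeq \cS \odot (\un \boxtimes (a \otimes b)).
\]

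Let $\cN \subset \cM_\cS$ denote the full subcategory with objects $\{\cS \odot (\un \boxtimes a) : a \in \cC\}$. By the step above, $\cN$ contains, up to isomorphism, every free $\cS$-module, so its Cauchy completion coincides with $\cM_\cS$ by the very definition of the latter. The right $(\mpcC \boxtimes \cC)$-module structure restricts compatibly, since
\[
(\cS \odot (\un \boxtimes a)) \triangleleft (b \boxtimes c) = \cS \odot (b \boxtimes (a \otimes c)) \simeq \cS \odot (\un \boxtimes (b \otimes a \otimes c))
\]
still lies (up to isomorphism) in $\cN$. The only point worth noting is that the module structure on the Cauchy completion of $\cN$ agrees with that of $\cM_\cS$; this is automatic, as all isomorphisms used come from the equivalence of Definition \ref{def:SEalgebra}, which is itself a morphism of $(\mpcC \boxtimes \cC)$-module C$^*$-categories. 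So the main (and only) obstacle is really bookkeeping with the $\mpcC$ convention for the tensor product, and no essential new argument beyond Lemma \ref{lem:mirrorpropertySE} is required.
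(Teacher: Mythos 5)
Your proof is correct and follows essentially the same route as the paper: the whole content is Lemma \ref{lem:mirrorpropertySE} combined with the free-module identity, showing each generator $\cS \odot (a \boxtimes b)$ is isomorphic to some $\cS \odot (\un \boxtimes c)$, so the two Cauchy completions coincide (the paper's proof is exactly the one-line chain $\cS \odot (a \boxtimes b) \simeq \cS \odot (a \boxtimes \un) \odot (\un \boxtimes b) \simeq \cS \odot (\un \boxtimes ab)$). Your extra remarks on the module structure and the $\mpcC$ ordering are fine but not needed beyond that.
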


\begin{proof}
    For all $a,b \in \cC$,
    \[ \cS \odot (a \boxtimes b) \simeq \cS \odot (a \boxtimes \un) \odot (\un \boxtimes b) \simeq \cS \odot (\un \boxtimes ab) \]
    by Lemma \ref{lem:mirrorpropertySE}.
\end{proof}

\begin{lemma}
The canonical equivalence $\cC \simeq \cM_\cS$ of right $(\mpcC \boxtimes \cC)$-module categories extends to an equivalence 
\[ \Vect(\cC) \simeq \LMod_{\Vect(\cC^{mp} \boxtimes \cC)}(\cS) \  \]
of $\Vect(\mpcC \boxtimes \cC)$-module categories.
\end{lemma}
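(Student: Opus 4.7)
The plan is to extend the module-category equivalence $\cC \simeq \cM_\cS$ of Definition \ref{def:SEalgebra} to the ind-completions on both sides, identifying the ind-completion of $\cM_\cS$ with $\LMod_{\Vect(\cC^{mp} \boxtimes \cC)}(\cS)$. More concretely, I would construct explicit quasi-inverse functors
\[
\Phi : \Vect(\cC) \longrightarrow \LMod_{\Vect(\cC^{mp} \boxtimes \cC)}(\cS), \qquad \Psi : \LMod_{\Vect(\cC^{mp} \boxtimes \cC)}(\cS) \longrightarrow \Vect(\cC) ,
\]
defined as follows. For $V = \{V(c)\}_{c \in \Irr(\cC)} \in \Vect(\cC)$, set
\[
\Phi(V) := \bigoplus_{c \in \Irr(\cC)} \cS \odot (\un \boxtimes c) \odot V(c) \in \Vect(\cC^{mp} \boxtimes \cC) ,
\]
equipped with the left $\cS$-action coming from multiplication of $\cS$ on the first factor. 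For $(M,\lambda) \in \LMod(\cS)$, set $\Psi(M)(c) := M(\un \boxtimes c)$ for $c \in \Irr(\cC)$; this defines an object of $\Vect(\cC)$ and is visibly functorial.

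First I would verify $\Psi \circ \Phi \simeq \id_{\Vect(\cC)}$, which is a direct computation: using $\cS = \bigoplus_b \bar{b} \boxtimes b$ (Lemma \ref{lem:explicitpresentationSEalgebra}) and the fact that $\cC^{mp}(\un,\bar{b}) = \cC(\bar{b},\un)$ is nonzero only when $b = \un$, only the $b = \un$ summand of $\cS$ survives when we evaluate $\Phi(V)$ at $\un \boxtimes a$, giving $\Phi(V)(\un \boxtimes a) \simeq V(a)$. Compatibility with morphisms and with the right $(\cC^{mp} \boxtimes \cC)$-action is immediate from the definitions of $\Phi$ and $\Psi$, since tensoring on the right with $a \boxtimes b$ reduces, via Lemma \ref{lem:mirrorpropertySE}, to tensoring with $\un \boxtimes (ab)$, matching the tensor product $\otimes$ of $\cC$ extended to $\Vect(\cC)$.

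The main step is the other composition, $\Phi \circ \Psi \simeq \id$. For a left $\cS$-module $(M,\lambda)$, I would produce a natural $\cS$-linear isomorphism
\[
\Phi(\Psi(M)) = \bigoplus_{c \in \Irr(\cC)} \cS \odot (\un \boxtimes c) \odot M(\un \boxtimes c) \longrightarrow M
\]
by combining the action $\lambda : \cS \odot M \to M$ with the canonical evaluation morphism
\[
\bigoplus_{c \in \Irr(\cC)} (\un \boxtimes c) \odot M(\un \boxtimes c) \longrightarrow M ,
\]
which is the counit of the "evaluate at $\un \boxtimes (-)$" $\dashv$ "formal direct sum" adjunction between $\Vect(\cC^{mp} \boxtimes \cC)$ and $\Vect(\cC)$ applied to $M$, restricted to the line of objects $\un \boxtimes c$. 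The $\cS$-action axiom for $\lambda$ guarantees $\cS$-linearity of the resulting map; the inverse is built from the unit $\un \to \cS$ together with the $\cS = \bigoplus_b \bar{b} \boxtimes b$ decomposition and the conjugate equations.

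The main obstacle I expect is showing that this candidate map is an isomorphism — equivalently, that every left $\cS$-module is generated in the appropriate ind-sense by its values at the objects $\un \boxtimes c$. The cleanest way to handle this is to note that the action presents $M$ as a coequalizer $\cS \odot \cS \odot M \rightrightarrows \cS \odot M \to M$, whose two preceding terms lie in the image of $\Phi$ (being free $\cS$-modules of the form $\cS \odot (\text{something})$, which by Lemma \ref{lem:mirrorpropertySE} are themselves of the form $\cS \odot (\un \boxtimes -)$), and then $\Phi$ preserves this coequalizer because all the constructions in sight commute with formal direct sums indexed by $\Irr(\cC)$. This reduces the claim to the level of free modules, where it is exactly the content of the equivalence $\cC \simeq \cM_\cS$.
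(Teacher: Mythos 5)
Your proposal is correct, and it reaches the lemma by a somewhat different mechanism than the paper. The functor you call $\Phi$ is exactly the paper's $W \mapsto \cS_W = \bigoplus_{a \in \Irr(\cC)} \cS \odot (\un \boxtimes a) \odot W(a)$, and both arguments rest on the same two ingredients: the mirror property $\cS \odot (b \boxtimes c) \simeq \cS \odot (\un \boxtimes bc)$ (Lemma \ref{lem:mirrorpropertySE}) and the free--forgetful adjunction for left $\cS$-modules. The difference lies in how one passes from free modules to arbitrary ones. The paper uses the adjunction isomorphism $\Hom_{\LMod(\cS)}(\cS \odot (b \boxtimes c), V) \simeq \Hom_{\Vect(\cC^{mp} \boxtimes \cC)}(b \boxtimes c, V)$ to decompose an arbitrary left $\cS$-module directly as a direct sum of free modules $\cS \odot (b \boxtimes c)$ weighted by these hom-spaces, then rewrites each summand as $\cS \odot (\un \boxtimes bc)$; you instead construct an explicit quasi-inverse $\Psi(M)(c) = M(\un \boxtimes c)$ and prove $\Phi\Psi \simeq \id$ by running the canonical split coequalizer presentation $\cS \odot \cS \odot M \rightrightarrows \cS \odot M \to M$, a Beck-style reduction to free modules. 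Your route is slightly longer but more robust: it never needs the intermediate assertion that every module is a direct sum of frees (true here, but the quickest justification of it is essentially your argument), only that split coequalizers are absolute and that free modules lie in the essential image of $\Phi$. Two small points to tidy: $\cC^{mp}$ has the same hom-spaces as $\cC$ (only the monoidal structure is reversed), so the vanishing you need is $\cC(\un,\bar{b}) = 0$ unless $b \simeq \un$, not $\cC(\bar{b},\un)$ as written --- the conclusion is unaffected; and you should record the triangle-identity-type check that your counit $\Phi\Psi(M) \to M$ is an isomorphism on free modules $\cS \odot (\un \boxtimes c)$, namely that it inverts $\Phi$ applied to the unit isomorphism $V \to \Psi\Phi(V)$ --- routine, but it is the step that lets the coequalizer comparison close.
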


\begin{proof}
An object $a \in \cC$ corresponds to $\cS \odot (\un \boxtimes a)$. If $W \in \Vect(\cC)$, 
\[ W \simeq \bigoplus_{a \in \Irr(\cC)} a \odot W(a) \ , \]
the corresponding left $\cS$-module is
\[\cS_W := \bigoplus_{a \in \Irr(\cC)} \cS \odot (\un \boxtimes a) \odot W(a) \ . \]
It remains to show that every left $\cS$-module in $\Vect(\cC^{mp}\boxtimes\cC)$ decomposes as a direct sum as above. Recall the adjunction between the free $\cS$-module functor and the forgetful functor from $\LMod_{\Vect(\cC^{mp} \boxtimes \cC)}(\cS) \to \Vect(\cC^{mp}\boxtimes \cC)$: given a left $\cS$-module $V$ in $\Vect(\cC^{mp} \boxtimes \cC)$,
\[ \Hom_{\LMod_{\Vect(\cC^{mp} \boxtimes \cC)}(\cS)}(\cS \odot (b \boxtimes c), V) \simeq \Hom_{\Vect(\cC^{mp} \boxtimes \cC)}(b \boxtimes c, V) \ . \]
This shows that the identity morphism of $V$ can be written as a sum of left $\cS$-module morphisms $S \odot (b \boxtimes c) \to V$. i.e.,
\[ V \simeq \bigoplus_{b,c} \left[ \cS \odot (b \boxtimes c) \right] \odot \Hom_{\LMod(\cS)}(\cS \odot (b \boxtimes c), V) \ . \]
Since
\[ \cS \odot (b \boxtimes c) \simeq \cS \odot(b \boxtimes \un) \odot(\un \boxtimes c) \simeq \cS \odot (\un \boxtimes b) \odot (\un \boxtimes c) \simeq \cS \odot (\un \boxtimes bc) \ ,\]
which are $\cS$-module morphisms by Lemma \ref{lem:mirrorpropertySE}, we are done.
\end{proof}

\begin{lemma}
    For each $a \in \Irr(\cC)$, $\cS_a:= \cS \odot ( \un \boxtimes a)$ has a canonical structure of a Hilbert space object in $\Vect(\cC^{mp} \boxtimes \cC)$.
\end{lemma}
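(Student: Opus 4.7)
The plan is to use the explicit decomposition $\cS \simeq \bigoplus_{c \in \Irr(\cC)} \bar c \boxtimes c$ provided by Lemma~\ref{lem:explicitpresentationSEalgebra} to write
\[
\cS_a \simeq \bigoplus_{c \in \Irr(\cC)} \bar c \boxtimes (c \otimes a)
\]
as an object of $\Vect(\cC^{mp} \boxtimes \cC)$, and then equip each fiber of this functor with a canonical Hilbert space inner product coming from the UTC structure on $\cC$.

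First I would compute the fibers explicitly. Using that $\cC^{mp}$ has the same morphism spaces as $\cC$, Yoneda gives
\[
\cS_a(b_1 \boxtimes b_2) \simeq \bigoplus_{c \in \Irr(\cC)} \cC(b_1, \bar c) \otimes_{\C} \cC(b_2, c \otimes a).
\]
For any fixed $b_1 \in \cC$, only finitely many $c \in \Irr(\cC)$ satisfy $\cC(b_1, \bar c) \neq 0$, so the sum is finite and each fiber is finite-dimensional. Each morphism space $\cC(x, y)$ of a UTC carries a canonical Hilbert space structure: for simple $x$ the inner product is determined by $\langle f, g \rangle \, \id_x = g^* f$, and for general $x$ one uses the normalized categorical trace, $\langle f, g \rangle = \tr(g^* f)$. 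Equipping each summand with the Hilbert space tensor product and taking the orthogonal direct sum over $c$ then promotes $\cS_a(b_1 \boxtimes b_2)$ to a canonical finite-dimensional Hilbert space.

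Next I would verify functoriality. A morphism $f_1 \boxtimes f_2 \colon b_1 \boxtimes b_2 \to b_1' \boxtimes b_2'$ induces, on the $c$-summand of $\cS_a(b_1' \boxtimes b_2') \to \cS_a(b_1 \boxtimes b_2)$, the precomposition map
\[
(\phi \otimes \psi) \mapsto (\phi \circ f_1) \otimes (\psi \circ f_2),
\]
which is automatically a bounded linear map between finite-dimensional Hilbert spaces, hence a morphism in $\Hilb$. Independence of the particular decomposition $\cS \simeq \bigoplus_c \bar c \boxtimes c$ up to canonical unitary isomorphism follows from uniqueness of multiplicity-space decompositions in a UTC.

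The main place where care is needed is pinning down the correct normalization of the inner products on hom spaces so that the resulting object in $\Hilb(\cC^{mp} \boxtimes \cC)$ is compatible with (i) the $(d_b d_c)^{-1}$ rescaling built into the tensor structure of $\Hilb(\cC)$ recalled in the preliminaries, and (ii) the graphical calculus conventions of \cite{MR3687214}, which are used elsewhere in the paper to identify this data with bimodule/half-braiding structures. Once the normalization is chosen consistently with these conventions, all remaining checks reduce to routine manipulations in the C$^*$-tensor category $\cC$, and $\cS_a$ inherits its canonical structure of a Hilbert space object in $\Hilb(\cC^{mp} \boxtimes \cC)$.
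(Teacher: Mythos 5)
Your proposal is correct and follows essentially the same route as the paper: compute the fibers of $\cS_a$ at irreducibles via the decomposition $\cS \simeq \bigoplus_{c \in \Irr(\cC)} \bar{c} \boxtimes c$ and Yoneda, note they are finite-dimensional, and endow them with the canonical inner products carried by hom spaces in a unitary tensor category. The paper just pushes the fiber computation one step further, collapsing the direct sum $\bigoplus_{d}\cC(b,\bar{d})\otimes\cC(c,d\otimes a)$ to the single hom space $\cC(bc,a)$ and transporting its Hilbert space structure, which settles exactly the normalization ambiguity you flag at the end and is the identification used later (e.g.\ in the corollary giving $\cS_H(b \boxtimes c) \simeq H(bc)$).
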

 \begin{proof}
 Indeed,
 \begin{align*}
\cS_a (b \boxtimes c) & = \Hom_{\Vect(\cC^{mp} \boxtimes \cC)} (b \boxtimes c, \cS \odot (\un \boxtimes a))  \simeq \bigoplus_{d \in \Irr(\cC)} \Hom_{\cC^{mp} \boxtimes \cC} (b \boxtimes c, \bar{d} \boxtimes da) \\
& \simeq \bigoplus_{d \in \Irr(\cC)} \cC(b,\bar{d}) \otimes \cC(c \bar{a},d) \simeq  \bigoplus_{d \in \Irr(\cC)} \cC(b,\bar{d}) \otimes \cC(\overline{d},a \overline{c}) \simeq \cC(b,a \overline{c}) \simeq \cC(bc,a) \ .
\end{align*}
The Hilbert space structure of $\cS_a (b \boxtimes c)$ is the one corresponding to the Hilbert space structure of $\cC(bc,a)$ through the isomorphisms above.
\end{proof}

\begin{corollary}
    If $H \in \Hilb(\cC)$, $\cS_H$ is given by
    \[
    \cS_H(b \boxtimes c) = H(bc) \simeq \Hilb(\cC)(bc, H) \ .
    \]
\end{corollary}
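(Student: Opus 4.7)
The plan is to bootstrap from the preceding lemma (which handles $\cS_a$ for $a \in \Irr(\cC)$) by decomposing an arbitrary $H \in \Hilb(\cC)$ into its simple components and summing. Concretely, I would first write
\[
H \simeq \bigoplus_{a \in \Irr(\cC)}^{\ell^2} a \otimes H(a)
\]
in $\Hilb(\cC)$, where $H(a)$ is the multiplicity Hilbert space. Since the assignment $W \mapsto \cS_W = \cS \odot (\un \boxtimes W)$ is built out of the tensor structure on $\Hilb(\cC^{mp} \boxtimes \cC)$ restricted along the embedding $\un \boxtimes \cC \hookrightarrow \cC^{mp} \boxtimes \cC$, it commutes with $\ell^2$-direct sums and tensoring with Hilbert space multiplicities, yielding
\[
\cS_H \simeq \bigoplus_{a \in \Irr(\cC)}^{\ell^2} \cS_a \otimes H(a)
\]
in $\Hilb(\cC^{mp} \boxtimes \cC)$.

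Next I would evaluate fiber-by-fiber on $b \boxtimes c$ with $b,c \in \Irr(\cC^{mp} \boxtimes \cC)$, applying the preceding lemma to each $\cS_a$:
\[
\cS_H(b \boxtimes c) \simeq \bigoplus_{a \in \Irr(\cC)}^{\ell^2} \cC(bc, a) \otimes H(a).
\]
The right-hand side is, by the definition of the multiplicity decomposition in $\Hilb(\cC)$, exactly the fiber $H(bc)$ of $H$ at the object $bc \in \cC$. The final identification $H(bc) \simeq \Hilb(\cC)(bc, H)$ is a general property of the unitary ind-completion: objects of $\cC$ are corepresentable in $\Hilb(\cC)$ via the Yoneda-type identity $\Hilb(\cC)(U, H) = H(U)$ for $U \in \cC$, since the inclusion $\cC \hookrightarrow \Hilb(\cC)$ is fully faithful and $\Hilb(\cC)$ is defined as a functor category.

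The only point that requires a bit of care is checking that the Hilbert space norms match up correctly: the preceding lemma equips $\cS_a(b \boxtimes c)$ with the norm inherited from $\cC(bc, a)$, and one must verify that summing these over $a \in \Irr(\cC)$ with the multiplicity spaces $H(a)$ reproduces the canonical Hilbert space structure on $H(bc)$ (including the $(d_b d_c)^{-1}$ rescaling convention recorded in Section 2). This is however a direct unravelling of the definitions of the tensor product in $\Hilb(\cC)$ and the graphical inner product from \cite{MR3687214}, so no new idea is needed. Naturality in $b \boxtimes c$ (and hence the identification as objects of $\Hilb(\cC^{mp} \boxtimes \cC)$ rather than just fiberwise) follows from the naturality of the lemma's isomorphisms and the functoriality of $W \mapsto \cS_W$.
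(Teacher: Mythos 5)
Your proposal is correct and follows essentially the same route as the paper: decompose $\cS_H \simeq \bigoplus_a \cS_a \odot H(a)$, evaluate the fiber at $b \boxtimes c$ using the preceding lemma $\cS_a(b \boxtimes c) \simeq \cC(bc,a)$, and recognize $\bigoplus_a \cC(bc,a) \otimes H(a) \simeq H(bc)$; note that only finitely many $a$ contribute (those appearing in $bc$), so the $\ell^2$-sum is in fact finite and the norm-matching concern you raise is immediate.
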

\begin{proof}
The proof is a straighforward computation:
    \[ \cS_H(b \boxtimes c) \simeq \bigoplus_{a \in \Irr(\cC)} \cS_a(b \boxtimes c) \odot H(a) \simeq \bigoplus_{a \leq bc}\cC(bc,a) \odot H(a) \simeq H(bc) \ ,\]
    where the notation $a \leq bc$ means that $a$ is a subobject of $bc$.
\end{proof}

\begin{corollary}
	For any vector space object $H$, $\cS_H$ is completely determined by its values on objects of the form $\un \boxtimes b$, with $b \in \Irr(\cC)$. Thus, there is a bijective correspondence between isomorphism classes of Hilbert space object structures on $H$ and isomorphism classes of Hilbert space object structures on  $\cS_H$.
	\label{cor:equivalencebetweenHilbertspacestronHandonS_H}
\end{corollary}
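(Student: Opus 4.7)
My plan is to deduce both claims directly from the isomorphism $\cS_H(b \boxtimes c) \simeq H(bc)$ established in the preceding corollary. First, I would observe that specializing to $b = \un$ yields $\cS_H(\un \boxtimes a) \simeq H(a)$ for all $a \in \Irr(\cC)$, so the restriction of $\cS_H$ to objects of the form $\un \boxtimes a$ already recovers the underlying vector space object $H$. Then, for a general simple $b \boxtimes c \in \Irr(\cC^{mp} \boxtimes \cC)$, decomposing $bc \simeq \bigoplus_{a \in \Irr(\cC)} \cC(a, bc) \cdot a$ in $\cC$ gives $\cS_H(b \boxtimes c) \simeq \bigoplus_{a \in \Irr(\cC)} \cC(a, bc) \otimes \cS_H(\un \boxtimes a)$, so $\cS_H$ is reconstructed at every simple of $\cC^{mp} \boxtimes \cC$ from its values at objects of the form $\un \boxtimes a$.

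For the bijection of Hilbert space object structures, the strategy is the following. A Hilbert space object structure on a vector space object amounts, up to isomorphism, to a choice of Hilbert space structure on each fiber at a simple. Given such a structure on $H$, the isomorphism $\cS_H(b \boxtimes c) \simeq \bigoplus_{a \in \Irr(\cC)} \cC(a, bc) \otimes H(a)$ equips each fiber of $\cS_H$ at a simple with a canonical Hilbert space structure, where each morphism space $\cC(a, bc)$ carries its canonical UTC inner product. Conversely, restricting a Hilbert space object structure on $\cS_H$ along $\cS_H(\un \boxtimes a) \simeq H(a)$ yields a Hilbert space object structure on $H$. These two operations are manifestly mutually inverse at the level of isomorphism classes.

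The step I expect to require the most care is checking that the Hilbert structure produced on $\cS_H$ from one on $H$ is well defined independently of the chosen decomposition of $bc$ into simples, and is natural in $b \boxtimes c$. Both points follow from the naturality of the isomorphism $\cS_H(b \boxtimes c) \simeq H(bc)$ in $b$ and $c$ established in the preceding corollary, together with the compatibility of the UTC inner products on $\cC(a, bc)$ with composition. Consequently no new analytical input is required beyond what is already encoded in the structure of $\cC$.
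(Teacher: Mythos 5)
Your proposal is correct and follows exactly the route the paper intends: the corollary is stated there without a separate proof, as an immediate consequence of the preceding computation $\cS_H(b \boxtimes c) \simeq H(bc) \simeq \bigoplus_{a \in \Irr(\cC)} \cC(a,bc) \otimes H(a)$, which is precisely what you use (specializing to $b=\un$ to recover $H$, and decomposing $bc$ into simples to reconstruct $\cS_H$ and transport Hilbert space structures in both directions). Your spelled-out check of naturality and of the canonical inner products on the spaces $\cC(a,bc)$ is, if anything, more explicit than the paper's treatment.
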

\medskip

The goal now is to analyze the free left $\cS$-module structure on $\cS_a$ and prove that it is unitary. The left $\cS$-action on $\cS_\un$ corresponds to an algebra object morphism $\Gamma: \cS \to \cL^l_{\cS_\un}$. The only non-zero fibers of $\cS$ are $\cS(\bar{a} \boxtimes a)$, with $a \in \Irr(\cC)$. Thus, $\Gamma$ is determined by linear maps $\cS(\bar{a} \boxtimes a) \to \cL^l_{\cS_\un} (\bar{a} \boxtimes a)$. Direct computation shows that 
\[
\cL^l_{\cS_\un} (\bar{a} \boxtimes a) = \Vect(\cC^{mp} \boxtimes \cC)((\overline{a} \boxtimes a) \odot \cS_\un, \cS_\un) \simeq \prod_{a_1,a_2}  \cL \left( \cC(\overline{a}_1, \overline{a}_2 \overline{a}) \odot \cC(a_1, a a_2) \odot \cS(\bar{a}_2 \boxtimes a_2), \cS(\overline{a_1} \boxtimes a_1) \right)  \ .
\]
Let $\iota_a: \overline{a} \boxtimes a \to \cS$ be the embedding corresponding to the localization $\cS \to \overline{a} \boxtimes a$. Then there is an identification
\[
\cS(\overline{a} \boxtimes a) \simeq \C \ \iota_a \ .
\]
Thus, the algebra morphism $\Gamma: \cS \to \cL^l_{\cS_\un}$ corresponding to the free left $\cS$-action on $\cS_\un$, when written in components
\[
\Gamma_{a_1,a_2}: \cS(\overline{a} \boxtimes a) \to  \cL \left( \cC(\overline{a}_1, \overline{a}_2 \overline{a}) \odot \cC(a_1, a a_2) \odot \cS(\bar{a}_2 \boxtimes a_2), \cS(\overline{a_1} \boxtimes a_1) \right) \ , 
\]
is completely determnied by $\Gamma_{a_1,a_2}(\iota_a)$.

\begin{lemma}
	The algebra morphism $\Gamma: \cS \to \cL^l_{\cS_\un}$ corresponding to the free left $\cS$-action on $\cS_\un = \cS$ is given by
	\[
	\Gamma_{a_1,a_2}(\iota_a) (u \odot v \odot \iota_{a_2}) = \langle u^{\vee * }, v \rangle \iota_{{a_1}} \ .
	\]
	In particular,
	\[
	\| \Gamma_{a_1,a_2}(\iota_a)  \| = 1 \ ,
	\]
	for all $a_1,a_2 \in \Irr(\cC)$.
	\label{lemma:algebrahomassociatedtofreeSaction}
\end{lemma}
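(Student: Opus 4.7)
The strategy is to identify $\Gamma$ with left multiplication in $\cS$ and then collapse the resulting morphism via the defining conjugate equation for $u^\vee$. By Lemma~\ref{lemma:correspondenceleftmoduleandalgebramorphism}, the algebra morphism $\Gamma : \cS \to \cL^l_{\cS_\un}$ encodes the free left action of $\cS$ on itself, which is simply the multiplication $\mu^\cS$. Hence, writing $u \boxtimes v : \bar{a}_1 \boxtimes a_1 \to (\bar{a} \boxtimes a) \otimes (\bar{a}_2 \boxtimes a_2)$ in $\cC^{mp} \boxtimes \cC$, one has
\[
\Gamma_{a_1, a_2}(\iota_a)(u \odot v \odot \iota_{a_2}) = \mu^\cS(\iota_a \otimes \iota_{a_2}) \circ (u \boxtimes v)
\]
as an element of $\cS(\bar{a}_1 \boxtimes a_1)$.

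Next, I would use the description of $\cS$ as arising from the cyclic right $(\cC^{mp} \boxtimes \cC)$-module $(\cC, \un)$: in that presentation $\cS(x \boxtimes y) \simeq \cC(x \otimes y, \un)$, the generator $\iota_a$ is realised as the evaluation morphism $R_a^* : \bar{a} \otimes a \to \un$, and $\mu^\cS$ takes the form $\mu^\cS(f \otimes g) = g \circ (f \triangleleft \id)$. Applied to $\iota_a \otimes \iota_{a_2}$ this yields
\[
\mu^\cS(\iota_a \otimes \iota_{a_2}) = R_{a_2}^* \circ (\id_{\bar{a}_2} \otimes R_a^* \otimes \id_{a_2}) = R_{a a_2}^*,
\]
the evaluation of $a a_2$ against its dual $\bar{a}_2 \bar{a}$ via the canonical factorisation of duality morphisms for tensor products. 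Substituting this and applying the conjugate equation
\[
R_{a a_2}^* \circ (u \otimes \id_{a a_2}) = R_{a_1}^* \circ (\id_{\bar{a}_1} \otimes u^\vee)
\]
for $u : \bar{a}_1 \to \bar{a}_2 \bar{a}$, one reduces the expression to $R_{a_1}^* \circ (\id_{\bar{a}_1} \otimes (u^\vee \circ v))$. Since $a_1$ is simple, $u^\vee \circ v \in \End(a_1) = \C \cdot \id_{a_1}$, and under the convention $\langle f, g \rangle \id = f^* g$ this scalar equals $\langle u^{\vee *}, v \rangle$, giving the claimed formula.

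For the norm, I would view $\Gamma_{a_1, a_2}(\iota_a)$, after trivialising the one-dimensional factors $\cS(\bar{a}_i \boxtimes a_i) \simeq \C$, as the bilinear functional $(u, v) \mapsto \langle u^{\vee *}, v \rangle$ on $\cC(\bar{a}_1, \bar{a}_2 \bar{a}) \otimes \cC(a_1, a a_2)$. The map $u \mapsto u^{\vee *}$ is an antilinear isomorphism with respect to the Hilbert-space structures coming from the dagger, which is isometric up to the pivotal/dimension factors, so Cauchy--Schwarz gives the bound $\leq 1$, with equality attained at $v = u^{\vee *}$. Matching this estimate with the operator-space norm $\|u\|^2 = \|(\tr \otimes \id)(u u^*)\|$ of Lemma~\ref{lemma:presentationrightboundedoperators} then delivers $\|\Gamma_{a_1, a_2}(\iota_a)\| = 1$. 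The hardest part will be this last reconciliation of norms: one must match the partial-trace norm of Lemma~\ref{lemma:presentationrightboundedoperators} with the ordinary Hilbert-space norms appearing in $\langle u^{\vee *}, v \rangle$, and confirm that the various dimension factors (from the evaluations, the pivotal structure, and the $(d_b d_c)^{-1}$ renormalisation built into $\Hilb(\cC)$) cancel to give exactly $1$ uniformly in $a_1, a_2$.
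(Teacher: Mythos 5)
Your argument is correct and shares the skeleton of the paper's proof --- identify $\Gamma$ with the multiplication $\mu^\cS$ via Lemma~\ref{lemma:correspondenceleftmoduleandalgebramorphism}, reduce the value on $u \odot v \odot \iota_{a_2}$ to a scalar pairing, then obtain the norm from Cauchy--Schwarz together with a saturating choice of $v$ --- but you execute the central computation by a different route. The paper stays with the presentation $\cS = \bigoplus_c \bar{c} \boxtimes c$ and expands $\mu^\cS \circ (\iota_a \odot \iota_{a_2})$ as the ONB sum $\sum_{a_3} \sum_{w \in O(a_3, a a_2)} \iota_{a_3} \circ ((w^\vee)^* \boxtimes w^*)$; precomposing with $u \boxtimes v$ kills all $a_3 \neq a_1$ and the sum collapses to $\langle u^{\vee *}, v \rangle \, \iota_{a_1}$. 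You instead pass to the internal-end presentation $\cS(x \boxtimes y) \simeq \cC(xy, \un)$ (consistent with the paper's computation $\cS_a(b \boxtimes c) \simeq \cC(bc,a)$ at $a = \un$), take $\iota_a = R_a^*$, identify the product with $R_{a a_2}^*$ for the standard solution $R_{a a_2} = (\id_{\bar{a}_2} \otimes R_a \otimes \id_{a_2}) R_{a_2}$, and then use the rotation identity $R_{a a_2}^*(u \otimes \id) = R_{a_1}^*(\id_{\bar{a}_1} \otimes u^\vee)$ together with simplicity of $a_1$. That identity is indeed valid for the paper's $(-)^\vee$ and standard duals (expand $u^\vee$ and apply the conjugate equation $(R_{a_1}^* \otimes \id_{\bar{a}_1})(\id_{\bar{a}_1} \otimes \bar{R}_{a_1}) = \id_{\bar{a}_1}$), so your derivation of the formula is sound; the trade-off is that you must take as input that the multiplication transported to $\cC(xy,\un)$ is the stacking one, whereas the paper takes the ONB formula for $\mu^\cS$ as known --- neither input is proved in the paper, so the two routes are on equal footing. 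On the norm, the ``reconciliation'' you defer is less delicate than you fear: with respect to the inner products $\langle x, y \rangle \, \id = x^* y$ on the relevant hom spaces (the ones in which $\langle u^{\vee *}, v \rangle$ is computed and which the ONB convention uses), the map $u \mapsto u^{\vee *}$ is exactly anti-unitary, with no dimension factors, since $u^\vee (u')^{\vee *} = (u'^* u)^\vee = \langle u', u \rangle \, \id_{a_1}$ by standardness of the duals and sphericality; hence the functional $(u,v) \mapsto \langle u^{\vee *}, v \rangle$ has norm exactly $1$ uniformly in $a_1, a_2$, which is all the subsequent Proposition needs. This is also precisely what the paper's terse norm step (lower bound from a saturating vector, upper bound from Cauchy--Schwarz) implicitly relies on, so your flagged gap amounts to a one-line verification rather than a genuine obstacle.
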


\begin{proof}
	Let $u \in \cC(\overline{a}_1, \overline{a}_2 \overline{a})$ and $v \in \cC(a_1, a a_2)$. The free left action of $\cS$ on $\cS_\un$ is the multiplication of $\cS$. Thus, 
	\begin{align*}
		\Gamma_{a_1,a_2}(\iota_a) (u \odot v \odot \iota_{a_2}) & = \mu^\cS \circ (\iota_a \odot \iota_{a_2}) \circ (u \boxtimes v) = \sum_{a_3} \sum_{w \in O(a_3,a a_2)} \iota_{a_3} \circ ( (w^\vee)^* \boxtimes w^*)  \circ (u \boxtimes v) \\
		& = \sum_{w \in O(a_1,a a_2)} \iota_{a_1} \circ ( (w^\vee)^* \boxtimes w^*)  \circ (u \boxtimes v) \\
		& = \iota_{a_1}  \circ \sum_{w \in O(a_1,a a_2)} ( (w^\vee)^* \boxtimes w^*)  \circ (u \boxtimes v)  =  \langle u^{\vee * }, v \rangle \iota_{{a_1}} \ .
	\end{align*}
	
	Since 
	\[
	\| \Gamma_{a_1,a_2}(\iota_a) (u \odot v \odot \iota_{a_2})  \| = | \langle u^{\vee * }, v \rangle| \| \iota_{{a_1}} \| = \| u \otimes v \otimes \iota_{a_2} \| \ ,
	\]
	it follows that $\Gamma_{a_1,a_2} (\iota_a)\| \geq 1$ but an application of the Cauchy-Schwarz inequality shows that $\| \Gamma_{a_1,a_2} (\iota_a)\| \leq 1$.
\end{proof}

\begin{proposition}
	Let free left $\cS$-action on $\cS_\un$ is unitary.
\end{proposition}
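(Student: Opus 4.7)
The plan is to verify the two conditions in the definition of a unitary module structure: that the algebra morphism $\Gamma: \cS \to \cL^l_{\cS_\un}$ factors through $\bB^l_{\cS_\un}$, and that this factorization is compatible with the $*$-structures.

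Boundedness comes essentially for free from the preceding lemma. Lemma \ref{lemma:algebrahomassociatedtofreeSaction} establishes $\|\Gamma_{a_1,a_2}(\iota_a)\| = 1$ uniformly in $a_1, a_2 \in \Irr(\cC)$, so each $\Gamma(\iota_a)$, viewed as a natural transformation $(\bar a \boxtimes a) \otimes \cS_\un \to \cS_\un$, is uniformly bounded on the components indexed by $\Irr(\cC^{mp} \boxtimes \cC)$. By definition of $\bB^l_{\cS_\un}$, this places $\Gamma(\iota_a)$ in $\bB^l_{\cS_\un}(\bar a \boxtimes a)$. Since the $\iota_a$ generate $\cS$ as an object of $\Vect(\cC^{mp} \boxtimes \cC)$, the factorization $\Gamma: \cS \to \bB^l_{\cS_\un}$ is obtained.

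For the $*$-compatibility, I would first describe $j^\cS$ explicitly on generators. Under the canonical identification $\overline{\bar a \boxtimes a} \simeq a \boxtimes \bar a$ in $\cC^{mp} \boxtimes \cC$, the involution identifies $\iota_a \in \cS(\bar a \boxtimes a)$ with $\iota_{\bar a} \in \cS(a \boxtimes \bar a)$. I would then compute the adjoint $\Gamma(\iota_a)^* \in \bB^l_{\cS_\un}(a \boxtimes \bar a)$ by direct evaluation using the Hilbert space structure on $\cS_\un(b \boxtimes c) \simeq \cC(bc, \un)$ and the formula $\Gamma_{a_1,a_2}(\iota_a)(u \odot v \odot \iota_{a_2}) = \langle u^{\vee *}, v \rangle \iota_{a_1}$ from Lemma \ref{lemma:algebrahomassociatedtofreeSaction}. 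The matrix coefficients of the adjoint should be recognizable as the same pairing $\langle \cdot, \cdot \rangle$ read with the roles of $a$ and $\bar a$ exchanged, yielding precisely $\Gamma(\iota_{\bar a}) = \Gamma(j^\cS(\iota_a))$.

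The main obstacle is careful bookkeeping. The duality in $\cC^{mp}$ reverses tensor products and interacts non-trivially with that of $\cC$, so the identification $\overline{\bar a \boxtimes a} \simeq a \boxtimes \bar a$ must be tracked precisely; in addition, the Hilbert space structures on the multiplicity spaces $\cC(bc, a_1)$ involve renormalizations by quantum dimensions coming from the $(d_b d_c)^{-1}$ factors in the inner product on the tensor structure of $\Hilb(\cC)$. Once these normalizations are handled, the identity $\Gamma(j^\cS(\iota_a)) = \Gamma(\iota_a)^*$ should follow from the formula in Lemma \ref{lemma:algebrahomassociatedtofreeSaction} combined with the conjugate equations for duals in $\cC$.
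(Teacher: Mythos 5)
Your proposal takes essentially the same route as the paper: boundedness is exactly the content of Lemma \ref{lemma:algebrahomassociatedtofreeSaction}, which places each $\Gamma(\iota_a)$ in $\bB^l_{\cS_\un}(\bar a \boxtimes a)$, and the $*$-compatibility is checked on the generators $\iota_a$ by computing the adjoint of $\Gamma(\iota_a)$ and matching it with $\Gamma(\iota_{\bar a}) = \Gamma(j^{\cS}(\iota_a))$. This is precisely the paper's verification that the adjoint of $\langle u^{\vee *}, v\rangle\,\iota_{a_1}$ coincides with $\Gamma_{a_2,a_1}(\iota_{\bar a})\bigl((v^\vee)^* \odot (u^\vee)^* \odot \iota_{a_1}\bigr)$, so the only difference is that you leave this final evaluation as a plan rather than writing it out.
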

\begin{proof}
	Lemma \ref{lemma:algebrahomassociatedtofreeSaction} shows that $\Gamma(\iota_a)$ lies in $\bB_{\cS_\un}^l(\bar{a} \boxtimes a)$. It remains to show that $\Gamma$ intertwines the $*$-structures of $\cS$ and $\bB_{\cS_\un}^l$. The $*$-structure of $\bB^l_{\cS_\un}$ sends $ \langle u^{\vee * }, v \rangle \iota_{{a_1}}$ to $\overline{\langle u^{\vee * }, v \rangle } \iota_{a_2}$, which coincides with $\Gamma_{a_2,a_1}(\iota_{\bar{a}})((v^\vee)^* \odot (u^\vee)^* \odot \iota_{a_1})$.
\end{proof}

\begin{corollary}
	For every $H \in \Hilb(\cC)$, $\cS_H$ has a unitary left $\cS$-module structure induced from $\cS_\un$.
\end{corollary}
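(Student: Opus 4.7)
The plan is to reduce this to the previous proposition via the amplification lemma (Lemma~\ref{lemma:amplificationunitarymodules}). The key observation is that for any $H \in \Hilb(\cC)$, there is a canonical isomorphism
\[
\cS_H \simeq \cS_\un \otimes (\un \boxtimes H)
\]
in $\Hilb(\cC^{mp} \boxtimes \cC)$, which intertwines the free left $\cS$-module structure on $\cS_H$ with the one obtained by amplifying the free action on $\cS_\un$ by tensoring (on the right) with $\un \boxtimes H$. To check this, I would verify the identification on fibers: using the formula $\cS_H(b \boxtimes c) = H(bc)$ from the earlier corollary, together with the decomposition $\cS_\un(b \boxtimes d) \simeq \cC(bd, \un)$ amplified by $H(d \backslash c)$-style multiplicities, the identification is straightforward from the tensor structure on $\Hilb(\cC^{mp}\boxtimes \cC)$.

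Having identified $\cS_H$ with the tensor amplification $\cS_\un \otimes (\un \boxtimes H)$ as a left $\cS$-module, I would then invoke the preceding proposition, which asserts that the free left $\cS$-action on $\cS_\un$ is unitary, and apply Lemma~\ref{lemma:amplificationunitarymodules} with $\bA = \cS$, $H = \cS_\un$ and $K = \un \boxtimes H$. The lemma immediately provides a unitary left $\cS$-module structure on $\cS_\un \otimes (\un \boxtimes H) \simeq \cS_H$, and by construction this is the structure obtained by transporting the free one.

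The only point requiring care is making the isomorphism $\cS_H \simeq \cS_\un \otimes (\un \boxtimes H)$ compatible with the free $\cS$-actions on each side, i.e.\ checking that the algebra morphism $\cS \to \bB^l_{\cS_H}$ obtained from the amplification agrees, through this identification, with the algebra morphism corresponding to the free left action on $\cS_H$. This is essentially a naturality check on the embedding $\bB^l_{\cS_\un} \hookrightarrow \bB^l_{\cS_\un \otimes (\un \boxtimes H)}$ of Lemma~\ref{lemma:amplificationunitarymodules}, combined with the fact that free modules are characterized by the universal property $\Hom_{\LMod(\cS)}(\cS \odot X, V) \simeq \Hom(X,V)$; unwinding this universal property for $X = \un \boxtimes H$ shows that the two $\cS$-actions coincide.
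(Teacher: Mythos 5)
Your proposal is correct and follows exactly the paper's own argument: identify $\cS_H \simeq \cS_\un \otimes (\un \boxtimes H)$ as Hilbert space objects and then apply Lemma~\ref{lemma:amplificationunitarymodules} to the unitary free action on $\cS_\un$. The extra compatibility check you include (that the amplified action agrees with the free $\cS$-action on $\cS_H$) is a reasonable refinement the paper leaves implicit, but it does not change the route.
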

\begin{proof}
	As a Hilbert space object, $\cS_H \simeq \cS_\un \otimes (\un \boxtimes H)$. By Lemma \ref{lemma:amplificationunitarymodules}, there is an induced unitary left $\cS$-module structure on $\cS_H$.
\end{proof}

\begin{proposition}
	There is an equivalence
	\[ 
	\Hilb(\cC) \simeq \LMod_{\Hilb(\cC^{mp} \boxtimes \cC)} (\cS) 
	 \]
	 of W$^*$-categories.
	 \label{prop:Hilb(C)simequnitaryleftSmodules}
\end{proposition}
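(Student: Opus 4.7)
The plan is to upgrade the algebraic equivalence $\Vect(\cC) \simeq \LMod_{\Vect(\cC^{mp} \boxtimes \cC)}(\cS)$ to the unitary setting by defining mutually inverse functors and checking they respect the Hilbert-space structure on each fiber. Concretely, I would define
\[
\cF : \Hilb(\cC) \to \LMod_{\Hilb(\cC^{mp} \boxtimes \cC)}(\cS) , \qquad H \mapsto (\cS_H, \Gamma^H),
\]
where $\Gamma^H$ is the unitary left $\cS$-action on $\cS_H = \cS \odot (\un \boxtimes H)$ obtained by combining the just-established unitarity of the free action on $\cS_\un$ with Lemma~\ref{lemma:amplificationunitarymodules}. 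The candidate inverse is
\[
\cG : \LMod_{\Hilb(\cC^{mp} \boxtimes \cC)}(\cS) \to \Hilb(\cC), \qquad (V,\alpha) \mapsto V(\un \boxtimes -),
\]
the restriction of $V \in \Hilb(\cC^{mp} \boxtimes \cC)$ along $\un \boxtimes (-) : \cC \to \cC^{mp} \boxtimes \cC$; that this lands in $\Hilb(\cC)$ is automatic since $V$ is a uniformly bounded family of Hilbert spaces and morphisms between such restrict to the analogous data on $\cC$.

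The first step is to check functoriality and to identify $\cG \circ \cF \simeq \id_{\Hilb(\cC)}$. The latter follows from the computation already made above, namely $\cS_H(\un \boxtimes c) = H(c)$ with the Hilbert-space structure transported isometrically, so $\cG \cF(H) = H$ on the nose (or at worst up to a canonical unitary natural isomorphism). On morphisms, a bounded natural transformation $f : H \to K$ is sent to the unique bounded left $\cS$-module morphism $\cS_f : \cS_H \to \cS_K$ extending $\id_\cS \odot (\un \boxtimes f)$; its norm is controlled by $\| f \|$ because the isomorphisms $\cS_H(b \boxtimes c) \simeq H(bc)$ are unitary.

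Next I would establish $\cF \circ \cG \simeq \id$ on the unitary module side. Given $(V,\alpha) \in \LMod_{\Hilb(\cC^{mp} \boxtimes \cC)}(\cS)$, write $H := \cG(V) = V(\un \boxtimes -)$. Lemma~\ref{lemma:correspondenceleftmoduleandalgebramorphism} and its unitary counterpart provide a canonical left $\cS$-module morphism $\varphi_V : \cS_H \to V$ obtained by acting: on the fiber at $b \boxtimes c$, this is the map $\cS_H(b \boxtimes c) = H(bc) = V(\un \boxtimes bc) \to V(b \boxtimes c)$ induced from $\alpha$ restricted to the appropriate summand of $\cS$. The algebraic Lemma already established in the Section shows $\varphi_V$ is an isomorphism in $\Vect(\cC^{mp} \boxtimes \cC)$. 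What must be checked unitarily is that $\varphi_V$ is a bounded isomorphism in $\Hilb(\cC^{mp} \boxtimes \cC)$ with bounded inverse, so that $\cS_H \simeq V$ in the W$^*$ setting.

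The main obstacle is precisely this unitary essential surjectivity: showing that the algebraic isomorphism $\varphi_V$ is automatically bounded with bounded inverse once $\alpha$ is unitary. The argument I have in mind uses that, because $\alpha$ is a $*$-algebra morphism $\cS \to \bB^l_V$, the operators $\alpha(\iota_a)$ have norm $\leq 1$ by the same Cauchy--Schwarz argument as in Lemma~\ref{lemma:algebrahomassociatedtofreeSaction}; dually, the standard solution $R_a$ of the conjugate equations provides a contractive section witnessing that the image of $\alpha(\iota_a)\alpha(\iota_a)^*$ generates $V(b \boxtimes c)$ from $V(\un \boxtimes bc)$ with controlled norm. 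Combined with the formula $\cS_H(b \boxtimes c) \simeq H(bc)$, this produces uniform bounds on $\varphi_V$ and its inverse across all fibers, giving the unitary isomorphism. The naturality of $\varphi_V$ in $V$ and the compatibility with morphisms are then formal, completing the equivalence of W$^*$-categories.
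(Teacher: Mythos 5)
Your overall route is the same as the paper's: send $H$ to $\cS_H$ with the free action made unitary via Lemma~\ref{lemma:amplificationunitarymodules}, and get essential surjectivity by recovering $H$ from the unit fibers $V(\un\boxtimes -)$ and transporting the algebraic equivalence; the paper compresses the analytic content of this last step into Corollary~\ref{cor:equivalencebetweenHilbertspacestronHandonS_H}, while you attempt to prove boundedness of the comparison map $\varphi_V:\cS_H\to V$ directly. The identifications $\cS_H(\un\boxtimes c)=H(c)$, the functoriality checks, and the reduction to showing $\varphi_V$ is a bounded isomorphism with bounded inverse are all fine and match the paper.

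The gap is in the estimate you give for that crucial step. On the fiber at $b\boxtimes c$, the map $\varphi_V$ is (up to the identifications you use) built from $\alpha(\iota_{\bar b})$ together with a morphism induced by a standard solution of the conjugate equations, and the ``contractive section'' you invoke is not contractive: $\|R_b\|=\|\bar R_b\|=d_b^{1/2}$. So the argument as stated only yields fiberwise bounds of order $d_b^{1/2}$ for $\varphi_V$ (and similarly for its inverse), which are not uniform over $\Irr(\cC)$; this does not even show that $\varphi_V$ is a morphism in $\Hilb(\cC^{mp}\boxtimes\cC)$, let alone an isomorphism there. Compare Lemma~\ref{lemma:fromboundedhalfbraidingstoboundedalgebramorphism}, where exactly this kind of estimate produces the non-uniform factor $d_a^{1/2}\|\beta_a\|$ and is only used componentwise. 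To close the gap you must either track the dimension-renormalized inner products (the $(d_b d_c)^{-1}$ rescaling in the tensor structure of the unitary ind-completion and the trace norm appearing in Lemma~\ref{lemma:presentationrightboundedoperators}), which is precisely what makes the free action have norm exactly $1$ in Lemma~\ref{lemma:algebrahomassociatedtofreeSaction}, or, better, avoid norm estimates altogether and verify that $\varphi_V^*\varphi_V$ and $\varphi_V\varphi_V^*$ are identities by using that $\alpha$ is a $*$-algebra object morphism together with the multiplication and $*$-structure of $\cS$; this computation is in effect the transfer of Hilbert space object structures between $H$ and $\cS_H$ that the paper invokes through Corollary~\ref{cor:equivalencebetweenHilbertspacestronHandonS_H}.
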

\begin{proof}
	The previous proposition shows that $H \mapsto \cS_H$ can be understood as a functor $\Hilb(\cC) \to \LMod_{\Hilb(\cC^{mp} \boxtimes \cC)}(\cS)$. Now, for a Hilbert space object $\cK \in \Hilb(\cC^{mp} \boxtimes \cC)$, the structure of a unitary left $\cS$-module on $\cK$ is a $*$-algebra object morphism $\cS \to \bB^l_{\cK}$. Writing $\cK \simeq \cS_H$ for $H \in \Hilb(\cC)$ given by
	\[ H(a)  = \bigoplus_{b,c \in \Irr(\cC)}^{\ell^2} \cC(a,bc) \otimes \cK(b \boxtimes c) \ , \]
	we conclude that the equivalence $\Vect(\cC) \simeq \LMod_{\Vect(\cC^{mp} \boxtimes \cC)}(\cS)$ induces an equivalence $\Hilb(\cC) \simeq \LMod_{\Hilb(\cC^{mp} \boxtimes \cC)} (\cS)$.
\end{proof}

\begin{remark}
	By Proposition \ref{prop:Hilb(C)simequnitaryleftSmodules}, the category of unitary left $\cS$-modules can be equipped with a unitary tensor structure. Up to unitary monoidal isomorphism, it is determined by
	\[\cS_{H_1} \otimes \cS_{H_2} = \cS_{H_1 \otimes H_2} \ ,
	\]
	for $H_1,H_2 \in \Hilb(\cC)$.
\end{remark}

\bigskip


\subsection{Bimodules and half-braidings}

In this subsection we will show that the algebraic Drinfeld center coincides with the category of $\cS$-bimodules in $\Vect(\cC^{mp} \boxtimes \cC)$.
\medskip

Through the module structure $\cC \curvearrowleft \cC^{mp} \boxtimes \cC$, algebras objects in $\Vect(\cC^{mp} \boxtimes \cC)$ can act on objects of $\Vect(\cC)$.

\begin{lemma}
    Let $H \in \Vect(\cC)$. We have
    \[ \cS_{H \triangleleft \cS} \simeq \cS_H \odot \cS \ . \]
\end{lemma}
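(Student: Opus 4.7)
The plan is to deduce the statement directly from the preceding lemma, which asserts that the equivalence $\Vect(\cC) \simeq \LMod_{\Vect(\cC^{mp} \boxtimes \cC)}(\cS)$ is not merely linear but an equivalence of right $\Vect(\cC^{mp} \boxtimes \cC)$-module categories. Unpacking the module structures: on the $\cS$-module side, the right action is simply $(M, K) \mapsto M \odot K$ with its inherited left $\cS$-module structure; on the $\Vect(\cC)$ side it is the ind-extension of the canonical right $\cC^{mp} \boxtimes \cC$-action on $\cC$. The module-categorical equivalence therefore provides a natural isomorphism $\cS_{H \triangleleft K} \simeq \cS_H \odot K$ for every $H \in \Vect(\cC)$ and $K \in \Vect(\cC^{mp} \boxtimes \cC)$; specialising to $K = \cS$ yields the claim.

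Should a more hands-on verification be desired, I would expand both sides using the explicit presentation $\cS = \bigoplus_{c \in \Irr(\cC)} \bar{c} \boxtimes c$. Writing $H = \bigoplus_{a \in \Irr(\cC)} a \odot H(a)$, the formula $a \triangleleft (\bar{c} \boxtimes c) = \bar{c} a c$ gives
\[ \cS_{H \triangleleft \cS} \simeq \bigoplus_{a,c \in \Irr(\cC)} \cS \odot (\un \boxtimes \bar{c} a c) \odot H(a). \]
For the right-hand side, distributing $\odot$ over the direct sum and using that $(\un \boxtimes a) \otimes (\bar{c} \boxtimes c) = \bar{c} \boxtimes ac$ in $\cC^{mp} \boxtimes \cC$, followed by the mirror isomorphism $\cS \odot (\bar{c} \boxtimes ac) \simeq \cS \odot (\un \boxtimes \bar{c} a c)$ furnished by Lemma \ref{lem:mirrorpropertySE}, produces the same formal direct sum.

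The only point requiring some care is the bookkeeping of the opposite tensor structure on $\cC^{mp}$ when distributing $\odot$ in $\cS_H \odot \cS$; naturality of the resulting isomorphism in $H$ follows from naturality in Lemma \ref{lem:mirrorpropertySE}. Beyond this there is no conceptual obstacle, the statement being essentially a specialisation of the module-categorical equivalence already in hand.
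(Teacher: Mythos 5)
Your hands-on verification is essentially the paper's own proof: the paper writes $\cS_{H \triangleleft \cS} = \cS \odot (\un \boxtimes H \triangleleft \cS) \simeq \bigoplus_{a} \cS \odot (\un \boxtimes \bar{a} H a)$, applies the mirror isomorphism of Lemma \ref{lem:mirrorpropertySE} to move $\bar{a}$ into the first leg, and regroups to obtain $\left( \cS \odot (\un \boxtimes H) \right) \odot \cS$; your componentwise version with $H = \bigoplus_{a \in \Irr(\cC)} a \odot H(a)$ is the same computation (the paper additionally records a fiberwise hom-space calculation, which you do not need). Your first, softer deduction --- reading the statement off the preceding lemma, which asserts that $\Vect(\cC) \simeq \LMod_{\Vect(\cC^{mp} \boxtimes \cC)}(\cS)$ is an equivalence of $\Vect(\cC^{mp} \boxtimes \cC)$-module categories, and specializing the module-functor constraint $\cS_{H \triangleleft K} \simeq \cS_H \odot K$ to $K = \cS$ --- is a legitimate citation and is not circular, since that lemma is proved independently of the present one. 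Be aware, though, that the paper's proof of that lemma only checks essential surjectivity of $H \mapsto \cS_H$ and never spells out the module-functor coherence isomorphisms; the content of that compatibility is exactly the mirror-lemma manipulation you carry out in your second paragraph, which is presumably why the paper proves the present lemma by direct computation rather than by appeal to the earlier statement. So your explicit argument should stand as the proof, with the abstract derivation best kept as a remark.
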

\begin{proof}
	Observe
	\begin{align*}
		\cS_{H \triangleleft \cS}   := \cS \odot (\un \boxtimes H \triangleleft \cS) 
		& \simeq \bigoplus_{a \in \Irr(\cC)} \cS \odot (\un \boxtimes \bar{a} H a)  \\
		& \overset{(!)}{\simeq} \bigoplus_{a \in \Irr(\cC)} \cS \odot (\bar{a} \boxtimes H a) \simeq \cS \odot \left( (\un \boxtimes H) \odot \cS \right)  \simeq \left( \cS \odot (\un \boxtimes H) \right) \odot \cS = \cS_H \odot \cS \ .
	\end{align*}
	Notice that in $(!)$ we have used the canonical isomorphism $\cS \odot (\un \boxtimes \bar{a}) \simeq \cS \odot (\bar{a} \boxtimes \un)$ from Lemma \ref{lem:mirrorpropertySE}.
Alternatively, in components,
    \begin{align*}
        (\cS_H \odot \cS)(b \boxtimes c) & = \bigoplus_{a,b_1,c_1,b_2} \cC(\bar{b}_1,\bar{b}b_2) \odot \cC(c,c_1 \bar{b}_2) \odot \cC(c_1,\bar{b}_1a) \odot H(a) \\
        & \simeq \bigoplus_{a,c_1,b_2} \cC(c,c_1 \bar{b}_2) \odot \cC(c_1,\bar{b}b_2a) \odot H(a)  \simeq \bigoplus_{a,b_2}\cC(c,\bar{b}b_2a \bar{b}_2) \odot H(a) \\
        & \simeq \bigoplus_{a,b_2} \cC(bc,b_2a \bar{b}_2) \odot H(a)  \overset{(*)}{\simeq} \bigoplus_{a,b_2} \cC(a, \bar{b}_2 bc b_2) \odot H(a) \simeq \bigoplus_{a,b_2,d} \cC(d,bc) \odot \cC(a,\bar{b}_2 d b_2) \odot H(a) \\
        & \overset{(*)}{\simeq} \bigoplus_{a,b_2,d} \cC(bc,d) \odot \cC(a, \bar{b}_2 d b_2) \odot H(a) \simeq \bigoplus_{d} \cS_d(b \boxtimes c) \odot (H \triangleleft \cS)(d) \ .
    \end{align*}
\end{proof}

\begin{corollary}
    For $H \in \Vect(\cC)$, a structure of a right $\cS$-module on $H$ induces a right $\cS$-module structure on $\cS_H$.
    \label{cor:frommonadmoduletoalgebramodule}
\end{corollary}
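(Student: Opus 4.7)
The plan is to transport the right $\cS$-module structure from $H$ to $\cS_H$ across the equivalence
\[
\cS_{(-)} : \Vect(\cC) \overset{\simeq}{\longrightarrow} \LMod_{\Vect(\cC^{mp} \boxtimes \cC)}(\cS)
\]
of $\Vect(\cC^{mp} \boxtimes \cC)$-module categories, using the natural isomorphism $\phi_H : \cS_H \odot \cS \overset{\simeq}{\longrightarrow} \cS_{H \triangleleft \cS}$ furnished by the preceding lemma. Morally, this is the statement that a module functor between module categories over a monoidal category automatically sends algebra objects and their modules to algebra objects and their modules.

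Concretely, I would view the right $\cS$-module structure on $H$ as a morphism $\alpha : H \triangleleft \cS \to H$ in $\Vect(\cC)$ satisfying the standard associativity and unit axioms, where the right $\Vect(\cC^{mp} \boxtimes \cC)$-action on $\Vect(\cC)$ is the ind-extension of $c \triangleleft (a_1 \boxtimes a_2) = a_1 \otimes c \otimes a_2$. Applying $\cS_{(-)}$ to $\alpha$ yields a left $\cS$-module morphism $\cS_\alpha : \cS_{H \triangleleft \cS} \to \cS_H$, and the induced right $\cS$-action on $\cS_H$ is then defined to be
\[
\tilde{\alpha} := \cS_\alpha \circ \phi_H : \cS_H \odot \cS \longrightarrow \cS_H.
\]

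To verify that $\tilde\alpha$ is a right $\cS$-action, I would chase the associativity square comparing the two legs of
\[
(\cS_H \odot \cS) \odot \cS \rightrightarrows \cS_H \odot \cS \longrightarrow \cS_H
\]
determined by $\id_{\cS_H} \odot \mu^\cS$ and $\tilde\alpha \odot \id_\cS$. Applying the preceding lemma twice identifies $\cS_H \odot \cS \odot \cS$ with $\cS_{(H \triangleleft \cS) \triangleleft \cS} = \cS_{H \triangleleft (\cS \odot \cS)}$; the naturality of $\phi$ in $H$ together with the functoriality of $\cS_{(-)}$ then reduce the two legs to $\cS_{\alpha \circ (\alpha \triangleleft \id_\cS)}$ and $\cS_{\alpha \circ (\id_H \triangleleft \mu^\cS)}$ respectively, which agree by the associativity of $\alpha$. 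The unit axiom is analogous, using that the unit of $\cS$ corresponds to the canonical inclusion $H \simeq H \triangleleft \un \hookrightarrow H \triangleleft \cS$ under the right action.

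The main technical point is verifying the coherence compatibility of $\phi$ with the multiplication $\mu^\cS$: concretely, that $\phi_{H \triangleleft \cS} \circ (\phi_H \odot \id_\cS)$ agrees with $\cS_{\id_H \triangleleft \mu^\cS} \circ \phi_H$ after the obvious associator of $\odot$. This is not entirely automatic from the statement of the lemma and must be checked by tracing through its proof, which ultimately rests on the mirror isomorphism of Lemma~\ref{lem:mirrorpropertySE} together with the explicit decomposition $\cS \simeq \bigoplus_{c \in \Irr(\cC)} \bar{c} \boxtimes c$. Once this coherence is established, the transfer of the right $\cS$-module structure to $\cS_H$ is a purely formal consequence.
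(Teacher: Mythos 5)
Your proposal is correct and matches the paper's (essentially unstated) argument: the paper treats the corollary as an immediate consequence of the preceding isomorphism $\cS_{H \triangleleft \cS} \simeq \cS_H \odot \cS$, transporting the action $H \triangleleft \cS \to H$ along $\cS_{(-)}$ exactly as you do. Your explicit remark about checking compatibility of this isomorphism with $\mu^\cS$ is a reasonable elaboration of a point the paper leaves implicit, not a departure from its route.
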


\begin{lemma}
    Let $H \in \Vect(\cC)$. A right action $\cS_H \overset{\beta}{\curvearrowleft} \cS$ induces a right action of $\cS$ on $H$.
    \label{lemma:fromalgebramoduletomonadmodule}
\end{lemma}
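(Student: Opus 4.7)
The plan is to use the isomorphism $\cS_{H \triangleleft \cS} \simeq \cS_H \odot \cS$ proven in the previous Lemma, together with the equivalence $\Vect(\cC) \simeq \LMod_{\Vect(\cC^{mp} \boxtimes \cC)}(\cS)$ sending $H \mapsto \cS_H$, to transport the right $\cS$-action on $\cS_H$ to a right $\cS$-action on $H$.

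First, observe that the given right $\cS$-action $\beta \colon \cS_H \odot \cS \to \cS_H$ is a morphism of left $\cS$-modules (it commutes with the canonical left $\cS$-action on $\cS_H$, which is the setting needed for the correspondence with half-braidings coming up next). Composing $\beta$ with the isomorphism $\cS_{H \triangleleft \cS} \simeq \cS_H \odot \cS$ from the previous Lemma yields a left-$\cS$-linear morphism $\tilde{\beta} \colon \cS_{H \triangleleft \cS} \to \cS_H$. Under the equivalence $\Vect(\cC) \simeq \LMod(\cS)$, the morphism $\tilde{\beta}$ corresponds to a morphism $\beta_H \colon H \triangleleft \cS \to H$ in $\Vect(\cC)$, which is the candidate right $\cS$-action on $H$.

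To complete the proof, one must check that $\beta_H$ satisfies the associativity and unitality axioms of a right $\cS$-module. Unitality is immediate: the unit of $\cS$ is concentrated in the summand $\un \boxtimes \un$, and its action on $\cS_H$ (namely $\id_{\cS_H}$) transports under the equivalence to $\id_H$. Associativity requires translating the identity $\beta \circ (\beta \odot \id_\cS) = \beta \circ (\id_{\cS_H} \odot \mu^\cS)$ into the corresponding identity for $\beta_H$, which uses the iterated isomorphism $\cS_H \odot \cS \odot \cS \simeq \cS_{H \triangleleft \cS} \odot \cS \simeq \cS_{(H \triangleleft \cS) \triangleleft \cS}$.

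The main obstacle I anticipate is this associativity coherence: one needs that the isomorphism of the previous Lemma is natural in $H$ and compatible with iteration, so that it intertwines the multiplication $\mu^\cS \colon \cS \odot \cS \to \cS$ with the associativity constraint of the $\Vect(\cC^{mp} \boxtimes \cC)$-action on $\Vect(\cC)$. This amounts to upgrading the equivalence $\Vect(\cC) \simeq \LMod(\cS)$ to an equivalence of $\Vect(\cC^{mp} \boxtimes \cC)$-module categories on morphisms between free left $\cS$-modules, which follows formally from the mirror property (Lemma \ref{lem:mirrorpropertySE}) and standard coherence for module categories.
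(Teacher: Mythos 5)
Your proposal is correct and is essentially the paper's argument in abstract form: the paper carries out the transport along $\Vect(\cC)\simeq\LMod_{\Vect(\cC^{mp}\boxtimes\cC)}(\cS)$ concretely, by restricting $\beta$ to its $(\un\boxtimes c)$-components and setting $\gamma_c:=\beta_{\un\boxtimes c}\colon (H\triangleleft\cS)(c)\to H(c)$, which is exactly what your $\beta_H$ is on fibers. One caveat: the left $\cS$-linearity of $\beta$ that you ``observe'' is not automatic from the literal statement (an arbitrary right action on the object $\cS_H$ need not commute with the free left action); it is the standing bimodule hypothesis of this subsection, and it is genuinely needed -- it is what makes $\beta$ determined by its $(\un\boxtimes c)$-components and hence what underlies the associativity check that the paper dismisses as ``straightforward.'' With that hypothesis made explicit, your argument and the paper's coincide.
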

\begin{proof}
    In components, using the isomorphism $\cS_H \odot \cS \simeq \cS_{H \triangleleft \cS}$, the right action $\beta$ of $\cS$ on $\cS_H$ is given by
    \[ \beta_{b \boxtimes c} : \bigoplus_{a} \cS_a(b \boxtimes c) \odot (H \triangleleft \cS)(a) \to \bigoplus_a \cS_a (b \boxtimes c) \odot H(a) \ . \]
    In particular, the $(\un \boxtimes c)$-component gives
    \[ \bigoplus_a \cS_a(\un \boxtimes c) \odot (H \triangleleft \cS)(a) \simeq \bigoplus_a \cC(c,a) \odot (H \triangleleft \cS)(a) = (H \triangleleft \cS)(c) \overset{\beta_{\un \boxtimes c}}{\longrightarrow} \bigoplus_a \cS_a(\un \boxtimes c) \odot H(a) \simeq H(c) \ . \]
    That is, 
    \[ \gamma_c := \beta_{\un \boxtimes c}: (H \triangleleft \cS)(c) \to H(c) \]
    gives a right action of $\cS$ on $H$. It is straightforward to check that $\gamma$ is indeed an action.
\end{proof}

In the above, we have crucially used the canonical identification $\cS_a(\un \boxtimes c) \simeq \cC(c,a)$, a fact that underpinnes the equivalence $\cC \simeq \cM_\cS$ of right $\cC^{mp} \boxtimes \cC$-module C$^*$-categories.

\begin{lemma}
	Given $H \in \Vect(\cC)$, there is a bijection
	\[
	\{ H \curvearrowleft (-) \triangleleft S \} \simeq \{ \cS_H \curvearrowleft \cS \}
	\]
\end{lemma}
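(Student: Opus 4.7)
The plan is to show that the assignments from Corollary \ref{cor:frommonadmoduletoalgebramodule} and Lemma \ref{lemma:fromalgebramoduletomonadmodule} are mutually inverse. Corollary \ref{cor:frommonadmoduletoalgebramodule} produces, from a right $\cS$-action $\gamma : H \triangleleft \cS \to H$, an induced action on $\cS_H$ by applying the functor $\cS \odot (\un \boxtimes (-))$ and invoking the isomorphism $\cS_{H \triangleleft \cS} \simeq \cS_H \odot \cS$. Lemma \ref{lemma:fromalgebramoduletomonadmodule} goes in the reverse direction by restricting a right action $\beta : \cS_H \odot \cS \to \cS_H$ to its $(\un \boxtimes c)$-components, using $\cS_a(\un \boxtimes c) \simeq \cC(c,a)$ to identify these with the fibers of $H \triangleleft \cS$ and $H$ at $c$.

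First I would verify that starting from $\gamma : H \triangleleft \cS \to H$, passing to $\cS_\gamma : \cS_H \odot \cS \to \cS_H$ and then restricting to $(\un \boxtimes c)$-components recovers $\gamma$. This is essentially by construction: the induced action on $\cS_H$ is, in components, precisely the action of $\gamma$ on multiplicity spaces, so at $(\un \boxtimes c)$ the multiplicity-space identifications $\cS_a(\un \boxtimes c) \simeq \cC(c,a)$ collapse the sum to yield $\gamma_c : (H \triangleleft \cS)(c) \to H(c)$.

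For the other direction, starting from a right action $\beta$ on $\cS_H$, one first extracts $\gamma_c := \beta_{\un \boxtimes c}$ and then needs to check that the induced action $\cS_\gamma$ on $\cS_H$ reproduces $\beta$ in every component $(b \boxtimes c)$, not just the $(\un \boxtimes c)$-ones. Here the key observation is that, by $(\mpcC \boxtimes \cC)$-equivariance (i.e.\ the fact that $\beta$ is a morphism of right $\cC^{mp} \boxtimes \cC$-module objects in $\Vect(\cC^{mp} \boxtimes \cC)$), the value of $\beta$ at an arbitrary object $(b \boxtimes c)$ is determined from its values at objects of the form $(\un \boxtimes c')$ through the natural isomorphism $\cS \odot (b \boxtimes c) \simeq \cS \odot (\un \boxtimes bc)$ supplied by Lemma \ref{lem:mirrorpropertySE}. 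In other words, $\cS_H$ is generated as a right $\cS$-module by its values on the subcategory $\{\un\} \boxtimes \cC$, so two right actions that agree on this subcategory agree everywhere.

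The main obstacle, therefore, is simply to check this naturality/equivariance step carefully, i.e.\ to verify that the $(b \boxtimes c)$-component of $\cS_\gamma$ really is the one forced by the $(\un \boxtimes bc)$-component together with the left $\cS$-module structure on $\cS_H$. This is a diagrammatic chase using the same $(!)$-isomorphism $\cS \odot (\un \boxtimes \bar{a}) \simeq \cS \odot (\bar{a} \boxtimes \un)$ that was used to prove $\cS_{H \triangleleft \cS} \simeq \cS_H \odot \cS$, and the rest reduces to bookkeeping with the identifications $\cS_a(b \boxtimes c) \simeq \cC(bc,a)$. Since both constructions act linearly and naturally and agree on all $(\un \boxtimes c)$-components, the bijection follows.
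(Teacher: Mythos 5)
Your overall architecture is the same as the paper's: the composite $\gamma\mapsto\beta_\gamma\mapsto\gamma^{\beta_\gamma}$ is the identity essentially by construction, and the real content is that a right $\cS$-action $\beta$ on $\cS_H$ is determined by its components $\beta_{\un\boxtimes c}$ (the paper obtains this from the identification $\cS_a(b\boxtimes c)=\cC(bc,a)=\cS_a(\un\boxtimes bc)$, spelled out in the corollary that follows the lemma). However, the justification you give for this determination step does not work as stated. You invoke ``$(\cC^{mp}\boxtimes\cC)$-equivariance'', i.e.\ the fact that $\beta$ is a morphism in $\Vect(\cC^{mp}\boxtimes\cC)$, together with the mirror isomorphism $\cS\odot(b\boxtimes c)\simeq\cS\odot(\un\boxtimes bc)$ of Lemma \ref{lem:mirrorpropertySE}. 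But naturality with respect to $\cC^{mp}\boxtimes\cC$ is automatic for \emph{every} morphism of $\Vect(\cC^{mp}\boxtimes\cC)$ and only relates components along morphisms of $\cC^{mp}\boxtimes\cC$; since $\Hom_{\cC^{mp}\boxtimes\cC}(b\boxtimes c,\un\boxtimes bc)\simeq\cC(b,\un)\otimes\cC(c,bc)=0$ whenever $b$ is a nontrivial irreducible, this naturality gives no relation at all between $\beta_{b\boxtimes c}$ and $\beta_{\un\boxtimes bc}$. The mirror isomorphism is an isomorphism in $\cM_\cS$, i.e.\ of \emph{left} $\cS$-modules, and is not induced by any morphism of $\cC^{mp}\boxtimes\cC$, so transporting the right action along it requires $\beta$ to commute with the free left $\cS$-action. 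Indeed, a generic morphism $\cS_H\odot\cS\simeq\cS_{H\triangleleft\cS}\to\cS_H$ in $\Vect(\cC^{mp}\boxtimes\cC)$ has independent components in $\cL((H\triangleleft\cS)(bc),H(bc))$ for each pair $(b,c)$, and associativity and unitality of the right action alone do not force these to depend only on $bc$; so ``two right actions agreeing on $\{\un\}\boxtimes\cC$ agree everywhere'' is false without an extra hypothesis.

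The missing ingredient is precisely the compatibility with the left $\cS$-module structure: the right actions in question are those making $\cS_H$, with its free left action, an $\cS$-bimodule (this is how the lemma is used for $\cZ\Vect(\cC)\simeq\Bim_{\Vect(\cC^{mp}\boxtimes\cC)}(\cS)$, and it is what Corollary \ref{cor:frommonadmoduletoalgebramodule} actually produces). Once $\beta$ is a morphism of left $\cS$-modules, it corresponds, under $\cS_H\odot\cS\simeq\cS_{H\triangleleft\cS}$ and the equivalence $\LMod_{\Vect(\cC^{mp}\boxtimes\cC)}(\cS)\simeq\Vect(\cC)$, to a morphism $H\triangleleft\cS\to H$, i.e.\ it \emph{is} the data of its $(\un\boxtimes c)$-components, and your propagation argument via Lemma \ref{lem:mirrorpropertySE} becomes legitimate. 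So the repair is to replace the appeal to $(\cC^{mp}\boxtimes\cC)$-equivariance by the bimodule condition (equivalently, to run the argument in $\cM_\cS$, resp.\ $\LMod_{\Vect(\cC^{mp}\boxtimes\cC)}(\cS)$, rather than in $\Vect(\cC^{mp}\boxtimes\cC)$), which is also the content of the paper's observation $\cS_a(b\boxtimes c)=\cC(bc,a)=\cS_a(\un\boxtimes bc)$.
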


\begin{proof}
	Given a $(-) \triangleleft \cS$-module structure $\gamma$ on $H$, let $\beta_\gamma$ be the $\cS$-module structure on $\cS_H$ provided by Corollary \ref{cor:frommonadmoduletoalgebramodule}. Conversely, if a $\cS$-module structure $\beta$ is given on $\cS_H$, let $\gamma^\beta$ denote the $(-) \triangleleft \cS$-module structure on $H$ provided by Lemma \ref{lemma:fromalgebramoduletomonadmodule}. Now, starting from $\gamma: H \triangleleft \cS \to H$, by construction we have that $\gamma^{\beta_\gamma} = \gamma$. 
\end{proof}

\begin{corollary}
    There is an equivalence between right  modules in $\Vect(\cC)$ for the monad $(-) \triangleleft \cS$ and bimodules in $\Vect(\cC^{mp} \boxtimes \cC)$ for $\cS$. That is, 
    \[ \cZ \Vect(\cC) \simeq \Bim_{\Vect(\cC^{mp} \boxtimes \cC)}(\cS) \ . \]
\end{corollary}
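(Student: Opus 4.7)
The plan is to chain together three equivalences already at hand: the monadic reconstruction $\cZ \Vect(\cC) \simeq Z_{\Vect}\-\Mod_{\Vect(\cC)}$ of Theorem \ref{thm:linearisom}, the equivalence $\Vect(\cC) \simeq \LMod_{\Vect(\cC^{mp}\boxtimes\cC)}(\cS)$ identifying objects of $\Vect(\cC)$ with left $\cS$-modules via $H \mapsto \cS_H$, and the bijection between $(-)\triangleleft \cS$-actions on $H$ and right $\cS$-actions on $\cS_H$ provided by the preceding Lemma.

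First I would make the identification of the endofunctor monads $Z_{\Vect}$ and $(-) \triangleleft \cS$ explicit. Using $\cS \simeq \bigoplus_{a \in \Irr(\cC)} \bar a \boxtimes a$ from Lemma \ref{lem:explicitpresentationSEalgebra} together with the module action $c \triangleleft (a_1 \boxtimes a_2) = a_1 \otimes c \otimes a_2$, one has
\[
H \triangleleft \cS \;=\; \bigoplus_{a \in \Irr(\cC)} \bar a \odot H \odot a \;=\; Z_{\Vect}(H) \ ,
\]
and a short diagrammatic check, using the multiplication formula for $\cS$ established in Lemma \ref{lemma:algebrahomassociatedtofreeSaction} together with the explicit formulas for $\mu$ and $\eta$ recorded after Corollary \ref{cor:adjunctionbetweenFandU}, matches the induced monad structure with $(Z_{\Vect},\mu,\eta)$. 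Hence modules for the monad $(-)\triangleleft \cS$ coincide with $Z_{\Vect}$-modules, which by Theorem \ref{thm:linearisom} coincide with $\cZ \Vect(\cC)$.

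Second, I would pass to the bimodule side. Under $H \leftrightarrow \cS_H$, the preceding Lemma gives a bijection between right $(-)\triangleleft \cS$-actions on $H$ and right $\cS$-actions on $\cS_H$; the resulting right $\cS$-action automatically commutes with the canonical free left $\cS$-action on $\cS_H$, because by Corollary \ref{cor:frommonadmoduletoalgebramodule} it is obtained by applying the left-$\cS$-linear free functor $H \mapsto \cS_H$ to the morphism $H \triangleleft \cS \to H$ and precomposing with the natural left-$\cS$-linear isomorphism $\cS_{H \triangleleft \cS} \simeq \cS_H \odot \cS$. Morphism compatibility is then formal: a morphism $f: H \to K$ in $\Vect(\cC)$ is $(-)\triangleleft\cS$-equivariant if and only if $\cS_f: \cS_H \to \cS_K$ intertwines the right $\cS$-actions, by naturality of that same isomorphism in $H$.

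The main obstacle, and really the only nontrivial input, is verifying that the two monad structures on the common endofunctor $Z_{\Vect} = (-) \triangleleft \cS$ coincide; once this is accepted, the remaining identifications are formal consequences of the functoriality of $H \mapsto \cS_H$ and of Theorem \ref{thm:linearisom}. Stringing the three equivalences together then yields the claimed equivalence of linear categories.
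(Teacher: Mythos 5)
Your proposal is correct and follows essentially the same route as the paper: the bijection between $(-)\triangleleft\cS$-module structures on $H$ and right $\cS$-module structures on $\cS_H$ from the preceding lemmas, combined with the equivalence $\Vect(\cC)\simeq\LMod_{\Vect(\cC^{mp}\boxtimes\cC)}(\cS)$ and the monadic reconstruction $\cZ\Vect(\cC)\simeq Z_{\Vect}\-\Mod_{\Vect(\cC)}$ of Theorem \ref{thm:linearisom}. The points you spell out explicitly --- matching the monad structure of $(-)\triangleleft\cS$ with $(Z_{\Vect},\mu,\eta)$, left-$\cS$-linearity of the induced right action, and compatibility on morphisms --- are left implicit in the paper's proof, but they are the same underlying argument rather than a different one.
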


\begin{proof}
    Observing, as we did previously, that $\cS_a(b \boxtimes c) = \cC(bc,a) = \cS_a(\un \boxtimes bc)$, it follows that a right $\cS$-action $\beta$ on $\cS_H$ is completely determined by the components
    $\gamma_c = \beta_{\un \boxtimes c}$, using the notation above. This establishes a one-to-one correspondence between $(-) \triangleleft \cS$ module structures on an object $H \in \Vect(\cC)$ and right  $\cS$-module structures on $\cS_H$.
\end{proof}

Let $H \in \Vect(\cC)$ be equipped with an action of the monad $(-) \triangleleft \cS$. Then $\cS_H$ inherits a right $\cS$-module structure $\cS_H \odot \cS \to \cS_H$. From Lemma \ref{lemma:correspondenceleftmoduleandalgebramorphism}, the right $\cS$-module structure corresponds to an algebra morphism $\alpha: \cS \to  \cL^r_{\cS_H} := \Vect(\cC^{mp} \boxtimes \cC)(\cS_H \odot (-), \cS_H)$. Our goal now is to recover the half-braiding on $H$ from $\alpha$, and later use this to characterize unitary half-braidings. 

Since $\cS( a\boxtimes b) \simeq \delta_{a, \bar{b}} \C$, $\alpha$ is determined by algebra homomorphisms
\[ \alpha_a : \cS(\bar{a} \boxtimes a) = \C \iota_a \to \Vect(\cC^{mp} \boxtimes \cC)(\cS_H \odot (\bar{a} \boxtimes a), \cS_H)  = \cL_{\cS_H}^r(\bar{a} \boxtimes a)\ .\]

\begin{lemma}
	There is a linear ismorphism
	\[
	\Vect(\cC^{mp} \boxtimes \cC) (\cS_H \odot (\bar{a} \boxtimes a), \cS_H) \simeq \Vect(\cC^{mp} \boxtimes \cC)((H \odot a) \circ \tau, (a \odot H) \circ \tau ) \ , 
	\]
	where $\tau: \cC^{mp} \boxtimes \cC \to \cC$ is the tensor product functor.
\end{lemma}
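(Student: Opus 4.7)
The plan is to unfold both hom-spaces into natural families of linear maps between fibers and then exhibit a bijection using the conjugate equations for $a$. Using the Day-convolution identity $(X \odot U)(Y) \simeq X(Y \otimes \bar U)$ valid in $\Vect(\cC^{mp} \boxtimes \cC)$, the identification $\overline{\bar a \boxtimes a} \simeq a \boxtimes \bar a$, and the formula $\cS_H(b\boxtimes c) \simeq H(bc)$ (which holds for $H \in \Vect(\cC)$ by the same argument given earlier for the $\Hilb$-version), one computes
\[
(\cS_H \odot (\bar a \boxtimes a))(b\boxtimes c) \simeq H(abc\bar a), \qquad \cS_H(b\boxtimes c) \simeq H(bc).
\]
On the right-hand side, the explicit formulas $(H \odot a)(x) \simeq H(x \bar a)$ and $(a \odot H)(x) \simeq H(\bar a x)$ give $((H \odot a) \circ \tau)(b\boxtimes c) \simeq H(bc\bar a)$ and $((a \odot H) \circ \tau)(b\boxtimes c) \simeq H(\bar a bc)$. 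Thus elements of the two sides can be identified with natural families
\[
\phi = \{\phi_{b,c}\colon H(abc\bar a) \to H(bc)\}, \qquad \psi = \{\psi_{b,c}\colon H(bc\bar a) \to H(\bar a bc)\},
\]
both indexed by $b \boxtimes c \in \cC^{mp} \boxtimes \cC$.

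Next, I would define mutually inverse assignments using the conjugate equation data $R_a\colon \un \to \bar a \otimes a$ and $\bar R_a\colon \un \to a\otimes \bar a$:
\[
\Phi(\phi)_{b,c} := \phi_{\bar a b,\, c} \circ H(\bar R_a^* \otimes \id_{bc\bar a}), \qquad \Psi(\psi)_{b,c} := H(R_a \otimes \id_{bc}) \circ \psi_{ab,\, c}.
\]
Naturality of $\Phi(\phi)$ and $\Psi(\psi)$ in $b \boxtimes c$ reduces, via bifunctoriality of $\otimes$ in $\cC$, to naturality of $\phi$ and $\psi$ at morphisms of the form $(\id_{\bar a} \otimes g,\, h)$ and $(\id_a \otimes g,\, h)$ respectively.

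The identities $\Psi \circ \Phi = \id$ and $\Phi \circ \Psi = \id$ are then verified by telescoping. For $\Psi\Phi$, invoking naturality of $\phi$ at the morphism $R_a \otimes \id_b\colon b \to \bar a a b$ yields
\[
\Psi\Phi(\phi)_{b,c} = \phi_{b,c} \circ H\bigl(((\bar R_a^* \otimes \id_a)\circ(\id_a \otimes R_a)) \otimes \id_{bc\bar a}\bigr),
\]
which collapses to $\phi_{b,c}$ by the conjugate (snake) equation $(\bar R_a^* \otimes \id_a)(\id_a \otimes R_a) = \id_a$, valid in any UTC as the $*$-adjoint of the standard identity $(\id_a \otimes R_a^*)(\bar R_a \otimes \id_a) = \id_a$. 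The computation for $\Phi\Psi$ is symmetric and uses the mirror identity $(\id_{\bar a} \otimes \bar R_a^*)(R_a \otimes \id_{\bar a}) = \id_{\bar a}$. The main obstacle is purely bookkeeping: aligning tensor slots and indices so that the conjugate equations act in the correct position. Conceptually, the content is simply the rigidity of $a$ in $\cC$ transported across the fiber identification $\cS_H \simeq H \circ \tau$.
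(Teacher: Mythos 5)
Your proof is correct and follows essentially the same route as the paper: both arguments unfold the two hom-spaces into families of linear maps between fibers via the shift identity and $\cS_H(b\boxtimes c)\simeq H(bc)$, and both use the rigidity of $a$ (the paper invokes Frobenius reciprocity abstractly in the step $\Vect(\cC^{mp}\boxtimes\cC)(\cS_H\odot(\bar a\boxtimes a),\cS_H)\simeq\Vect(\cC^{mp}\boxtimes\cC)(\cS_H\odot(\un\boxtimes a),\cS_H\odot(a\boxtimes\un))$, while you implement the same isomorphism explicitly with $R_a,\bar R_a$ and the conjugate equations). The only point left implicit, in both your argument and the paper's, is that the identification $\cS_H(b\boxtimes c)\simeq H(bc)$ is natural with the $\cC^{mp}\boxtimes\cC$-action given by $u\boxtimes v\mapsto H(u\otimes v)$, which is exactly how the equivalence $\cC\simeq\cM_\cS$ is set up.
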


\begin{proof}
	\begin{align*}
		\Vect(\cC^{mp} \boxtimes \cC) (\cS_H \odot (\bar{a} \boxtimes a), \cS_H) & \simeq \Vect(\cC^{mp} \boxtimes \cC) (\cS_H \odot (\un \boxtimes a), \cS_H\odot (a \boxtimes \un)) \\
		& \simeq \prod_{b,c \Irr(\cC)} \cL (\cS_H ((b\boxtimes c) \odot (\un \boxtimes \bar{a})), \cS_H((b \boxtimes c) \odot (\bar{a} \boxtimes \un))) \\
		& \simeq \prod_{b,c \in \Irr(\cC)} \cL (\cS_H(b \boxtimes c\bar{a}), \cS_H(\bar{a} b \boxtimes c)) \\
		& \simeq \prod_{b,c \in \Irr(\cC)} \cL(H(bc\bar{a}),H(\bar{a}bc)) \\
		& \simeq \prod_{b,c \in \Irr(\cC)} \cL((H \odot a)(bc), (a \odot H)(bc)) \\
		& \simeq \Vect(\cC^{mp} \boxtimes \cC)((H \odot a) \circ \tau, (a \odot H) \circ \tau ) \ .	
	\end{align*}
\end{proof}

There is a restriction map
\[ 
\Vect(\cC^{mp} \boxtimes \cC)((H \odot a) \circ \tau, (a \odot H) \circ \tau ) \to \Vect(\cC) (H \odot a, a \odot H ) \ ,
\]
induced by the embedding $\cC \hookrightarrow \cC^{mp} \boxtimes \cC$. 
\begin{lemma}
	The image of an algebra morphism $\alpha: \cS \to \cL^r_{\cS_H}$ under the restriction to $\Vect(H \odot (-), (-) \odot H)$ is a half-braiding on $H$. 
\end{lemma}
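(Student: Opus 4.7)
The plan is to directly verify the half-braiding axioms for $\gamma := \alpha|_{\Vect(\cC)}$, exploiting the concrete description of the multiplication on $\cS$ together with the fact that $\alpha$ is a morphism of algebra objects. Throughout, for each $a \in \Irr(\cC)$ let $\gamma_a \in \Vect(\cC)(H \odot a, a \odot H)$ be the image of $\alpha_a(\iota_a)$ under the restriction established in the previous lemma, and extend $\gamma$ to a natural family indexed by all of $\cC$ (and then to $\Vect(\cC)$) by decomposing into isotypic components.

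The naturality of $\gamma$ in the $\cC$-variable is inherited from the fact that $\alpha_a(\iota_a)$ is already natural as a morphism in $\Vect(\cC^{mp} \boxtimes \cC)((H \odot a) \circ \tau, (a \odot H) \circ \tau)$: restricting along the embedding $\cC \hookrightarrow \cC^{mp} \boxtimes \cC$, $U \mapsto \un \boxtimes U$, preserves naturality. The unit condition $\gamma_\un = \id_H$ follows because $\alpha$ is a unital algebra morphism and the unit of $\cS$ is $\iota_\un$, which acts on $\cS_H$ by the identity; unpacking the identifications of the preceding lemma shows that its restriction to $\Vect(\cC)$ is $\id_H$. This part is essentially bookkeeping.

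The main content is the hexagon identity
\[
\gamma_{a \otimes b} = (\id_a \odot \gamma_b) \circ (\gamma_a \odot \id_b),
\]
and this is where the algebra-morphism property of $\alpha$ is used crucially. Using the formula for $\mu^\cS$ recalled in the proof of Lemma~\ref{lemma:algebrahomassociatedtofreeSaction}, the product of $\iota_a$ and $\iota_b$ in $\cS$ is
\[
\iota_a \cdot \iota_b \;=\; \sum_{c \in \Irr(\cC)} \sum_{w \in \ONB(c, a \otimes b)} \iota_c \circ \bigl( (w^\vee)^* \boxtimes w^* \bigr),
\]
so applying $\alpha$ and invoking the multiplicativity of $\alpha$ expresses $\alpha(\iota_a) \circ \alpha(\iota_b)$ as a sum involving the various $\alpha_c(\iota_c)$ composed with duality morphisms. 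Restricting to $\Vect(\cC)$ and matching the two sides through Frobenius reciprocity produces exactly the hexagon relation; this is a bookkeeping computation but requires careful tracking of the Frobenius isomorphisms in the lemma preceding the statement. I expect this compatibility between the concrete formula for $\mu^\cS$ and the tensor structure of $\cC$ to be the main technical obstacle.

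Finally, invertibility of each $\gamma_a$ can be extracted from the hexagon together with duality: applying the hexagon to $a \otimes \bar{a}$ and $\bar{a} \otimes a$ and composing with the evaluation and coevaluation morphisms $R_a, \bar{R}_a$ produces a two-sided inverse built from $\gamma_{\bar{a}}$, in the spirit of the formula
\[
\gamma_a^{-1} \;=\; (\bar{R}_a^* \odot \id_H \odot \id_a)(\id_a \odot \gamma_{\bar{a}} \odot \id_a)(\id_a \odot \id_H \odot R_a).
\]
With naturality, unit, hexagon and invertibility in hand, the family $\gamma = \{\gamma_U\}_{U \in \cC}$ is a half-braiding on $H$, concluding the proof.
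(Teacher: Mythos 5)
Your proposal is correct and follows essentially the same route as the paper: the unit and braid relations are extracted from unitality and multiplicativity of $\alpha$ together with the explicit formula for $\mu^\cS$ on the generators $\iota_a$ (this is exactly the content of the commuting squares in the paper's proof, whose lower square is the definition of $\mu^{\cL^r_{\cS_H}}$), and invertibility rests on duality and unitality. The only cosmetic difference is that you exhibit the inverse of $\gamma_a$ explicitly through $\gamma_{\bar{a}}$ and the conjugate equations, whereas the paper deduces invertibility directly from $\alpha$ being a unital algebra morphism and the one-dimensionality of the fibers $\cS(\bar{a} \boxtimes a)$ --- the same argument in expanded form.
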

\begin{proof}
	Since the fibers of $\cS$ at $\bar{a} \boxtimes a$ are one dimensional and $\alpha$ is a unital algebra object morphism, the images $\beta_a$ of $\alpha_a$ under the restriction functor are invertible.  To check that $\beta$ is indeed a half-braiding, we have to check commutativity of the diagram
	\begin{center}
		\begin{tikzcd}
			\cS(\bar{a} \boxtimes a) \odot \cS(\bar{b} \boxtimes b) \arrow[d, "\alpha"'] \arrow[rr, "\mu^S"]                                 &  & \cS(\bar{b} \bar{a} \boxtimes a b) \arrow[d, "\alpha"]                 \\
			\cL^r_{\cS_H}(\bar{a} \boxtimes a) \odot \cL^r(\bar{b} \boxtimes b) \arrow[d, "\text{rest.}"'] \arrow[rr, "\mu^{\cL^r_{\cS_H}}"] &  & \cL^r_{\cS_H}(\bar{b} \bar{a} \boxtimes a b) \arrow[d, "\text{rest.}"] \\
			{\Vect(\cC)(H \odot a, a \odot H) \odot \Vect(\cC)(H \odot b, b \odot H)} \arrow[rr]                                             &  & {\Vect(\cC)(H \odot ab, ab \odot H)}                                  
		\end{tikzcd}
	\end{center}
	where the bottom-most arrow is the linear map
	\[
	f \odot g \mapsto (\id_a \odot g) \circ (f \odot \id_b) \ .
	\]
	Commutativity of the upper square is the associativity of $\alpha$. Commutativity of the lower square follows form the definition of $\mu^{\cL^r_{\cS_H}}$.
\end{proof}

An analogous computation shows that, for $H \in \Hilb(\cC)$,
\[
\bB^r_{\cS_H}(\bar{a} \boxtimes a) \simeq \Hilb(\cC^{mp} \boxtimes \cC) ((H \otimes a) \circ \tau, (a \otimes H) \circ \tau) \ .
\] 
Therefore, if $\alpha: \cS \to \bB^r_{\cS_H}$ is a unitary right $\cS$-module structure on $
\cS_H$, the $a$-component of the corresponding half-braiding on $H$ has value in $\Hilb(\cC) (H \otimes a, a \otimes H)$. This however only uses that the image of $\alpha$ lies in $\bB^r_{\cS_H}$. We want to argue that $\alpha$ being a $*$-algebra morphism implies that the half-braiding is actually unitary.

\begin{proposition}
	Let $H \in \Hilb(\cC)$ and let $\alpha: \cS \to \bB^r_{\cS_H}$ be a unitary right $\cS$-module structure on $\cS_H$. Then the corresponding half-braiding $\beta$ on $H$ is unitary.
	\label{prop:fromunitaryrightSmodulestounitaryhalfbraidings}
\end{proposition}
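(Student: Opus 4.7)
The plan is to show that the half-braiding $\beta$ extracted from $\alpha$ satisfies $\beta_a^* = \beta_a^{-1}$ for every $a \in \Irr(\cC)$. Since $\beta$ is determined by its values on simple objects and invertibility of each $\beta_a$ already follows from $\alpha$ being a unital algebra morphism on the one-dimensional fiber $\cS(\bar a \boxtimes a) = \C \cdot \iota_a$, it suffices to show that $\beta_a$ is isometric.

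First I would spell out the $*$-structure $j^{\cS}$ of $\cS$ in the chosen basis. Because $\cS(\bar a \boxtimes a)$ is one-dimensional, $j^{\cS}(\iota_a)$ is a fixed (explicit) scalar multiple of $\iota_{\bar a} \in \cS(a \boxtimes \bar a)$, the scalar being determined by the concrete realization of $j^{\cS}$ on $\cS = \bigoplus_a \bar a \boxtimes a$ in terms of the standard solutions $R_a,\bar R_a$ to the conjugate equations. In parallel, I would write out the $*$-structure on $\bB^r_{\cS_H}$ in the presentation $\bB^r_{\cS_H}(\bar a \boxtimes a) \simeq \Hilb(\cC)(H \otimes a, a \otimes H)$. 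Concretely, the involution sends a natural transformation $f : H \otimes a \to a \otimes H$ to a natural transformation in $\Hilb(\cC)(H \otimes \bar a, \bar a \otimes H)$ obtained by taking the componentwise Hilbert-space adjoint $f^*$ and relabelling with the rigidity data, producing a formula of the form
\[
f^\sharp \; = \; (\id_{\bar a} \otimes \bar R_a^*) \circ (\id_{\bar a} \otimes f^* \otimes \id_{\bar a}) \circ (R_a \otimes \id_H \otimes \id_{\bar a}).
\]

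Second, I would apply the $*$-algebra morphism condition $\alpha(j^{\cS}(\iota_a)) = \alpha(\iota_a)^*$. Tracing through Step 1, the left-hand side is (a scalar multiple of) $\beta_{\bar a}$, while the right-hand side is the rigidity-twisted Hilbert-space adjoint of $\beta_a$ given above. This yields
\[
\beta_{\bar a} \;=\; (\id_{\bar a} \otimes \bar R_a^*)\,(\id_{\bar a} \otimes \beta_a^* \otimes \id_{\bar a})\,(R_a \otimes \id_H \otimes \id_{\bar a}).
\]
On the other hand, the half-braiding axioms applied to the decomposition $\un \leq \bar a \otimes a$ (via $R_a$) together with the conjugate equations produce the standard rigidity formula expressing $\beta_{\bar a}$ in terms of $\beta_a^{-1}$,
\[
\beta_{\bar a} \;=\; (\id_{\bar a} \otimes \bar R_a^*)\,(\id_{\bar a} \otimes \beta_a^{-1} \otimes \id_{\bar a})\,(R_a \otimes \id_H \otimes \id_{\bar a}).
\]
Comparing the two expressions for $\beta_{\bar a}$ and cancelling the invertible rigidity data gives $\beta_a^* = \beta_a^{-1}$, which is the desired unitarity.

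The main obstacle I expect is Step 1, specifically obtaining a clean, normalisation-correct description of the $*$-structure on $\bB^r_{\cS_H}$ under the identification with $\Hilb(\cC)(H \otimes a, a \otimes H)$: one must carefully track both the quantum-dimension scaling in the inner product on $\Hilb(\cC)$ and the re-scaling present in the isomorphism $\cS \simeq \bigoplus_a \bar a \boxtimes a$, and verify that the scalars in $j^{\cS}(\iota_a)$ match exactly the scalars appearing in the Hilbert-space adjoint formula on $\bB^r_{\cS_H}$. Once this bookkeeping is in place, the argument reduces to matching two explicit formulas for $\beta_{\bar a}$.
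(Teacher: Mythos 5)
Your proposal is essentially the paper's own proof: the paper packages the identity $\alpha(j^{\cS}(\iota_a)) = \alpha(\iota_a)^*$ into a commutative diagram whose left column is $j^{\cS}$ (sending $\iota_a$ to a multiple of $\iota_{\bar a}$, hence to $\beta_{\bar a}$ under $\alpha$) and whose right column is the Hilbert-space adjoint combined with Frobenius reciprocity, and then notes that this reproduces the standard rigidity formula relating $\beta_{\bar a}$ to $\beta_a^{-1}$, yielding $\beta_a^* = \beta_a^{-1}$ exactly as in your comparison of the two expressions for $\beta_{\bar a}$. The normalization bookkeeping you flag (and the missing $\id_H$ legs in your displayed formulas) is precisely what the paper also leaves implicit, so your plan matches both in strategy and in substance.
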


\begin{proof}
	Since $\alpha$ is a $*$-algebra object morphism, we have the commutative diagram
	\begin{center}
		\begin{tikzcd}
			\cS(\bar{a} \boxtimes a) \arrow[rr, "\alpha"] \arrow[d, "j"'] &  & { \Hilb(\cC^{mp} \boxtimes \cC)((H\otimes a) \circ \tau, (a \otimes H)\circ \tau)} \arrow[rr, "F"] \arrow[d, "j"] &  & {\Hilb(\cC) (H \otimes a, a \otimes H)} \arrow[d, "*"]                \\
			\cS(a \boxtimes \bar{a}) \arrow[rr, "\alpha"']                &  & { \Hilb(\cC^{mp} \boxtimes \cC)((H \otimes \bar{a})\circ \tau, (\bar{a} \otimes H) \circ \tau)} \arrow[rr, "F"']  &  & {\Hilb(\cC)(H \otimes \bar{a}, \bar{a} \otimes H) } \arrow[d, "\Phi"] \\
			&  &                                                                                                                   &  & { \Hilb(\cC)(a \otimes H, H \otimes a)}                              
		\end{tikzcd}
	\end{center}
	In the above diagram, $\Phi$ is the Frobenius reciprocity applied twice.
	Given a half-braiding $\beta$ on an object $H$, the inverse of the $a$-component $\beta_a$ is computed via
	\[
	\beta_a^{-1} = (\bar{R}_a^* \otimes \id_H \otimes \id_a) \circ (\id_a \otimes \beta_{\bar{a}} \otimes \id_a) \circ (\id_a \otimes \id_H \otimes R_a)
	\]
	This is exaclty what we get starting with $\iota_a$ at the top-left corner of the above diagram, and running it through the botton-right path.
\end{proof}

\begin{lemma}
	Let $H \in \Hilb(\cC)$ and suppose that $\beta$ is an algebraic half-braiding on $\For(H) \in \Vect(\cC)$. Then the corresponding algebra morphism $\alpha: \cS \to \cL^r_{\cS_H}$ factors through $\bB^r_{\cS_H}$ if the components $\beta_a: \For(H) \odot a \to a \odot \For(H)$ are bounded.
	\label{lemma:fromboundedhalfbraidingstoboundedalgebramorphism}
	\end{lemma}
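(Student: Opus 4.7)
The plan is to trace through the identifications set up in the preceding discussion and show that the boundedness hypothesis on the components $\beta_a$ translates directly into uniform boundedness in the sense required for membership in $\bB^r_{\cS_H}$.

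First, I will use the identification
\[
\cL^r_{\cS_H}(\bar a \boxtimes a) \simeq \Vect(\cC^{mp} \boxtimes \cC)\bigl((H \odot a)\circ \tau,\,(a \odot H)\circ \tau\bigr)
\]
derived above, together with the parallel statement
\[
\bB^r_{\cS_H}(\bar a \boxtimes a) \simeq \Hilb(\cC^{mp} \boxtimes \cC)\bigl((H \otimes a)\circ \tau,\,(a \otimes H)\circ \tau\bigr)
\]
noted just after Proposition \ref{prop:fromunitaryrightSmodulestounitaryhalfbraidings}. The key point is that the latter is exactly the subspace of the former consisting of those natural transformations whose $(b,c)$-components, viewed as maps between the Hilbert fibers $(H \otimes a)(bc) \simeq H(bc\bar a)$ and $(a \otimes H)(bc) \simeq H(\bar a bc)$, are uniformly bounded in $(b,c) \in \Irr(\cC)^{\op} \times \Irr(\cC)$.

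Next, I will verify that $\alpha(\iota_a)$ corresponds, under the above isomorphism, precisely to the natural transformation $\beta_a \circ \tau$. This is essentially the content of the construction in the discussion preceding the lemma: the right $\cS$-action on $\cS_H$ is induced from the action of $(-) \triangleleft \cS$ on $H$ via Corollary~\ref{cor:frommonadmoduletoalgebramodule}, and the latter recovers the half-braiding $\beta$ via Lemma~\ref{lemma:fromalgebramoduletomonadmodule}. In particular, the $(b,c)$-component of $\alpha(\iota_a)$ coincides with $\beta_{a,bc}: H(bc\bar a) \to H(\bar a bc)$, possibly after inserting the canonical duality isomorphisms that implement $(H \odot a)(bc) \simeq H(bc\bar a)$ and $(a \odot H)(bc) \simeq H(\bar a bc)$.

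The boundedness of $\beta_a$ as an element of $\Hilb(\cC)(H \otimes a, a \otimes H)$ means, by definition, that $\sup_{V \in \cC} \|\beta_{a,V}\| < \infty$. Restricting this supremum to objects of the form $V = bc$ with $b,c \in \Irr(\cC)$, and recalling that the canonical duality isomorphisms $(H \otimes a)(bc) \simeq H(bc\bar a)$ and $(a \otimes H)(bc) \simeq H(\bar a bc)$ are isometries in the Hilbert space structures chosen on $\Hilb(\cC)$ and $\Hilb(\cC^{mp} \boxtimes \cC)$, we obtain
\[
\sup_{b,c \in \Irr(\cC)} \|\beta_{a,bc}\| \leq \|\beta_a\| < \infty.
\]
This says exactly that $\alpha(\iota_a) \in \bB^r_{\cS_H}(\bar a \boxtimes a)$. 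Since the fibers $\cS(\bar a \boxtimes a) = \C \iota_a$ generate $\cS$ and $\alpha$ is an algebra morphism while $\bB^r_{\cS_H} \hookrightarrow \cL^r_{\cS_H}$ is a sub-algebra object, it follows that $\alpha$ factors through $\bB^r_{\cS_H}$.

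The main obstacle is the bookkeeping in the second step: making sure that the various canonical isomorphisms arising from duality, Frobenius reciprocity, and the normalization conventions defining the tensor product in $\Hilb(\cC)$ and $\Hilb(\cC^{mp}\boxtimes \cC)$ transport $\beta_a$ to $\alpha(\iota_a)$ without introducing an unbounded rescaling across $\Irr(\cC)^2$. Once this compatibility is checked, the uniform bound by $\|\beta_a\|$ is immediate and the remaining algebraic verifications are straightforward.
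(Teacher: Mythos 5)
Your approach is essentially the paper's: identify $\cL^r_{\cS_H}(\bar a\boxtimes a)$ with natural transformations $(H\odot a)\circ\tau\to(a\odot H)\circ\tau$, recognize the components of $\alpha(\iota_a)$ as the half-braiding $\beta_a$ evaluated at the fibers $bc$, and conclude membership in $\bB^r_{\cS_H}(\bar a\boxtimes a)$ from a uniform bound over $(b,c)$. The one loose end is exactly the point you flag at the end: you assert that the comparison isomorphisms $(H\otimes a)(bc)\simeq H(bc\bar a)$ and $(a\otimes H)(bc)\simeq H(\bar a bc)$ are isometric, which with the paper's normalization conventions (the $(d_bd_c)^{-1}$ rescaling of inner products and Frobenius reciprocity maps built from $R_a,\bar R_a$, of norm $d_a^{1/2}$) is not something you can simply quote. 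The paper avoids having to decide this by pushing one more Frobenius reciprocity step, writing $\cL^r_{\cS_H}(\bar a\boxtimes a)\simeq\Vect(\cC^{mp}\boxtimes\cC)\bigl((\bar a\odot H\odot a)\circ\tau,\,H\circ\tau\bigr)$ and exhibiting the $b\boxtimes c$-component of $\alpha(\iota_a)$ explicitly as $(\bar R_a^*\odot\id_H)\circ(\id_{\bar a}\odot\beta_a)$, which gives the clean estimate $\|\alpha(\iota_a)_{b\boxtimes c}\|\le\|\bar R_a^*\|\,\|\beta_a\|=d_a^{1/2}\|\beta_a\|$. Note that your argument survives even if your isometry claim fails: any norm distortion coming from duality/Frobenius identifications depends only on $a$ (through $d_a$), not on $(b,c)$, and since $\cS(\bar a\boxtimes a)=\C\,\iota_a$ one only needs, for each fixed $a$, a bound uniform in $(b,c)$ — no uniformity in $a$ is required for $\alpha$ to factor through $\bB^r_{\cS_H}$. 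You should either verify the isometry claim or, more economically, replace it by the paper's explicit composition so that the ($a$-dependent) constant is visible and harmless.
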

	\begin{proof}
		Using the Frobenius reciprocity isomorphism, we can write
		\[
		\cL^r_{\cS_{\For(H)}} (\bar{a} \boxtimes a) \simeq \Vect(\cC^{mp} \boxtimes \cC) ((H \odot a) \circ \tau , (a \odot H) \circ \tau) \simeq \Vect(\cC^{mp} \boxtimes \cC)((\bar{a} \odot H \odot a) \circ \tau, H \circ \tau) \ .
		\]
		Under this identification, the $ b \boxtimes c$-component of $\alpha(\iota_a)$ is given by either of the two paths in the commutative diagram
		\begin{center}
			\begin{tikzcd}
				(\bar{a} \odot H \odot a)(bc) \arrow[rr, "\iota_a", hook] \arrow[rrd, "\id_{\bar{a}} \odot \beta_a"'] &  & (H \triangleleft \cS)(bc) \arrow[rr, "\alpha"]                        &  & H(bc) \\
				&  & (\bar{a} \odot a \odot H)(bc) \arrow[rru, "\bar{R}_a^* \odot \id_H"'] &  &      
			\end{tikzcd}
		\end{center}
		Thus, the norm of the $b \boxtimes c$-component of $\alpha(\iota_a)$ can be estimated: if $\beta_a$ is bounded, i.e. a morphism $H \otimes a \to a \otimes H$ of Hilbert space objects, 
		\[
		\| \alpha(\iota_a)_{b \boxtimes c} \| \leq \| \beta_a \| \| \bar{R}_a^* \| = d_a^{1/2} \| \beta_a\| \ .
		\]
		Since the right-hand side does not depend on $b$ or $c$, this gives a uniform bound on the norms of the components of $\alpha(\iota_a)$.
	\end{proof}

\begin{corollary}
	There is an equivalence 
	\[ \cZ\Hilb(\cC) \simeq \Bim_{\Hilb(\cC^{mp} \boxtimes \cC)}(\cS) \]
	of W$^*$-categories.
	\label{cor:equivalencehalfbraidingsunitarybimodulesoverSEalgebra}
\end{corollary}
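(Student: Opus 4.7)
The plan is to assemble the corollary from the pieces already in place: the algebraic equivalence $\cZ\Vect(\cC) \simeq \Bim_{\Vect(\cC^{mp} \boxtimes \cC)}(\cS)$ furnishes the underlying bijection on isomorphism classes, and one simply has to verify that passing to unitary half-braidings on one side corresponds exactly to passing to unitary bimodule structures on the other. Concretely, I would start by unwinding the notion of a unitary $\cS$-bimodule in $\Hilb(\cC^{mp} \boxtimes \cC)$: by Proposition \ref{prop:Hilb(C)simequnitaryleftSmodules}, the underlying unitary left $\cS$-module is of the form $\cS_H$ for a unique (up to unitary isomorphism) $H \in \Hilb(\cC)$, and a commuting unitary right $\cS$-action is a $*$-algebra object morphism $\alpha:\cS \to \bB^r_{\cS_H}$.

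Next I would define the functor $\Bim_{\Hilb(\cC^{mp} \boxtimes \cC)}(\cS) \to \cZ\Hilb(\cC)$ by sending $(\cS_H, \alpha)$ to $(H,\beta)$, where $\beta$ is the half-braiding recovered from $\alpha$ by restriction along $\cC \hookrightarrow \cC^{mp}\boxtimes \cC$. Proposition \ref{prop:fromunitaryrightSmodulestounitaryhalfbraidings} already guarantees that $\beta$ is unitary, and the arguments from the previous section show it is a genuine half-braiding; morphisms of unitary bimodules are sent to morphisms in $\cZ\Hilb(\cC)$ by forgetting the $\cS$-actions and restricting to the underlying bounded natural transformation. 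For the inverse, given a unitary half-braiding $(H,\beta) \in \cZ\Hilb(\cC)$, the algebraic equivalence produces a right $\cS$-action on $\cS_H$ and thus an algebra morphism $\alpha:\cS \to \cL^r_{\cS_H}$. Since the half-braiding $\beta$ is unitary (hence in particular its components are bounded), Lemma \ref{lemma:fromboundedhalfbraidingstoboundedalgebramorphism} shows that $\alpha$ lands in $\bB^r_{\cS_H}$. Chasing the commutative diagram of Proposition \ref{prop:fromunitaryrightSmodulestounitaryhalfbraidings} in reverse then shows $\alpha$ is $*$-preserving, so $(\cS_H,\alpha)$ is a unitary bimodule.

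The two assignments are mutually inverse because both are refinements of the mutually inverse algebraic functors $\Phi,\Psi$ of Theorem \ref{thm:linearisom} (suitably transported across the identification $\cZ\Vect(\cC) \simeq \Bim_{\Vect(\cC^{mp} \boxtimes \cC)}(\cS)$); no new calculation is required beyond checking that the unitary constraints match on both sides, which has been arranged above. Finally, to upgrade the equivalence to one of W$^*$-categories, I would observe that morphism spaces of unitary $\cS$-bimodules are W$^*$-subspaces of $\Hilb(\cC^{mp}\boxtimes \cC)(\cS_{H_1},\cS_{H_2})$, and the equivalence $\Hilb(\cC)\simeq \LMod_{\Hilb(\cC^{mp}\boxtimes\cC)}(\cS)$ from Proposition \ref{prop:Hilb(C)simequnitaryleftSmodules} is already unitary; the restriction functor to half-braidings is a $*$-functor since it is given by composition with bounded natural transformations.

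The only potentially delicate step is verifying that the $*$-algebra morphism condition on $\alpha$ is equivalent to unitarity of $\beta$ in both directions; the forward direction is Proposition \ref{prop:fromunitaryrightSmodulestounitaryhalfbraidings}, but the backward direction requires reading that diagram in reverse and confirming that the two paths from $\iota_a$ to $\beta_a^{-1}$ agreeing is exactly the $*$-compatibility of $\alpha$. This is a diagram chase using the Frobenius reciprocity identification and the formula $\beta_a^{-1} = (\bar R_a^*\otimes \id_H \otimes \id_a)(\id_a \otimes \beta_{\bar a}\otimes \id_a)(\id_a \otimes \id_H\otimes R_a)$, but once set up it is essentially automatic.
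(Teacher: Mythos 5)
Your proposal follows essentially the same route as the paper's own proof: you describe unitary bimodules as $(\cS_H,\alpha)$ via Proposition \ref{prop:Hilb(C)simequnitaryleftSmodules}, use Proposition \ref{prop:fromunitaryrightSmodulestounitaryhalfbraidings} to pass to unitary half-braidings, and for the converse direction transport the algebraic right $\cS$-action through Lemma \ref{lemma:fromboundedhalfbraidingstoboundedalgebramorphism} and read the diagram of Proposition \ref{prop:fromunitaryrightSmodulestounitaryhalfbraidings} backwards to get $*$-compatibility of $\alpha$ from unitarity of $\beta$, exactly as the paper does. The "delicate step" you flag is precisely the point the paper also settles by appealing to that same diagram, so your argument is correct and matches the intended one.
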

\begin{proof}
	First, we make concrete the functor $\Bim_{\Hilb(\cC^{mp} \boxtimes \cC)}(\cS) \to \cZ \Hilb(\cC)$. Recall that every unitary left $\cS$-module is isomorphic to one of the form $\cS_H$, where $H \in \Hilb(\cC)$, and the left $\cS$ action is the free $\cS$-action. Recall also that this implements an equivalence
	\[ \LMod_{\Hilb(\cC^{mp} \boxtimes \cC)}(\cS) \simeq \Hilb(\cC) \ . \]
	From Proposition \ref{prop:fromunitaryrightSmodulestounitaryhalfbraidings}, a unitary right $\cS$-module structure $\alpha$ on $\cS_H$ induces a unitary half-braiding $\beta$ on $H$. The searched functor is then given by $(\cS_H, \alpha) \mapsto (H,\beta)$, but we have to prove that, if $f: (\cS_H, \alpha) \to (\cS_{H'}, \alpha')$ is a morphism of unitary $\cS$-bimodules, then the corresponding morphism $H \to H'$ is in fact a morphism of unitary half-braidings $(H,\beta) \to (H', \beta')$. This follows immediately from the commutativity of the diagram in the proof of Proposition \ref{prop:fromunitaryrightSmodulestounitaryhalfbraidings}.
	
	Let us now construct the functor in the converse direction $\cZ \Hilb(\cC) \to \Bim_{\Hilb(\cC^{mp} \boxtimes \cC)}(\cS)$. First, consider the composition
	\[
	\cZ \Hilb(\cC) \overset{\For}{\to} \cZ \Vect(\cC) \simeq \Bim_{\Vect\cC^{mp} \boxtimes 
	cC)}(\cS) \ ,
	\]
	mapping a unitary half-braiding $(H, \beta)$ to $(\cS_H,\alpha: \cS \to \cL^r_{\cS_H})$, the left $\cS$-action being the free one, and therefore unitary. Observe that the half-braiding constructed from $\alpha$ coincides with $\beta$. From Lemma \ref{lemma:fromboundedhalfbraidingstoboundedalgebramorphism}, the algebra object morphism $\alpha$ factors through an algebra morphism $\cS \to \bB^r_{\cS_H}$, which we still denote by $\alpha$. Again, from the proof of Proposition \ref{prop:fromunitaryrightSmodulestounitaryhalfbraidings}, one concludes that $\beta$ is unitary if and only if $\alpha: \cS \to \bB^r_{\cS_H}$ is a $*$-algebra object morphism.
\end{proof}

\begin{remark}
	\label{rem:tensorstructureofunitarySbimodules}
	Using Corollary \ref{cor:equivalencehalfbraidingsunitarybimodulesoverSEalgebra}, the W$^*$-category $\Bim_{\Hilb(\cC^{mp} \boxtimes \cC)}(\cS)$ can be equipped with a unitary tensor structure. On objects of the form $\cS_H$, $\cS_K$, where $H$ and $K$ are unitary half-bradings in $\Hilb(\cC)$, it is given by
	\[
	\cS_H \otimes \cS_K = \cS_{H \otimes K} \ .
	\]
	Since every unitary $\cS$-bimodule is isomorphic to $\cS_H$ for some unitary half-braiding $H$, there is a unique unitary tensor structure on $\Bim_{\Hilb(\cC^{mp} \boxtimes \cC)}(\cS)$ extending $\cS_H \boxtimes \cS_K = \cS_{H \otimes K}$, up to unitary monoidal equivalence.
	
	There is however a more intrinsic way to describe the tensor structure on the category of unitary $\cS$-bimodules. More generally, for W$^*$-algebra objects in unitary tensor categories, it is possible to define an analogue of Connes' fusion between {\em normal bimodules}. At this point this discussion seems too technical and also unnecessary for the applications we have in mind, so it will appear elsewhere.
\end{remark}

From the discussion above, we have the following.

\begin{corollary}
	\label{cor:equivalenceof2categories}
	There is an equivalence of W$^*$-2-categories
	
	\begin{align*}
		\left( \begin{matrix}
			\Hilb(\cC^{mp} \boxtimes \cC) & \Hilb(\cC) \\
			\Hilb(\cC^{mp}) & \cZ \Hilb(\cC)
		\end{matrix}
		\right) \simeq \left( \begin{matrix}
			\Hilb(\cC^{mp} \boxtimes \cC) & \LMod_{\Hilb(\cC^{mp} \boxtimes \cC)}(\cS) \\
			\RMod_{\Hilb(\cC^{mp} \boxtimes \cC)}(\cS) & \Bim_{\Hilb(\cC^{mp} \boxtimes \cC)}(\cS)
		\end{matrix}
		\right)
	\end{align*}
\end{corollary}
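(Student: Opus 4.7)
The plan is to read off the equivalence entry-by-entry from the previously established results and then verify compatibility with the 2-categorical compositions. A W$^*$-2-category with two objects consists of four morphism W$^*$-categories arranged in a $2\times 2$ matrix, with the diagonal entries being W$^*$-tensor categories, the off-diagonal entries being bimodule W$^*$-categories over the diagonal entries, and composition functors implementing the bicategorical structure.

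First I would identify the four component categories on each side. The $(1,1)$-entries $\Hilb(\cC^{mp}\boxtimes\cC)$ match tautologically. Proposition \ref{prop:Hilb(C)simequnitaryleftSmodules} gives the $(1,2)$-entry equivalence $\Hilb(\cC)\simeq\LMod_{\Hilb(\cC^{mp}\boxtimes\cC)}(\cS)$, and Corollary \ref{cor:equivalencehalfbraidingsunitarybimodulesoverSEalgebra} gives the $(2,2)$-entry equivalence $\cZ\Hilb(\cC)\simeq\Bim_{\Hilb(\cC^{mp}\boxtimes\cC)}(\cS)$. For the $(2,1)$-entry equivalence $\Hilb(\cC^{mp})\simeq\RMod_{\Hilb(\cC^{mp}\boxtimes\cC)}(\cS)$, I would apply Proposition \ref{prop:Hilb(C)simequnitaryleftSmodules} to $\cC^{mp}$ in place of $\cC$, using the braid isomorphism $(\cC^{mp})^{mp}\boxtimes\cC^{mp}\simeq\cC^{mp}\boxtimes\cC$ which sends the canonical algebra of $\cC^{mp}$ to $\cS$ (since $\bigoplus_a a\boxtimes\bar{a}\simeq\bigoplus_a\bar{a}\boxtimes a$) and exchanges left with right modules.

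Next I would verify compatibility with the compositions. Those involving a pure $\Hilb(\cC^{mp}\boxtimes\cC)$-factor are straightforward because the assignment $H\mapsto\cS_H=\cS\odot(\un\boxtimes H)$ is manifestly equivariant for right tensoring with $a\boxtimes b\in\cC^{mp}\boxtimes\cC$: one has $\cS_H\odot(a\boxtimes b)\simeq\cS_{a\otimes H\otimes b}$ via the module structure $\cC\curvearrowleft\cC^{mp}\boxtimes\cC$, with an analogous statement for right modules. The composition $\cZ\Hilb(\cC)\times\cZ\Hilb(\cC)\to\cZ\Hilb(\cC)$ corresponds to the relative tensor product of $\cS$-bimodules over $\cS$ by Remark \ref{rem:tensorstructureofunitarySbimodules}.

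The main obstacle will be matching the composition $\Hilb(\cC^{mp})\times\Hilb(\cC)\to\cZ\Hilb(\cC)$ with the relative tensor product $\RMod(\cS)\otimes_{\Hilb(\cC^{mp}\boxtimes\cC)}\LMod(\cS)\to\Bim(\cS)$, which is the unitary non-finite analog of Müger's Morita context for fusion categories. To handle this I would use that every unitary left (resp. right) $\cS$-module has the form $\cS_H$ (resp. $\cS_{H'}$) for a unique $H\in\Hilb(\cC)$ (resp. $H'\in\Hilb(\cC^{mp})$), so the relative tensor product is determined on such generators, where it reduces to the natural identification $\cS_{H'}\otimes_{\Hilb(\cC^{mp}\boxtimes\cC)}\cS_H\simeq\cS_{H'\otimes H}$ as an $\cS$-bimodule. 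This matches the external tensor $\Hilb(\cC^{mp})\boxtimes\Hilb(\cC)\to\cZ\Hilb(\cC)$ obtained by equipping $H'\otimes H$ with its canonical half-braiding, completing the 2-categorical compatibility.
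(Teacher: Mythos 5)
Your overall strategy---assembling the equivalence entry-by-entry from Proposition~\ref{prop:Hilb(C)simequnitaryleftSmodules} for the $(1,2)$-entry, Corollary~\ref{cor:equivalencehalfbraidingsunitarybimodulesoverSEalgebra} for the $(2,2)$-entry, a mirrored version of the proposition for the $(2,1)$-entry, and then checking compatibility with the composition functors (with the $(2,2)$-composition handled as in Remark~\ref{rem:tensorstructureofunitarySbimodules})---is exactly the route the paper takes, which states the corollary without further proof as a package of these previously established facts; your discussion of the compositions is, if anything, more explicit than the paper's.

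There is, however, one step that is wrong as written: the justification of the $(2,1)$-entry. The flip $(\cC^{mp})^{mp}\boxtimes\cC^{mp}=\cC\boxtimes\cC^{mp}\to\cC^{mp}\boxtimes\cC$, $x\boxtimes y\mapsto y\boxtimes x$, is a \emph{monoidal} equivalence, and a monoidal equivalence carries left modules to left modules; it cannot ``exchange left with right modules.'' Taken literally, your argument would identify $\Hilb(\cC^{mp})$ with a category of \emph{left} modules over the transported algebra, not with $\RMod_{\Hilb(\cC^{mp}\boxtimes\cC)}(\cS)$. The correct mechanism uses the monoidal opposite rather than the flip of the two legs: right $\cS$-modules in $\Hilb(\cC^{mp}\boxtimes\cC)$ are the same as left $\cS$-modules in $\Hilb\bigl((\cC^{mp}\boxtimes\cC)^{mp}\bigr)$, and $(\cC^{mp}\boxtimes\cC)^{mp}=\cC\boxtimes\cC^{mp}=(\cC^{mp})^{mp}\boxtimes\cC^{mp}$ on the nose, with no swap of factors; one then checks that $\cS$, with its multiplication reread in this reversed tensor product, is isomorphic as an algebra object to the canonical algebra of $\cC^{mp}$, and applies Proposition~\ref{prop:Hilb(C)simequnitaryleftSmodules} with $\cC^{mp}$ in place of $\cC$. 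Alternatively, one can analyze free right modules $(a\boxtimes\un)\odot\cS$ directly, mirroring the left-module argument via Lemma~\ref{lem:mirrorpropertySE}. A smaller imprecision: in matching the composition $\Hilb(\cC^{mp})\times\Hilb(\cC)\to\cZ\Hilb(\cC)$ you appeal to ``the canonical half-braiding on $H'\otimes H$,'' which is not defined for arbitrary $H'$ and $H$; the honest formulation is that the half-braiding is the one obtained, through Corollary~\ref{cor:equivalencehalfbraidingsunitarybimodulesoverSEalgebra}, from the $\cS$-bimodule structure on the composite of the corresponding right and left $\cS$-modules, which is how the right-hand 2-category dictates it.
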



\section{$\cZ\Hilb(\cC)$-module C$^*$-categories}

Consider the 2-category $\CCat$ of C$^*$-categories: objects are small C$^*$-categories, 1-morphisms are $*$-functors, and 2-morphisms are unitary natural isomorphisms. This category was studied in detail in \cite{MR3007088}, where the following is shown.

\begin{theorem}[\cite{MR3007088}, Section 4]
	The max-tensor product $\boxtimes$ of C$^*$-categories, induced by the maximal tensor product of C$^*$-algebras, endows $\CCat$ with the structure of a symmetric model category. In particular, it is cocomplete and $\boxtimes$ is cocontinuous.
\end{theorem}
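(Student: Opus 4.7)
The plan is to establish the three pieces of structure in turn: the symmetric monoidal tensor product $\boxtimes$, cocompleteness of $\CCat$, and cocontinuity of $\boxtimes$ in each variable. The details of the full monoidal model structure belong to \cite{MR3007088}, but a direct construction of the part needed here proceeds as follows.

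First, for the monoidal structure, I would define $\cC \boxtimes \cD$ to have object set $\Ob(\cC) \times \Ob(\cD)$ and morphism spaces obtained by completing the algebraic tensor product $\cC(c_1,c_2) \odot \cD(d_1,d_2)$ in the maximal C$^*$-norm
\[
\|x\|_{\max} = \sup \|(\pi_\cC \otimes \pi_\cD)(x)\|,
\]
where the supremum runs over pairs of $*$-functors into C$^*$-categories of Hilbert spaces with pointwise commuting images. The coherence data for a symmetric monoidal structure, with unit $\C$, then follow from the universal property of $\otimes_{\max}$ applied fiberwise to endomorphism algebras.

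Second, for cocompleteness, I would construct small coproducts and coequalizers separately. Coproducts take disjoint unions of object sets and present morphism spaces as max-norm completions of the associated free products of $*$-algebras. A coequalizer of a parallel pair $F,G : \cC \rightrightarrows \cD$ is built by forming the $*$-linear quotient of $\cD$'s morphisms by the relations $F(f) = G(f)$, together with the identifications forced on objects, and then completing in the universal C$^*$-seminorm; this seminorm is automatically bounded above by the original norm on $\cD$, so the completion is well-defined. Together these constructions yield all small colimits.

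Third, for cocontinuity, I would invoke the universal property of $\boxtimes$: a $*$-functor $\cC \boxtimes \cD \to \cE$ corresponds bijectively to a pair of $*$-functors out of $\cC$ and $\cD$ with pointwise commuting images in $\cE$. This exhibits $\cC \boxtimes (-)$ as a left adjoint to an appropriate internal-hom $*$-functor category endowed with suitable C$^*$-structure, and left adjoints preserve colimits.

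The main obstacle I anticipate is the coequalizer step: in general, quotienting a $*$-algebra can destroy the existence of a finite maximal C$^*$-seminorm, so the construction must carefully exploit that the relevant seminorm is dominated by the ambient norm on $\cD$. The reference \cite{MR3007088} circumvents this by a model-categorical approach where colimits arise via Quillen's small object argument, making boundedness automatic and simultaneously producing the full symmetric monoidal model structure of which the cocompleteness and cocontinuity claims are particular consequences.
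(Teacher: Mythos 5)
This statement is not proved in the paper at all: it is imported verbatim from Dell'Ambrogio \cite{MR3007088}, and the present paper only uses the ``in particular'' clause (cocompleteness of $\CCat$ and cocontinuity of $\boxtimes$) as input for the Kan-extension argument defining factorization homology. So there is no internal proof to compare against; what can be assessed is whether your sketch is a viable reconstruction of the cited result. Your overall skeleton is right for the cocompleteness/cocontinuity part: the maximal tensor product is indeed defined via suprema over pairs of representations with commuting images, and cocontinuity does follow once one knows $\boxtimes$ is part of a \emph{closed} symmetric monoidal structure, which is exactly how \cite{MR3007088} obtains it.

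However, there are concrete gaps. First, your coproduct construction is wrong as stated: the coproduct of small C$^*$-categories is simply the disjoint union with zero morphism spaces between the components; no free products of $*$-algebras enter (free products compute coproducts of unital C$^*$-\emph{algebras}, where the single objects are forced to be identified, which is not what the coproduct in $\CCat$ does). Second, in the coequalizer step the claim that the universal C$^*$-seminorm ``is automatically bounded above by the original norm on $\cD$'' does not suffice: identifying objects creates new formal composites $g\circ f$ that were not composable in $\cD$, and these are only controlled by submultiplicativity of the norm in C$^*$-categories together with the automatic contractivity of $*$-functors; your argument as written only bounds images of single morphisms of $\cD$. Third, your closing remark misattributes the mechanism of \cite{MR3007088}: colimits there are not produced by Quillen's small object argument --- the small object argument \emph{presupposes} cocompleteness, which must be (and is) established beforehand by explicit constructions of the kind you attempt. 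Finally, the actual headline of the theorem, the symmetric monoidal \emph{model} structure (weak equivalences the unitary equivalences, the pushout-product axiom, etc.), is not addressed by your sketch at all; for the purposes of this paper that part can legitimately be left to the citation, but it should be flagged as such rather than folded into the cocompleteness discussion.
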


\begin{remark}
	Often we encounter C$^*$-categories that are only locally small, e.g. ind-completions of small C$^*$-categories. In our context, however, we can always pass to a small C$^*$-category which is equivalent to the given one, and that is closed with respect to all operations we need to perform.We shall make use of this observation without mentioning.
\end{remark}

Small C$^*$-tensor categories are exactly the algebras in $\CCat$. Given a C$^*$-tensor category $\cD$, left and right $\cD$-module C$^*$-categories $\cM$ and $\cN$, it follows from Dell'Ambrogio's results that the functor 
\[
\CCat \ni \cE \mapsto \Bal_\cD(\cM \boxtimes \cN, \cE) \ ,
\]
where $\Bal_\cD(\cM \boxtimes \cN, \cE)$ denotes the C$^*$-category of $\cD$-balanced unitary functors from $\cM \boxtimes \cN$ to $\cE$, is representable: there exists a C$^*$-category $\cM \underset{\cD}{\boxtimes} \cN$ such that
\[
\Bal_\cD(\cM \boxtimes \cN, \cE) \simeq \CCat(\cM \underset{\cD}{\boxtimes} \cN, \cE)
\]
as C$^*$-categories.

\begin{definition}
	The C$^*$-category $\cM \underset{\cD}{\boxtimes} \cN$, defined up to unitary isomorphism, is called the balanced tensor product of $\cM$ and $\cN$ over $\cD$.
\end{definition}

\subsection{Reduction to centrally pointed bimodules}

\begin{definition}
	A centrally pointed $\cC$-bimodule C$^*$-category is a triple $(\cM,m, \sigma)$ where $\cM$ is a $\cC$-bimodule C$^*$-category, $m$ is an object of $\cM$, and $\sigma: m \triangleleft (-) \simeq (-) \triangleright m$ is a unitary natural isomorphism between the action functors $\cC \to \cM$, satisfying the braid relations
	\[
	\sigma_{a\otimes b} = (\id_a \triangleright \sigma_b) \circ (\sigma_a \triangleleft \id_b) \ . 
	\]
\end{definition}

It was proved in \cite{hataishi2025categorical} that centrally pointed $\cC$-bimodule C$^*$-categories give rise to C$^*$-algebra objects in $\cZ \Vect(\cC)$, i.e. C$^*$-algebra objects in $\Vect(\cC)$ equipped with braidings which are compatible with the respective involutions. We call such C$^*$-algebra objects internal Yetter-Drinfled C$^*$-algebras. When $\Hilb (\cC) = \Rep(\bK)$ for a compact quantum group $\bK$, the category of internal Yetter-Drinfeld C$^*$-algebras is equivalent to the category of continuous $D\bK$-C$^*$-algebras. In this case, $\Rep (D\bK) \simeq \cZ \Rep(\bK)$, as unitarily braided C$^*$-tensor categories.
\bigskip

Let $\cM$ be an arbitrary right $\cZ \Hilb(\cC)$-module C$^*$-category. Using Corollary \ref{cor:equivalenceof2categories}, we obtain a right $\Hilb(\cC^{mp} \boxtimes \cC)$-module via
\[
\cM \mapsto \cM \underset{\cZ \Hilb(\cC)}{\boxtimes} \Hilb(\cC) \ .
\]

There is a canonical $*$-functor $\cM \to \cM \underset{\cZ \Hilb(\cC)}{\boxtimes} \Hilb(\cC)$, given by $\cM \ni m \mapsto m \boxtimes \un$. Recall that a right $\Hilb(\cC^{mp} \boxtimes \cC)$-module structure is the same as a $\cC$-bimodule structure.

\begin{proposition}
	Let $\cM$ be a $\cZ \Hilb(\cC)$-module C$^*$-category, and consider the induced $\cC$-bimodule C$^*$-category $\cM \underset{\cZ \Hilb(\cC)}{\boxtimes} \Hilb(\cC)$. For any object $m \in \cM$, the object 
	\[m \boxtimes \un \in \cM \underset{\cZ \Hilb(\cC)}{\boxtimes} \Hilb(\cC)
	\]
	has a canonical central structure.
	\label{prop:frommodulerforthecentertocentrallypointedbimodules}
\end{proposition}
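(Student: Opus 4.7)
The strategy is to exhibit the identity morphisms as the required central structure, exploiting the fact that the left and right $\cC$-actions on $m \boxtimes \un$ agree because $\un$ is the strict tensor unit.

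First I would spell out the $\cC$-bimodule structure on $\cM \underset{\cZ\Hilb(\cC)}{\boxtimes} \Hilb(\cC)$ inherited from Corollary~\ref{cor:equivalenceof2categories}. In that $W^*$-$2$-category, $\Hilb(\cC)$ sits in the off-diagonal slot as a $(\Hilb(\cC^{mp} \boxtimes \cC), \cZ\Hilb(\cC))$-bimodule, its left $\Hilb(\cC^{mp} \boxtimes \cC)$-action being the obvious bilateral tensoring $(a \boxtimes b) \triangleright X = a \otimes X \otimes b$. Relative tensoring over $\cZ\Hilb(\cC)$ preserves this left action, equipping $\cM \underset{\cZ\Hilb(\cC)}{\boxtimes} \Hilb(\cC)$ with a $\cC$-bimodule structure whose formulas on representatives read
\[
a \triangleright (m \boxtimes X) = m \boxtimes (a \otimes X), \qquad (m \boxtimes X) \triangleleft b = m \boxtimes (X \otimes b).
\]

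Second I would specialize to $X = \un$. Since the paper works throughout with strict monoidal structures (adopted just after Definition~\ref{def:unitarytensorcategories}), $a \otimes \un = a = \un \otimes a$ both as objects and via the unit constraints, so the two action functors $\cC \to \cM \underset{\cZ\Hilb(\cC)}{\boxtimes} \Hilb(\cC)$ given by $a \mapsto (m \boxtimes \un) \triangleleft a$ and $a \mapsto a \triangleright (m \boxtimes \un)$ coincide on the nose with $a \mapsto m \boxtimes a$, sending $f : a \to b$ to $\id_m \boxtimes f$. I would then define $\sigma_a := \id_{m \boxtimes a}$; unitarity is automatic, and naturality in $a$ is tautological since both functors literally agree.

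For the braid relation $\sigma_{a \otimes b} = (\id_a \triangleright \sigma_b) \circ (\sigma_a \triangleleft \id_b)$, both $\sigma_a \triangleleft \id_b$ and $\id_a \triangleright \sigma_b$ are the identity on $m \boxtimes (a \otimes b)$ by strictness of the bimodule associators, so their composition equals $\id_{m \boxtimes (a \otimes b)} = \sigma_{a \otimes b}$, and functoriality of the construction in $(\cM, m)$ is then manifest.

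The main (minor) subtlety is descent of the $\cC$-bimodule structure through the coequalizer defining $\cM \underset{\cZ\Hilb(\cC)}{\boxtimes} \Hilb(\cC)$: unlike the right $\cC$-action, which descends strictly, the left $\cC$-action commutes with the $\cZ\Hilb(\cC)$-action only up to the half-braiding $\beta^z$ of $z=(Z,\beta^z) \in \cZ\Hilb(\cC)$, through the coherence $a \otimes Z \otimes X \simeq Z \otimes a \otimes X$. On the distinguished object $m \boxtimes \un$ this coherence involves only the unit half-braiding, which is trivial, so it contributes no obstruction and the identity $\sigma_a = \id_{m \boxtimes a}$ is indeed canonical.
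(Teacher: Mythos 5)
Your proposal is correct and follows essentially the same route as the paper's (much terser) proof: identify the induced bimodule structure as $a \triangleright (m \boxtimes x) \triangleleft b = m \boxtimes (a \otimes x \otimes b)$ and observe that on $m \boxtimes \un$ the left and right action functors agree strictly, so the (identity) isomorphisms give the central structure, with naturality and the braid relation holding trivially. Your closing remark about descent through the balancing is not needed for the statement (the induced bimodule structure is taken as given), and its phrasing about ``only the unit half-braiding'' is a bit loose, but it does not affect the argument.
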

\begin{proof}
	Indeed, the $\cC$-bimodule structure on $\cM \underset{\cZ \Hilb(\cC)}{\boxtimes} \Hilb(\cC)$ is defined by
	\[ a \triangleright (m \boxtimes x) \triangleleft b := m \boxtimes a \otimes x \otimes b \ ,\]
	for $a,b,x \in \cC$. Thus,
	\[
	a \triangleright (m \boxtimes \un) \simeq m \boxtimes a \simeq (m \boxtimes \un) \triangleleft a \ . 
	\]
\end{proof}

\subsection{Generalities about centrally pointed bimodule C$^*$-categories}

Let $(\cM,m)$ be a centrally pointed $\cC$-bimodule C$^*$-category. Denote by $\bA_m$ the internal Yetter-Drinfeld C$^*$-algebra object associated to the central object $m$. Moreover, since $\cM$ is a bimodule, there is the C$^*$-algebra object
\[
\cY_m := \underline{\End}_{\cC^{mp} \boxtimes \cC}(m) \ .
\]

\begin{lemma}
	There is a canonical isomorphism 
	\[
	\cY_m \simeq \cS_{\bA_m} \simeq \bigoplus_{a \in \Irr(\cC)} \cS_a \odot \bA_m(a)
	\]
	of C$^*$-algebra objects in $\Vect(\cC^{mp} \boxtimes \cC)$.
\end{lemma}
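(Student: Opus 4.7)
The plan is to construct an explicit isomorphism $\Theta \colon \cY_m \to \cS_{\bA_m}$ in $\Vect(\cC^{mp}\boxtimes\cC)$ and then verify it respects the C$^*$-algebra structures. At the level of fibers, using $\bA_m(c) = \cM(m \triangleleft c, m)$ together with the identification $\cS_{\bA_m}(a\boxtimes b) \simeq \bA_m(ab)$ already established, I would define
\[
\Theta_{a\boxtimes b}\colon \cY_m(a\boxtimes b) = \cM(a \triangleright m \triangleleft b,\, m) \longrightarrow \cM(m\triangleleft ab,\, m) = \cS_{\bA_m}(a\boxtimes b),\qquad f\longmapsto f \circ (\sigma_a \triangleleft \id_b).
\]
This is a linear isomorphism for each $a,b$, with inverse given by precomposition with $\sigma_a^{-1}\triangleleft\id_b$. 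Naturality in $a\boxtimes b \in \cC^{mp}\boxtimes\cC$ is immediate from the naturality of the central structure $\sigma$ and the bimodule axioms of $\cM$, producing a well-defined morphism $\Theta$ in $\Vect(\cC^{mp}\boxtimes\cC)$ which is a fiberwise isomorphism, hence an isomorphism.

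Next I would check that $\Theta$ intertwines the multiplications. On the $\cY_m$-side, the algebra structure is the composition in $\cM$: for $f_i \in \cY_m(a_i\boxtimes b_i)$,
\[
f_1 \cdot f_2 = f_2 \circ (\id_{a_2} \triangleright f_1 \triangleleft \id_{b_2}) \in \cY_m(a_2 a_1 \boxtimes b_1 b_2).
\]
On the $\cS_{\bA_m}$-side, the multiplication must combine the algebra structure of $\cS$ (as in Lemma \ref{lemma:algebrahomassociatedtofreeSaction}) with the algebra structure of $\bA_m$, twisted by the Yetter--Drinfeld half-braiding $\beta^{\bA_m}$; the crucial point is that $\beta^{\bA_m}$ is itself induced by the central structure $\sigma$. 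The verification then reduces, after applying $\sigma_{a_2 a_1}\triangleleft\id_{b_1 b_2}$, to rewriting $\Theta(f_1\cdot f_2)$ using the braid relation $\sigma_{a_2 a_1} = (\id_{a_2}\triangleright\sigma_{a_1}) \circ (\sigma_{a_2}\triangleleft\id_{a_1})$ and dinaturality in order to factor the composite through $m \triangleleft a_1 b_1 \triangleleft a_2 b_2$; the resulting expression coincides with the image under $\Theta$ of the twisted product in $\cS_{\bA_m}$ of $\Theta(f_1)$ and $\Theta(f_2)$. Unit preservation is immediate from $\sigma_\un = \id_m$, and preservation of the $*$-structure follows because both involutions are built from the adjoints in $\cM$ together with the (unitary) central structure, so they are intertwined by $\Theta$.

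The main obstacle will be the multiplication-compatibility step: although conceptually clear, it requires carefully aligning three different bookkeeping systems, namely composition in the C$^*$-category $\cM$, the multiplication formula for $\cS$ from Lemma \ref{lemma:algebrahomassociatedtofreeSaction}, and the half-braiding on $\bA_m$ coming from $\sigma$. A clean way to tame the combinatorics is to decompose the check fiberwise on $a\boxtimes b \in \cC^{mp}\boxtimes\cC$ and to record the identifications diagrammatically, so that the braid relation of $\sigma$ is visibly the same operation that encodes the Yetter--Drinfeld twist in the product of $\cS_{\bA_m}$. Once this alignment is made, the lemma follows.
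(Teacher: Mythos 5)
Your construction is correct and, unwound, produces the same canonical isomorphism as the paper, but the proof strategy is genuinely different. The paper argues by representability: $\cY_m$ represents $a \boxtimes b \mapsto \cM(a \triangleright m \triangleleft b, m)$ and $\bA_m$ represents $a \mapsto \cM(m \triangleleft a, m)$, and the chain
\[
\Vect(\cC^{mp}\boxtimes\cC)(a\boxtimes b,\cY_m)\simeq \cM(a\triangleright m\triangleleft b,m)\simeq \cM(m\triangleleft ab,m)\simeq \Vect(\cC)(ab,\bA_m)\simeq \LMod_{\Vect(\cC^{mp}\boxtimes\cC)}(\cS)\bigl(\cS\odot(a\boxtimes b),\cS_{\bA_m}\bigr)\simeq \Vect(\cC^{mp}\boxtimes\cC)(a\boxtimes b,\cS_{\bA_m})
\]
of natural isomorphisms (the second one being exactly your precomposition with $\sigma_a\triangleleft\id_b$, and the later ones coming from the free--forgetful adjunction for $\cS$-modules) identifies the two representing objects by Yoneda; compatibility with the algebra and $*$-structures is then absorbed into ``universality,'' since all the identifications are natural and module-structure preserving. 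You instead bypass the adjunction by using $\cS_{\bA_m}(a\boxtimes b)\simeq\bA_m(ab)$ directly and then propose to verify multiplicativity, unitality and $*$-compatibility by hand. That buys an explicit formula for the isomorphism and makes visible where the central structure $\sigma$ and the induced half-braiding on $\bA_m$ enter, which is useful for the later computations with $\cY_m$; the cost is precisely the step you flag as the main obstacle: you must first fix the crossed-product-type algebra structure on $\cS_{\bA_m}$ (the paper never defines it independently --- it is effectively transported from $\cY_m$) and then carry out the braid-relation/dinaturality computation, which in your write-up is only asserted to close (``the resulting expression coincides with\ldots''). The paper's representability route avoids this bookkeeping entirely, which is its main advantage; your route would be complete once that one fiberwise computation is actually written out, and nothing in it should fail, since the braid relation $\sigma_{ab}=(\id_a\triangleright\sigma_b)\circ(\sigma_a\triangleleft\id_b)$ and unitarity of $\sigma$ are exactly the required inputs.
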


\begin{proof}
	The C$^*$-algebra object $\cY_m$ is defined as the object representing the functor $a \boxtimes b \mapsto \cM(a \triangleright m \triangleleft b,m)$, while $\bA_m$ is the object representing $a \mapsto \cM(m \triangleleft a,m)$. Observe now that
	\begin{align*}
		\Vect(\cC^{mp} \boxtimes \cC)(a \boxtimes b, \cY_m) & \simeq \cM(a \triangleright m \triangleleft b, m)  \simeq \cM(m \triangleleft (ab),m) \\
		& \simeq \Vect(\cC)(ab, \bA_m) \simeq \LMod_{\Vect(\cC^{mp} \boxtimes \cC)}(\cS)(\cS_{ab},\cS_{\bA_m}) \\
		& \simeq \LMod_{\Vect(\cC^{mp} \boxtimes \cC)}(\cS)(\cS \odot (a \boxtimes b),\cS_{\bA_m}) \simeq \Vect(\cC^{mp} \boxtimes \cC)(a \boxtimes b, \cS_{\bA_m}) \ ,
	\end{align*}
	and thus by universality we obtain the claim.
\end{proof}

In terms of $\cY_m$, the half-braiding on $\bA_m$ correspond to isomoprhism $\cY_m \odot (\un \boxtimes a) \simeq \cY_m \odot (a \boxtimes \un)$, natural in $a$ and satisfying the factorizability with respect to tensor products. Indeed,
\begin{align*}
	\cY_m \odot (\un \boxtimes a) \simeq \cS_{\bA_m} \odot (\un \boxtimes a) &\simeq \cS_{\bA_m \odot a} \simeq \cS_{a \odot \bA_m }\\
	& \simeq \cS \odot (a \boxtimes \un) \odot (\un \boxtimes \bA_m) \simeq \cS \odot (\un \boxtimes \bA_m) \odot (a \boxtimes \un) \\
	& = \cS_{\bA_m} \odot (a \boxtimes \un) \simeq \cY_m \odot (a \boxtimes \un) \ .
\end{align*}

The composition
\begin{align*}
	\cS \simeq \cS_{\un} \odot \C 1 \hookrightarrow \cS_{\un} \odot \bA_m(\un) \hookrightarrow \cS_{\bA_m} \simeq \cY_m
\end{align*}
is a $*$-algebra object monomorphism $\cS \to \cY_m$, such that
\begin{center}
	\begin{tikzcd}
		\cY_m \odot (\un \boxtimes a) \arrow[rr, "\simeq"]                &  & \cY_m \odot (a \boxtimes \un)              \\
		\cS \odot (\un \boxtimes a) \arrow[u, hook] \arrow[rr, "\simeq"'] &  & \cS\odot (a \boxtimes \un) \arrow[u, hook]
	\end{tikzcd}
\end{center}
commutes.

\subsection{Realization of Yetter-Drinfeld C$^*$-algebra objects}

Given a C$^*$-algebra $A$, we write $\fgpBim(A)$ for the category of finitely generated projective Hilbert C$^*$-bimodules over $A$. See \cite{Hataishi_Palomares_2025} for more details.

Let $\cC$ be a unitary tensor category, and suppose $F: \cC^{mp} \boxtimes \cC \to \fgpBim(A)$ is a unitary tensor functor. Every C$^*$-algebra object  $\bD$ in $\Vect(\cC^{mp} \boxtimes \cC)$ gives rise to an extension
\[
A \subset A \underset{F}{\rtimes} \bD
\]
of C$^*$-algebras, called the realization of $\bD$ along $F$, or the cross-product of $A$ by $(F,\bD)$. It comes with a canonical faithful conditional expectation $\bE: A \rtimes \bD \to A \otimes \bD(\un)$. This construction is functorial in the appropriate sense, see \cite{Hataishi_Palomares_2025}.

Every unitary tensor functor $\tilde{F}: \cC \to \fgpBim(A)$ induces a unitary tensor functor $F: \cC^{mp} \boxtimes \cC \to \fgpBim(A^{\op} \otimes A)$, where $\otimes$ is either the minimal or the maximal tensor product of C$^*$-algebras. $F$ is constructued from $\tilde{F}$ as follows. First, note that there are equivalences
\[
\fgpBim(A^{\op} \underset{\min}{\otimes} A) \simeq \fgpBim(A^{\op}) \underset{\min}{\boxtimes} \fgpBim(A) \  , \ \fgpBim(A^{\op} \underset{\max}{\otimes} A) \simeq \fgpBim(A^{\op}) \underset{\max}{\boxtimes} \fgpBim(A) \ .
\]
Indeed, letting $\otimes = \underset{\max}{\otimes}$, we have the canonical forgetful functors
\[
\fgpBim(A^{\op} \otimes A) \rightarrow \Corr(A^{\op} \otimes A) \simeq \Corr(A^{\op}) \boxtimes \Corr(A) \leftarrow \fgpBim(A^{\op}) \boxtimes \fgpBim(A)
\]
(see \cite{MR4162123}, Section 3) and it is easy to see that the essential image of $\fgpBim(A^{\op} \otimes A)$ and $\fgpBim(A^{\op}) \boxtimes \fgpBim(A)$ under the corresponding forgetful functors are equivalent. 

Given $X \in \fgpBim(A)$, let $X^\#$ be the complex conjugate vector space. Then
\[
a^{\op} \triangleright \xi^\# \triangleleft b^{\op} := (b^* \triangleright \xi \triangleleft a^*)^\#
\]
defines on $X^\#$ the structure of an $A^{\op}$-bimodule. The right linear $A^{\op}$-valued inner product is then given by
\[
\langle \xi^\# , \eta^\# \rangle := \langle \eta, \xi \rangle \ . 
\]
The functor $F: \cC^{mp} \boxtimes \cC \to \fgpBim(A^{\op} \otimes A)$ is then given by the exterior tensor product
\[
\tilde{F}(a)^\# \otimes \tilde{F}(b) \ .
\]
The tensor structure of $\tilde{F}$ induces a canonical tensor structure on $F$. 

\begin{definition}
	Let $\tilde{F}:\cC \to \fgpBim(A)$ be a unitary tensor functor. The symmetric enveloping algebra associated to $\tilde{F}$ is defined to be
	\[
	B_F:= (A^{\op} \otimes A) \underset{F}{\rtimes} \cS \ ,
	\]
	where $F: \cC^{mp} \boxtimes \cC \to \fgpBim(A^{\op} \otimes A)$ is the unitary tensor functor induced by $F$.
	\label{def:symmetricenvelopingalgebraofatensorfunctor}
\end{definition}

\begin{remark}
	The terminology {\em symmetric enveloping algebra} is reminiscent from subfactor literature, see (\cite{MR1302385},\cite{MR1742858}, \cite{MR1332979}).
\end{remark}

We have seen that to a pointed $\cZ \Hilb(\cC)$-module C$^*$-category $(\cM,m)$ it is associated a centrally pointed $\cC$-bimodule C$^*$-category, namely $(\cM \underset{\cZ \Hilb(\cC)}{\boxtimes} \Hilb(\cC), m \boxtimes \un)$. The internal endomorphism algebra $\cY_m$ of $m \boxtimes \un$ is a C$^*$-algebra object in $\Vect(\cC^{mp} \boxtimes \cC)$ contaning $\cS$ as a C$^*$-subalgebra. 

\begin{theorem}
	Given a pointed right $\cZ \Hilb(\cC)$-module C$^*$-category as above, there is a canonical inclusion
	\[
	B_F \subset (A^{\op} \otimes A) \underset{F}{\rtimes} \cY_m \ .
	\]
	Moreover, if $\psi: (\cM, m) \to (\cN,n)$ is a morphism of pointed right $\cZ \Hilb(\cC)$-module C$^*$-categories, then there is an induced $*$-homomorphism $\psi_*: (A^{\op} \otimes A) \underset{F}{\rtimes} \cY_m \to (A^{\op} \otimes A) \underset{F}{\rtimes} \cY_n$ that fixes $B_F$ point-wise. The operation $\psi \mapsto \psi_*$ respects composition, and we obtain therefore a functor from the category of pointed right $\cZ \Hilb(\cC)$-module C$^*$-categories to extensions of the symmetric enveloping algebra $B_F$.
	\label{thm:extensionsofsymmetricenvelopingalgebra}
\end{theorem}
\begin{proof}
	This is an immediate consequence of the functoriality of realization, as explained in \cite{Hataishi_Palomares_2025}.
\end{proof}

\subsection{Realization of Categories of Internal Modules}

In this section, due to our choices of conventions, we will have to make use of both right and left Hilbert C$^*$-modules. For a C$^*$-algebra $B$, denote by $\Corr^l(B)$ its category of left Hilbert C$^*$-correspondences, and continue denoting by $\Corr(B)$ its category of right Hilbert C$^*$-correspondences. 

Let $\cD$ be a unitary tensor category, $\bD$ a C$^*$-algebra object in $\Vect(\cD)$, and suppose $F: \cD \to \Corr(B)$ is a fully-faithful unitary tensor functor. Recall that $\cM_\bD$ denotes the Cauchy completion of the category of free left $\bD$-modules of the form $\bD \odot a =: \bD_a$, with $a \in \bD$.

The goal of this section is to prove the following Theorem.

\begin{theorem}
	There is a canonical fully-faithful unitary functor
	\[ | \cdot |_F : \cM_\bD \to \Corr^l(B \rtimes \bD, B) \ . \]
	\label{thm:realizationofcategoriesofinternalmodules}
\end{theorem}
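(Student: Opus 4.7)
The plan is to construct $|\cdot|_F$ explicitly on the generating free modules $\{\bD \odot a : a \in \cD\}$ and then extend by universal property of the Cauchy completion. On objects, I would set
\[
|\bD \odot a|_F := (B \rtimes \bD) \underset{B}{\otimes} F(a),
\]
which canonically carries a left Hilbert $(B\rtimes\bD)$-module structure (from the left factor) and a right Hilbert $B$-module structure (through $F(a)$). On morphism spaces, using the identification $\cM_\bD(\bD \odot a, \bD \odot b) \simeq \Vect(\cD)(a, \bD \odot b)$, I would transport morphisms via the natural pairing between $\bD(c)$ and $F(c)$ inside $B \rtimes \bD$ supplied by the realization construction of \cite{Hataishi_Palomares_2025}; unitarity then follows from compatibility of the $*$-structure of $\bD$ with the adjoint in $\Corr^l(B\rtimes\bD,B)$.

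A conceptually cleaner way to organize the construction is to observe that $\Corr^l(B \rtimes \bD, B)$ carries a natural right $\cD$-module C$^*$-category structure via $X \triangleleft a := X \otimes_B F(a)$, and that $|\cdot|_F$ can be characterized as the unique right $\cD$-module $*$-functor sending the distinguished object $\bD \in \cM_\bD$ to $B \rtimes \bD$ viewed as a $(B\rtimes\bD, B)$-bimodule with itself. Since $\cM_\bD$ is, by construction, the free Cauchy-complete right $\cD$-module C$^*$-category generated by an object whose internal endomorphism algebra is $\bD$, this universal property immediately produces the functor, and no case-by-case verification is needed on morphisms.

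For full-faithfulness, the right $\cD$-module equivariance of $|\cdot|_F$ together with rigidity in $\cD$ (Frobenius reciprocity) reduce the problem to the single identification
\[
\underline{\End}_{\Corr^l(B\rtimes\bD, B)}(B \rtimes \bD) \;\simeq\; \bD
\]
as C$^*$-algebra objects in $\Vect(\cD)$, where the internal endomorphisms are computed with respect to the $\cD$-action described above. But this is essentially the defining property of the realization construction $(F, \bD) \mapsto B \rtimes \bD$: morphisms $F(a) \to B \rtimes \bD$ in $\Corr(B)$ correspond naturally and isometrically to elements of $\bD(\bar a)$, and this identification intertwines composition on the left with the algebra structure of $\bD$ on the right. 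Assembling these isomorphisms in $a \in \cD$ produces the required identification of internal endomorphism algebras.

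The main obstacle will be the isometry portion of this identification: one must verify that the C$^*$-norm on $\bD(\bar a \otimes a)$ (coming from its description as a C$^*$-algebra in the sense of the Preliminaries) matches the operator norm on the corresponding space of bimodule morphisms, and that positivity in $\bD$ matches positivity of composition in $\Corr^l(B\rtimes\bD, B)$. Both are built into the realization construction but require careful bookkeeping, particularly because of the bra/ket conventions distinguishing $\Corr$ from $\Corr^l$; the fully-faithfulness assumption on $F$ is what guarantees that no norm collapse occurs when passing from $\bD(a)$ to bimodule morphisms. Once the isomorphism on generators is established, extension to the Cauchy completion is automatic, as both $\cM_\bD$ and $\Corr^l(B\rtimes\bD, B)$ admit finite direct sums and splittings of projections, and the explicit formula defining $|\cdot|_F$ preserves these on the nose.
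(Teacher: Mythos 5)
Your object-level construction is essentially a repackaging of the paper's: the paper realizes the free module $\bD_a$ as the completion of $\bigoplus_{b \in \Irr(\cD)} F(b) \odot \bD(b\bar a)$ with an explicit $(B \rtimes \bD)$-valued left inner product, and this dense subspace is identified with your $(B\rtimes\bD)\otimes_B F(a)$ via the tensorator of $F$ and Frobenius reciprocity (your version does quietly use that each $F(a)$ is bi-Hilbertian, which holds by rigidity, and one must still check the two completions agree). That part, together with functoriality on morphisms and faithfulness, is unproblematic and close in spirit to the paper.

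The genuine gap is fullness, i.e.\ the surjectivity of $\cM_\bD(\bD_{a_1},\bD_{a_2}) \to \Corr^l(B\rtimes\bD,B)\bigl(|\bD_{a_1}|_F,|\bD_{a_2}|_F\bigr)$, which in your scheme is the identification $\underline{\End}_{\Corr^l(B\rtimes\bD,B)}(B\rtimes\bD) \simeq \bD$. You declare this to be ``essentially the defining property of the realization construction,'' but it is not: the realization is \emph{defined} as a C$^*$-completion of $\bigoplus_c F(c)\odot \bD(c)$, and nothing in that definition rules out adjointable $(B\rtimes\bD,B)$-bimodule maps between the completed correspondences that do not come from elements of $\bD(a_1\bar a_2)$ --- the whole difficulty is controlling how a bounded operator interacts with the infinitely many isotypic components after completion, and full faithfulness of $F$ alone does not do this. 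This is exactly where the paper does real work: given a nonzero $T$, it uses the canonical expectation $\bE$ with values in $B\otimes\bD(\un)$ to find a state $\omega$ on $\bD(\un)$ with $(\id_B\otimes\omega)\bigl(\bE(\langle T\zeta,T\zeta\rangle)\bigr)>0$, localizes to the Hilbert space object $L^2_\omega\bD \in \Hilb(\cD)$ so that $|\bD_{a_i}|_F$ is replaced by $|(L^2_\omega\bD)_{a_i}|_F$ in $\Corr^l(B,B)$, and there compares the image of $T$ with the image of the internal hom space $\cM_\bD(\bD_{a_1},\bD_{a_2})$, using the inclusion $\Img(\tilde\iota_\omega)\subset\Img(\iota_\omega)$ coming from the identification of $\Corr^l(B,B)$-maps with $\LMod_{\Hilb(\cD)}$-maps (this is where full faithfulness of $F$ actually enters). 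Your proposal has no substitute for this localization/expectation argument; the ``careful bookkeeping of norms and positivity'' you flag is not the missing content --- the missing content is an argument that there are no extra bounded intertwiners at all. (A smaller caveat: your universal-property phrasing of $\cM_\bD$ as the ``free'' Cauchy-complete $\cD$-module C$^*$-category is only correct if read as: pointed module $*$-functors out of $(\cM_\bD,\bD)$ correspond to unitary algebra morphisms from $\bD$ into the internal endomorphism algebra of the chosen object of the target; to invoke it you must first exhibit such a morphism $\bD \to \underline{\End}(B\rtimes\bD)$, which is the bounded-action statement the paper proves by hand.)
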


Given $a \in \cD$, let
\[
| \bD_a |_F^\circ := \bigoplus_{b \in \Irr(\cD)} F(b) \odot \bD_a(b) \simeq \bigoplus_{b \in \Irr(\cD)} F(b) \odot \bD(b \bar{a}) \ .\]

Given $ \xi_c \odot \eta_c \in F(c) \odot \bD(c)$ and $\xi_b \odot \eta_b \in F(b) \odot \bD_a(b)$, define
\[
(\xi_c \odot \eta_c ) \triangleright  (\xi_b \odot \eta_b) := \sum_{d \in \Irr(\cD)} \sum_{v \in O(d,cb)} F(v^*) \left( F_2(\xi_c \boxtimes \xi_b) \right) \odot \bD(v \otimes \id_{\bar{a}}) \left( \bD_2 (\eta_c \odot \eta_b) \right)
\]

Recall that $B \underset{F}{\rtimes} \bD$ is obtained as a C$^*$-completion of the $*$-algebra
\[ \bigoplus_{c \in \Irr(\cD)} F(c) \odot \bD(c) = | \bD_\un |_F^\circ =: | \bD |_F^\circ \ .\]
See \cite{Hataishi_Palomares_2025}. Identically to how one describes the algebra structure of $| \bD |^\circ_F$, the following Lemma is proven. 

\begin{lemma}
	The above formula extends to an action of the algebra $| \bD|_F^\circ $ on $| \bD_a |_F^\circ$.
\end{lemma}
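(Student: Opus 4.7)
The plan is to first extend the formula bilinearly and continuously in each factor to all of $|\bD|_F^\circ$ and $|\bD_a|_F^\circ$, and then verify associativity: for $\alpha, \alpha' \in |\bD|_F^\circ$ and $\beta \in |\bD_a|_F^\circ$,
\[
(\alpha \cdot \alpha') \triangleright \beta \;=\; \alpha \triangleright (\alpha' \triangleright \beta).
\]
By linearity it suffices to check this on elementary tensors $\alpha = \xi_{c_1} \odot \eta_{c_1}$, $\alpha' = \xi_{c_2} \odot \eta_{c_2}$, $\beta = \xi_b \odot \eta_b$, with $c_i,b \in \Irr(\cD)$ and $\eta_b \in \bD(b\bar{a})$.

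My approach is to expand both sides and show they reduce to the common iterated expression
\[
\sum_{e \in \Irr(\cD)} \sum_{w \in O(e, c_1 c_2 b)} F(w^*)\bigl(F_3(\xi_{c_1}\boxtimes\xi_{c_2}\boxtimes\xi_b)\bigr) \odot \bD(w \otimes \id_{\bar{a}})\bigl(\bD_3(\eta_{c_1}\odot\eta_{c_2}\odot\eta_b)\bigr),
\]
where $F_3$ and $\bD_3$ denote the iterated monoidal/multiplication maps, which are unambiguous by the associativity of the lax structures $F_2$ and $\bD_2$. For the left-hand side I would first apply the multiplication formula of $|\bD|_F^\circ$ recalled above to combine $\alpha$ and $\alpha'$ (summing over intermediate $d \in \Irr(\cD)$ and $v \in O(d, c_1 c_2)$), and then apply the action to pair the result with $\beta$ (summing over $e \in \Irr(\cD)$ and $u \in O(e, db)$). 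For the right-hand side I would first contract $\alpha' \triangleright \beta$ (summing over $d' \in \Irr(\cD)$ and $v' \in O(d', c_2 b)$; crucially, the result lives in $F(d') \odot \bD(d' \bar{a}) = F(d') \odot \bD_a(d')$, so it is again an element of $|\bD_a|_F^\circ$), and then apply the action of $\alpha$.

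The key identity that makes the two expansions coincide is the completeness relation of composed ONBs: for any $e, c_1, c_2, b$,
\[
\sum_{d \in \Irr(\cD)}\sum_{\substack{u \in O(d,c_1 c_2)\\ v \in O(e,db)}} v\circ(u\otimes\id_b) \;=\; \sum_{w \in O(e,c_1 c_2 b)} w \cdot P_{u,v}^w,
\]
with an analogous identity for the other bracketing, combined with the coherence pentagons for $F_2$ and $\bD_2$. The extra factor $\bar{a}$ on the right is inert throughout: in every term $\bD(v \otimes \id_{\bar{a}})$ the basis element $v$ acts only on the "$b$-part" while $\bar{a}$ rides along as a spectator, so all ONB sums, associators and lax structure maps commute past the $\bar{a}$-slot.

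The main obstacle is purely bookkeeping: carefully tracking which basis sums are indexed over which hom spaces and matching the two nested ONB decompositions of the triple Hom space $\cD(e, c_1 c_2 b)$. This is essentially the same calculation that establishes associativity of the cross product multiplication on $|\bD|_F^\circ$ in \cite{Hataishi_Palomares_2025}; indeed, specializing $a = \un$ identifies $|\bD_a|_F^\circ$ with $|\bD|_F^\circ$ and the action formula with the product formula, so the present lemma is a parametrized version of that result with the spectator factor $\bar{a}$. Hence I expect the proof to amount to invoking the associativity argument of \cite{Hataishi_Palomares_2025} verbatim, after recording that every step is $A^{\op}$-linear in the $\bar{a}$-slot so it transfers without modification.
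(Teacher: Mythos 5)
Your argument is correct and is exactly the routine verification the paper leaves implicit: it states the lemma without proof (``straightforward''), relying on the same naturality of $F_2$, $\bD_2$ and the basis-independence of ONB sums in $\cD(e,c_1c_2b)$ that underlie the associativity of the crossed product multiplication in \cite{Hataishi_Palomares_2025}, with $\bar{a}$ as a spectator. Your outline supplies precisely those omitted details (the mention of continuity is unnecessary at the algebraic level of $|\bD_a|_F^\circ$, and the change-of-basis relation you write with $P^w_{u,v}$ is just unitarity of the transition matrix between the two composed orthonormal bases), so it matches the intended proof.
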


\begin{definition}
	Let $\langle \cdot , \cdot \rangle: | \bD_a|_F^{\circ} \times |\bD_a |_F^{\circ} \to |\bD|_F^\circ$ be given by
	\[
	\langle \xi_{b_1} \odot \eta_{b_1}, \xi_{b_2} \odot \eta_{b_2} \rangle := \sum_{c \in \Irr(\cD)} \sum_{v \in O(c,b_1 \bar{b}_2)} F(v^*) \left( F_2(\xi_{b_1} \odot j^F(\xi_{b_2})) \right) \odot \bD(v) \left( \bD_2 ((\eta_{b_1} \odot j^\bD(\eta_{b_2}))(\id_{b_1} \odot R_a \odot \id_{\bar{b}_2})) \right)
	\]
\end{definition}

\begin{lemma}
	The function $\langle \cdot , \cdot \rangle$ defined above is a faithful left $| \bD |_F^\circ$-sesquilinear form. 
\end{lemma}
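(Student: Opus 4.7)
The three assertions — sesquilinearity, left $|\bD|_F^\circ$-linearity in the first variable, and faithfulness — can be verified largely independently, so I would treat them in that order.

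\emph{Sesquilinearity} is essentially formal once the definition is unpacked. For fixed irreducibles $b_1, b_2$ and elementary tensors, linearity in $\xi_{b_1}\odot\eta_{b_1}$ follows from the $\mathbb C$-linearity of $F_2$, $\bD_2$, $F(v^*)$ and $\bD(v)$, while conjugate-linearity in $\xi_{b_2}\odot\eta_{b_2}$ comes from the antilinearity of the $*$-structures $j^F$ and $j^\bD$. Extending additively across $|\bD_a|_F^\circ = \bigoplus_{b} F(b)\odot\bD_a(b)$ gives the full form.

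\emph{Left $|\bD|_F^\circ$-linearity} is the substantive calculation. For elementary tensors one expands both sides of
\[
\langle (\zeta_c\odot\delta_c)\triangleright(\xi_{b_1}\odot\eta_{b_1}),\,\xi_{b_2}\odot\eta_{b_2}\rangle = (\zeta_c\odot\delta_c)\cdot\langle \xi_{b_1}\odot\eta_{b_1},\,\xi_{b_2}\odot\eta_{b_2}\rangle
\]
into double sums over irreducibles and ONBs of $\cD$. The LHS uses the bracketing $(cb_1)\bar{b}_2$, decomposing first $cb_1$ via $v\in O(d,cb_1)$ and then $d\bar{b}_2$ via an ONB; the RHS uses $c(b_1\bar{b}_2)$, decomposing first $b_1\bar{b}_2$ via $u\in O(c',b_1\bar{b}_2)$ and then $cc'$. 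Equality of the two expressions is the standard ONB-reorganization identity for the triple tensor product $cb_1\bar{b}_2$ (two different iterated completeness relations for $\id_{cb_1\bar{b}_2}$), combined with the associativity coherence diagrams of $F_2$ (as a tensorator of a unitary tensor functor) and of $\bD_2$ (as the multiplication of an algebra object). The main obstacle I anticipate is bookkeeping: tracking the insertion of $R_a$ — present because we work with $\bD_a$ rather than $\bD$ itself — and aligning the two reorganizations so that the associativity coherences can be applied without ambiguity. Once the expressions are in normal form, the matching is mechanical.

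\emph{Faithfulness}: because $F$ is fully faithful and unitary, each tensorator $F_2$ is an isometric isomorphism of Hilbert C$^*$-correspondences, so the $F$-factor in the formula for $\langle\xi,\eta\rangle$ is a nondegenerate pairing on each isotypic slice. Combined with the faithfulness of the canonical $\bD$-valued form on $\bD_a$ intrinsic to the C$^*$-algebra-object structure of $\bD$ (which underlies the construction of $\cM_\bD$), one concludes that $\langle\xi,\cdot\rangle\equiv 0$ forces every isotypic component of $\xi$ in $F(b)\odot\bD_a(b)$ to vanish separately; hence $\xi=0$.
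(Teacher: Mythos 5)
The paper states this lemma without proof (like the preceding lemma on the module action, it is treated as routine), so there is no written argument to compare against; your plan follows the route the author evidently intends. The sesquilinearity step is indeed formal, and you have correctly identified the content of the $|\bD|_F^\circ$-linearity step: comparing the bracketings $(cb_1)\bar b_2$ and $c(b_1\bar b_2)$ via the two iterated ONB decompositions of $\id_{cb_1\bar b_2}$, together with the coherence of $F_2$ and the associativity of $\bD_2$, while the $R_a$ insertion rides along in the $\bar a$ leg untouched.

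The faithfulness paragraph is where your sketch has a real gap. The form is not a tensor product of an $F$-pairing with a $\bD$-pairing: its value is a sum over $c\in\Irr(\cD)$, $c\subset b_1\bar b_2$, and over ONBs $v$, of simple tensors in $F(c)\odot\bD(c)$, so ``nondegenerate $F$-factor combined with faithful $\bD$-factor'' is not by itself an inference, and you never explain how the isotypic components get separated. The argument that works is: project $\langle\zeta,\zeta\rangle$ onto its unit fiber $c=\un$; since $\cD(\un,b_1\bar b_2)=0$ unless $b_1=b_2$, only the diagonal terms survive, and on the slice $F(b)\odot\bD(b\bar a)$ the two tensor factors become the left $B$-valued inner product on $F(b)$ (via $F$ applied to the coevaluation, $F_2$ and $j^F$) and the canonical $\bD(\un)$-valued form on $\bD(b\bar a)$ coming from the C$^*$-algebra object structure (this is where $R_a$ and $j^\bD$ enter). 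To conclude that vanishing of this slice-wise form kills the whole component of $\zeta$, you need \emph{positivity} of both constituent forms and a states/GNS argument, exactly as for exterior tensor products of Hilbert C$^*$-modules; nondegeneracy of each factor alone, which is all you invoke, does not give faithfulness of the form on the algebraic tensor product. Relatedly, ``faithful'' here should be read as $\langle\zeta,\zeta\rangle=0\Rightarrow\zeta=0$ (this is what the completion in the next definition requires), whereas you address $\langle\zeta,\cdot\rangle\equiv 0\Rightarrow\zeta=0$; the two coincide only once positivity and Cauchy--Schwarz are in hand, which is again the input your sketch omits.
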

\begin{proof}
	All properties but positivity and faithfulness are easy to obtain, argument being identical to the construction of the $*$-algebra structure of $|\bD|_F^{\circ}$. 
	
	First, observe that for $\xi \odot \eta \in F(b) \odot \bD_a(b)$ we have $\langle \xi \odot \eta , \xi \odot \eta \rangle \neq 0$, due to the equalities
	\begin{align*}
		\| F_2(\xi \odot j^F(\xi)) \| &= \| \xi \underset{B}{\boxtimes} j(\xi) \| > 0, \\
		\| \bD_2 ((\eta \odot j^\bD(\eta))(\id_{b} \odot R_a \odot \id_{\bar{b}})) \| &= d_a^{1/2} \| \eta \underset{\bD(\un)}{\boxtimes} j(\eta) \| > 0 \ .
	\end{align*}
	The first equality in the second line follows from the fact that $d_a^{-1/2} R_a$ is an isometry. Moreover, $\langle \xi \odot \eta , \xi \odot \eta \rangle$ is a positive element in $|\bD|_F^\circ$. Indeed, 
	\[
	F_2 (\xi \odot j(\xi)) \odot \bD_2 (\eta \odot j(\eta)) \in F(b\bar{b}) \odot \bD(b \bar{a} a\bar{b})
	\]
	projects to a positive element of $|\bD|_F^\circ$. Now, $\bD(\id_b \odot R_a^* \odot \id_{\bar{b}}): \bD(b\bar{a}a\bar{b}) \to \bD(b \bar{b})$ is a completely positive map, and therefore the projection of  
	\[
	F_2 (\xi \odot j(\xi)) \odot \bD_2 (\eta \odot j(\eta)(\id_b \odot R_a \odot \id_{\bar{b}})) =  (\id \odot \bD(\id_b \odot R_a^* \odot \id_{\bar{b}})) \left( F_2 (\xi \odot j(\xi)) \odot \bD_2 (\eta \odot j(\eta)) \right)
	\]
	is also positive in $|\bD|_F^\circ$.
	
	From the additivity of both $F$ and $\bD$, the claimed properties of $\langle \cdot , \cdot \rangle$ follow from the above by an application of a standard amplification trick: seeing vectors in $\oplus_b F(b) \odot \bD_a(b)$ as vectors in $F(X) \odot \bD_a(X)$, where 
	\[X = \bigoplus_{i \in I} b_i 
	\]
	for a finite indexing set $I$.
\end{proof}

\begin{definition}
	Let $|\bD_a|_F$ be the completion of $| \bD_a|_F^\circ$ as a left $B \underset{F}{\rtimes} \bD$-Hilbert C$^*$-module. 
\end{definition}

\begin{lemma}
	The right $A$-actions on the first tensor factor of $F(b) \odot \bD(b)$ induce a $*$-homomorphism $\rho: A^{\op} \to \bB^l_{B \rtimes \bD} (|\bD_a|_F)$. In other words, $|\bD_a|_F \in \Corr^l(B \rtimes \bD,A)$.
\end{lemma}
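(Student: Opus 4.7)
The plan is to define the right $A$-action on the pre-Hilbert module $|\bD_a|_F^\circ$ summand by summand, then verify three things: commutation with the left $(B\underset{F}{\rtimes}\bD)$-action, formal adjointability with involution matching, and continuous extension to a $*$-representation of $A^{\op}$ on the completion $|\bD_a|_F$.

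First, for $\alpha \in A$ I would set $(\xi_b \odot \eta_b) \cdot \alpha := (\xi_b \cdot \alpha) \odot \eta_b$ on each summand $F(b) \odot \bD_a(b)$. Since $F(b) \in \Corr(B)$ carries a canonical right $A$-action and $\bD_a(b)$ is a plain vector space, this is well-defined and assembles into a right $A$-action $\rho^\circ$ on $|\bD_a|_F^\circ$, equivalently a left $A^{\op}$-action.

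Commutation with the left $(B\underset{F}{\rtimes}\bD)$-action rests on two facts about the formula for $\triangleright$: the tensorator $F_2 : F(c) \otimes_A F(b) \to F(cb)$ is $A$-balanced, so $F_2(\xi_c \boxtimes (\xi_b \cdot \alpha)) = F_2(\xi_c \boxtimes \xi_b) \cdot \alpha$, and for each morphism $v : d \to cb$ in $\cD$, $F(v^*)$ is a morphism of $B$-$A$-correspondences, hence right $A$-linear. Together these show that $\rho^\circ(\alpha^{\op})$ is left $(B\underset{F}{\rtimes}\bD)$-linear on each summand, and therefore globally.

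For formal adjointability, the key identity is $j^F(\xi \cdot \alpha) = \alpha^* \cdot j^F(\xi)$, coming from the $*$-structure on the duality in $\Corr(B)$ (complex conjugation swaps the sides and conjugates the two actions). Substituting this into the formula for $\langle -, - \rangle$ and invoking $A$-balancing of $F_2$ a second time yields
\[ \langle \xi_{b_1} \odot \eta_{b_1},\, (\xi_{b_2} \cdot \alpha) \odot \eta_{b_2} \rangle = \langle (\xi_{b_1} \cdot \alpha^*) \odot \eta_{b_1},\, \xi_{b_2} \odot \eta_{b_2} \rangle, \]
so $\rho^\circ((\alpha^*)^{\op})$ is a pre-adjoint of $\rho^\circ(\alpha^{\op})$. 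Continuous extension then follows from the standard positivity argument: for $\alpha \in A$ write $\|\alpha\|^2 \cdot 1_A - \alpha^*\alpha = \beta^*\beta$ in $A$, and use that $\rho^\circ$ is an algebraic $*$-representation to get $\|\alpha\|^2 \langle \xi, \xi \rangle - \langle \xi\cdot\alpha, \xi\cdot\alpha \rangle = \langle \xi\cdot\beta, \xi\cdot\beta \rangle \geq 0$ in $B\underset{F}{\rtimes}\bD$, whence $\|\xi \cdot \alpha\| \leq \|\alpha\| \|\xi\|$. This delivers the bounded $*$-homomorphism $\rho : A^{\op} \to \bB^l_{B\underset{F}{\rtimes}\bD}(|\bD_a|_F)$.

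I expect the main obstacle to be the adjointness calculation, where the duality data $j^F$, $j^\bD$, $R_a$ all appear alongside the branching morphism $v$: one must check that transporting the right $A$-action across the inner product requires exactly two applications of $A$-balancing of $F_2$ together with the $*$-anti-involution property of $j^F$, and that everything else ($\bD_2$, $\bD(v)$, $F(v^*)$, $R_a$) is carried through transparently because it only interacts with the $\bD$-factor or is itself right $A$-linear.
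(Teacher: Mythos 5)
Your proposal is correct and is exactly the routine verification the paper leaves implicit (this lemma is stated without proof): defining the action summandwise on the first tensor factor, using $B$-balancedness of $F_2$ and right $B$-linearity of $F(v^*)$ for commutation with the left $B\underset{F}{\rtimes}\bD$-action, the identity $j^F(\xi\cdot\alpha)=\alpha^*\cdot j^F(\xi)$ for formal adjointability, and the standard positivity trick for boundedness and extension to the completion. Note only that the ``$A$'' in the statement is a notational slip for $B$ in this subsection (the functor is $F:\cD\to\Corr(B)$), which your argument handles correctly in substance.
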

\begin{proof}
	Indeed, the right given $A$-action is a direct sum of amplifications of the original right $A$ actions on the modules $F(b)$.
\end{proof}

The following Lemma can be verified straightforwardly.
\begin{lemma}
	If $f \in \cM_\bD(\bD_{a_1}, \bD_{a_2}) \simeq \bD(a_1 \bar{a}_2)$, then the linear maps $\id_{F(b)} \odot f_b: F(b) \odot \bD_{a_1}(b) \to F(b) \odot \bD_{a_2}(b)$ induce a morphism $|f|: |\bD_{a_1}| \to |\bD_{a_2}|$ of $(B \rtimes \bD,B)$-correspondences. If $f$ and $g$ are composable, $|g \circ f| = |g| \circ |f|$. Thus, there is a well defined functor $| \cdot |_F: \cM_\bD \to \Corr^l(B \rtimes \bD, B)$.
\end{lemma}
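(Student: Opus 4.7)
The plan is as follows. First, I would assemble the fiberwise maps $\id_{F(b)} \odot f_b$, as $b$ ranges over $\Irr(\cD)$, into a linear map $|f|^\circ : |\bD_{a_1}|_F^\circ \to |\bD_{a_2}|_F^\circ$ of algebraic pre-correspondences. Since both sides are by construction the direct sums over $\Irr(\cD)$ displayed in the definition of $|\bD_{a_i}|_F^\circ$, this assembly step is immediate, and the substance of the lemma lies in verifying the three compatibilities required for $|f|^\circ$ to extend to a morphism in $\Corr^l(B \underset{F}{\rtimes} \bD, B)$.

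Next I would check each compatibility. Left $|\bD|_F^\circ$-linearity: after unfolding the explicit action formula $(\xi_c \odot \eta_c) \triangleright (\xi_b \odot \eta_b)$, the intertwining identity reduces to the statement that $f$ commutes with the multiplication morphism $\mu^\bD$ in the sense required of a morphism in $\cM_\bD$, which is precisely the hypothesis that $f$ is a left $\bD$-module map. Right $B$-linearity is automatic, because $|f|^\circ$ acts as the identity on the tensor factor $F(b)$ where the right $B$-action lives. For adjointability, since $\cM_\bD$ is a C$^*$-category, $f$ admits an adjoint $f^* \in \cM_\bD(\bD_{a_2}, \bD_{a_1})$; a direct computation with the explicit left inner product, using the $*$-structure $j^\bD$ on $\bD$, the dagger structure on $F$, and the conjugate equations $R_{a_i}, \bar R_{a_i}$, yields $\langle |f|^\circ \xi, \eta \rangle = \langle \xi, |f^*|^\circ \eta \rangle$ on the dense pre-correspondences.

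Adjointability together with the standard Hilbert C$^*$-module Cauchy-Schwarz estimate gives boundedness of $|f|^\circ$, which therefore extends uniquely to an adjointable morphism $|f|: |\bD_{a_1}|_F \to |\bD_{a_2}|_F$ of left $B \underset{F}{\rtimes} \bD$-Hilbert C$^*$-modules, still right $B$-linear by continuity; hence $|f|$ is a morphism in $\Corr^l(B \rtimes \bD, B)$. The identities $|g \circ f| = |g| \circ |f|$ and $|\id_{\bD_a}| = \id_{|\bD_a|_F}$ are immediate on the pre-correspondence, since the assignment $f \mapsto \id_{F(b)} \odot f_b$ is visibly functorial fiberwise, and these identities are preserved by the unique continuous extension.

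The main technical obstacle is the bookkeeping in the left $|\bD|_F^\circ$-linearity verification: one has to decompose iterated tensor products via the Clebsch-Gordan branching $c b = \bigoplus_d d \odot O(d,cb)$, insert the tensorators $F_2$ and $\bD_2$ at the correct places, and then invoke the left $\bD$-linearity of $f$ to collapse the two sides. The adjointability check follows the same pattern but is lighter, requiring only a single application of the involutive structures inside the inner product formula.
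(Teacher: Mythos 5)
Your overall route is the natural one, and in fact the paper offers no written proof of this lemma at all (it is one of several auxiliary statements in this subsection left as routine), so the relevant question is only whether your sketch is actually complete. The assembly of the fiberwise maps, the reduction of left $|\bD|_F^\circ$-linearity to associativity of $\mu^\bD$ (i.e.\ to $f$ being a left $\bD$-module map, which is correct since $f$ acts as right multiplication by an element of $\bD(a_1\bar a_2)$), the observation that right $B$-linearity is automatic because $|f|^\circ$ is the identity on the $F(b)$ factor, the algebraic adjoint relation via $f^*\in\cM_\bD(\bD_{a_2},\bD_{a_1})$, $j^\bD$ and the conjugate equations, and the fiberwise functoriality giving $|g\circ f|=|g|\circ|f|$ are all fine.

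The one step that does not work as stated is the passage to boundedness: ``adjointability together with the standard Hilbert C$^*$-module Cauchy--Schwarz estimate gives boundedness of $|f|^\circ$'' is false in this generality. The operator $|f|^\circ$ is only defined on the dense pre-correspondence $|\bD_{a_1}|_F^\circ$, and a densely defined operator admitting a densely defined adjoint need not be bounded (already for Hilbert spaces); Cauchy--Schwarz only yields $\||f|^\circ\xi\|^2\leq\|\xi\|\,\||f^*|^\circ|f|^\circ\xi\|$, which is circular. The automatic-boundedness result for adjointable maps requires the operator to be everywhere defined on the completed module, which is exactly what is being constructed. The standard repair uses precisely the two facts you did verify algebraically, plus the C$^*$-structure of $\End_{\cM_\bD}(\bD_{a_1})$: write $\|f\|^2\,\id_{\bD_{a_1}}-f^*f=g^*g$ for some $g$, use multiplicativity and the adjoint relation of $|\cdot|^\circ$ on the pre-correspondence to get $\langle |f|^\circ\xi,|f|^\circ\xi\rangle=\|f\|^2\langle\xi,\xi\rangle-\langle |g|^\circ\xi,|g|^\circ\xi\rangle\leq\|f\|^2\langle\xi,\xi\rangle$ in $B\underset{F}{\rtimes}\bD$, and conclude that $|f|^\circ$ extends to an adjointable contraction up to the factor $\|f\|$, with adjoint $|f^*|$. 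With that substitution your argument is complete; without it, the extension to $|\bD_{a_1}|_F$ is not justified.
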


\begin{proof}[Proof of Theorem \ref{thm:realizationofcategoriesofinternalmodules}]
   It remains to prove that, for $a_1, a_2 \in \cD$,
   \[
   \Corr^l(B \rtimes \bD, B) (|\bD_{a_1}|_F, |\bD_{a_2}|_F) \simeq \cM_{\bD}(\bD_{a_1}, \bD_{a_2}) \simeq \bD(a_1 \bar{a}_2) \ . 
   \]
   Let $T \in \Corr^l(B \rtimes \bD, B) (|\bD_{a_1}|_F, |\bD_{a_2}|_F)$. Let $\zeta \in |\bD_{a_1}|_F$ be such that $T(\zeta) \neq 0$. Then 
   \[ 
   \bE (\langle T(\zeta), T(\zeta)\rangle) > 0 
   \] 
   in $B \otimes \bD(\un)$ by faithfulness of the canonical conditional expectation. There exists a state $\omega$ on $\bD(\un)$ such that 
   \[
   (\id_B \otimes \omega) ( \bE (\langle T(\zeta), T(\zeta)\rangle)) > 0 \]
   in $B$. Consider the left $B$-$B$-correspondences $L^2_\omega(|\bD_{a_i}|_F) \simeq |(L^2_\omega \bD)_{a_i}|_F$, $i =1,2$. There are canonical linear maps
   \[
   \iota_\omega: \cM_\bD(\bD_{a_1},\bD_{a_2}) \to \LMod_{\Hilb(\cD)}((L^2_\omega \bD)_{a_1}, (L^2_\omega \bD)_{a_2}) \simeq \Corr^l(B,B)( |(L^2_\omega \bD)_{a_1}|_F,  |(L^2_\omega \bD)_{a_2}|_F)
   \]
   and
   \[
   \tilde{\iota}_\omega: \Corr^l(B \rtimes \bD, B) (|\bD_{a_1}|_F, |\bD_{a_2}|_F) \to \Corr^l(B,B)( |(L^2_\omega \bD)_{a_1}|_F,  |(L^2_\omega \bD)_{a_2}|_F)
   \]
   Then $\Im(\tilde{\iota}_\omega) \subset \Im(\iota_\omega)$. The inequality $ (\id_B \otimes \omega) ( \bE (\langle T(\zeta), T(\zeta)\rangle)) > 0$ now implies that $\tilde{\iota}_\omega(T) \neq 0$, so $T$ is a non-zero operator in $\Im(\iota_\omega) \simeq \cM_\bD(\bD_{a_1},\bD_{a_2})$.
\end{proof}

\section{Factorization Homology}

Factorization Homology is a mechanism which produces fully extended topological quantum field theories from relatively simple algebraic and categorical data. The general theory is based on $\infty$-categories, but for us only $(2,1)$-categories are needed.

Let $\Disk_2$ be the category whose objects are disjoint unions of compact framed 2-disks. Morphisms in $\Disk_2$ are framed embeddings. On the hom-space between two objects, consider the compact-open topology. Considering isotopies between framed embeddings, $\Disk_2$ becomes a $(2,1)$-category, i.e. a 2-category in which all 2-morphisms are invertible. Disjoint unions endow it with a symmetric monoidal structure (we reserve the term tensor for linear categories). As a symmetric monoidal $(2,1)$-category, it is generated by
\begin{itemize}
	\item an object $D$, the standard framed disk,
	\item a 1-morphism $\mu$, embedding $D \sqcup D$ into $D$
	\item and a 2-morphism $\beta$, depicted in the Figure below.
\end{itemize}
\begin{center}
	\begin{figure}[h]
	\includegraphics[scale=0.2]{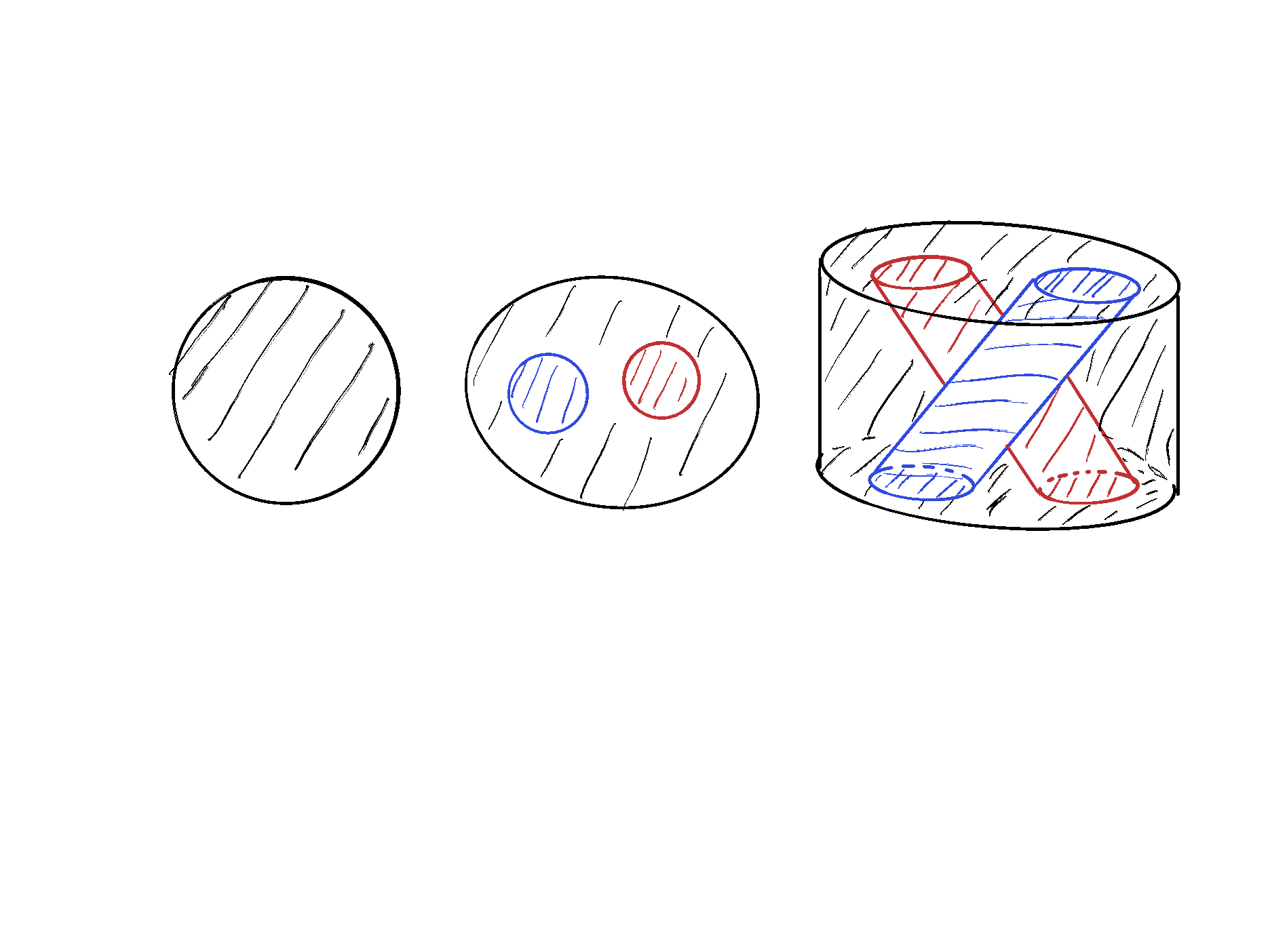}
	\label{fig:Diskcategory}
	\caption{Genereting object, 1-morphism and 2-morphism in $\Disk_2$, respectively}
\end{figure}
\end{center}

The relations are
\begin{itemize}
	\item $\mu \circ (\id_D \sqcup \mu)$ is isotopic to $\mu \circ(\mu \sqcup \id_D)$, 
	\item $\beta$ satisfies the braid relation up to isotopy.
\end{itemize}
The fact that $\Disk_2$ is generated by the data $(D,\mu,\beta)$ implies that a symmetric monoidal functor $\Disk_2 \to \cA$ is completely determined  by the image of $(D,\mu,\beta)$.  

$\Disk_2$ is a symmetric monoidal full $(2,1)$-subcategory of $\Surf$, the category of compact framed surfaces, framed embeddings and isotopies. Roughly speaking, every object in $\Surf$ is a colimit of embedded disks. Making this statement precise gives us the following result.

\begin{theorem}[\cite{MR3431668}, Theorem 1.2]
	Let $F: \Disk_2 \to \cA$ be a symmetric monoidal functor, determined by $\cD := F(D,\mu,\beta)$. If $\cA$ is cocomplete and its monoidal structure is cocontinuous, then there exists the left Kan-extension of $F$ to $\Surf$.
	\begin{center}
		\begin{tikzcd}
			\Surf \arrow[rd, "\tilde{F}", dashed]   &     \\
			\Disk_2 \arrow[r, "F"'] \arrow[u, hook] & \cA
		\end{tikzcd}
	\end{center}
	\label{thm:fundamentaltheoremofFH}
\end{theorem}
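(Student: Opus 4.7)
The plan is to construct $\tilde{F}$ via the pointwise left Kan extension formula. For each compact framed surface $\Sigma$, I would define
\[
\tilde{F}(\Sigma) := \colim_{(\varphi: D^{\sqcup k} \hookrightarrow \Sigma) \in \Disk_2/\Sigma} F(D^{\sqcup k}),
\]
where $\Disk_2/\Sigma$ denotes the $(2,1)$-overcategory whose objects are framed embeddings from disk-disjoint-unions into $\Sigma$, whose 1-morphisms are commuting triangles of framed embeddings, and whose 2-morphisms are isotopies over $\Sigma$. Essential smallness of $\Disk_2/\Sigma$, which follows from compactness of $\Sigma$ and countability of isotopy classes of framed embeddings between compact surfaces, together with cocompleteness of $\cA$, ensures this colimit exists.

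Next I would promote $\tilde{F}$ to a functor on $\Surf$. A framed embedding $\iota: \Sigma \hookrightarrow \Sigma'$ induces a functor $\iota_*: \Disk_2/\Sigma \to \Disk_2/\Sigma'$ by post-composition, producing a canonical 1-morphism between the defining colimits; isotopies between such $\iota$ give natural isomorphisms of $\iota_*$ and hence 2-isomorphisms in $\cA$. To verify that $\tilde{F}|_{\Disk_2} \simeq F$, I would observe that for a standard disk-disjoint-union $D^{\sqcup n}$ the identity embedding is weakly terminal in $\Disk_2/D^{\sqcup n}$ after accounting for the symmetric rearrangement of disks, so the colimit collapses to $F(D^{\sqcup n})$.

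To obtain the symmetric monoidal structure on $\tilde{F}$, I would prove that the disjoint-union functor
\[
(\Disk_2/\Sigma) \times (\Disk_2/\Sigma') \longrightarrow \Disk_2/(\Sigma \sqcup \Sigma'), \qquad (\varphi,\varphi') \mapsto \varphi \sqcup \varphi'
\]
is cofinal, and then use cocontinuity of $\otimes$ in each variable to interchange the colimit with the tensor product:
\[
\tilde{F}(\Sigma \sqcup \Sigma') \simeq \colim \bigl( F(D^{\sqcup k}) \otimes F(D^{\sqcup k'}) \bigr) \simeq \tilde{F}(\Sigma) \otimes \tilde{F}(\Sigma').
\]
The associator, unitor, and braiding data on $\tilde{F}$ would then be assembled from the universal properties of the colimits, with coherence reducing to routine but careful diagram chases.

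The main obstacle is the cofinality of the above disjoint-union functor in the enriched $(2,1)$-categorical sense. Intuitively, a disk configuration in $\Sigma \sqcup \Sigma'$ splits by connected components, but the delicate point is showing that the relevant comma-$(2,1)$-categories are contractible, which requires an adaptation of Quillen's Theorem A to the $(2,1)$-categorical setting. Careful attention must also be paid to the fact that the disks $D^{\sqcup k}$ carry framings and the embeddings in the overcategory must respect them, so that the implicit action of the symmetric group on rearrangements of disks interacts correctly with the symmetric monoidal structure. This is the technical heart of the argument, and is carried out in full in the cited reference \cite{MR3431668}.
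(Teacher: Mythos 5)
The paper gives no proof of this statement---it is quoted directly from Ayala--Francis \cite{MR3431668}---and your outline is precisely the standard construction used there: a pointwise left Kan extension computed as a colimit over the disk overcategory $\Disk_2/\Sigma$, with the symmetric monoidal structure obtained from cofinality of the disjoint-union functor and cocontinuity of the tensor product. Your sketch is essentially correct, provided the colimits are understood in the $(2,1)$-categorical (bicolimit/homotopy colimit) sense rather than strictly, and noting that the identification $\tilde{F}|_{\Disk_2} \simeq F$ follows more directly from full faithfulness of the inclusion $\Disk_2 \hookrightarrow \Surf$ than from the weak-terminality argument you indicate.
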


\begin{definition}
	In the context of Theorem \ref{thm:fundamentaltheoremofFH}, $\cD$ is called an $\bE_2$-algebra in $\cA$, and $\tilde{F}$ is called the {\em Factorization Homology} with coefficient in $\cD = F(D,\mu,\beta)$, and is denoted by
	\[
	\int_{(-)} \cD : \Surf \to \cA  \ .
	\]
	\label{def:FH}
\end{definition}

Factorization homology satisfies an important condition, which is usually what allows concrete computations. Let $\Sigma,\Sigma_1,\Sigma_2 \in \Surf$, let $S$ be a compact framed 1-dimensional manifold, and let $I$ be the unit interval with standard framing. Suppose that $\Sigma$ is the collar-gluing of $\Sigma_1$ with $\Sigma_2$ along $S$:
\[
\Sigma \simeq \Sigma_1 \underset{S \times I} {\sqcup} \Sigma_2 \ .
\]
From functoriality of factorization homolory, it follows that $\int_{S \times I} \cD$ is an algebra in $\cA$, and that $\int_{\Sigma_1} \cD$ and $\int_{\Sigma_2} \cD$ are right and left modules for it, respectively. 

\begin{theorem}[\cite{MR3431668}, Theorem 1.2]
	Denote the monoidal structure of $\cA$ by $\boxtimes$. In the above context,
	\[
	\int_\Sigma \cD \simeq \int_{\Sigma_1} \cD \underset{\int_{S \times I} \cD}{\boxtimes} \int_{\Sigma_2} \cD \ .
	\]
	We say that $\int_{(-)} \cD$ satisfies excision with respect to collar-gluing.
\end{theorem}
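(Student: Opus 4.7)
The plan is to exploit the definition of factorization homology as a left Kan extension along $\Disk_2 \hookrightarrow \Surf$, and then match both sides of the claimed equivalence as colimits over suitably chosen diagram categories.

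First, by the pointwise formula for left Kan extensions applied to Theorem \ref{thm:fundamentaltheoremofFH}, one has
\[
\int_\Sigma \cD \;\simeq\; \colim_{(D^{\sqcup k} \hookrightarrow \Sigma)} \cD^{\boxtimes k},
\]
where the colimit is taken over the comma $(2,1)$-category of framed embeddings of disjoint unions of disks into $\Sigma$, with isotopies as invertible $2$-morphisms. Analogous expressions hold for $\int_{\Sigma_1}\cD$, $\int_{\Sigma_2}\cD$, and $\int_{S\times I}\cD$, each regarded as a colimit indexed by embeddings of disks into the respective surface. Next, the relative tensor product on the right-hand side is itself a colimit, realized as the two-sided bar construction whose $n$-th simplicial level is
\[
\int_{\Sigma_1}\cD \,\boxtimes\, \left(\int_{S\times I}\cD\right)^{\boxtimes n}\!\!\boxtimes\, \int_{\Sigma_2}\cD.
\]

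Using cocontinuity of $\boxtimes$ in each variable together with commutation of colimits, I would reorganize this iterated colimit into a single colimit over a category $\cE$ whose objects are tuples of framed embeddings into $\Sigma_1$, into $\Sigma_2$, and into finitely many copies of the collar $S\times I$, glued coherently along their boundaries. The key step is then to construct a gluing functor $\Phi \colon \cE \to (\Disk_2 \downarrow \Sigma)$ that sends such a tuple to the composite embedding into $\Sigma$, and to verify that $\Phi$ is cofinal in the $(2,1)$-categorical sense (its slice $2$-categories are weakly contractible). Cofinality would follow from the following geometric argument: any framed embedding $D^{\sqcup k} \hookrightarrow \Sigma$ can be isotoped to one disjoint from $S$, so that every disk lies either in $\Sigma_1$ or in $\Sigma_2$; moreover, the space of such disjointening isotopies is contractible by standard transversality applied to the collar $S\times I$.

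The main obstacle will be a clean execution of this cofinality argument: one must show that the comparison slice categories are \emph{contractible}, not merely connected, which requires controlling isotopies of isotopies and ultimately relies on the Kister--Mazur-type fact that the space of framed self-embeddings of a disk deformation retracts onto its framed orthogonal group. Everything else---rewriting the bar construction as a single colimit, commuting colimits past $\boxtimes$, and identifying the image of $\Phi$---is formal manipulation. Granted cofinality, $\Phi$ induces the desired equivalence of colimits, yielding the excision formula. For the applications in this paper, the statement is taken directly from \cite{MR3431668}.
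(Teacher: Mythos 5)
The paper offers no proof of this statement at all: it is imported verbatim from Ayala--Francis \cite{MR3431668}, so there is nothing in the text to compare your argument against line by line. Your closing remark that ``the statement is taken directly from \cite{MR3431668}'' is in fact exactly what the paper does.

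Judged on its own terms, your outline reproduces the expected shape of the known argument (pointwise left Kan extension as a colimit over $\Disk_2\!\downarrow\!\Sigma$, the balanced tensor product as a two-sided bar construction, cocontinuity of $\boxtimes$, and a cofinality comparison), and you correctly identify the cofinality/contractibility of slice categories as the crux. But as written it is a plan, not a proof, and the gap sits precisely at that crux. The ``isotope disks off $S$'' argument only rearranges a single embedding $D^{\sqcup k}\hookrightarrow\Sigma$; it does not by itself control the simplicial direction of the bar construction, i.e.\ the coherent bookkeeping of how many collar factors a configuration passes through and how face/degeneracy maps interact with isotopies of isotopies. Ayala--Francis handle this not by a direct cofinality of an ad hoc glued-embedding category $\cE$, but by pushing forward along the collar-gluing map $\Sigma\to[-1,1]$ and identifying the resulting one-dimensional factorization homology of the interval with the two-sided bar construction; the contractibility statements are then packaged in their localization results for disk categories, which is substantially more than the Kister--Mazur fact you invoke. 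Separately, in the target $\CCat$ with the maximal tensor product you must also justify that the relative tensor product of module C$^*$-categories is computed by (the realization of) the bar construction and is invariant enough to commute with the colimits you use; this relies on the model-categorical properties of \cite{MR3007088} and is not automatic. So the proposal is a reasonable reconstruction of the strategy, but the two load-bearing steps --- the cofinality/localization of disk categories and the identification of the balanced tensor product as a homotopy colimit in $\CCat$ --- are asserted rather than established.
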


Taking $\cA$ to be a suitable symmetric monoidal $(2,1)$-category of linear categories, an $\bE_2$-algebra in $\cA$ is the same as a braided tensor category in $\cA$. In particular, the representation categories of quasi-triangular quantum groups are examples of $\bE_2$-algebras. In this context \cite{MR3847209} and \cite{MR3874702} relate factorization homology with geometric representation theory and quantization of topological gauge theories. 

We consider now running factorization homology with categories of unitary representations of locally compact quantum groups, and more generally C$^*$-tensor categories, as coefficients. For this, we take $\cA$ to be the $(2,1)$-category $\CCat$. It is then a simple exercise to show that $\bE_2$-algebras in $\CCat$ are small unitarily braided C$^*$-tensor categories.  With the above discussion, we conclude the following fact.

\begin{theorem}
	Let $\cC$ be a unitary tensor category, and $\cZ \Hilb(\cC)$ its unitary Drinfeld center. There is a unique symmetric monoidal functor
	\[
	\int_{(-)} \cZ \Hilb(\cC): \Surf \to \CCat \ , 
	\]
	factorization homology with coefficients in $\cZ \Hilb(\cC)$, such that
	\[ \int_D \cZ \Hilb(\cC) = \cZ \Hilb(\cC) \ , \]
	and satisfying excision with respect to collar-gluing:
	\[
	\Sigma \simeq \Sigma_1 \underset{S \times I} {\sqcup} \Sigma_2 \implies \int_{\Sigma} \cZ\Hilb(\cC) \simeq \int_{\Sigma_1} \cZ\Hilb(\cC) \underset{\int_{S \times I}\cZ \Hilb(\cC)} {\boxtimes} \int_{\Sigma_2} \cZ\Hilb(\cC) \ . 
	\]
		
\end{theorem}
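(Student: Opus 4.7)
The plan is to apply the Ayala--Francis factorization homology theorem (Theorem~\ref{thm:fundamentaltheoremofFH}) directly, viewing $\cZ\Hilb(\cC)$ as an $\bE_2$-algebra in the symmetric monoidal $(2,1)$-category $\CCat$ with the maximal tensor product $\boxtimes$. The existence, uniqueness, and excision will then follow formally from general properties of left Kan extensions, so the work is in verifying the hypotheses in this specific setting.

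First I would check the input data. By Theorem~\ref{thm:freehalfbraiding} and the standard theory of unitary Drinfeld centers, $\cZ\Hilb(\cC)$ is a C$^*$-tensor category equipped with a unitary braiding. Such data are precisely what is needed to produce a symmetric monoidal functor $F_0: \Disk_2 \to \CCat$ sending the standard framed disk $D$ to $\cZ\Hilb(\cC)$, the multiplication $\mu: D\sqcup D \hookrightarrow D$ to the tensor product $\otimes: \cZ\Hilb(\cC)\boxtimes \cZ\Hilb(\cC) \to \cZ\Hilb(\cC)$ (which exists by the universal property of $\boxtimes$ for bilinear $*$-functors), and the generating $2$-morphism $\beta$ to the unitary braiding. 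The relations in $\Disk_2$ (associativity of $\mu$ up to isotopy and the braid relation on $\beta$) are matched by the associativity of $\otimes$ and the hexagon axioms of the braiding, so $F_0$ is well defined.

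Next I would apply Theorem~\ref{thm:fundamentaltheoremofFH} to $F_0$. The target $(2,1)$-category $\CCat$ is cocomplete and its monoidal product $\boxtimes$ is cocontinuous by the cited results on C$^*$-categories, so the hypotheses of the theorem are satisfied. This yields the left Kan extension
\[
\int_{(-)} \cZ\Hilb(\cC) : \Surf \to \CCat,
\]
which is unique up to canonical symmetric monoidal natural isomorphism, and whose value on the disk is tautologically $\cZ\Hilb(\cC)$. The excision formula for a collar-gluing $\Sigma \simeq \Sigma_1 \sqcup_{S\times I} \Sigma_2$ is a general property of left Kan extensions from $\Disk_2$ to $\Surf$: presenting $\Sigma$ as a pushout of disk coverings in $\Surf$, commutation of the left Kan extension with colimits together with cocontinuity of $\boxtimes$ in $\CCat$ gives
\[
\int_\Sigma \cZ\Hilb(\cC) \simeq \int_{\Sigma_1} \cZ\Hilb(\cC) \underset{\int_{S\times I} \cZ\Hilb(\cC)}{\boxtimes} \int_{\Sigma_2} \cZ\Hilb(\cC).
\]

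The main technical obstacle is the size issue: $\cZ\Hilb(\cC)$ and $\Hilb(\cC)$ are not small, while the formalism of $\CCat$ and the Ayala--Francis theorem are set up for small C$^*$-categories. The remedy, as flagged in the remark preceding the theorem, is to pass to an essentially small, Cauchy-complete C$^*$-subcategory of $\cZ\Hilb(\cC)$ closed under all operations occurring in the colimits computing factorization homology (countable direct sums, subobjects, relative tensor products of the sort $\boxtimes_{\cZ\Hilb(\cC)^{\boxtimes n}}$, etc.). One would verify that such a subcategory can be chosen so that the resulting functor $F_0: \Disk_2 \to \CCat$ lands in genuinely small objects and that the left Kan extension computed there is independent of the choice of small model, up to canonical equivalence. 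Once this foundational adjustment is made, the rest of the argument is formal.
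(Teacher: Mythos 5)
Your proposal is correct and follows essentially the same route as the paper: the paper likewise observes that a unitarily braided C$^*$-tensor category such as $\cZ\Hilb(\cC)$ is exactly an $\bE_2$-algebra in $\CCat$, invokes Dell'Ambrogio's result that $\CCat$ with the maximal tensor product is cocomplete with cocontinuous $\boxtimes$, and then applies the Ayala--Francis theorem (existence/uniqueness of the left Kan extension together with excision), handling the size issue by passing to an equivalent small model exactly as you describe.
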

\bigskip

Suppose the framed surface $\Sigma$ has a boundary component $S$. By thickening $S$, we obtain an embedding $S\times I \to \Sigma$. Outside $S \times I$, a copy of $\Sigma$ can be embedded, and inside the collar $S \times I$ a disk can be embedded. In summary, fixing a collar $S \times I$ induces an embeding $\Sigma \sqcup D \to \Sigma$. By functoriality, there is an induced functor
\[
\int_\Sigma \cZ \Hilb(\cC) \boxtimes \cZ \Hilb(\cC) \to \int_\Sigma \cZ \Hilb(\cC) \ .
\]

This gives to $\int_\Sigma \cZ \Hilb(\cC)$ the structure of a $\cZ \Hilb(\cC)$-module C$^*$-category. More generally, if $\Sigma$ has $n$ boundary components then factorization homology gives a $\cZ \Hilb(\cC)^{\boxtimes n}$-module C$^*$-category.
\smallskip 

The monoidal unit of $\Surf$ is the empty framed surface $\emptyset$. Its image under factorization homology is a monoidal unit of $\CCat$, which up to isomorphism is the category $\Hilb$ of Hilbert spaces. The tautological embedding $\emptyset \to \Sigma$ induces a $*$-functor
\[
\int_\emptyset \cZ \Hilb(\cC) \simeq \Hilb \to \int_\Sigma \cZ \Hilb(\cC) \ .
\]

\begin{definition}
	The image of $\C$ under the canonical functor $\Hilb \to \int_\Sigma \cZ \Hilb(\cC)$ is denoted by $\cO_\Sigma$, and is called the {\em quantum structure sheaf}.
\end{definition}

\begin{corollary}
	Suppose $\partial \Sigma \neq \emptyset$, with $n$-boundary components. Consider the induced $\cZ \Hilb(\cC)^{\boxtimes n}$-module C$^*$-category structure on $\int_\Sigma \cZ \Hilb(\cC)$. Then
	\[
	\left(  \int_{\Sigma} \cZ \Hilb(\cC) \underset{\cZ \Hilb(\cC)^{\boxtimes n}}{\boxtimes} \Hilb(\cC)^{\boxtimes n}, \cO_\Sigma \boxtimes \un  \right)
	\]
	has a canonical structure of a centrally pointed $\cC^{\boxtimes n}$-bimodule C$^*$-category. Define
	\[
	\cY_\Sigma := \underline{\End}_{(\cC^{mp} \boxtimes \cC)^{\boxtimes n}}(\cO_\Sigma \boxtimes \un) \ . 
	\]
\end{corollary}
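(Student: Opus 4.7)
The plan is to apply Proposition~\ref{prop:frommodulerforthecentertocentrallypointedbimodules} directly, with $\cM := \int_\Sigma \cZ\Hilb(\cC)$ and $m := \cO_\Sigma$. The hypothesis of that proposition is that $\cM$ be a right $\cZ\Hilb(\cC)$-module C$^*$-category, so the main task is to verify that the action functor induced by the collar-thickening embedding $\Sigma \sqcup D \to \Sigma$ really does endow $\int_\Sigma \cZ\Hilb(\cC)$ with such a structure, and that the stated centrality of $\cO_\Sigma\boxtimes\un$ is a consequence of the general argument $a\triangleright(m\boxtimes\un)\simeq m\boxtimes a\simeq (m\boxtimes\un)\triangleleft a$.

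First, I would spell out that the discussion preceding the corollary exhibits a $*$-functor $\int_\Sigma\cZ\Hilb(\cC)\boxtimes\cZ\Hilb(\cC)\to\int_\Sigma\cZ\Hilb(\cC)$, and that coherence for the $\cZ\Hilb(\cC)$-action (associativity and unitality up to unitary isomorphism) follows from functoriality and isotopy-invariance of factorization homology applied to the compositions of the collar-embedding with itself and with the embedding $\emptyset\sqcup\Sigma\hookrightarrow\Sigma$. The key input is the symmetric monoidality of $\int_{(-)}\cZ\Hilb(\cC)$ together with the fact that the E$_2$-structure on $\cZ\Hilb(\cC)$ arising from $\int_D$ is the original monoidal structure, so that the collar-induced functor is compatible with the tensor product on $\cZ\Hilb(\cC)$.

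Next, invoking Corollary~\ref{cor:equivalenceof2categories}, the relative tensor product
\[
\int_\Sigma \cZ\Hilb(\cC) \underset{\cZ\Hilb(\cC)}{\boxtimes}\Hilb(\cC)
\]
acquires a right $\Hilb(\cC^{mp}\boxtimes\cC)$-module C$^*$-category structure, which is the same datum as a $\cC$-bimodule C$^*$-category structure, with left and right actions given by $a\triangleright (m\boxtimes x)\triangleleft b = m\boxtimes a\otimes x\otimes b$. Applying Proposition~\ref{prop:frommodulerforthecentertocentrallypointedbimodules} to $(\cM,m)=(\int_\Sigma\cZ\Hilb(\cC),\cO_\Sigma)$ immediately produces the unitary natural isomorphism
\[
a\triangleright(\cO_\Sigma\boxtimes\un)\simeq \cO_\Sigma\boxtimes a\simeq (\cO_\Sigma\boxtimes\un)\triangleleft a,
\]
and this family satisfies the braid coherence condition because it is induced componentwise by the unit half-braiding of $\un\in\Hilb(\cC)$ inside $\cZ\Hilb(\cC)$.

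The only step that requires some care is confirming that the definition $\cO_\Sigma\boxtimes\un$ is well-posed under the relative tensor product, i.e.\ that $\cO_\Sigma$ is the image under the chosen $\Hilb\to\int_\Sigma\cZ\Hilb(\cC)$ of the monoidal unit, and that $\un\in\Hilb(\cC)$ balances correctly with the right action on $\cO_\Sigma$; this follows because $\cO_\Sigma$ is defined precisely as the image of $\C$ under the canonical functor induced by $\emptyset\hookrightarrow\Sigma$, and the unit of $\cZ\Hilb(\cC)$ acts trivially on both sides. The final sentence defining $\cY_\Sigma$ is then unambiguous, as internal endomorphism objects in $\cC$-bimodule C$^*$-categories with centrally pointed distinguished object exist by the general theory recalled in the section on centrally pointed bimodules.
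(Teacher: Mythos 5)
Your proposal is correct and follows the paper's intended argument: the corollary is an immediate application of Proposition \ref{prop:frommodulerforthecentertocentrallypointedbimodules} to $\cM = \int_\Sigma \cZ\Hilb(\cC)$ with the collar-induced module structure and $m = \cO_\Sigma$, exactly as you describe. Your additional verifications (coherence of the action from functoriality of factorization homology, the braid relation via the unit, well-posedness of $\cO_\Sigma \boxtimes \un$) are consistent with the surrounding discussion in the paper and do not deviate from its route.
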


Observe that for $\Sigma = D$ the quantum structure sheaf $\cO_D$ is just the tensor unit of $\cZ \Hilb(\cC)$, and
\[
	\left(  \int_{D} \cZ \Hilb(\cC) \underset{\cZ \Hilb(\cC)}{\boxtimes} \Hilb(\cC), \cO_D \boxtimes \un  \right) \simeq (\Hilb(\cC), \un) \ ,
\]
as a pointed right $\cC^{mp} \boxtimes \cC$-module C$^*$-category. In particular, $\cY_D \simeq \cS$ is the canonical C$^*$-algebra object.

Suppose now that $\tilde{F}: \cC \to \fgpBim(A)$ is a unitary tensor functor. From Theorem \ref{thm:extensionsofsymmetricenvelopingalgebra}, we obtain the following.
\begin{corollary}
	If $\Sigma$ is a compact framed surface with $n$-boundary components, there is an associated C$^*$-algebraic extension
	\[
	B_F^{\otimes n} \subset (A^{\op} \otimes A)^{\otimes n} \underset{F}{\rtimes} \cY_\Sigma \ .
	\]
	If $\Gamma(\Sigma)$ is the modular group of $\Sigma$ relative to its boundary, there is a canonical action of $\Gamma(\Sigma)$ by $*$-automorphisms on $(A^{\op} \otimes A)^{\otimes n} \underset{F}{\rtimes} \cY_\Sigma$ such that
	\[
	B_F^{\otimes n} \subset \left( (A^{\op} \otimes A)^{\otimes n} \underset{F}{\rtimes} \cY_\Sigma \right)^{\Gamma(\Sigma)} \ . 
	\]
	Moreover, there is a fully-faithful unitary $(\cC^{mp} \boxtimes \cC)^{\boxtimes n}$-module functor
	\[
		\left(  \int_{\Sigma} \cZ \Hilb(\cC) \underset{\cZ \Hilb(\cC)^{\boxtimes n}}{\boxtimes} \Hilb(\cC)^{\boxtimes n}, \cO_\Sigma \boxtimes \un  \right) \to \left( \Corr^l((A^{\op} \otimes A)^{\otimes n} \underset{F}{\rtimes} \cY_\Sigma, (A^{\op} \otimes A)^{\otimes n}) , (A^{\op} \otimes A)^{\otimes n} \underset{F}{\rtimes} \cY_\Sigma \right) \ .
	\]
	\label{cor:extensionofSEfromFH}
\end{corollary}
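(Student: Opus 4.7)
The plan is to combine functoriality of factorization homology with Theorems~\ref{thm:extensionsofsymmetricenvelopingalgebra} and~\ref{thm:realizationofcategoriesofinternalmodules}, once the correct centrally pointed bimodule is identified.

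First, I would apply Proposition~\ref{prop:frommodulerforthecentertocentrallypointedbimodules} to the pointed $\cZ \Hilb(\cC)$-module C$^*$-category $(\int_\Sigma \cZ \Hilb(\cC),\cO_\Sigma)$ to obtain the centrally pointed $\cC$-bimodule C$^*$-category $(\int_\Sigma \cZ \Hilb(\cC) \underset{\cZ \Hilb(\cC)}{\boxtimes} \Hilb(\cC),\ \cO_\Sigma \boxtimes \un)$. By definition its internal endomorphism algebra in $\Vect(\cC^{mp}\boxtimes\cC)$ is $\cY_\Sigma$, so feeding this data into Theorem~\ref{thm:extensionsofsymmetricenvelopingalgebra} immediately produces the extension $B_F \subset (A^{\op}\otimes A) \underset{F}{\rtimes} \cY_\Sigma$ together with its functoriality in the pointed module.

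For the $\Gamma_S(\Sigma)$-action, I would use that every element of the modular group is represented in $\Surf$ by a framed self-diffeomorphism of $\Sigma$ which fixes a chosen collar $S \times I \hookrightarrow \Sigma$ up to isotopy relative to the boundary. Applying $\int_{(-)} \cZ \Hilb(\cC)$ yields $*$-autoequivalences of $\int_\Sigma \cZ \Hilb(\cC)$ strictly preserving $\cO_\Sigma$ (which is built from the collar alone), hence autoequivalences of the pointed bimodule fixing $\cO_\Sigma \boxtimes \un$; the functoriality half of Theorem~\ref{thm:extensionsofsymmetricenvelopingalgebra} then promotes these to $*$-automorphisms of $(A^{\op}\otimes A) \underset{F}{\rtimes} \cY_\Sigma$. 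Since the canonical inclusion $\cS \hookrightarrow \cY_\Sigma$ that realizes $B_F$ is produced entirely from the same collar, it is preserved pointwise, and therefore $B_F$ lies in the fixed-point subalgebra.

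Finally, for the fully-faithful unitary $\cC^{mp}\boxtimes\cC$-module functor into the correspondence category I would apply Theorem~\ref{thm:realizationofcategoriesofinternalmodules} with $\cD = \cC^{mp}\boxtimes\cC$, $\bD = \cY_\Sigma$ and $B = A^{\op}\otimes A$, which directly gives a fully-faithful unitary $\cC^{mp}\boxtimes\cC$-module functor $\cM_{\cY_\Sigma} \to \Corr^l((A^{\op}\otimes A) \underset{F}{\rtimes} \cY_\Sigma,\ A^{\op}\otimes A)$. The step requiring the most care, and the one I expect to be the main obstacle, is identifying $\cM_{\cY_\Sigma}$ with $(\int_\Sigma \cZ \Hilb(\cC) \underset{\cZ \Hilb(\cC)}{\boxtimes} \Hilb(\cC),\ \cO_\Sigma \boxtimes \un)$ as a pointed $\cC^{mp}\boxtimes\cC$-module C$^*$-category: by analogy with the equivalence $\cM_\cS \simeq \cC$ of Definition~\ref{def:SEalgebra}, this reduces to checking that $\cO_\Sigma \boxtimes \un$ cyclically generates the target under the right $\cC^{mp}\boxtimes\cC$-action up to Cauchy completion, which should follow from the excision-based description $\int_\Sigma \cZ \Hilb(\cC) \simeq \int_\Sigma \cZ \Hilb(\cC) \underset{\cZ \Hilb(\cC)}{\boxtimes} \cZ \Hilb(\cC)$ applied with $\cO_\Sigma = \cO_\Sigma \boxtimes \un_{\cZ \Hilb(\cC)}$, together with the fact that the second tensor factor is itself generated under the $\cC^{mp}\boxtimes\cC$-action by $\un$. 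Functoriality in $\Sigma$ then follows by naturality through each of the three steps.
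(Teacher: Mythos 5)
Your overall route coincides with the paper's: the corollary is stated there as an immediate consequence of Proposition \ref{prop:frommodulerforthecentertocentrallypointedbimodules} applied to $(\int_\Sigma \cZ\Hilb(\cC),\cO_\Sigma)$, of Theorem \ref{thm:extensionsofsymmetricenvelopingalgebra} (whose functoriality, fed the pointed-module automorphisms induced by boundary-fixing diffeomorphisms through functoriality of factorization homology, gives the $\Gamma_S(\Sigma)$-action fixing $B_F$), and of Theorem \ref{thm:realizationofcategoriesofinternalmodules} with $\cD=\cC^{mp}\boxtimes\cC$, $\bD=\cY_\Sigma$, $B=A^{\op}\otimes A$; the paper supplies no argument beyond these citations, so your first two steps reproduce its proof.

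The caveat concerns exactly the step you flag. The identification of $\left(\int_\Sigma \cZ\Hilb(\cC)\underset{\cZ\Hilb(\cC)}{\boxtimes}\Hilb(\cC),\ \cO_\Sigma\boxtimes\un\right)$ with $\cM_{\cY_\Sigma}$ is left implicit in the paper as well, and your excision sketch does not yet establish it. Rewriting $\int_\Sigma\cZ\Hilb(\cC)\simeq\int_\Sigma\cZ\Hilb(\cC)\underset{\cZ\Hilb(\cC)}{\boxtimes}\cZ\Hilb(\cC)$ and absorbing the second factor only reaches objects of the form $\cO_\Sigma\boxtimes x$ starting from objects in the $\cZ\Hilb(\cC)$-orbit of $\cO_\Sigma$; for a general elementary tensor $m\boxtimes x$ you still need every $m\in\int_\Sigma\cZ\Hilb(\cC)$ to lie in the Cauchy completion of that orbit. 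This is the C$^*$-analogue of the statement that the quantum structure sheaf is a progenerator for surfaces with nonempty boundary, which in the algebraic setting (Ben-Zvi--Brochier--Jordan) rests on rigidity/dualizability arguments and is nowhere proved in the present paper; note also that ind-objects $x\in\Hilb(\cC)$ are not reached from $\un$ under the $\cC^{mp}\boxtimes\cC$-action after only finite Cauchy completion. Without such a generation statement, Theorem \ref{thm:realizationofcategoriesofinternalmodules} yields the fully-faithful module functor only on the cyclic subcategory generated by $\cO_\Sigma\boxtimes\un$, which is all the cited results actually deliver --- so your write-up, like the paper's, should either prove the generation claim or restrict the domain of the functor accordingly.
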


\subsection{Complex Quantum Groups}

If $\bK$ is as compact quantum group, the category $\fdRep(\bK)$ of finite dimensional unitary representations of $\bK$ is a unitary tensor category, and $\cZ \Hilb(\fdRep(\bK))$ is the category of unitary representations of the Drinfeld double $D \bK$ of $\bK$ (\cite{MR3204665}, \cite{MR3509018}). The Drinfeld double construction on compact quantum groups is the non-commutative analogue of {\em complexification}, \cite{MR4162277}.

As was already mentioned, it was proven in \cite{hataishi2025categorical} that the category of centrally cyclic $\cC$-bimodule C$^*$-categories is equivalent to the category of continuous unital $D \bK$-C$^*$-algebras, or equivalent the category of continuous unital Yetter-Drinfeld $\bK$-C$^*$-algebras. Let us denote the latter category by $\YD(\bK)$.

\begin{corollary}
	Let $\bK$ be a compact quantum group. There exists a functor
	\[
	\left\lbrace \text{ pointed $\Rep(D\bK)$-module C$^*$-categories } \right\rbrace  \to \YD(\bK) \ ,
	\]
	obtained by first
	\[
	(\cM,m) \mapsto (\cM \underset{\Rep(D\bK)}{\boxtimes} \Rep(\bK) , m \boxtimes \un) \ ,
	\]
	and then applying Theorem 4.2 in \cite{hataishi2025categorical}.
	\label{cor:fromRepDKmodulestoYetterDrinfeldlagebras}
\end{corollary}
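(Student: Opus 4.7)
The plan is to simply compose two functors that have already been constructed. Given a pointed $\Rep(D\bK)$-module C$^*$-category $(\cM,m)$, first apply Proposition~\ref{prop:frommodulerforthecentertocentrallypointedbimodules} in the case $\cC = \fdRep(\bK)$; since $\cZ\Hilb(\fdRep(\bK)) \simeq \Rep(D\bK)$ as unitarily braided C$^*$-tensor categories, this yields the pointed $\fdRep(\bK)$-bimodule C$^*$-category
\[
\bigl(\cM \underset{\Rep(D\bK)}{\boxtimes} \Rep(\bK),\ m \boxtimes \un\bigr),
\]
together with the unitary natural isomorphism $a \triangleright (m \boxtimes \un) \simeq m \boxtimes a \simeq (m \boxtimes \un) \triangleleft a$ that exhibits $m \boxtimes \un$ as a central object. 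Thus the assignment lands in the category of centrally pointed $\fdRep(\bK)$-bimodule C$^*$-categories.

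Next, I would invoke Theorem~4.2 of \cite{hataishi2025categorical}, which provides an equivalence between centrally pointed $\fdRep(\bK)$-bimodule C$^*$-categories and $\YD(\bK)$, to produce the associated continuous unital Yetter--Drinfeld $\bK$-C$^*$-algebra. Defining the functor of the corollary as the composition of these two steps gives the stated construction on objects.

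For the behaviour on morphisms: a morphism $\psi : (\cM,m) \to (\cN,n)$ of pointed $\Rep(D\bK)$-module C$^*$-categories, being a $\Rep(D\bK)$-module $*$-functor with a chosen unitary $\psi(m) \simeq n$, induces by the universal property of the relative tensor product a $\fdRep(\bK)$-bimodule $*$-functor $\psi \boxtimes \id_{\Rep(\bK)}$, and one checks directly that it intertwines the central structures on $m\boxtimes\un$ and $n\boxtimes\un$. Applying the functoriality part of \cite[Thm.~4.2]{hataishi2025categorical} then yields the morphism in $\YD(\bK)$, and composition is preserved because both constituent functors preserve composition.

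There is no real obstacle here: the corollary is a direct packaging of Proposition~\ref{prop:frommodulerforthecentertocentrallypointedbimodules} with the external input of \cite{hataishi2025categorical}. The only point that might require a line of verification is that the canonical central structure on $m\boxtimes\un$ produced in Proposition~\ref{prop:frommodulerforthecentertocentrallypointedbimodules} is functorial in $(\cM,m)$, which follows from the naturality of the isomorphisms $m \boxtimes a \simeq a \triangleright(m\boxtimes \un) \simeq (m\boxtimes\un)\triangleleft a$ in the object $m$.
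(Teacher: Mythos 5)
Your proposal is correct and matches the paper's intended argument: the corollary is stated precisely as the composition of Proposition~\ref{prop:frommodulerforthecentertocentrallypointedbimodules} (specialized to $\cC = \fdRep(\bK)$, using $\cZ\Hilb(\fdRep(\bK)) \simeq \Rep(D\bK)$) with Theorem~4.2 of \cite{hataishi2025categorical}, and the paper gives no further proof beyond this packaging. Your added remarks on functoriality of the central structure and on morphisms via the universal property of the relative tensor product are exactly the routine verifications the paper leaves implicit.
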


\begin{corollary}
	Let $\bK$ be a compact quantum group. Given a compact framed surface $\Sigma$ with $n >0$ boundary components, there exists a Yetter-Drinfeld $\bK^{\times n}$-C$^*$-algebra $A_\Sigma$ such that
	\[
	\int_\Sigma \Rep(D\bK) \underset{\Rep(D\bK)^{\boxtimes n}}{\boxtimes} \Rep(\bK)^{\boxtimes n} \simeq \Hilb_{A_\Sigma}^{\bK} \ ,
	\]
	as $\Rep(\bK^{\times n})$-bimodule C$^*$-categories. There is a canonical action of $\Gamma(\Sigma)$ on $A_\Sigma$ by $D\bK^{\times n}$-equivariant $*$-automorphisms.
	\label{cor:DrinfelddoubleactionsfromFH}
\end{corollary}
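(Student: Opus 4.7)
The plan is to combine Proposition~\ref{prop:frommodulerforthecentertocentrallypointedbimodules}, Corollary~\ref{cor:fromRepDKmodulestoYetterDrinfeldlagebras}, and the $(2,1)$-functoriality of factorization homology. First I would specialize to $\cC = \fdRep(\bK)$ so that $\cZ\Hilb(\cC) \simeq \Rep(D\bK)$ and $\Hilb(\cC) \simeq \Rep(\bK)$. The chosen non-empty boundary component $S$ equips $\int_\Sigma \Rep(D\bK)$ with a $\Rep(D\bK)$-module C$^*$-category structure, with $\cO_\Sigma$ as a distinguished object. Applying Corollary~\ref{cor:fromRepDKmodulestoYetterDrinfeldlagebras} to $\bigl(\int_\Sigma \Rep(D\bK), \cO_\Sigma\bigr)$ produces a Yetter-Drinfeld $\bK$-C$^*$-algebra which I take as $A_\Sigma$.

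By construction, that functor proceeds in two steps. First, Proposition~\ref{prop:frommodulerforthecentertocentrallypointedbimodules} passes to the centrally pointed $\Rep(\bK)$-bimodule C$^*$-category
\[
\bigl(\textstyle\int_\Sigma \Rep(D\bK) \underset{\Rep(D\bK)}{\boxtimes} \Rep(\bK),\; \cO_\Sigma \boxtimes \un \bigr).
\]
Second, Theorem~4.2 of \cite{hataishi2025categorical} converts centrally pointed $\Rep(\bK)$-bimodule C$^*$-categories into unital continuous Yetter-Drinfeld $\bK$-C$^*$-algebras, and is an equivalence of categories whose inverse sends a Yetter-Drinfeld algebra $B$ to the pair $(\Hilb^\bK_B, B)$. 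Applying this inverse to $A_\Sigma$ delivers the claimed unitary equivalence of $\Rep(\bK)$-bimodule C$^*$-categories, with $\cO_\Sigma \boxtimes \un$ identified with $A_\Sigma$ viewed as its own unit module.

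For the mapping class group action, I use that elements of $\Gamma_S(\Sigma)$ are isotopy classes of framed self-diffeomorphisms of $\Sigma$ fixing a collar of $S$ pointwise. Each representative is a morphism in $\Surf$ that preserves the embedding $D \hookrightarrow \Sigma$ used to single out $\cO_\Sigma$; by functoriality of $\int_{(-)}\Rep(D\bK)$ it induces a self-equivalence of the pointed $\Rep(D\bK)$-module $\bigl(\int_\Sigma \Rep(D\bK),\cO_\Sigma\bigr)$, and isotopies supply the unitary 2-isomorphisms identifying homotopic diffeomorphisms. Post-composing with the 1-categorical assignment $(\cM, m) \mapsto A_{(\cM,m)}$ collapses those 2-isomorphisms and respects composition, giving a group homomorphism from $\Gamma_S(\Sigma)$ to the group of $D\bK$-equivariant $*$-automorphisms of $A_\Sigma$.

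The main obstacle I anticipate lies in the second step: ensuring that the Tannaka-Krein correspondence really captures the full bimodule $\int_\Sigma \Rep(D\bK) \boxtimes_{\Rep(D\bK)} \Rep(\bK)$, rather than only the sub-bimodule generated by $\cO_\Sigma \boxtimes \un$. This reduces to showing that $\cO_\Sigma$ generates $\int_\Sigma\Rep(D\bK)$ as a $\Rep(D\bK)$-module, which I would establish by an excision argument: a collar-gluing decomposition of $\Sigma$ into disks and handles exhibits every object as a colimit built from $\cO_\Sigma$ via tensor products with objects of $\Rep(D\bK)$.
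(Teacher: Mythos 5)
Your proposal is correct and follows essentially the same route the paper takes (which states the corollary without a separate proof): specialize to $\cC = \fdRep(\bK)$, use the boundary collar to make $\int_\Sigma \Rep(D\bK)$ a pointed $\Rep(D\bK)$-module with distinguished object $\cO_\Sigma$, feed it through Proposition~\ref{prop:frommodulerforthecentertocentrallypointedbimodules} and Corollary~\ref{cor:fromRepDKmodulestoYetterDrinfeldlagebras} (i.e.\ Theorem~4.2 of \cite{hataishi2025categorical}) to get $A_\Sigma$, and obtain the $\Gamma_S(\Sigma)$-action from functoriality of factorization homology under boundary-fixing diffeomorphisms and isotopies. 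The only points to tidy are that $\cO_\Sigma$ is singled out by the embedding $\emptyset \to \Sigma$ rather than a disk embedding, and that the generation (central cyclicity) of $\cO_\Sigma \boxtimes \un$, which you rightly flag as needed for a genuine equivalence rather than an embedding, is also left implicit in the paper and would indeed be justified by the excision/handle-decomposition argument you sketch.
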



\raggedright
\begin{bibdiv}
\begin{biblist}
	
\bib{MR4162123}{article}{
	author={Antoun, Jamie},
	author={Voigt, Christian},
	title={On bicolimits of $C^*$-categories},
	journal={Theory Appl. Categ.},
	volume={35},
	date={2020},
	pages={Paper No. 46, 1683--1725},
	review={\MR{4162123}},
}	
	
\bib{MR3431668}{article}{
	author={Ayala, David},
	author={Francis, John},
	title={Factorization homology of topological manifolds},
	journal={J. Topol.},
	volume={8},
	date={2015},
	number={4},
	pages={1045--1084},
	issn={1753-8416},
	review={\MR{3431668}},
	doi={10.1112/jtopol/jtv028},
}

\bib{MR3847209}{article}{
	author={Ben-Zvi, David},
	author={Brochier, Adrien},
	author={Jordan, David},
	title={Integrating quantum groups over surfaces},
	journal={J. Topol.},
	volume={11},
	date={2018},
	number={4},
	pages={874--917},
	issn={1753-8416},
	review={\MR{3847209}},
	doi={10.1112/topo.12072},
}

\bib{MR3874702}{article}{
	author={Ben-Zvi, David},
	author={Brochier, Adrien},
	author={Jordan, David},
	title={Quantum character varieties and braided module categories},
	journal={Selecta Math. (N.S.)},
	volume={24},
	date={2018},
	number={5},
	pages={4711--4748},
	issn={1022-1824},
	review={\MR{3874702}},
	doi={10.1007/s00029-018-0426-y},
}
	
\bib{MR2793022}{article}{
	author={Brugui\`eres, Alain},
	author={Lack, Steve},
	author={Virelizier, Alexis},
	title={Hopf monads on monoidal categories},
	journal={Adv. Math.},
	volume={227},
	date={2011},
	number={2},
	pages={745--800},
	issn={0001-8708},
	review={\MR{2793022}},
	doi={10.1016/j.aim.2011.02.008},
}	

\bib{MR2355605}{article}{
	author={Brugui\`eres, Alain},
	author={Virelizier, Alexis},
	title={Hopf monads},
	journal={Adv. Math.},
	volume={215},
	date={2007},
	number={2},
	pages={679--733},
	issn={0001-8708},
	review={\MR{2355605}},
	doi={10.1016/j.aim.2007.04.011},
}

\bib{MR2869176}{article}{
	author={Brugui\`eres, Alain},
	author={Virelizier, Alexis},
	title={Quantum double of Hopf monads and categorical centers},
	journal={Trans. Amer. Math. Soc.},
	volume={364},
	date={2012},
	number={3},
	pages={1225--1279},
	issn={0002-9947},
	review={\MR{2869176}},
	doi={10.1090/S0002-9947-2011-05342-0},
}

\bib{MR3079759}{article}{
	author={Brugui\`eres, Alain},
	author={Virelizier, Alexis},
	title={On the center of fusion categories},
	journal={Pacific J. Math.},
	volume={264},
	date={2013},
	number={1},
	pages={1--30},
	issn={0030-8730},
	review={\MR{3079759}},
	doi={10.2140/pjm.2013.264.1},
}

\bib{MR3007088}{article}{
	author={Dell'Ambrogio, Ivo},
	title={The unitary symmetric monoidal model category of small $\rm
		C^*$-categories},
	journal={Homology Homotopy Appl.},
	volume={14},
	date={2012},
	number={2},
	pages={101--127},
	issn={1532-0073},
	review={\MR{3007088}},
	doi={10.4310/HHA.2012.v14.n2.a7},
}

\bib{MR0934283}{article}{
	author={Drinfel\cprime d, V. G.},
	title={Quantum groups},
	conference={
		title={Proceedings of the International Congress of Mathematicians,
			Vol. 1, 2},
		address={Berkeley, Calif.},
		date={1986},
	},
	book={
		publisher={Amer. Math. Soc., Providence, RI},
	},
	isbn={0-8218-0110-4},
	date={1987},
	pages={798--820},
	review={\MR{0934283}},
}

\bib{MR1316301}{article}{
	author={Evans, David E.},
	author={Kawahigashi, Yasuyuki},
	title={On Ocneanu's theory of asymptotic inclusions for subfactors,
		topological quantum field theories and quantum doubles},
	journal={Internat. J. Math.},
	volume={6},
	date={1995},
	number={2},
	pages={205--228},
	issn={0129-167X},
	review={\MR{1316301}},
	doi={10.1142/S0129167X95000468},
}

\bib{MR4139893}{article}{
	author={Hartglass, Michael},
	author={Hern\'andez Palomares, Roberto},
	title={Realizations of rigid $\rm C^*$-tensor categories as bimodules
		over GJS $\rm C^*$-algebras},
	journal={J. Math. Phys.},
	volume={61},
	date={2020},
	number={8},
	pages={081703, 32},
	issn={0022-2488},
	review={\MR{4139893}},
	doi={10.1063/5.0015294},
}

\bib{Hataishi_Palomares_2025}{article}{ title={Inclusions of operator algebras from tensor categories: beyond irreducibility}, DOI={10.1017/S0017089525100839}, journal={Glasgow Mathematical Journal}, author={Hataishi, Lucas} 
author={Palomares, Roberto Hernández}, 
year={2025}, pages={1–35}
} 

\bib{hataishi2025categorical}{article}{
	title={Categorical dualtiy for Yetter-Drinfeld C*-algebras. Beyond the braided-commutative case},
	author={Hataishi, Lucas}
	author= {Yamashita, Makoto},
	journal={arXiv preprint arXiv:2504.21162},
	year={2025}
}

\bib{MR1782145}{article}{
	author={Izumi, Masaki},
	title={The structure of sectors associated with Longo-Rehren inclusions.
		I. General theory},
	journal={Comm. Math. Phys.},
	volume={213},
	date={2000},
	number={1},
	pages={127--179},
	issn={0010-3616},
	review={\MR{1782145}},
	doi={10.1007/s002200000234},
}

\bib{MR1622812}{article}{
	author={Izumi, Masaki},
	author={Longo, Roberto},
	author={Popa, Sorin},
	title={A Galois correspondence for compact groups of automorphisms of von
		Neumann algebras with a generalization to Kac algebras},
	journal={J. Funct. Anal.},
	volume={155},
	date={1998},
	number={1},
	pages={25--63},
	issn={0022-1236},
	review={\MR{1622812}},
	doi={10.1006/jfan.1997.3228},
}

\bib{MR3687214}{article}{
	author={Jones, Corey},
	author={Penneys, David},
	title={Operator algebras in rigid $\rm C^*$-tensor categories},
	journal={Comm. Math. Phys.},
	volume={355},
	date={2017},
	number={3},
	pages={1121--1188},
	issn={0010-3616},
	review={\MR{3687214}},
	doi={10.1007/s00220-017-2964-0},
}

\bib{MR3948170}{article}{
	author={Jones, Corey},
	author={Penneys, David},
	title={Realizations of algebra objects and discrete subfactors},
	journal={Adv. Math.},
	volume={350},
	date={2019},
	pages={588--661},
	issn={0001-8708},
	review={\MR{3948170}},
	doi={10.1016/j.aim.2019.04.039},
}

\bib{MR0696688}{article}{
	author={Jones, V. F. R.},
	title={Index for subfactors},
	journal={Invent. Math.},
	volume={72},
	date={1983},
	number={1},
	pages={1--25},
	issn={0020-9910},
	review={\MR{0696688}},
	doi={10.1007/BF01389127},
}

\bib{kitamura2024actions}{article}{
	title={Actions of tensor categories on Kirchberg algebras},
	author={Kitamura, Kan},
	journal={arXiv preprint arXiv:2405.18429},
	year={2024}
}

\bib{PhysRevB.71.045110}{article}{
	title = {String-net condensation: A physical mechanism for topological phases},
	author = {Levin, Michael A.}
	author={Wen, Xiao-Gang},
	journal = {Phys. Rev. B},
	volume = {71},
	issue = {4},
	pages = {045110},
	numpages = {21},
	year = {2005},
	month = {Jan},
	publisher = {American Physical Society},
	doi = {10.1103/PhysRevB.71.045110},
	url = {https://link.aps.org/doi/10.1103/PhysRevB.71.045110}
}

\bib{MR1332979}{article}{
	author={Longo, R.},
	author={Rehren, K.-H.},
	title={Nets of subfactors},
	note={Workshop on Algebraic Quantum Field Theory and Jones Theory
		(Berlin, 1994)},
	journal={Rev. Math. Phys.},
	volume={7},
	date={1995},
	number={4},
	pages={567--597},
	issn={0129-055X},
	review={\MR{1332979}},
	doi={10.1142/S0129055X95000232},
}

\bib{MR1381692}{book}{
	author={Majid, Shahn},
	title={Foundations of quantum group theory},
	publisher={Cambridge University Press, Cambridge},
	date={1995},
	pages={x+607},
	isbn={0-521-46032-8},
	review={\MR{1381692}},
	doi={10.1017/CBO9780511613104},
}

\bib{MR1742858}{article}{
	author={Masuda, Toshihiko},
	title={Generalization of Longo-Rehren construction to subfactors of
		infinite depth and amenability of fusion algebras},
	journal={J. Funct. Anal.},
	volume={171},
	date={2000},
	number={1},
	pages={53--77},
	issn={0022-1236},
	review={\MR{1742858}},
	doi={10.1006/jfan.1999.3523},
}

\bib{MR1966524}{article}{
	author={M\"uger, Michael},
	title={From subfactors to categories and topology. I. Frobenius algebras
		in and Morita equivalence of tensor categories},
	journal={J. Pure Appl. Algebra},
	volume={180},
	date={2003},
	number={1-2},
	pages={81--157},
	issn={0022-4049},
	review={\MR{1966524}},
	doi={10.1016/S0022-4049(02)00247-5},
}

\bib{MR1966525}{article}{
	author={M\"uger, Michael},
	title={From subfactors to categories and topology. II. The quantum double
		of tensor categories and subfactors},
	journal={J. Pure Appl. Algebra},
	volume={180},
	date={2003},
	number={1-2},
	pages={159--219},
	issn={0022-4049},
	review={\MR{1966525}},
	doi={10.1016/S0022-4049(02)00248-7},
}

\bib{MR3204665}{book}{
	author={Neshveyev, Sergey},
	author={Tuset, Lars},
	title={Compact quantum groups and their representation categories},
	series={Cours Sp\'ecialis\'es [Specialized Courses]},
	volume={20},
	publisher={Soci\'et\'e{} Math\'ematique de France, Paris},
	date={2013},
	pages={vi+169},
	isbn={978-2-85629-777-3},
	review={\MR{3204665}},
}

\bib{MR3509018}{article}{
	author={Neshveyev, Sergey},
	author={Yamashita, Makoto},
	title={Drinfeld center and representation theory for monoidal categories},
	journal={Comm. Math. Phys.},
	volume={345},
	date={2016},
	number={1},
	pages={385--434},
	issn={0010-3616},
	review={\MR{3509018}},
	doi={10.1007/s00220-016-2642-7},
}

\bib{MR0996454}{article}{
	author={Ocneanu, Adrian},
	title={Quantized groups, string algebras and Galois theory for algebras},
	conference={
		title={Operator algebras and applications, Vol.\ 2},
	},
	book={
		series={London Math. Soc. Lecture Note Ser.},
		volume={136},
		publisher={Cambridge Univ. Press, Cambridge},
	},
	isbn={0-521-36844-X},
	date={1988},
	pages={119--172},
	review={\MR{0996454}},
}

\bib{palomares2023discrete}{article}{
	title={Discrete inclusions of C*-algebras},
	author={Palomares, Roberto Hern{\'a}ndez and Nelson, Brent},
	journal={arXiv preprint arXiv:2305.05072},
	year={2023}
}

\bib{MR1302385}{article}{
	author={Popa, Sorin},
	title={Symmetric enveloping algebras, amenability and AFD properties for
		subfactors},
	journal={Math. Res. Lett.},
	volume={1},
	date={1994},
	number={4},
	pages={409--425},
	issn={1073-2780},
	review={\MR{1302385}},
	doi={10.4310/MRL.1994.v1.n4.a2},
}

\bib{MR1091619}{article}{
	author={Reshetikhin, N.},
	author={Turaev, V. G.},
	title={Invariants of $3$-manifolds via link polynomials and quantum
		groups},
	journal={Invent. Math.},
	volume={103},
	date={1991},
	number={3},
	pages={547--597},
	issn={0020-9910},
	review={\MR{1091619}},
	doi={10.1007/BF01239527},
}

\bib{MR1015339}{article}{
	author={Reshetikhin, N. Yu.},
	author={Takhtadzhyan, L. A.},
	author={Faddeev, L. D.},
	title={Quantization of Lie groups and Lie algebras},
	language={Russian},
	journal={Algebra i Analiz},
	volume={1},
	date={1989},
	number={1},
	pages={178--206},
	issn={0234-0852},
	translation={
		journal={Leningrad Math. J.},
		volume={1},
		date={1990},
		number={1},
		pages={193--225},
		issn={1048-9924},
	},
	review={\MR{1015339}},
}

\bib{MR1302020}{article}{
	author={Semenov-Tian-Shansky, M. A.},
	title={Poisson Lie groups, quantum duality principle, and the quantum
		double},
	conference={
		title={Mathematical aspects of conformal and topological field
			theories and quantum groups},
		address={South Hadley, MA},
		date={1992},
	},
	book={
		series={Contemp. Math.},
		volume={175},
		publisher={Amer. Math. Soc., Providence, RI},
	},
	isbn={0-8218-5186-1},
	date={1994},
	pages={219--248},
	review={\MR{1302020}},
	doi={10.1090/conm/175/01845},
}

\bib{ScheimbauerThesis}{article}{
	title = {Factorization homology as a fully extended topological field theory},
	author={Scheimbauer, Claudia Isabella},
	year={2014},
	school={ETH Zurich}
}

\bib{MR3674995}{book}{
	author={Turaev, Vladimir},
	author={Virelizier, Alexis},
	title={Monoidal categories and topological field theory},
	series={Progress in Mathematics},
	volume={322},
	publisher={Birkh\"auser/Springer, Cham},
	date={2017},
	pages={xii+523},
	isbn={978-3-319-49833-1},
	isbn={978-3-319-49834-8},
	review={\MR{3674995}},
	doi={10.1007/978-3-319-49834-8},
}

\bib{MR4162277}{book}{
	author={Voigt, Christian},
	author={Yuncken, Robert},
	title={Complex semisimple quantum groups and representation theory},
	series={Lecture Notes in Mathematics},
	volume={2264},
	publisher={Springer, Cham},
	date={2020},
	pages={x+374},
	isbn={978-3-030-52463-0},
	isbn={978-3-030-52462-3},
	review={\MR{4162277}},
	doi={10.1007/978-3-030-52463-0},
}

\end{biblist}
\end{bibdiv}
\end{document}